\numberwithin{equation}{section}
\numberwithin{figure}{section}
\theoremstyle{plain}
\newtheorem*{theorem*}{Theorem}
\newtheorem*{lemma*}{Lemma}
\newtheorem{lemma}{Lemma}[section]
\newtheorem{proposition}[lemma]{Proposition}
\newtheorem{prop}[lemma]{Proposition}
\newtheorem{theorem}[lemma]{Theorem}
\newtheorem{corollary}[lemma]{Corollary}
\theoremstyle{definition}
\newtheorem{defn}[lemma]{Definition}
\theoremstyle{remark}
\newtheorem{remark}{Remark}[section]
\newtheorem{rem}{Remark}[section]
\newcommand{\newP}{L}
\newcommand{\ep}{\varepsilon}
\newcommand{\NN}{\mathbb{N}}
\newcommand{\ZZ}{\mathbb{Z}}
\newcommand{\RR}{\mathbb{R}}
\newcommand{\R}{\mathbb{R}}
\newcommand{\SC}{\mathcal{S}}
\newcommand{\Psisc}{\Psi_{\mathrm{sc}}}
\newcommand{\lra}{\longrightarrow}
\newcommand\Ell{\mathrm{ell}}
\newcommand\Char{\mathrm{char}}
\DeclareMathOperator{\WF}{WF}
\newcommand\supp{\mathop{\rm supp}}
\newcommand\coker{\mathop{\rm coker}}
\newcommand\real{\mathop{\rm Re}}
\newcommand{\Rp}{\mathcal{R}_+}
\newcommand\Id{\operatorname{Id}}
\newcommand{\sw}{\mathsf{r}}   
\newcommand\Y{\mathcal{Y}}
\newcommand\ang[1]{\langle #1 \rangle}
\definecolor{darkgreen}{cmyk}{1,0,1,.2}
\definecolor{m}{rgb}{1,0.1,1}
\definecolor{b}{rgb}{0,0.1,1}
\renewcommand{\SC}{\mathcal C}
\newcommand{\SF}{\mathcal F}
\newcommand{\SG}{\mathcal G}
\newcommand{\SM}{\mathcal M}
\newcommand{\SN}{\mathcal N}
\newcommand{\SR}{\mathcal R}
\newcommand{\SU}{\mathcal U}
\newcommand{\SX}{\mathcal X}
\newcommand{\SY}{\mathcal Y}
\renewcommand{\Re}{\mathrm{Re}}
\renewcommand{\Im}{\mathrm{Im}}
\newcommand{\rhob}{\rho_{\mathrm{base}}}
\newcommand{\rhof}{\rho_{\mathrm{fib}}}
\newcommand{\rhor}{\rho_{\mathrm{\mathcal{R}}}}
\newcommand{\Hpar}[2]{H_\mathrm{par}^{#1,#2}}
\newcommand{\Psip}[2]{\Psi^{#1,#2}_\mathrm{par}}
\newcommand{\Psipcl}[2]{\Psi^{#1,#2}_\mathrm{par, cl}}
\newcommand{\Psipd}[2]{\Psi^{#1,#2}_{\delta, \mathrm{par}}}
\newcommand{\br}[1]{\left(#1\right)}  
\newcommand{\Lap}{\Delta}
\newcommand{\Diff}{\mathrm{Diff}}
\newcommand{\Op}{\mathrm{Op}}
\renewcommand{\epsilon}{\varepsilon}
\newcommand{\totz}{Z}
\DeclareMathOperator{\spt}{\mathrm{spt}}
\newcommand{\NP}{\chi_{\mathrm{pol,+}}}
\newcommand{\SP}{\chi_{\mathrm{pol,-}}}
\newcommand{\EQ}{\chi_{\mathrm{eq}}}
\newcommand{\TP}{\chi_{\tau,+}}
\newcommand{\TN}{\chi_{\tau,-}}
\newcommand{\bface}{\partial_{\mathrm{base}} \overline{T}^*_{\mathrm{par}}\mathbb{R}^{n + 1}}
\newcommand{\fface}{\partial_{\mathrm{fiber}} \overline{T}^*_{\mathrm{par}}\mathbb{R}^{n + 1}}
\newcommand\Hdata{\mathcal{W}}
\newcommand\Poi{\mathcal{P}}
\newcommand\Poim{\mathcal{P}_-}
\newcommand\Poip{\mathcal{P}_+}
\newcommand\Poipm{\mathcal{P}_\pm}
\newcommand\swmin{\mathsf{r}_{min}}
\newcommand\swmax{\mathsf{r}_{max}}
\newcommand\Nhat{\widehat{\mathcal{N}}}
\newcommand\Nhatgen{\widehat{\mathcal{N}}_{gen}}
\newcommand\Radm{\mathcal{R}_-}
\newcommand\Radp{\mathcal{R}_+}
\newcommand\Zdual{\Phi}
\newcommand\Rout{{P_+^{-1}}}
\newcommand\Rin{{P_-^{-1}}}
\begin{document}
\title[Fredholm analysis for the
  time-dependent Schr\"odinger equation]{Propagation estimates and Fredholm analysis for the
  time-dependent Schr\"odinger equation}

\author{Jesse Gell-Redman}
\address{School of Mathematics and Statistics, University of Melbourne, Melbourne, Victoria, Australia}
\email{jgell@unimelb.edu.au}
\author{Sean Gomes}
\address{Department of Mathematics and Statistics, University of Helsinki, Helsinki, Finland}
\email{sean.p.gomes@gmail.com}
\author{Andrew Hassell}
\address{Mathematical Sciences Institute, Australian National University, Acton, ACT, Australia}
\email{Andrew.Hassell@anu.edu.au}

\thanks{The authors were supported in part by the Australian Research
  Council through grant DP180100589.  The first author is supported in
part by ARC grant DP210103242}

\begin{abstract}
We study the time-dependent Schr\"odinger operator $P = D_t + \Delta_g
+ V$ acting on functions defined on $\RR^{n+1}$, where, using
coordinates $z \in \RR^n$ and $t \in \RR$, $D_t$ denotes $-i \partial_t$,
$\Delta_g$ is the positive Laplacian with respect to a time dependent
family of non-trapping metrics $g_{ij}(z, t) dz^i dz^j$ on
$\mathbb{R}^n$ which is equal to the Euclidean metric outside of a
compact set in spacetime, and $V = V(z, t)$ is a potential function which is also compactly supported in spacetime. 
In this paper we introduce a new approach to studying $P$, by finding pairs of Hilbert spaces between which the operator acts invertibly. 

Using this invertibility it is straightforward to solve the `final state problem' for the time-dependent Schr\"odinger equation, that is, find a global solution $u(z, t)$ of $Pu = 0$ having prescribed asymptotics as $t \to  +\infty$. These asymptotics are of the form 
$$
u(z, t) \sim t^{-n/2} e^{i|z|^2/4t} f_+\big( \frac{z}{2t} \big), \quad t \to +\infty
$$
where $f_+$, the `final state' or outgoing data, is an arbitrary element of a suitable function space $\Hdata^k(\mathbb{R}^n)$; here $k$ is a regularity parameter simultaneously measuring smoothness and decay at infinity. We can of course equally well prescribe asymptotics as $t \to -\infty$; this leads to incoming data $f_-$. We consider the `Poisson operators' $\Poipm : f_\pm \to u$ and precisely characterise the range of these operators on $\Hdata^k(\mathbb{R}^n)$ spaces. Finally we show that the scattering map, mapping $f_-$ to $f_+$, preserves these spaces. 


 \end{abstract}

\maketitle

\tableofcontents

\section{Introduction and statement of results}

\subsection{Introduction}\label{subsec:intro}
In this article we develop Fredholm theory for the time-depend\-ent Schr\"odinger equation on
$\mathbb{R}^{n + 1} = \mathbb{R}^n_z \times \mathbb{R}_t$, with time-dependent coefficients.  We begin
by studying the inhomogeneous problem
\begin{equation}
  \label{eq:inhomogeneous equation}
  P u(z, t) := \left( \frac{1}{i} \frac{\partial}{\partial t}  + \Lap_{g(t)} +
    V(z, t)\right)u(z, t) = v(z, t)
\end{equation}
where $\Lap_{g(t)}$ denotes the positive Laplacian with respect to the metric $g(t)$, and we assume that 
\begin{multline}\label{eq:Pconditions}
\text{ $g(t) - \delta$ and $V$ are compactly supported in spacetime, and }\\
\text{$g(t)$ is a nontrapping metric on $\RR^n$ for every time $t$.}
\end{multline}
 The condition of compact support is chosen for convenience; it could be weakened to symbolic-type decay estimates (in spacetime) of $g_{ij} - \delta_{ij}$ and $V$. The stronger assumption is made in order to introduce the Fredholm approach to the time-dependent Schr\"odinger operator in a relatively simple (but still variable-coefficient) setting.

It was proven by Lascar in \cite{lascar} that solutions $u$ to
\eqref{eq:inhomogeneous equation} satisfy a propagation of singularities
result which is analogous to the classical theorem of H\"ormander
\cite{Hormander:Existence} but adapted to the parabolic nature of the
equation, i.e. where time derivatives are first order but spatial derivatives are second order, so both $D_t$ and $\Delta_g$ contribute to the principal symbol of $P$. Lascar's result implies in particular that if $Pu$ is smooth,
then singularities of $u$ propagate along $g(t)$-geodesics in space, at a fixed time $t$, 
i.e.\ with `infinite speed'. 

In this paper we prove microlocal propagation estimates for $P$ that are valid uniformly out to spacetime infinity. This is done by adapting propagation estimates, including so-called  `radial point estimates', of Melrose \cite{RBMSpec} and Vasy \cite{VD2013} (which are themselves a microlocal version of the classical Mourre estimate) to this setting. 
In particular, we prove estimates for $u$ in
terms of $v = Pu$ in weighted parabolic Sobolev spaces $H^{s,l}_{par}(\RR^{n+1})$, defined in Section~\ref{sec:scatcalc}, thus taking into account both
the (parabolic) regularity, as measured by $s$, and spacetime growth or decay, as measured by $l$. One essential feature is that we need to work with variable spatial orders, which will vary `microlocally', that is, vary in phase space not just physical spacetime, as will be explained shortly. We will always use a sans-serif font such as $\sw$ to denote variable orders. 
We define, for arbitrary fixed differential order $s$ and suitable variable orders $\sw_\pm$, Hilbert spaces $\mathcal{Y}^{s,\sw_\pm} = H_{\mathrm{par}}^{s,\sw_\pm}(\RR^{n+1})$ and $\mathcal{X}^{s,\sw_\pm}$, given by 
\begin{equation}\label{eq:Xvardef}
\mathcal{X}^{s,\sw_\pm} = \{ u \in H_{\mathrm{par}}^{s,\sw_\pm}(\RR^{n+1}) \mid Pu \in H_{\mathrm{par}}^{s-1,\sw_\pm +1}(\RR^{n+1}) \},
\end{equation}
with the corresponding inner products and norms. 
Our first main result is then the following mapping property for $P$:

\begin{theorem}
	\label{thm:sob.invertible} Assume that $g(t)$ and $V$ satisfy the conditions above. 
	For all $s\in \RR$, for each choice of sign $\pm$ and all
        weight functions $\sw_\pm$ satisfying the conditions in
        Section \ref{subsec:global.fredholm}, the map
	\begin{equation}
		\label{eq:P.mapping}
		P:\mathcal{X}^{s,\sw_\pm}\to \mathcal{Y}^{s-1,\sw_\pm +1}	
	\end{equation}
	is invertible.
\end{theorem}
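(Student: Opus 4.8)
The plan is to establish invertibility of $P : \X^{s,\sw_\pm} \to \Y^{s-1,\sw_\pm+1}$ in the standard way for this circle of ideas: prove that $P$ is Fredholm of index zero by combining microlocal propagation/radial point estimates with their adjoint counterparts, then separately show the kernel is trivial, so that surjectivity follows. By interpolation and duality it suffices to treat a convenient range of $s$; the key analytic input is the set of a priori estimates for $P$ on the parabolic scattering calculus, which I would assemble from Lascar-type propagation of singularities in the interior, real-principal-type propagation along the (parabolic) bicharacteristics out to the boundary face at fiber infinity, and radial point estimates at the incoming/outgoing radial sets, where the threshold condition on the variable order $\sw_\pm$ (the conditions referenced from Section~\ref{subsec:global.fredholm}) is exactly what selects whether one propagates \emph{into} or \emph{out of} the radial set and hence fixes the sign $\pm$.

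Concretely, the first step is the \emph{estimate with compact error}: for $u \in \X^{s,\sw_\pm}$,
\begin{equation}
  \label{eq:fred.est}
  \| u \|_{H^{s,\sw_\pm}_{par}} \le C \big( \| P u \|_{H^{s-1,\sw_\pm+1}_{par}} + \| u \|_{H^{-N,-N}_{par}} \big),
\end{equation}
and the analogous estimate for the adjoint $P^*$ on the dual spaces (with the dual variable order $-\sw_\mp$, say, so that the adjoint propagation estimates have the opposite radial-set behaviour). Because the error term in \eqref{eq:fred.est} is relatively compact — the inclusion $H^{s,\sw_\pm}_{par} \hookrightarrow H^{-N,-N}_{par}$ is compact once $N$ is large — the estimate \eqref{eq:fred.est} shows $P$ has finite-dimensional kernel and closed range, and the adjoint estimate does the same for $P^*$, yielding that $P$ in \eqref{eq:P.mapping} is Fredholm. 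A deformation argument in the order parameters, or a direct pairing argument, then shows the index is zero: one moves $\sw_\pm$ continuously within the allowed range (so the estimates persist and the index is constant) to a configuration where the index can be computed, or one observes that $P^* $ on the dual spaces is conjugate to an operator of the same type with the roles of $\pm$ reversed, so $\dim \ker P = \dim \coker P$.

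The second step is \emph{triviality of the kernel}: if $u \in \X^{s,\sw_+}$ (say) and $Pu = 0$, then $u$ is smooth and decaying, and in particular $u$ lies in $L^2(\RR^{n+1})$ on the spatial slices uniformly; one then runs an energy/uniqueness argument — e.g. a Gronwall estimate for $\|u(\cdot,t)\|_{L^2}$ using that $\partial_t \|u(\cdot,t)\|^2_{L^2}$ is controlled because $\Delta_{g(t)}+V$ is symmetric, combined with the decay of $u$ as $t \to +\infty$ forced by membership in the outgoing space — to conclude $u \equiv 0$. (For the $-$ sign one uses decay as $t \to -\infty$.) Here the variable-order space is essential: it is precisely the asymmetry built into $\sw_\pm$ that forces decay at one temporal end and thereby breaks the time-reversal symmetry that would otherwise allow nonzero solutions. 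Combined with index zero, this gives surjectivity, hence invertibility, and boundedness of $P^{-1}$ follows from the open mapping theorem (or directly from \eqref{eq:fred.est} once the error term is absorbed).

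I expect the main obstacle to be the first step — proving the uniform microlocal estimate \eqref{eq:fred.est} and its adjoint — and within it, the radial point estimates at the parabolic analogue of the incoming/outgoing radial sets. This is where the non-product structure of the parabolic scattering calculus, the variable (and microlocally varying) order $\sw_\pm$, and the time-dependent non-trapping geometry all interact: one must verify the threshold inequalities relating $\sw_\pm$ to the subprincipal/imaginary part of the symbol at the radial set, check that the non-trapping hypothesis on each $g(t)$ indeed rules out trapped bicharacteristics for the spacetime problem (so that real-principal-type propagation connects every point of the characteristic set to a radial point in the favourable direction), and handle the commutator/positive-commutator construction carefully enough that the weights match across the various regions. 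The interior propagation is essentially Lascar's theorem, and the propagation at fiber infinity away from radial sets is standard Hörmander-type; the genuinely new work is organizing all of this uniformly up to spacetime infinity in the variable-order parabolic calculus set up in Section~\ref{sec:scatcalc}.
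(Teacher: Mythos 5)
Your first two steps — assembling the Fredholm estimate from microlocal elliptic, real-principal-type, and radial-point estimates, and then deducing index zero from formal self-adjointness of $P$ (so that the cokernel in $\mathcal{Y}^{s-1,\mathsf{l}_\pm+1}$ is identified with the kernel of $P$ in the dual space $H^{1-s,\mathsf{l}_\mp}_{par}$, an operator of the same type with the roles of $\mathcal{R}_\pm$ swapped) — match the paper's Propositions~\ref{prop:fredholmest} and \ref{prop:fredholm} exactly. The difference is in the proof of triviality of the kernel, where you propose an energy-conservation argument and the paper (Theorem~\ref{thm:main thm restate}) takes a different, more hands-on route.

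Two comments on your kernel-triviality step. First, you have the temporal orientation reversed: membership of $u\in\ker P$ in $\mathcal{X}^{s,\mathsf{l}_+}$ forces rapid decay as $t\to-\infty$, not $t\to+\infty$. The weight $\mathsf{l}_+$ is \emph{above} threshold (equal to $-1/2+\ep$) in a neighbourhood of $\mathcal{R}_-$, which lies over the ``southern'' ($t\to-\infty$) part of spacetime infinity, so the above-threshold radial estimate (Proposition~\ref{prop:aboveschrod}) upgrades $u$ to microlocally trivial there; whereas near $\mathcal{R}_+$ (over $t\to+\infty$) the weight is \emph{below} threshold and $u$ can have the full $t^{-n/2}e^{i|z|^2/4t}f_+(z/2t)$ asymptotics, which do \emph{not} decay in $L^2(\RR^n_z)$. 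The corrected statement is: decay as $t\to-\infty$ for the $+$ sign and as $t\to+\infty$ for the $-$ sign. With that sign fixed, the energy argument is morally right (and the paper's own introduction advertises ``conservation in time of the spatial $L^2$ norm of global solutions'' as the idea). Second, making it rigorous is not quite immediate: one must first use the propagation estimates to show that the time-slices $u(\cdot,t)$ are well-defined elements of $L^2(\RR^n)$ (which requires a trace/restriction argument from the spacetime Sobolev membership) and that the integration by parts justifying $\partial_t\|u(\cdot,t)\|^2=0$ is legitimate near $\mathcal{R}_+$ where spatial decay is only $\ang z^{-1/2-\ep}$. The paper instead first propagates microlocal triviality from $\mathcal{R}_-$ through the elliptic set and characteristic set to get $u$ smooth and rapidly decaying everywhere except at $\mathcal{R}_+$, then bootstraps module regularity, restricts to a time slice $t_0\ll 0$ where the equation is exactly the free Schr\"odinger equation, writes $u=\mathcal{F}^{-1}_{\zeta\to z}(e^{-it|\zeta|^2}a(\zeta))$, shows $a\in C^\infty_0$ from module regularity, and kills $a$ by stationary phase using the Schwartz decay of $u$ in negative time cones. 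Both routes should yield the conclusion, but the paper's argument sidesteps the trace and integration-by-parts technicalities of the energy method and uses machinery (module regularity, stationary phase at the radial set) that the paper needs for the Poisson operator anyway. If you pursue the energy method, the sign correction is essential and the restriction-to-slices step deserves an honest argument rather than a gesture.
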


Theorem~\ref{thm:sob.invertible}, proved in Section \ref{sec:inv proof sec} below,  implies the existence of two inverses (which we will call \emph{propagators}) of $P$, namely
$$
P_+^{-1} : \mathcal{Y}^{s-1,\sw_+ +1} \to \mathcal{X}^{s,\sw_+}
$$
and 
$$
P_-^{-1} : \mathcal{Y}^{s-1,\sw_- +1} \to \mathcal{X}^{s,\sw_-}.
$$
These are in fact `forward' and `backward' propagators. To explain this, consider 
$v \in C_{c}^\infty(\mathbb{R}^{n + 1})$. If one denotes by $T_\pm$ 
the initial and final times of the support of $v$,
$$
T_+ = \sup\{ t : \exists (z, t) \in \supp v \},\ T_- = \inf\{ t : \exists (z, t) \in \supp v \},
$$
there are two special solutions to \eqref{eq:inhomogeneous equation}, the
forward solution $u_+$ and the backward solution $u_-$, which are the
unique solutions satisfying (respectively) $\supp u_+ \subset \{ t \ge
T_- \}$ and $\supp u_- \subset \{ t \le T_+ \}$.  The inverse mappings 
$P_\pm^{-1}$ of \eqref{eq:P.mapping}, which we refer to as the forward 
($+$)/ backward ($+$) propagators, take
$v$ to $\Rout v = u_+, \Rin v = u_-$, in this instance lying in $\mathcal{X}^{s,
  \sw_\pm}$ for arbitrary $s$ (since $v$ is assumed smooth).
The asymptotic behavior of $u_\pm$ as $t \to \pm \infty$ in the
regions $|z| / t < C$ is (see Section~\ref{sec:final state}) 
\begin{equation}
u_\pm(z, t) \sim (4\pi it)^{-n/2} e^{i |z|^2 / t} f_\pm(z/t)
\label{upm}\end{equation}
where $f_\pm$ are Schwartz. The role of the
spacetime decay/growth weight function $\sw_\pm$ is precisely to allow only one of these behaviours; namely, for
example choosing $+$, the weight function $\sw_+$ is subject
to a threshold condition which allow expansions such as \eqref{upm} for $t \to +\infty$, but not such expansions as $t \to -\infty$. For $\sw_-$, the reverse is true. See Section~\ref{subsec:global.fredholm} for the precise conditions on $\sw_\pm$. 

Let us elaborate on how the variable spatial orders $\sw_\pm$ allow or disallow expansions such as \eqref{upm}. Given $f_\pm \in \mathcal{S}(\RR^n)$, we note that \eqref{upm} is in the weighted space $\ang{z, t}^{-l} L^2(\RR^{n+1})$ for $l < -1/2$, but not for $l = -1/2$. The value $-1/2$ is thus a threshold value; whether the spatial weight is greater or less than this threshold value determines whether \eqref{upm} for $f \neq 0$ is possible for a function in the corresponding weighted space. Microlocally the expression \eqref{upm} is concentrated at the `outgoing radial set' $\mathcal{R}_+$ for $t \to +\infty$ and the `incoming radial set' $\mathcal{R}_-$ for $t \to -\infty$; these are the limiting points of bicharacteristics\footnote{By a \emph{bicharacteristic} of $P$ we mean an integral curve of the Hamilton vector field of $p = \sigma(P)$ contained within $\{ p = 0 \}$.} of $P$ at spacetime infinity, that is, the initial/final points of bicharacteristics of $P$ on the compactified phase space. The key property of the weight $\sw_+$ is therefore that $\sw_+$ is less than $-1/2$ on $\mathcal{R}_+$ but greater than $-1/2$ on $\mathcal{R}_-$, thereby allowing elements of $\mathcal{X}^{s, \sw_+}$ to have asymptotics  \eqref{upm} as $t \to +\infty$ but not for $t \to -\infty$.  For $\sw_-$, the reverse is true. 

The above discussion may give the impression that the $\sw_\pm$ could be chosen to depend only on $t$, and therefore, these weights do not need to vary `microlocally'. 
In fact, this is not the case. Along bicharacteristics at fibre-infinity (that is, where the frequency variables are infinite), the time $t$ is fixed and can be any finite value, so the value of $t$ cannot be used to distinguish between these two sets. What determines whether we are at $\mathcal{R}_\pm$ is the relative orientation of the spatial variable $z$ and its dual variable $\zeta$. Let $\hat z = z/|z|$ and $\hat \zeta = \zeta/|\zeta|$. At the incoming radial set, we have $\hat z \cdot \hat \zeta = -1$ while at the outgoing radial set, we have $\hat z \cdot \hat \zeta = +1$. Thus $\sw_\pm$ need to be functions of both $z$ and $\zeta$ (at least), and in particular, the weights must vary nontrivially in phase space, not just in spacetime. 

Theorem~\ref{thm:sob.invertible} immediately implies that we can solve the `final state problem' with prescribed outgoing data $f_+$ in the sense of \eqref{upm}. For now we only consider Schwartz $f_+$; in Theorem~\ref{thm:Poissonbounds-intro} we will treat  distributional $f_+$. 

\begin{theorem}\label{thm:finalstate-intro} Given $f_+ \in \mathcal{S}(\RR^n)$ there is a unique solution to the equation $Pu = 0$ with asymptotics \eqref{upm} as $t \to +\infty$. Moreover, for every real $s$, and variable orders $\sw_\pm$ satisfying the conditions in Section~\ref{subsec:global.fredholm},  $u$ lies in the space 
$$
u \in \mathcal{X}^{s, \sw_+} + \mathcal{X}^{s, \sw_-}.
$$
We write $u = \Poip f_+$ and refer to $\Poip$ as the outgoing `Poisson operator'. 
\end{theorem}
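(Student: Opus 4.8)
The plan is to build from $f_+$ an explicit \emph{approximate} solution carrying the prescribed asymptotics, and then correct it to an exact solution of $Pu=0$ using the invertibility in Theorem~\ref{thm:sob.invertible}. The structural fact I rely on is that an element of $\mathcal{X}^{s,\mathsf{l}_+}+\mathcal{X}^{s,\mathsf{l}_-}$ has a well-defined `outgoing data', the coefficient $f_+$ in \eqref{upm}, and that this data vanishes on the summand $\mathcal{X}^{s,\mathsf{l}_-}$: since $\mathsf{l}_-$ lies \emph{above} the threshold $-1/2$ on the outgoing radial set $\mathcal{R}_+$, elements of $\mathcal{X}^{s,\mathsf{l}_-}$ decay strictly faster than \eqref{upm} as $t\to+\infty$ (this is the content of Section~\ref{sec:final state}). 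This forces the correction to be made with the \emph{backward} propagator $\Rin$, not $\Rout$: the range of $\Rout$ lies in $\mathcal{X}^{s,\mathsf{l}_+}$, on which the outgoing data is generically nonzero, so a correction by $\Rout$ would change $f_+$.

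For the approximate solution I would use that $g(t)-\delta$ and $V$ are compactly supported in spacetime, so $P=D_t+\Lap_\delta$ on $\{t\ge T_0\}$ for some $T_0$, with $\Lap_\delta$ the flat Laplacian. Given $f_+\in\mathcal{S}(\RR^n)$, choose $\psi\in\mathcal{S}(\RR^n)$, pinned down by its Fourier transform, so that the classical long-time stationary-phase asymptotics of the free Schr\"odinger solution $e^{-it\Lap_\delta}\psi$ match, to leading order as $t\to+\infty$, the right-hand side of \eqref{upm}; since $\psi$ is Schwartz this solution carries the full expansion uniformly for $|z|/t$ bounded and is rapidly decreasing where $|z|/t$ is large. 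Fixing $\chi\in C^\infty(\RR)$ with $\chi\equiv0$ on $\{t\le T_0\}$ and $\chi\equiv1$ on $\{t\ge T_0+1\}$, put $w_+:=\chi(t)\,e^{-it\Lap_\delta}\psi$. Then $Pw_+=-v$ with $v:=i\chi'(t)\,e^{-it\Lap_\delta}\psi$ Schwartz on $\RR^{n+1}$ (as $\chi'$ is compactly supported in $t$ and $e^{-it\Lap_\delta}$ preserves $\mathcal{S}(\RR^n)$), hence $v\in\mathcal{Y}^{s-1,\mathsf{l}_-+1}$ for all $s$. Moreover $w_+$ is smooth, vanishes for $t\le T_0$ (so has no behaviour at $\mathcal{R}_-$), and for $t\to+\infty$ has the decay of \eqref{upm}, which lies in $\ang{z,t}^{-l}L^2(\RR^{n+1})$ precisely for $l<-1/2$; with the threshold conditions of Section~\ref{subsec:global.fredholm} ($\mathsf{l}_+<-1/2$ on $\mathcal{R}_+$ and $\mathsf{l}_+>-1/2$ on $\mathcal{R}_-$) this gives $w_+\in H_{par}^{s,\mathsf{l}_+}(\RR^{n+1})$ for all $s$, so $w_+\in\mathcal{X}^{s,\mathsf{l}_+}$.

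Existence follows: apply Theorem~\ref{thm:sob.invertible} with the weight $\mathsf{l}_-$ to obtain $\Rin:\mathcal{Y}^{s-1,\mathsf{l}_-+1}\to\mathcal{X}^{s,\mathsf{l}_-}$ and set $u:=w_++\Rin v$. Then $Pu=Pw_++v=0$, and $u\in\mathcal{X}^{s,\mathsf{l}_+}+\mathcal{X}^{s,\mathsf{l}_-}$ for every $s$ and every admissible $\mathsf{l}_\pm$; the outgoing data of $u$ is that of $w_+$, namely $f_+$, plus that of $\Rin v\in\mathcal{X}^{s,\mathsf{l}_-}$, namely $0$, so $u$ realises the asymptotics \eqref{upm} with outgoing data $f_+$. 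We denote $u=\Poip f_+$; it is independent of the choices of $\psi$, $\chi$, $s$ and the weights by the uniqueness below.

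For uniqueness, if $u'$ is a second solution of $Pu'=0$ with asymptotics \eqref{upm} and the same $f_+$, then $w:=u-u'$ solves $Pw=0$ with vanishing outgoing data; by the expansion structure of Section~\ref{sec:final state}, vanishing of the leading outgoing coefficient puts $w$ into $\mathcal{X}^{s,\mathsf{l}_-}$, and since $P$ is injective there (Theorem~\ref{thm:sob.invertible}), $w=0$. I expect the real content to lie in (i) the asymptotic analysis ensuring $v\in\mathcal{Y}^{s-1,\mathsf{l}_-+1}$ --- here immediate because $P$ is the flat operator for large $t$, but requiring transport equations along bicharacteristics near $\mathcal{R}_\pm$ and a Borel summation in the symbolic-decay generalisation --- and (ii) the structural facts that the outgoing-data map is defined on $\mathcal{X}^{s,\mathsf{l}_+}+\mathcal{X}^{s,\mathsf{l}_-}$, vanishes on $\mathcal{X}^{s,\mathsf{l}_-}$, and detects exactly the threshold behaviour at $\mathcal{R}_+$.
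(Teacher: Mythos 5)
Your proposal is correct and closely parallels the paper's argument, but with one genuinely different ingredient. The paper sets $\Poip f_+ = \mathcal{P}_0 f_+ - \Rin (P-P_0)\mathcal{P}_0 f_+$, taking the free Poisson operator $\mathcal{P}_0 f_+ = \mathcal{F}^{-1}_{\zeta\to z}\bigl(e^{-it|\zeta|^2}f_+\bigr)$ globally, with the error $(P-P_0)\mathcal{P}_0 f_+$ Schwartz because $P-P_0$ has compact spacetime support; you instead truncate the free solution in time, $w_+ = \chi(t)\,e^{-it\Delta_\delta}\psi$ with $\hat\psi = f_+$, so the error $v = i\chi'(t)\,e^{-it\Delta_\delta}\psi$ is Schwartz because $\chi'$ is compactly supported. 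Both constructions correct by $\Rin$ applied to a Schwartz function, and both then need the same propagation fact --- namely that $\Rin(\text{Schwartz})$ is Schwartz in the forward cone $\{t>0,\ |z|/t<c\}$, established via the above-threshold radial estimate at $\SR_+$ followed by Propositions~\ref{prop:sing.P} and \ref{prop:elliptic.estimate.P} --- in order to conclude that the correction does not perturb the outgoing data. Your variant has a small structural advantage: the cutoff makes $w_+$ lie cleanly in $\mathcal{X}^{s,\mathsf{l}_+}$ alone (with no $\SR_-$ content), whereas the paper's $\mathcal{P}_0 f_+$ lives in $\mathcal{X}^{s,\mathsf{l}_+}+\mathcal{X}^{s,\mathsf{l}_-}$; the paper's choice is nevertheless the more natural one for the later development of Section~\ref{sec:scat mat}, where $\mathcal{P}_0$ and the identity $\Poipm = \bigl(\Id - R^{\mp}(P-P_0)\bigr)\mathcal{P}_0$ are used directly to define and estimate the scattering matrix. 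One caveat: your uniqueness step --- that vanishing of the leading coefficient in \eqref{upm} forces membership in $\mathcal{X}^{s,\mathsf{l}_-}$, after which injectivity of $P$ there applies --- is stated but not proved, and this is the only part of your sketch that genuinely needs the content of the outgoing-data map made precise; the paper handles this, for general incoming data, via Propositions~\ref{prop:limits} and \ref{prop:PPidentity} rather than at the level of Proposition~\ref{prop:final state schwartz}.
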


See Section \ref{sec:final state} for the simple proof in the case of Schwartz data. Thus, global solutions to $Pu = 0$ do not lie in one or other of our function spaces, but in the sum of the two. Given a (suitable) global solution, this decomposition is easily effected using a microlocal partition of unity, $\Id = Q_- + Q_+$, where $Q_-$ is microlocally equal to the identity near $\mathcal{R}_-$ and microlocally trivial near $\mathcal{R}_+$. Then $u = Q_- u + Q_+ u$ gives a decomposition such that $u_\pm \in \mathcal{X}^{s, \sw_\pm}$.

\vskip 5pt

One of the main goals of this work is to establish foundational theory for a microlocal/Fredholm approach to the \emph{nonlinear} Schr\"odinger equation along the lines of \cite{NLSM, NEH} for the nonlinear Helmholtz equation, which in turn was inspired by works \cite{HVsemi}, \cite{GHV2016} on nonlinear wave equations. The first step in this direction has been taken in \cite{NLS1}. To this end,  we include \emph{module regularity estimates} which typically arise in 
microlocal approaches to nonlinear analysis. The notion of module regularity was formalized in \cite{HMV2004} although it goes back much further; for example, the definition of Lagrangian distribution given by H\"ormander in \cite{Ho4} (which he credits to Melrose) is in terms of module regularity. In the present context, it is closely related to Klainerman's vector field method \cite{klainerman1985}.   Thus we also prove
refined mapping properties for $P$ in which the spaces
$\mathcal{X}^{s,\sw_\pm}, \mathcal{Y}^{s-1,\sw_\pm +1}$
are replaced by spaces in which regularity is measured with respect to
iterated application of elements in the module of operators which are
characteristic on $\mathcal{R}_\pm$.  
We distinguish between the module $\mathcal{N}$ of operators which are
characteristic at both radial sets
$\mathcal{R}_\pm$ simultaneously and the larger modules of operators
$\mathcal{M}_+$ and $\mathcal{M}_-$ vanishing at $\mathcal{R}_+$ and
$\mathcal{R}_-$, respectively.  The module $\mathcal{N}$ is generated
(up to precomposition by globally elliptic operators)
by a finite collection of operators which
correspond directly to the natural invariance properties of the free
Schr\"odinger equation; these are the generators of translation, of rotations, and of
Galilean transformations.  We let $H_\pm^{s,l;\kappa,k}$ denote the elements of the space $H_{\mathrm{par}}^{s,l}(\RR^{n+1})$ with $k$ orders of small module regularity and $\kappa$ additional orders of module regularity with respect to $\mathcal{M}_\pm$ (see Definition~\ref{def:2modules}). Defining, analogously to \eqref{eq:Xvardef}, 
$\mathcal{Y}_\pm ^{s,l;\kappa,k} = H_\pm^{s,l;\kappa,k}$ and 
\begin{equation}\label{eq:Xmoddef}
\mathcal{X}_\pm^{s,l;\kappa,k} = \{ u \in H_\pm^{s,l;\kappa,k} \mid Pu \in H_\pm^{s-1,l+1;\kappa,k}\},
\end{equation}
then for suitable $l$ and $\kappa$ we also obtain a Hilbert space isomorphism 
	\begin{equation}
	\label{eq:mod.invertible.intro}
		P:\mathcal{X}_\pm^{s,l;\kappa,k}\to \mathcal{Y}_\pm^{s-1,l+1;\kappa,k}.
	\end{equation}
See Proposition \ref{thm:mod.invertible}.

We note that, if $\kappa$ is at least $1$, then we can take the spatial order $l$ in \eqref{eq:mod.invertible.intro} to be \emph{constant} in the range $-3/2 < l < -1/2$. The reason for this is that (choosing the + sign arbitrarily) the $\mathcal{M}_+$ module is elliptic at $\mathcal{R}_-$, in effect raising the spatial regularity weight at $\mathcal{R}_-$ by one order, thus raising it above the threshold value of $-1/2$ (since $l > -3/2$), while not affecting the regularity at $\mathcal{R}_+$. Being able to take a constant spatial order is advantageous when proving multiplicative properties, as has been explained in \cite{HVsemi}, and will be important in our planned future work on the nonlinear Schr\"odinger equation.

Moreover, the consideration of module regularity spaces enables us to
prove a precise scattering result in terms of natural 
spaces $\mathcal{W}^k(\mathbb{R}^n)$ of incoming/outgoing data of solutions to
$Pu = 0$. Here $k \in \ZZ$ is a regularity index, measuring both smoothness and decay at infinity, and is such that $\cap_k \mathcal{W}^k(\mathbb{R}^n) = \mathcal{S}(\RR^n)$, while $\cup_k \mathcal{W}^k(\mathbb{R}^n) = \mathcal{S}'(\RR^n)$.   
The $\mathcal{W}^k(\RR^n)$ for $k \geq 1$ are themselves module regularity spaces,
relative to a module $\Nhat$ induced by the small module $\mathcal{N}$
above. In fact, these modules are such that the free Poisson operator
$\mathcal{P}_0$, i.e.\ the Poisson operator from Theorem
\ref{thm:finalstate-intro} for the flat Euclidean metric with $V
\equiv 0$, intertwines $\Nhat$ and $\mathcal{N}$. We then show that the Poisson
operators $\Poipm$ extend from Schwartz space to all tempered
distributions, and we precisely characterise the range of $\Poipm$ on
$\mathcal{W}^k(\RR^n)$.  In the theorem below, the weight functions
$\sw_\pm$ are chosen as in Section
\ref{subsec:global.fredholm}, in particular they are equal to $-1/2$
off a neighborhood of the radial sets on the characteristic set:

\begin{theorem}\label{thm:Poissonbounds-intro}
 For $k \in \NN$, the range of the Poisson operator $\Poip$ on $\mathcal{W}^k(\RR^n)$ is precisely 
$$
\{ u \in \mathcal{X}_+^{1/2, \sw_+; k,0}(\RR^{n+1}) + \mathcal{X}_-^{1/2, \sw_-; k,0}(\RR^{n+1}) \mid Pu = 0 \}, 
$$
i.e. that is, those elements of $\mathcal{X}^{1/2, \sw_+} + \mathcal{X}^{1/2, \sw_-}$ in the kernel of $P$ having module regularity of order $k$. %

For $k \leq -1$, the range of $\Poip$ on $\mathcal{W}^k(\RR^n)$ is precisely the elements of $\ker P$ in 
$$
\{ u \in H_{\mathrm{par}}^{k+1/2, k-1/2}(\RR^{n+1}) \mid Pu = 0 \}. 
$$
Provided that  $k \geq 2$, the global solution $u = \Poip f_+$ admits the asymptotic \eqref{upm} in the precise sense that 
\begin{equation}
 \lim_{t \to +\infty} (4\pi it)^{n/2} e^{-it|\zeta|^2} u_+(2t \zeta, t) = f_+(\zeta)
\end{equation} 
as a limit in the space $\ang{\zeta}^{1/2 + \epsilon}\Hdata^{k-2}(\RR^n_\zeta)$. 
\end{theorem}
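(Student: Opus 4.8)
The plan is to reduce the entire statement to the free operator $P_0 = D_t + \Lap$ by a perturbation argument, using the invertibility results of Theorem~\ref{thm:sob.invertible} and Proposition~\ref{thm:mod.invertible} together with the explicitly computable mapping properties of the free Poisson operator $\mathcal{P}_0$. The cornerstone is the identity, for $f_+ \in \mathcal{S}(\RR^n)$,
\begin{equation}\label{eq:perturb-poisson}
\Poip f_+ \;=\; \mathcal{P}_0 f_+ \;-\; R^-\bigl( (\Lap_{g} - \Lap + V)\,\mathcal{P}_0 f_+ \bigr),
\end{equation}
which holds because $w := \mathcal{P}_0 f_+ - \Poip f_+$ satisfies $Pw = (P - P_0)\mathcal{P}_0 f_+ = (\Lap_g - \Lap + V)\mathcal{P}_0 f_+$, a function compactly supported in spacetime since $g - \delta$ and $V$ are, while $w$ has vanishing outgoing asymptotics of the form \eqref{upm} because $\mathcal{P}_0 f_+$ and $\Poip f_+$ carry the same final state $f_+$; hence $w \in \mathcal{X}^{s,\mathsf{l}_-}$ for all $s$ and \eqref{eq:perturb-poisson} follows from the invertibility of $P : \mathcal{X}^{s,\mathsf{l}_-} \to \mathcal{Y}^{s-1,\mathsf{l}_- + 1}$. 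What makes \eqref{eq:perturb-poisson} effective is that $\Lap_g - \Lap$ has \emph{parabolic} order one, so that multiplication by $\Lap_g - \Lap + V$ costs no parabolic regularity and $R^-$ recovers what it loses; consequently both terms on the right of \eqref{eq:perturb-poisson} have the same parabolic order as $\mathcal{P}_0 f_+$.

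I would first prove the continuous extension of $\Poip$ to $\Hdata^k(\RR^n)$ and the inclusion of its range in $\mathcal{X}^{1/2,\mathsf{l}_+;0,k} + \mathcal{X}^{1/2,\mathsf{l}_-;0,k}$ (for $k \le -1$, in $H_{par}^{k+1/2,k-1/2}$). For $k \ge 1$ the space $\Hdata^k(\RR^n)$ is a module regularity space relative to $\Nhat$, and since $\mathcal{P}_0$ intertwines $\Nhat$ and $\mathcal{N}$ it maps $\Hdata^k(\RR^n)$ boundedly into $\mathcal{X}^{1/2,\mathsf{l}_+;0,k} + \mathcal{X}^{1/2,\mathsf{l}_-;0,k}$, the two summands corresponding to the outgoing and incoming parts of the free solution; this is read off from the explicit form of $\mathcal{P}_0$. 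For $k = 0$ there is no module regularity and this reduces to the free case of Theorem~\ref{thm:finalstate-intro}, while for $k \le -1$ it follows by duality, the negative-order module spaces over $\RR^{n+1}$ collapsing to the plain weighted space $H_{par}^{k+1/2,k-1/2}$. Since $g - \delta$ and $V$ are compactly supported, multiplication by $\Lap_g - \Lap + V$ sends $\mathcal{X}^{1/2,\mathsf{l}_\pm;0,k}$ into $\mathcal{Y}^{-1/2,\mathsf{l}_- + 1;0,k}$ (the weight being irrelevant after multiplication by a compactly supported function), and then $R^-$ sends the latter into $\mathcal{X}^{1/2,\mathsf{l}_-;0,k}$ by Proposition~\ref{thm:mod.invertible} with $\kappa = 0$. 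Feeding these three bounds into \eqref{eq:perturb-poisson} shows that $\Poip$ extends continuously from $\mathcal{S}(\RR^n)$ to $\Hdata^k(\RR^n)$ with range in the asserted space, and every element of the range lies in $\ker P$ because $P \circ \Poip = 0$ on the dense subspace $\mathcal{S}(\RR^n)$ and $P$ is continuous between the relevant spaces.

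For the reverse inclusion, let $u$ be an element of $\ker P$ lying in the asserted space. Then $(P_0 - P)u = -(\Lap_g - \Lap + V)u$ is compactly supported and inherits $k$ orders of module regularity from $u$, so by the invertibility of the free propagators we may write $u = \mathcal{P}_0 g + R_0^-\bigl((P_0 - P)u\bigr)$, where $R_0^-$ is the free backward propagator and $g$ is the outgoing data of the free solution $u - R_0^-((P_0-P)u)$; by the explicit form of $\mathcal{P}_0$ this $g$ lies in $\Hdata^k(\RR^n)$. Now $u - \Poip g$ lies in $\ker P$ and in the asserted space and has vanishing outgoing asymptotics, since $u$ and $\Poip g$ share the final state $g$; a radial point estimate at $\mathcal{R}_+$ then upgrades its spatial order past the threshold $-1/2$ there, so that $u - \Poip g \in \mathcal{X}^{s,\mathsf{l}_-}$ for some $s$, and the injectivity of $P$ on $\mathcal{X}^{s,\mathsf{l}_-}$ from Theorem~\ref{thm:sob.invertible} forces $u = \Poip g$. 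The case $k \le -1$ is identical, the constant weight $k - 1/2 < -1/2$ lying below threshold at both radial sets so that no variable order is needed.

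Finally, for the asymptotics when $k \ge 2$, write $u = \Poip f_+ = \mathcal{P}_0 f_+ + w$ with $w = -R^-((\Lap_g - \Lap + V)\mathcal{P}_0 f_+) \in \mathcal{X}^{1/2,\mathsf{l}_-;0,k}$ as above. The free term satisfies $(4\pi i t)^{n/2} e^{-it|\zeta|^2}(\mathcal{P}_0 f_+)(2t\zeta, t) \to f_+(\zeta)$, which is the stationary-phase asymptotics of the free Schr\"odinger evolution and uses only $f_+ \in \Hdata^k(\RR^n)$; the correction term, being in $\mathcal{X}^{1/2,\mathsf{l}_-;0,k}$ with $\mathsf{l}_-$ equal to $-1/2$ away from the radial sets and strictly above $-1/2$ near $\mathcal{R}_+$, contributes, via the boundary pairing underlying the radial point propagation, $(4\pi i t)^{n/2} e^{-it|\zeta|^2} w(2t\zeta, t) \to 0$ in $\ang{\zeta}^{1/2+\epsilon}\Hdata^{k-2}(\RR^n_\zeta)$ — the loss of two module orders reflecting the second-order spatial derivatives in $P$ that relate the spacetime profile near $\mathcal{R}_+$ to its spatial trace, and the $\ang{\zeta}^{1/2+\epsilon}$ loss the gap between $\mathsf{l}_-$ and $-1/2$. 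I expect the main obstacle to lie in the surjectivity step — proving that a solution in $\mathcal{X}^{1/2,\mathsf{l}_+;0,k} + \mathcal{X}^{1/2,\mathsf{l}_-;0,k}$ with vanishing outgoing asymptotics is already in $\mathcal{X}^{s,\mathsf{l}_-}$, and that the outgoing data of a free solution in this space is genuinely an element of $\Hdata^k(\RR^n)$ and not of a weaker space — both of which require a sharp radial point analysis at $\mathcal{R}_+$ matching the vanishing of the leading asymptotic coefficient to a full order of weight improvement, together with careful bookkeeping of how $\mathcal{P}_0$ transports $\Nhat$ on $\RR^n$ to $\mathcal{N}$ on $\RR^{n+1}$.
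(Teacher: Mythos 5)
Your perturbative framework—starting from the identity $\Poip f = \mathcal{P}_0 f - R^-(P-P_0)\mathcal{P}_0 f$ (the paper's \eqref{eq:4}) and reducing to the free Poisson operator via the intertwining of $\Nhat$ and $\mathcal{N}$—matches the paper's general strategy, but there are two concrete gaps. First, the claim that $\Lap_g - \Lap$ has parabolic order one is incorrect: its principal symbol is $(g^{ij}-\delta^{ij})\zeta_i\zeta_j$, so $P - P_0 \in \Psi^{2,-\infty}_{\mathrm{par}}$, parabolic order two. Applying $P-P_0$ to $\mathcal{P}_0 f \in H^{1/2,\swmin;k}_{\mathrm{par},\mathcal{N}}$ therefore produces only $\mathcal{Y}^{-3/2,\mathsf{l}_-+1;0,k}$, and $R^-$ lands in $\mathcal{X}^{-1/2,\mathsf{l}_-;0,k}$, one fibre derivative short of the claimed $\mathcal{X}^{1/2,\mathsf{l}_-;0,k}$. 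That loss can be repaired, but only by a further argument using that $R^-(P-P_0)\mathcal{P}_0 f$ solves $Pu=0$ near spacetime infinity and is above threshold at $\mathcal{R}_+$, so that the radial-point estimate plus propagation and elliptic regularity upgrade the fibre order — a step you omit. The paper's proof of Proposition~\ref{prop:Poissonbounds} avoids the loss altogether: it first shows that $\Poipm f$ agrees with $\mathcal{P}_0 f$ modulo Schwartz microlocally near $\mathcal{R}_\mp$ (choosing $Q_1$ microsupported there so that $Q_1 (P-P_0) \equiv 0$ by compact support) and then decomposes $u = (R^+ - R^-)[P,Q_+]u$, the commutator $[P,Q_+] \in \Psi^{1,-1}_{\mathrm{par}}$ having exactly the lower order the count needs.

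Second — and as you flag yourself — the surjectivity step, passing from ``$u - \Poip g$ has vanishing leading outgoing coefficient'' to ``$u - \Poip g$ lies above threshold at $\mathcal{R}_+$,'' is not established. A radial-point estimate propagates regularity but does not by itself convert vanishing of an $L^2$-sense asymptotic coefficient into an improvement of the spacetime weight past $-1/2$; and the module $\mathcal{N}$ imposes no extra vanishing at $\mathcal{R}_+$ (that is the role of $\mathcal{M}_+$, whereas the range is described only via $\mathcal{N}$-regularity). The same one-order loss also affects your intermediate claim that $u - R_0^-((P_0-P)u)$ lies in $H^{1/2,\swmin;k}_{\mathrm{par},\mathcal{N}}$ where the free isomorphism applies. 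The paper instead derives surjectivity from the operator identity $\mathcal{P}_\pm\mathcal{P}_\pm^* = i(2\pi)^{-n}(R^+ - R^-)$ (Proposition~\ref{prop:PPidentity}), proved by a boundary-pairing computation; combined with the decomposition $u = (R^+-R^-)[P,Q_+]u$ this exhibits any $u$ in the claimed range directly as $\Poip$ applied to an explicit element of $\Hdata^k$, sidestepping the asymptotic-coefficient argument entirely.
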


\begin{remark} The difference between the two cases $k \geq 0$ and $k \leq -1$ is that, in the latter case, the spacetime regularity is everywhere below threshold, so nothing special happens at the radial sets, while in the former case, the spacetime regularity must drop to below threshold at the radial sets. In both cases, given a solution to $Pu = 0$, its outgoing data is in $\mathcal{W}^k(\RR^n)$ if and only if it is microlocally in $H_{\mathrm{par}}^{k+1/2, k-1/2}(\RR^{n+1})$ away from the radial sets.
\end{remark}

Moreover, we show that the scattering map, which maps the incoming data $f_-$ of global solutions $u$ to the outgoing data $f_+$, preserves the spaces $\mathcal{W}^k(\RR^n)$:

\begin{theorem}\label{thm:sc-intro}
The scattering map $S$, initially defined for $f_- \in \mathcal{S}(\RR^n)$, extends to a bounded map from $\mathcal{W}^k(\RR^n)$ to itself for each $k \in \ZZ$. 
\end{theorem}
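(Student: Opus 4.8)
To prove Theorem~\ref{thm:sc-intro} the plan is to exhibit $S$ as a composition of two operators whose mapping properties have already been established, and then to conclude with soft functional analysis; no new microlocal estimates are required. By definition the scattering matrix sends the incoming data $f_-$ of a global solution $u$ of $Pu = 0$ to its outgoing data $f_+$: writing $u = \Poim f_-$ for the unique global solution with the prescribed incoming asymptotics, and recalling that the same $u$ satisfies $u = \Poip f_+$, we have $S = (\Poip)^{-1} \circ \Poim$ on $\mathcal{S}(\RR^n)$, and the task is to extend this identity to $\mathcal{W}^k(\RR^n)$.

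First I would record that, by Theorem~\ref{thm:Poissonbounds-intro} together with its $\Poim$-analogue (the range of $\Poipm$ is characterised there; the symmetry of the construction under interchanging $+$ and $-$ makes the two characterisations coincide), for each $k \in \ZZ$ both $\Poip$ and $\Poim$ map $\mathcal{W}^k(\RR^n)$ \emph{onto the same Hilbert space} $\mathcal{K}^k$, where for $k \ge 0$
$$
\mathcal{K}^k = \{ u \in \mathcal{X}^{1/2, \mathsf{l}_+; 0, k}(\RR^{n+1}) + \mathcal{X}^{1/2, \mathsf{l}_-; 0, k}(\RR^{n+1}) \mid Pu = 0 \}
$$
and for $k \le -1$, $\mathcal{K}^k = \{ u \in H_{par}^{k+1/2, k-1/2}(\RR^{n+1}) \mid Pu = 0\}$. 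The point is that these right-hand sides treat $\mathcal{R}_+$ and $\mathcal{R}_-$ symmetrically — the ambient space is the \emph{sum} $\mathcal{X}^{1/2,\mathsf{l}_+;0,k} + \mathcal{X}^{1/2,\mathsf{l}_-;0,k}$ — which is exactly why $\ran(\Poip) = \ran(\Poim)$. Being the kernel of the bounded operator $P$ on a Hilbert space, $\mathcal{K}^k$ is a closed subspace, hence a Hilbert space; and each of $\Poip, \Poim : \mathcal{W}^k(\RR^n) \to \mathcal{K}^k$ is bounded (the inclusion $\ran \Poipm \subseteq \mathcal{K}^k$ with norm control is the easier half of the range characterisation) and injective (a global solution is determined by its incoming, resp.\ outgoing, asymptotics — the uniqueness in Theorem~\ref{thm:finalstate-intro}, extended to $\mathcal{W}^k$ data in the course of proving Theorem~\ref{thm:Poissonbounds-intro}).

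Next I would apply the bounded inverse theorem: a continuous bijection between Hilbert spaces has continuous inverse, so $(\Poip)^{-1} : \mathcal{K}^k \to \mathcal{W}^k(\RR^n)$ is bounded. (If the proof of Theorem~\ref{thm:Poissonbounds-intro} is arranged so as to give the two-sided bound $\|\Poip f_+\|_{\mathcal{K}^k} \simeq \|f_+\|_{\mathcal{W}^k}$, this step is immediate.) Therefore $S = (\Poip)^{-1} \circ \Poim : \mathcal{W}^k(\RR^n) \to \mathcal{W}^k(\RR^n)$ is a composition of bounded maps, hence bounded. Since $\mathcal{S}(\RR^n) = \bigcap_k \mathcal{W}^k(\RR^n)$ is dense in each $\mathcal{W}^k(\RR^n)$ and this operator agrees on $\mathcal{S}(\RR^n)$ with the scattering matrix as originally defined, it is its unique continuous extension, which is the assertion of the theorem.

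The main obstacle is not the functional analysis but assembling the inputs: one must (i) confirm that $\Poip$ and $\Poim$ genuinely have the same range $\mathcal{K}^k$, which rests on the symmetric roles of $\mathcal{R}_\pm$ in the sum space and on having the $\Poim$-version of Theorem~\ref{thm:Poissonbounds-intro} available with matching norm control; and (ii) check that the intrinsic description of $S$ (incoming data $\mapsto$ outgoing data) agrees with $(\Poip)^{-1}\Poim$, which amounts to unwinding the definition of the outgoing-data map and using injectivity of $\Poip$. If instead one wanted a proof that does not quote the full strength of Theorem~\ref{thm:Poissonbounds-intro}, the genuine work would be to propagate $k$ orders of module regularity of the global solution $\Poim f_-$ from $\mathcal{R}_-$ across the characteristic set to $\mathcal{R}_+$ — but that is precisely what Theorem~\ref{thm:Poissonbounds-intro} already delivers.
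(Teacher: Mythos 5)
Your proof is correct, but it takes a genuinely different route from the paper's. The paper does not write $S = (\Poip)^{-1}\circ\Poim$ abstractly and invoke the bounded inverse theorem; instead it derives an \emph{explicit formula} for the scattering matrix,
$$
S \;=\; (2\pi)^n\,\Poip^*\,i[P,A]\,\Poim,
$$
where $A\in\Psip{0}{0}$ is microlocally the identity near $\mathcal{R}_-$ and trivial near $\mathcal{R}_+$. This is obtained by splitting a global solution $u$ via $u = \Rin[P,A]u + \Rout P(\Id-A)u = (\Rin-\Rout)[P,A]u$ and then invoking the identity $\Rin-\Rout = i(2\pi)^n\Poip\Poip^* = i(2\pi)^n\Poim\Poim^*$ (Proposition~\ref{prop:PPidentity}). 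With a microlocal cut to replace $[P,A]$ by $Q\,i[P,A]\,Q'$ modulo smoothing (Corollary~\ref{cor:Schwartz}), the bound follows by chaining: $Q'\Poim \colon \Hdata^k\to H^{k+1/2,k-1/2}_{par}$, then $[P,A]\in\Psip{1}{-1}$ gains $(-1,1)$, then $\Poip^* Q$ maps to $\Hdata^k$ by duality. Two consequences of this comparison are worth noting. First, the paper's argument requires only the \emph{one-sided} boundedness estimates \eqref{eq:QP}, \eqref{eq:noQP} (and their duals), whereas your argument invokes the full isomorphism statement of Proposition~\ref{prop:Poissonbounds} / Theorem~\ref{thm:Poissonbounds-intro}, including the surjectivity and injectivity onto the common range $\mathcal{K}^k$; for $k\le -1$ the paper establishes that isomorphism only in abbreviated form (``details are left to the reader''), so the paper's proof of the scattering bound is actually more self-contained than yours even though it looks more elaborate. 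Second, the explicit formula $S = (2\pi)^n\Poip^* i[P,A]\Poim$ carries information you do not recover from the abstract factorisation (e.g.\ microlocal structure of $S$, analogy with \cite[Proposition~5.1]{Vasy1998}). Your functional-analytic steps — $\mathcal{K}^k$ closed in a Hilbert space, bounded inverse theorem, density of $\mathcal{S}(\RR^n)$ in $\mathcal{W}^k$ — are all fine; the only caveat is that the isomorphism you take as input is itself the harder ingredient, so your proof is shorter but not cheaper.
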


\subsection{Parabolic calculus}

We begin by developing the calculus of parabolic pseudodifferential
operators on $\mathbb{R}^{n + 1}$.  These are quantizations of symbols
$S^{m,l}_{\mathrm{par}}(\mathbb{R}^{n + 1})$, of fibre (or differential) order $m$ and spacetime order $l$, defined using the standard 
spacetime weight function $(1 + |z|^2 + t^2)^{1/2}$ and the
\textit{parabolic} weight function
$(1 + |\zeta|^4 + \tau^2)^{1/4}$ in the dual variables --- see \eqref{symbolest} for the precise definition.  Thus,
unlike usual pseudodifferential operators, classical parabolic pseudodifferential operators
do not have principal symbols that are homogeneous functions on
$\tau, \zeta$ in the standard sense but instead are homogeneous with
respect to the \emph{parabolic scaling} 
\begin{equation}\label{eq:parabolic.scaling}
(\zeta, \tau) \mapsto (c \zeta, c^2 \tau), \quad c > 0.
\end{equation}
In addition, the behavior in the spacetime variables $(z, t)$ is
assumed to be uniformly symbolic in the usual sense.  All of this is
accomplished through the introduction of radial
compactification of spacetime and parabolic compactification of the
scattering cotangent bundle,
${}^{sc} \overline{T}^{*}_{\mathrm{par}}(\mathbb{R}^{n + 1})$.  As in other
Fredholm analysis of non-elliptic operators, it is convenient to have
variable order Sobolev spaces at our disposal, and in our case, as it
is only necessary to have variable spatial decay weights, we define
spaces of pseudodifferential operators, $\Psi^{s, \sw}_{\mathrm{par}}(\mathbb{R}^{n + 1})$,  for constant
$s \in \mathbb{R}$ and $\sw \in S^{0,0}_{\mathrm{par}}(\mathbb{R}^{n +
  1})$.  Here $s$ is the order of
(parabolic) differential regularity and $\sw$ is the
spatial decay order. Then choosing an elliptic and invertible element $A$ of this space, 
we define
\begin{equation}
H^{s, \sw}_{\mathrm{par}} = \{ u \in H^{-M, -N}_{\mathrm{par}} \mid Au \in L^2 \},
\end{equation}
for any sufficiently large $M, N$. In this space, the (parabolic) differential order is fixed at 
$s$, but the order of decay at spacetime infinity varies microlocally.

The Schr\"odinger operator is a differential operator of order $(2, 0)$ lying in
this parabolic calculus, whose characteristic set $\Sigma(P)$ contains two disjoint
`radial sets' $\mathcal{R}_\pm$ mentioned above. These are submanifolds of sources ($-$)
and sinks ($+$) for the rescaled Hamilton flow on the characteristic
set $\Sigma(P)$.  
We show that this parabolic calculus enjoys 
structures and features similar to those of Melrose's scattering calculus,
including extensions of the notions of characteristic set and
rescaled-Hamilton flow to the boundaries, introduced by compactification, at both spacetime
and fiber infinity.  As in the scattering calculus, this
allows one to formulate and prove propagation estimates, including at the radial sets $\mathcal{R}_\pm$, uniformly up to spacetime and fibre infinity.  By adapting positive
commutator estimates introduced originally by H\"ormander, and developed by Melrose \cite{RBMSpec} and Vasy \cite{VD2013}, to the parabolic calculus, we prove microlocal propagation estimates for $P$ in each region of phase space, which we put together to obtain global Fredholm estimates. These take the form, in the setting of Theorem~\ref{thm:sob.invertible},
\begin{equation}\label{eq:Fred1}
\| u \|_{s, \sw_+} \leq C \Big( \| Pu \|_{s-1, \sw_+ +1} + \| u \|_{M, N} \Big), \quad u \in \mathcal{X}^{s, \sw_+}, 
\end{equation}
together with the dual estimate 
\begin{equation}\label{eq:Fred2}
\| u \|_{s', \sw_-} \leq C \Big( \| P^* u \|_{s'-1, \sw_- +1} + \| u \|_{M', N'} \Big), \quad u \in \mathcal{X}^{s', \sw_-}.
\end{equation}
Here $s$ is arbitrary and $M, N$ should be thought of as very negative, so that $H^{s, \sw_+}_{\mathrm{par}}$ embeds compactly into $H^{M, N}_{\mathrm{par}}$ (thus, we require $M < s$ and $N < \inf \sw_+$, and similarly for the second estimate). For convenience, we assume here that the potential function $V$ is real, so that $P = P^*$. As shown by Vasy, these estimates imply that $P$ is a Fredholm map from 
$$
\{ u \in H^{s, \sw_\pm}_{\mathrm{par}} \mid Pu \in H^{s-1, \sw_\pm+1}_{\mathrm{par}} \} \to H^{s-1, \sw_\pm+1}_{\mathrm{par}},
$$
that is, between $\mathcal{X}^{s, \sw_\pm}$ and $\mathcal{Y}^{s-1, \sw_\pm +1}$ in our notation. 
Moreover, the formal self-adjointness of $P$ implies that the index of $P$, mapping between these spaces, is zero. So $P$ is invertible between these spaces if and only if its null space is trivial. This triviality is easy to show by considering the evolution in time of the spatial $L^2$ norm of global solutions.

\subsection{Relation to previous literature}

There have been many approaches to solving variable-coefficient time-dependent Schr\'odinger equations,
including via ODE methods in Banach spaces \cite{Kato1953}, approximating Feynman integrals \cite{Fuji}, via oscillatory integrals \cite{Treves, KS, IK1985, HW2005} or via the FBI transform (e.g. \cite{ST2002}). Our approach is, to the best of our knowledge, essentially different to any previous method for treating the time-dependent Schr\"odinger equation, although inspired by previous Fredholm treatments of non-elliptic problems for the wave equation \cite{VD2013, BVW2015, HVsemi, GHV2016} and the Helmholtz equation \cite{NEH}. The first example of Fredholm theory used to treat a non-elliptic problem appears to be Faure and Sj\"ostrand's treatment of Anosov flows in 
\cite{FS2011}. This appears at first sight to be very different in nature to the other treatments, but Dyatlov and Zworski \cite{DZ2016} showed that this example in fact fits into the general framework set out by Vasy in \cite{VD2013}. Fredholm theory in a Lorentzian (hence non-elliptic) setting was considered by B\"ar and Strohmaier in \cite{BS19}. 
Very recently, Sussman has used microlocal propagation estimates to study the Klein-Gordon equation \cite{Sussman}. 

Our work builds off the results of Lascar on inhomogeneous
pseudodifferential operators and the geometric microlocal scattering
theory of Melrose \cite{lascar,RBMSpec}.  The former develops a
general theory of operators with inhomogeneous symbols, and extends many
of the standard structures and theorem in microlocal analysis to these
operators, including propagation of singularities.  Lascar's work is
local in nature, and does not lead directly to quantitative global
estimates.  The global microlocal perspective imparts exactly that; as
in Melrose's work, the notion of (parabolic) wavefront set of
distributions on spacetime can be extended up to and including the
introduced boundary via radial compactification. We also use the module regularity formalism introduced by the third author with Melrose and Vasy in \cite{HMV2004}. 

There is a vast literature on scattering theory for the Schr\"odinger equation, that we will not attempt to discuss here. See for example the monographs \cite{RS3},  \cite{Yafaevbook1992, Yafaevbook2010} or \cite{DGbook}. Relatively little of this literature treats the case of time dependent metrics or potentials. Yafaev \cite{Yaf80, Yaf82} wrote several studies on wave operators for time-dependent potentials (including periodic potentials), and Chapter 3 of \cite{DGbook} is devoted to time-decaying potentials. Rodnianski and Schlag \cite{MR2038194} considered rough and time-dependent potentials, and more recently Soffer and Wu \cite{SofferWu} proved local decay for NLS with time-dependent potentials.

Asymptotic decay of solutions to Schr\"odinger's equation is a widely
studied topic, going back at least to Jensen and Kato
\cite{MR544248}.  There, as in the general results on the scattering
operator in \cite{Kato_book}, the Hamiltonians under consideration are
time-independent.

This work is intended to be a foundation for a wide-ranging program of research into nonlinear Schr\"odinger operators with nonlinearity polynomial in $u$ and $\overline{u}$. 
We expect that our method will be advantageous for analyzing the large-time asymptotics of solutions. Indeed, in \cite{NLS1}, combining the linear theory in the present work with a multiplication result for module regularity spaces, along the lines of \cite{NEH} in the Helmholtz case, leads directly to a small-data result for NLS \cite{NLS1}. Large data results should be achievable by combining our techniques with a priori estimates on solutions provided by Strichartz or Morawetz estimates. 
In the focusing case, we anticipate that the method --- after some further development --- will be effective in analyzing the interaction of solitons and radiation. This will require developing a Fredholm approach to `three-body-type' potentials, which is a topic of independent interest and one currently being pursued.

\subsection{Structure of the paper}
In Section~\ref{sec:scatcalc}, we discuss the compactification of phase space and set up the parabolic scattering calculus. We obtain standard results for composition, $L^2$-boundedness for zeroth order operators, and elliptic parametrices, and define weighted parabolic Sobolev spaces including variable order weights. 

In Section~\ref{sec:schrod geom}, we discuss the geometry of the characteristic variety $\Sigma(P)$ in our compactified phase space, and particularly properties of the Hamilton vector field relative the radial sets $\mathcal{R}_\pm$. This geometry, particularly the fact that $\mathcal{R}_-$ is a source, and $\mathcal{R}_+$ a sink, for the (rescaled) bicharacteristic flow, is crucial for the estimates in Section~\ref{sec:propfred}. 

In Section~\ref{sec:modregdef} we introduce the modules $\mathcal{M}_\pm$ and $\mathcal{N}$ with respect to which we shall prove module regularity estimates, and derive a  basic positivity property that makes the iterative module regularity argument possible. 

In Section~\ref{sec:propfred} we give the microlocal propagation estimates that we need to assemble the global Fredholm estimate, as in \eqref{eq:Fred1}, \eqref{eq:Fred2}. These estimates are actually proved in Section~\ref{sec:fredholm} for general operators in the calculus obeying some structural conditions. 

In Section~\ref{sec:solv} we show invertibility of $P$ both in the case of weighted parabolic Sobolev spaces with variable weights, and in the case of module regularity spaces. This establishes Theorem~\ref{thm:sob.invertible}. We deduce solvability of the final state problem for Schwartz outgoing data, proving Theorem~\ref{thm:finalstate-intro}. 

In Section~\ref{sec:scat mat}, we define the spaces $\mathcal{W}^k(\RR^n)$ of incoming and outgoing boundary data, and analyze the Poisson operator and scattering map on these spaces, proving Theorems~\ref{thm:Poissonbounds-intro} and \ref{thm:sc-intro}.

In the appendix, Section \ref{sec:fredholm}, we prove various propagation estimates
which we apply to $P$, including radial points estimates and module regularity
propagation estimates..  To maximize the
utility of these results, we work in a general setting analogous to
that of \cite{grenoble}, in which we assume only that the operator
under consideration has a non-degenerate characteristic set with
smooth submanifolds of radial sets.

\subsection{Acknowledgements} The authors thank Andras Vasy and Peter Hintz for their encouragement and for several enlightening conversations. They also thank MATRIX for its hospitality during the workshop ``Hyperbolic Differential Equations in Geometry and Physics'' during April 2022.


\section{Scattering calculus and parabolic scattering calculus}
\label{sec:scatcalc}

\subsection{Definition of the parabolic scattering calculus}
In order to make use of propagation estimates in our present setting, it is necessary for us to work with a calculus of pseudodifferential operators which contains $P=\Lap-i\partial_t$ as an operator of principal type. 

Such a calculus must be anisotropic, so that $i\partial_t$ and $\Lap$ can be viewed as operators of the same order. Anisotropic calculi with this property are considered in \cite{lascar}, \cite{Hormander:Existence}, as well as propagation estimates at ``interior" points. We shall require a scattering version of this calculus in order to obtain propagation estimates along bicharacteristics lying in the boundary of the radial compactification (in each factor) of $T^*\RR^{n+1}\cong \RR^{n+1}\times\RR^{n+1}$.

\begin{figure}\label{fig:rad comp spacetime}
   \centering
    \def\svgwidth{70mm}
    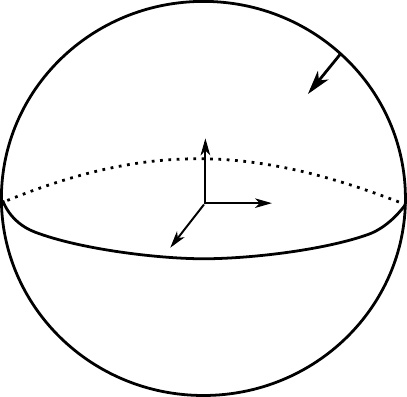
    \caption{The radially compactified spacetime.  The function
  $\rho_{base} = (1 + t^2 + |z|^2)^{-1/2}$ defines (i.e.\ vanishes at) spacetime
  infinity.  The radial set $\mathcal{R}_+$ lies over the top
  hemisphere, while the radial set $\mathcal{R}_-$ lies over the bottom
  hemisphere.}
\end{figure}
  
We denote the elements of $T^*\RR^{n+1}$ as $(z,t,\zeta,\tau)$ where
$z,\zeta\in \RR^n$ and $t,\tau\in\RR$.  Our parabolic pseudodifferential
calculus will consist of operators which are quantizations of symbols
defined with respect to an anisotropic weight function $R = R(\zeta, \tau)$, 
defined by 
	\begin{equation}\label{eq:fiber weight function}
 R^4 = |\zeta|^4 + \tau^2, \quad R > 0. 
            \end{equation}
\begin{defn}\label{def:parab symbol spaces}
  For $m,l\in\RR$, $a \in C^\infty( T^* \RR^{n + 1})$, define the seminorms,
  $$
\| a \|_{S^{m,l}_{\mathrm{par}}, N} = \!\!\!\! \sum_{|\alpha| + k + |\beta| + j \le N} \sup_{T^* \mathbb{R}^{n + 1}} \left| \langle(z,t)\rangle^{-(l-|\alpha|-k)}\langle R \rangle ^{-(m-|\beta|-2j)} \partial_z^\alpha\partial_t ^k \partial_\zeta ^\beta\partial_\tau^j a(z, t, \zeta, \tau)\right|.
  $$
  We denote the Fr\'echet space defined by these seminorms by
	\begin{equation}\label{symbolest}
		S_{\mathrm{par}}^{m,l}(T^*{\RR^{n+1}}) =\{a\in\SC^\infty(T^*{\RR^{n+1}}):
                \| a \|_{S^{m,l}_{\mathrm{par}}, N} < \infty, \mbox{ for all } N
                \in \mathbb{N}_0
                \}
              \end{equation}
      \end{defn}
This is a statement about decay of $a$ and its derivatives in both the
spatial and momentum variables.  Indeed, it is easy to see that $R$ is comparable to $\max(\ang{\zeta}, \ang{\sqrt{|\tau|}})$. 
Thus an $a \in S^{1,0}_{\mathrm{par}}$ grows at most like $|\tau|^{1/2}$ in the 
region $|\zeta|^2 / |\tau| <  2$ and at most like $|\zeta|$ in the region $|\tau|/|\zeta|^2 < 2$.

Note that the residual symbol space is exactly the Schwartz functions:
      $$
S_{\mathrm{par}}^{-\infty,-\infty}(T^*{\RR^{n+1}}) := \bigcap_{m, l}
S_{\mathrm{par}}^{m,l}(T^*{\RR^{n+1}}) = \mathcal{S}(\mathbb{R}^{n + 1} \times \mathbb{R}^{n + 1}).
      $$

A careful development of the local properties of the
pseudodifferential calculus obtained by quantizing these
symbols can be found in \cite{lascar}, \cite{Hormander:Existence}.  We briefly
summarize some of its properties.
\begin{defn}
	The class $\Psi_{\mathrm{par}}^{m,l}(\RR^{n+1})$ of parabolic pseudodifferential operators corresponding to $S_{\mathrm{par}}^{m,l}(T^*\RR^{n+1})$ are the operators with Schwartz kernels
	\begin{equation}\label{eq:kernel}
q_L(a) = 		(2\pi)^{-n-1}\int_{\RR^{n+1}}e^{i(x-y)\cdot \zeta+(t-s)\tau}a(z,t,\zeta,\tau)\, d\zeta\, d\tau
	\end{equation}
	for some $a\in S_{\mathrm{par}}^{m,l}(T^*\RR^{n+1}) $, where
        the integral \eqref{eq:kernel} is interpreted in the
        distributional sense.

        We also denote $\Diff^{m,l}_{\mathrm{par}}(\mathbb{R}^{n +
          1})$ denote the set of differential operators in
        $\Psi^{m,l}_{\mathrm{par}}(\mathbb{R}^{n + 1})$.
\end{defn}

Some results about operators $A \in \Psi_{\mathrm{par}}^{m,l}(\RR^{n+1})$ can be obtained easily from the
containment 
\begin{equation}
S^{m,l}_{\mathrm{par}}(\mathbb{R}^{n + 1}) \subset \begin{cases} S^{m,l}_{1/2,
  0}(\mathbb{R}^{n + 1}), \quad m \geq 0 \\
  S^{m/2,l}_{1/2,
  0}(\mathbb{R}^{n + 1}), \quad m \leq 0 \end{cases}
  \label{eq:basic containment}
\end{equation}
where $S^{m,l}_{\delta, \delta'}(\mathbb{R}^{n + 1})$ are the standard
scattering symbol spaces, as follows immediately from the definition. 
Thus e.g.\ we conclude that $A$ maps $\mathcal{S}(\mathbb{R}^{n + 1})$
to itself.

\subsection{Compactification of phase space}
As in the standard scattering calculus (see \cite{RBMSpec}), we compactify
$T^*\RR^{n+1}\cong \RR^{n+1}\times\RR^{n+1}$ in each factor, but, as
we describe now, we do so inhomogeneously in the dual (momentum) variables.

We compactify the spatial factor using the standard radial
compactification, namely using the map $\totz=(z,t)\in \RR^{n+1}$ 
\begin{equation}
	\psi_1(\totz)= \br{\frac{1}{(1+|\totz|^2)^{1/2}},\frac{\totz}{(1+|\totz|^2)^{1/2}}}\in S^{n+1}_+,
      \end{equation}
      where $S^{n+1}$ is the unit sphere in $\mathbb{R}^{n +
        2}$ and $S^{n+1}_+$ the half sphere with first coordinate nonnegative. 
Since $\psi_1$ is a diffeomorphism onto the interior of $(S^{n + 1}_+
)^\circ$, we may abuse notation and allow $\totz$ to denote the
corresponding interior point $\psi_1(\totz)$.  With this notation, the
function 
\begin{equation}\label{rho-defn}
\rho_{base} = \frac{1}{(1+|\totz|^2)^{1/2}}
\end{equation}
 extends smoothly to
all of $S^{n + 1}_+$ and is a boundary defining function (bdf) of $\partial
S^{n + 1}_+$ (meaning $\partial S^{n + 1}_+ = \{ \rho_{base} = 0 \}$
and $d\rho_{base}$ is non-vanishing over $\partial S^{n + 1}_+$).
A function $f \in C^\infty(S^{n + 1}_+)$ thus equivalently satisfies
that $f(\totz)$ is smooth and on $|\totz| > C$, $f  \in C^\infty([0,
1)_{\rho_{base}} \times \partial{S^{n + 1}_+})$.
It is straightforward to show that away from $\totz = 0$, $1/|\totz|$ is also
a boundary defining function, so smoothness of $f$ means that, with $\hat \totz = \totz/|\totz|$,
$$
f(1/|\totz|, \hat \totz) \sim \sum_{j = 0}^\infty |\totz|^{-j} a_j(\hat \totz)
$$
where $\hat \totz \in \partial{S^{n + 1}_+}$ and $a_j \in
C^\infty(\partial{S^{n + 1}_+})$.

For the other factor, $\RR^{n+1}_{\zeta, \tau}$, we consider a `parabolic sphere' given by 
\begin{equation}
S^{n+1}_{par, +} = \{ (\zeta_0, \zeta, \tau) \in \RR^{n+2} \mid \zeta_0^4 + |\zeta|^4 + \tau^2 = 1, \ \zeta_0 \geq 0 \}.
\end{equation}
Consider the smooth mapping
$$
\psi_2 \colon \mathbb{R}^{n + 1}_{\zeta, \tau} \longrightarrow S^{n + 1}_{par, +}
$$
given by 
\begin{equation}
	\psi_2(\zeta, \tau)= \br{\frac{1}{(1+R^4)^{1/4}},\frac{\zeta}{(1+R^4)^{1/4}},\frac{\tau}{(1+R^4)^{1/2}}}\in  S^{n+1}_{par, +}
\end{equation}
where $R$ is as in \eqref{eq:fiber weight function}. This  is a diffeomorphism onto the interior $(S^{n + 1}_{par, +} )^\circ$ (using the fact that $R^4$ is a smooth function).
This defines our parabolic compactification of the fibres $R^{n+1}_{\zeta, \tau}$, namely we take the closed parabolic half-sphere $S^{n+1}_{par, +}$ as the compactification. This shows that 
\begin{equation}\label{rhofib}
\rho_{fib} = (1+R^4)^{-1/4}
\end{equation}
is a boundary defining function for the boundary of this compactified space, which we shall call `fibre-infinity'. Natural `angular' variables are induced by smooth coordinates on the boundary of the parabolic half-sphere, extended into the interior by requiring them to be invariant under the parabolic scaling \eqref{eq:parabolic.scaling}. 
In the region 
$|\zeta|^2 / |\tau| <  2$, $|\tau| \geq 1$ we can use the angular coordinates $\zeta_i/\sqrt{|\tau|}$ and the `radial' (i.e.\ homogeneous of degree 1 with respect to the parabolic scaling) coordinate $\sqrt{|\tau|}$, while in the region $|\tau|/|\zeta|^2 < 2$, $|\zeta| \geq 1$, we can assume without loss of generality that $|\zeta_1| \geq \max_i |\zeta_i|/2$ locally, and then we can use angular coordinates $\zeta_j/\zeta_1$, $j = 2, \dots, n$ together with $\tau \zeta_1^{-2}$, and the radial coordinate $\zeta_1$. 
We will call the first region $\tau$-dominant, and the second type of region $\zeta$-dominant. (Of course there is an overlap region which is both $\tau$-dominant and $\zeta$-dominant.)

We thereby obtain a compactification
\begin{equation}
  \label{eq:compactification}
  \psi = (\psi_1, \psi_2) \colon T^*\mathbb{R}^{n + 1} \longrightarrow
  S^{n+1}_+ \times S^{n + 1}_+ =: \overline{T}^*_{\mathrm{par}}\mathbb{R}^{n + 1}.
\end{equation}
This is a manifold with corners of codimension two, the boundary being
a union of two boundary hypersurfaces
\begin{equation}
  \label{eq:ends}
    \partial \overline{T}^*_{\mathrm{par}}\mathbb{R}^{n + 1} = \left\{ \rho_{base} = 0 \right\} \cup \left\{ \rho_{fib} = 0
    \right\} = \partial_{base} \overline{T}^*_{\mathrm{par}}\mathbb{R}^{n + 1} \cup \partial_{fib} \overline{T}^*_{\mathrm{par}}\mathbb{R}^{n + 1},
  \end{equation}
  where
  $$
    \partial_{base} \overline{T}^*_{\mathrm{par}}\mathbb{R}^{n + 1}  \simeq \partial S^{n + 1}_+  \times S^{n +
      1}_{par, +} \mbox{ is ``spacetime infinity''}
    $$
    and
    $$
  \partial_{fib} \overline{T}^*_{\mathrm{par}}\mathbb{R}^{n + 1}   \simeq S^{n + 1}_+ \times
    \partial S^{n+1}_{par, +} \mbox{ is ``fiber infinity''}. 
    $$

Similar to how we write the variables $(z, t)$ collectively as $Z$, we shall write $(\zeta, \tau)$ collectively as $\Zdual$. We also write 
$$
|\Zdual| = R = (|\zeta|^4 + \tau^2)^{1/4} 
$$
(this would more correctly be denoted $|\Zdual|_{\mathrm{par}}$ or similar, but we prefer the simpler notation), and somewhat imprecisely write $\hat \Zdual$ for a set of $n$ angular variables, which can take various forms as described above. We also write $\ang{\Zdual} := (1 + R^4)^{1/4}$. 
  
As in the standard scattering calculus, classical symbols can be
defined using smooth functions on $\overline{T}^*_{\mathrm{par}}\mathbb{R}^{n +
  1}$. To be absolutely concrete, a function $a_0$ lies in
$C^\infty(\overline{T}^*_{\mathrm{par}}\mathbb{R}^{n + 1})$ if and only if
$a_0$ is smooth in the interior of $\overline{T}^*_{\mathrm{par}}\mathbb{R}^{n
  + 1}$ and satisfies the following, 
\begin{itemize}
\item \textbf{Spatial infinity/momentum interior:} On sets $|\totz| >
  C$, $|\Zdual| < C$, the function
  $a_0$ is a smooth funtion of $\rho_{base} = \ang{Z}^{-1}, \hat{\totz}, \Zdual$;
  \item \textbf{Spatial interior/momentum infinity:} On sets $|\totz| < C$,
  $|\Zdual| > C$, \\ $a_0$ is a smooth function of $\totz, \rho_{fib} = 1/\ang{\Zdual}, \hat\Zdual$;
   \item \textbf{Spatial infinity/momentum infinity (the corner):} On
      sets $|\totz| > C$,  $|\Zdual| > C$, $a_0$ is a smooth function of $\rho_{base}, \rho_{fib}, \hat\totz, \hat\Zdual$. 
\end{itemize}
It is straightforward to show that if $a_0 \in
C^\infty(\overline{T}^*_{\mathrm{par}}\mathbb{R}^{n + 1})$ then $a_0 \circ
\psi^{-1} \in C^\infty(T^*\mathbb{R}^{n + 1})$ in fact lies in
$S^{0,0}_{\mathrm{par}}$.

\begin{defn}\label{def:classical}
  Still with $\Zdual = (\zeta, \tau)$, a symbol $a\in S^{m,l}_{\mathrm{par}}(\RR^{n + 1})$ is said to be classical if 
  $$
\langle \totz \rangle^{-l} \ang{\Zdual}^{-m} a \equiv \rho_{base}^l \, \rho_{fib}^m \, a  \, \in
C^\infty(\overline{T}^*_{\mathrm{par}}\mathbb{R}^{n + 1}),
$$
meaning it extends to a smooth function on this space. We denote the set of classical symbols by $S^{m,l}_{\mathrm{par, cl}}(\RR^{n + 1})$ and the set of pseudodifferential operators obtained by quantizing such operators by $\Psi_{\mathrm{par, cl}}^{m,l}(\RR^{n+1})$. 

For $A = q_L(a)$, $a \in S^{m,l}_{\mathrm{par, cl}}(\RR^{n + 1})$, we shall define\footnote{this was denoted $\sigma_{base, m, l}(A)$ in \cite{NEH}}
\begin{equation}\label{eq:sigmabase}
\sigma_{base, l}(A) := [\rho_{base}^l a] \, |_{\partial_{base} \overline{T}^*_{\mathrm{par}}\mathbb{R}^{n + 1}},
\end{equation}
which is a classical symbol of order $m$ on the fibres of $\partial_{base} \overline{T}^*_{\mathrm{par}}\mathbb{R}^{n + 1}$. 
\end{defn}

\begin{remark}
  The symbols in Definition~\ref{def:parab symbol spaces} can also be
  characterized by a regularity condition when they are thought of as
  functions on $\overline{T}^*_{\mathrm{par}}\mathbb{R}^{n + 1}$,
  namely they are conormal to the boundary with
  the appropriate weights.
  \end{remark}

\subsection{Symbols, ellipticity, operator wavefront sets and Hamilton vector fields}

We have the principal symbol mapping 
\begin{equation}
  \label{eq:principal symbol}
  \sigma_{m,l} \colon \Psi_{\mathrm{par}}^{m,l}(\RR^{n+1})
  \longrightarrow S^{m,l}_{\mathrm{par}} / S^{m-1,l-1}_{\mathrm{par}} = S^{[m,l]}_{\mathrm{par}},
\end{equation}
with kernel equal to $\Psi_{\mathrm{par}}^{m-1,l-1}$.
Restricting attention to classical operators, given $q_L(a) = A,
q_L(b) = B \in \Psi_{\mathrm{par, cl}}^{m,l}(\RR^{n+1})$, we have
$$
\sigma_{m,l}(A) = \sigma_{m,l}(B) \iff \left( \langle \totz \rangle^{-l}
\ang{\Zdual}^{-m} a \right) \rvert_{\partial \overline{T}^*_{\mathrm{par}}\mathbb{R}^{n +
    1}} \equiv \left( \langle \totz \rangle^{-l} \ang{\Zdual}^{-m} b \right) \rvert_{\partial \overline{T}^*_{\mathrm{par}}\mathbb{R}^{n +
    1}}.
$$
This allows us to view the principal symbol of a classical operator --- renormalized by suitable powers of the boundary defining functions of fibre and spacetime infinity --- as a function on the boundary of compactified phase space, $\partial \overline{ T^*_{\mathrm{par}}\mathbb{R}^{n +1}}$.

The appropriate notion of ellipticity in this calculus is uniform in
the spacetime weight function.  Thus, we say 
$A \in \Psi^{m,l}_{\mathrm{par}}$ is \textit{globally} elliptic if 
\begin{equation}
  \label{eq:ell/char def}
\sigma_{m,l}(A)(Z, \Zdual) \ge C \langle Z \rangle^{l} \ang{\Zdual}^m.
\end{equation}
More generally we consider microlocal ellipticity at a boundary point of the compactified parabolic cotangent bundle, $q \in \partial \overline{T}^*_{\mathrm{par}}\mathbb{R}^{n + 1}$. We say that $A \in \Psi^{m,l}_{\mathrm{par}}$ is (microlocally) elliptic at $q$, and write $q \in \Ell_{m,l}(A)$, if this estimate holds in a
neighborhood of $q$ in $\overline{T}^*_{\mathrm{par}}\mathbb{R}^{n + 1}$.  To clarify this we can write down the estimates
at the three regions of phase space we considered above.  
\begin{enumerate}
\item For $q$ in the spacetime interior (and therefore at fibre infinity), points in phase space are in
  $\Ell_{m,l}(A)$ if and only if $A$ is elliptic there in the
  standard parabolic sense, i.e.\ for $q = (Z_0, \hat \Zdual_0)$, $q \in
  \Ell_{m,l}(A)$ if and only if for some $C, \epsilon > 0$, $a(Z, \Zdual) \ge C  |\Zdual|^m$ for
  $ |\Zdual| > \epsilon^{-1}, |Z - Z_0|, |\hat \Zdual - \hat \Zdual_0|<\epsilon.$
 \item At spacetime infinity with $\Zdual$ finite, $q = (\hat Z_0,
   \Zdual_0)$ lies in $\Ell_{m,l}(A)$ if and only if for some $C,
   \epsilon > 0$, $a(Z, \Zdual) \ge C  | Z |^l$ for
  $|Z| > \epsilon^{-1}, |\hat Z - \hat Z_0|, |\Zdual -
  \Zdual_0|<\epsilon.$
  \item At the corner, $q = (\hat Z_0,
   \hat \Zdual_0)$ lies in $\Ell_{m,l}(A)$ if and only if for some $C,
   \epsilon > 0$, $a(Z, \Zdual) \ge C  | Z |^l |\hat \Zdual|^m$ for
  $|Z|, |\Zdual | > \epsilon^{-1}, |\hat Z - \hat Z_0|, |\hat \Zdual -
  \hat \Zdual_0|<\epsilon.$ 
\end{enumerate}
The elliptic set of $A$ is by definition an open set. 
The characteristic set is simply the complement of the elliptic set (hence closed):
\begin{equation}\label{eq:chardef}
\Char_{m,l}(A) = \partial \overline{T}^*_{\mathrm{par}}\mathbb{R}^{n
    + 1} \setminus \Ell_{m,l}(A). 
\end{equation}

For classical operators, it is sometimes convenient to think of the
properties of these symbols in terms of the boundary
restriction of the reweighted function $a_0 = \langle \totz \rangle^{-l}
\ang{\Zdual}^{-m} a$.  Specifically, it follows easily
that, for $A \in \Psi^{m,l}_{\mathrm{par, cl}}$ that
\begin{equation}
  \label{eq:ell/char def 2}
  \Ell_{m,l}(A) = \{ q \in \partial \overline{T}^*_{\mathrm{par}}\mathbb{R}^{n
    + 1} : a_0(q) \neq 0 \},
\end{equation}
and thus $\Char_{m,l}(A) = \{ q \in \partial \overline{T}^*_{\mathrm{par}}\mathbb{R}^{n
    + 1} : a_0(q) = 0 \},$ i.e.\ it is simply the vanishing locus of
  this smooth function.

In the standard scattering calculus, the fiber principal symbol is, in
a sense, the usual principal symbol, and, if the symbol is classical,
it can be represented by a homogeneous function.  In our parabolic
setting a related statement is true, namely in bounded spatial sets,
away from zero momentum, the principal symbol of fibre order $m$ of a classical symbol in this calculus is represented by a unique function homogeneous of degree $m$, in the parabolic sense, i.e.\ 
\begin{equation}
\lambda > 0, \ \tilde a(t, z, \lambda^2 \tau, \lambda \zeta) = \lambda^m
\tilde a(t, z, \tau, \zeta)\label{eq:quasihomo}.
\end{equation}
Indeed, this function is the unique function $a$ homogeneous of degree $m$ such that $\ang{\Phi}^{-m} a$ has the appropriate boundary value at fibre infinity. 
However, if the symbol is not classical, the symbol at spatial
infinity is usually \emph{not} represented by a homogeneous function in the fiber variables.

The concept of operator wavefront set, also known as microlocal support, also carries over directly. Namely, 
if $A = q_L(a)$, then the operator wavefront set $\WF'(A)$ is the essential support of $a$, 
i.e.\ the subset
of $\partial \overline{T}^*_{\mathrm{par}}\mathbb{R}^{n + 1}$ whose complement consist of points
$q$ such that ``$a$ is trivial in an open neighborhood of $q$''. The meaning of this statement is that there is a neighbourhood $U \subset
\overline{T}^*_{\mathrm{par}}\mathbb{R}^{n + 1}$ of $q$  such that $a \rvert_{U \cap T^*
\mathbb{R}^{n + 1}}$ is Schwartz, i.e.\ vanishes to all orders
together with all its derivatives.  In particular, for $A \in \Psi^{m,l}_{\mathrm{par}}$,
$$
\WF'(A) = \varnothing \implies A \in \Psi^{-\infty, -\infty}_{\mathrm{par}},
$$
meaning $A = q_L(a)$ for $a \in \mathcal{S}(\mathbb{R}^{n + 1} \times
\mathbb{R}^{n + 1})$ and therefore $\WF'(A) = \varnothing$ implies
that $A$ maps tempered distributions to Schwartz functions, $A \colon \mathcal{S}'(\mathbb{R}^{n + 1})
\longrightarrow \mathcal{S}(\mathbb{R}^{n + 1})$.

As in the standard scattering setting, given $a \in
S^{m,l}_{\mathrm{par, cl}}(\mathbb{R}^{n +1})$, an appropriate
rescaling of the standard Hamilton vector field $H_a$
extends smoothly to the whole of $\overline{T}^*_{\mathrm{par}}\mathbb{R}^{n +
  1}$.  This can be seen from the following lemma which characterizes
the asymptotic behaviour of the linear vector fields on $\mathbb{R}^{n
  + 1}$.
\begin{lemma}\label{lem:ham vec rescale}
  Let $\rho_{base} , \rho_{fib}$ be boundary defining functions for
  $C^\infty(\overline{T}^*_{\mathrm{par}}\mathbb{R}^{n + 1})$ (see
  belo~\eqref{eq:compactification}) and let $\partial_{\hat{\totz}_i}$ and
  $\partial_{\hat{\zeta}_i}$ denote vector fields tangent to the spheres
  $\mathbb{S}^{n}_{\hat z}$ and   $\mathbb{S}^{n}_{\hat \zeta}$,
  respectively.  Then
  $$
\partial_{\totz_j} , \partial_{\zeta_j}\in \mathrm{span}_{C^\infty(\overline{T}^*_{\mathrm{par}}\mathbb{R}^{n + 1})} \langle \rho_{base}^2
\partial_{\rho_{base}}, \,  \rho_{base} \partial_{\hat \totz_i} , \,  \rho_{fib}^2
\partial_{\rho_{fib}}, \, \rho_{fib} \partial_{\hat \zeta_i}  \rangle,
$$
while
  $$
\partial_{\tau} \in \mathrm{span}_{C^\infty(\overline{T}^*_{\mathrm{par}}\mathbb{R}^{n + 1})} \langle \rho_{fib}^3
\partial_{\rho_{fib}}, \, \rho_{fib}^2 \partial_{\hat \zeta_i}  \rangle
$$
\end{lemma}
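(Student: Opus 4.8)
The statement is local, and since $\overline{T}^*_{\mathrm{par}}\RR^{n+1} = S^{n+1}_+ \times S^{n+1}_{\mathrm{par},+}$ is a product, the base and fibre factors decouple: each $\partial_{\totz_j}$ involves only $\totz = (z,t)$ while each of $\partial_{\zeta_j}, \partial_\tau$ involves only the dual variables, and a vector field on one factor whose coefficients are smooth functions of that factor alone is automatically a smooth section over the product. The claimed inclusions are trivial away from $\partial\overline{T}^*_{\mathrm{par}}\RR^{n+1}$ (there every coordinate vector field is smooth), and changing the boundary defining functions only alters the listed generators by smooth combinations of themselves; so the plan is to verify each inclusion in convenient local coordinates near each of the two boundary hypersurfaces. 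The conceptual reason is homogeneity: $\partial_{\totz_j}$ is homogeneous of degree $-1$ under $\totz\mapsto c\totz$, while $\partial_{\zeta_j}$ is homogeneous of degree $-1$ and $\partial_\tau$ of degree $-2$ under the parabolic scaling \eqref{eq:parabolic.scaling}, matching the homogeneities of the listed generators near the relevant boundary.

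For the base factor, $\partial_{\totz_j}$ is a translation-invariant vector field on $\RR^{n+1}_{\totz}$, and the claim is the classical fact that such vector fields are scattering vector fields for the radial compactification \cite{RBMSpec}. Explicitly, away from $\totz = 0$ set $r = |\totz|$, $\rho = 1/r$ (a smooth positive multiple of $\rho_{base}$ near $\partial S^{n+1}_+$ by \eqref{rho-defn}) and let $\omega$ be local angular coordinates on $\mathbb{S}^n$. Then $\partial_{\totz_j}\rho = -\hat{\totz}_j\,\rho^2$ and, since $\omega_i$ is homogeneous of degree $0$, $\partial_{\totz_j}\omega_i = \rho\, c_{ij}(\hat{\totz})$ with $c_{ij}$ smooth, so $\partial_{\totz_j} = -\hat{\totz}_j\,(\rho^2\partial_\rho) + \sum_i c_{ij}(\hat{\totz})\,(\rho\partial_{\omega_i})$; the coefficients extend smoothly to $\partial S^{n+1}_+$, depend only on $\totz$, hence lie in $C^\infty(\overline{T}^*_{\mathrm{par}}\RR^{n+1})$, and the change-of-defining-function identity rewrites $\rho^2\partial_\rho$, $\rho\partial_{\omega_i}$ as smooth combinations of $\rho_{base}^2\partial_{\rho_{base}}$ and $\rho_{base}\partial_{\hat\totz_i}$.

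For the fibre factor we argue near fibre infinity in the two chart types introduced after \eqref{rhofib}. In a $\tau$-dominant region ($|\zeta|^2 < 2|\tau|$, $\tau \geq 1$) put $s = \sqrt\tau$, $\eta_i = \zeta_i/\sqrt\tau$, so that $\sigma := 1/s$ is a smooth positive multiple of $\rho_{fib}$ near $\{\sigma = 0\}$ (from $R^4 = s^4(1+|\eta|^4)$), and one computes directly $\partial_{\zeta_j} = \sigma\,\partial_{\eta_j}$ and $\partial_\tau = -\frac12\sigma^3\partial_\sigma - \frac12\sigma^2\sum_i\eta_i\partial_{\eta_i}$. In a $\zeta$-dominant region ($|\tau| < 2|\zeta|^2$, $|\zeta_1| \geq \max_i|\zeta_i|/2 > 0$) put the radial coordinate $\zeta_1$ and angular coordinates $\nu_j = \zeta_j/\zeta_1$ $(j\geq 2)$, $\mu = \tau/\zeta_1^2$, so that $\sigma := 1/\zeta_1$ is again a smooth positive multiple of $\rho_{fib}$, and one computes $\partial_{\zeta_j} = \sigma\,\partial_{\nu_j}$ $(j\geq 2)$, $\partial_{\zeta_1} = -\sigma^2\partial_\sigma - \sigma\sum_{j\geq2}\nu_j\partial_{\nu_j} - 2\sigma\mu\,\partial_\mu$, and $\partial_\tau = \sigma^2\partial_\mu$. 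In each chart the angular coordinate fields ($\partial_{\eta_i}$, resp. $\partial_{\nu_j}$ and $\partial_\mu$) span the tangent space of fibre infinity and so may be taken as the $\partial_{\hat\zeta_i}$, while $\sigma^2\partial_\sigma$, $\sigma^3\partial_\sigma$, $\sigma\partial_{(\mathrm{ang})}$, $\sigma^2\partial_{(\mathrm{ang})}$ are, by the change-of-defining-function identity, smooth multiples of $\rho_{fib}^2\partial_{\rho_{fib}}$, $\rho_{fib}^3\partial_{\rho_{fib}}$, $\rho_{fib}\partial_{\hat\zeta_i}$, $\rho_{fib}^2\partial_{\hat\zeta_i}$ respectively. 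Reading off the displayed formulas then places $\partial_{\zeta_j}$ in the span of $\rho_{fib}^2\partial_{\rho_{fib}}$ and $\rho_{fib}\partial_{\hat\zeta_i}$, and $\partial_\tau$ in the span of $\rho_{fib}^3\partial_{\rho_{fib}}$ and $\rho_{fib}^2\partial_{\hat\zeta_i}$.

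The computation has no serious obstacle; the point requiring care is the anisotropic bookkeeping. Because $\tau$ scales quadratically under \eqref{eq:parabolic.scaling} while $\zeta$ scales linearly, $\partial_\tau$ decays one order faster at fibre infinity than $\partial_{\zeta_j}$, which is exactly why it needs $\rho_{fib}^3\partial_{\rho_{fib}}, \rho_{fib}^2\partial_{\hat\zeta_i}$ rather than $\rho_{fib}^2\partial_{\rho_{fib}}, \rho_{fib}\partial_{\hat\zeta_i}$; in the $\zeta$-dominant chart this shows up as the absence of a $\partial_\sigma$-term in $\partial_\tau$ and its appearance only at order $\sigma^2$ in $\partial_{\zeta_1}$. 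The remaining ingredients — patching the $\tau$- and $\zeta$-dominant charts over their overlap, the behaviour at the corner $\{\rho_{base} = \rho_{fib} = 0\}$, and the independence of the statement from the choice of $\rho_{base}, \rho_{fib}$ — are routine, using the product structure of the compactification and the smoothness of $R^4 = |\zeta|^4 + \tau^2$ as a polynomial.
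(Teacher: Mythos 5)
Your proof is correct, and it takes a genuinely different (though equivalent) route from the paper. You verify the fibre-variable claims chart by chart, using the $\tau$-dominant coordinates $(\sigma = \tau^{-1/2}, \eta_i = \zeta_i/\sqrt\tau)$ and the $\zeta$-dominant coordinates $(\sigma = 1/\zeta_1, \nu_j = \zeta_j/\zeta_1, \mu = \tau/\zeta_1^2)$, then translate back to $\rho_{fib}^a\partial_{\rho_{fib}}$ and $\rho_{fib}^b\partial_{\hat\zeta}$ via the change-of-defining-function argument. The paper avoids the chart splitting altogether: it works globally with the embedding coordinate functions $\rho_{fib}$, $\zeta_i\rho_{fib}$, $\tau\rho_{fib}^2$ (precisely the components of $\psi_2$ realizing the fibre compactification as $S^{n+1}_{par,+}\subset\RR^{n+2}$) and differentiates them directly, reading off from $\partial_\tau\rho_{fib}\in\rho_{fib}^3 C^\infty$, $\partial_\tau(\zeta_i\rho_{fib}),\partial_\tau(\tau\rho_{fib}^2)\in\rho_{fib}^2 C^\infty$ (and the analogous statements with one lower power of $\rho_{fib}$ for $\partial_{\zeta_j}$) that $\partial_\tau$ and $\partial_{\zeta_j}$ lie in the claimed spans. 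The paper's version is shorter and does not require patching over the overlap of the two chart types; your version makes the anisotropic power-counting very explicit and transparent — in particular, your remark that in the $\zeta$-dominant chart $\partial_\tau$ contains no $\partial_\sigma$ term at all (only $\sigma^2\partial_\mu$, an angular term) while the normal term appears in $\partial_{\zeta_1}$ at order $\sigma^2$ is a nice concrete illustration of why $\partial_\tau$ lands one order deeper at fibre infinity than $\partial_{\zeta_j}$. Either argument suffices; yours is essentially an unpacking of the paper's into local coordinates.
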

\begin{proof}
Since the spatial compactification here is the standard radial
compactification, the statement for the spatial vector fields $\partial_t$ and
  $\partial_{z_i}$ follows from the standard scattering case, and
  indeed there are no $\rho_{fib}^2
\partial_{\rho_{fib}}$ or $\rho_{fib} \partial_{\hat \zeta_i} $
terms.  Here one simply writes the vector fields in polar coordinates $|\totz|,
\hat{\totz}$ with $\rho_{base} = 1/\ang{\totz}$.

For the fiber variables, differentiating \eqref{eq:fiber
  weight function} shows
$$
\partial_{\tau} \rho_{fib} \in  \rho_{fib}^3 C^\infty(\overline{T}^*_{\mathrm{par}}\mathbb{R}^{n + 1}), \quad \partial_{\tau}
(\zeta_i\rho_{fib}) , \ \partial_{\tau}
(\tau \rho_{fib}^2) \in \rho_{fib}^2 C^\infty(\overline{T}^*_{\mathrm{par}}\mathbb{R}^{n + 1})
$$
which implies the statement for $\partial_{\tau}$, and
$$
\partial_{\zeta_j} \rho_{fib} \in  \rho_{fib}^2 C^\infty(\overline{T}^*_{\mathrm{par}}\mathbb{R}^{n + 1}), \quad \partial_{\zeta_j}
(\zeta_i\rho_{fib}) , \ \partial_{\tau}
(\tau \rho_{fib}^2) \in \rho_{fib} C^\infty(\overline{T}^*_{\mathrm{par}}\mathbb{R}^{n + 1}),
$$
which implies the statement for $\partial_{\zeta_j}$.
\end{proof}

 Given a classical symbol $a \in S^{m,l}_{\mathrm{par, cl}}(\mathbb{R}^{n +
    1})$, recalling that
  $$
H_a = \partial_\tau a \partial_t - \partial_t a \partial_\tau +
\sum_{j=1}^n (\partial_{\zeta_j}a\partial_{z_j} - \partial_{z_j}a\partial_{\zeta_j}),
$$
define the (parabolically) rescaled Hamilton vector field
  vector field
  \begin{equation}
    \label{eq:ham vec rescale}
H^{m,l}_a := \rho_{fib}^{m-1} \rho_{base}^{l - 1} H_a
  \end{equation}
 From the lemma, we see that $H^{m,l}_a$ \textit{extends smoothly to
   $\overline{T}^*_{\mathrm{par}}\mathbb{R}^{n + 1}$ and is
tangent to the boundary.}  Moreover, the fact that the components of
$\partial_\tau$ vanish an order faster at fiber infinity than the
components of the other vector fields implies that at fiber infinity, the terms with
$\partial_\tau$ do not contribute to leading order there, i.e.\
$$
H^{m,l}_a  \rvert_{\{ \rho_{fib} = 0 \}} = \rho_{fib}^{m-1} \rho_{base}^{l -
  1} \left( \sum_{j=1}^n \partial_{\zeta_j}a\partial_{z_j}- \partial_{z_j}a\partial_{\zeta_j}\right) 
$$

One can see that the concept of Hamilton vector field is invariant, in a
leading order sense, when applied to
an element $A \in \Psi^{m,l}_{\mathrm{par}}$ as follows. With $a \in
S^{m,l}_{\mathrm{par}}$, $H^{m,l}_a$ lies in the space
$\mathcal{V}_b(\overline{T}^*_{\mathrm{par}}\mathbb{R}^{n + 1})$ of vector
fields tangent to the boundary, the so-called b-vector fields.  Near
the corner, any $V \in \mathcal{V}_b(\overline{T}^*_{\mathrm{par}}\mathbb{R}^{n
  + 1})$ satsifies $V\rho_{fib} = O(\rho_{fib})$ and $V\rho_{base} =
O(\rho_{base})$, meaning
$$
V = c_f \rho_{fib} \partial_{\rho_{fib}} + c_b \rho_{base}
\partial_{\rho_{base}} + \sum c_i \partial_{\hat{\totz}_i} + c_i' \partial_{\hat{\zeta}_i'}
$$
where the $c_f, c_b, c_i, c'_i$ are all smooth functions on $\overline{T}^*_{\mathrm{par}}\mathbb{R}^{n + 1}$
It is straightforward to check that if $a, a' \in S^{m,l}_{\mathrm{par}}$ have
$a - a' \in S^{m-1,l-1}_{\mathrm{par}}$ then 
$$
H^{m,l}_a - H^{m,l}_{a'} = H^{m,l}_{a-a'} \in \rho_{fib} \rho_{base} \mathcal{V}_b(\overline{T}^*_{\mathrm{par}}\mathbb{R}^{n
  + 1}).
  $$
That is to say, if $a, a'$ both represent
$\sigma_{m,l}(A)$, then their Hamilton vector fields agree up to first
order at infinity \emph{as b-vector fields} (which is slightly stronger than saying that these vector fields agree at the boundary).   In particular, $(H^{m,l}_a -
H^{m,l}_{a'})\rho_{fib} = O(\rho_{fib}^2)$ and $(H^{m,l}_a -
H^{m,l}_{a'})\rho_{base} = O(\rho_{base}^2)$.

We also have, as in the standard scattering setting, that the flow of
$H^{m,l}_a$ preserves the characteristic set of $a$.
\begin{prop}
  Let $a \in S^{m,l}_{\mathrm{par, cl}}(\mathbb{R}^{n +1})$ be classical.  Then
  $H^{m,l}_a$ is tangent to the characteristic set $\Char_{m,l}(q_L(a))$.  
\end{prop}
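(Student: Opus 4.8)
The plan is to reduce the geometric assertion to a single algebraic identity on the compactified cotangent bundle. Write $a_0 := \rho_{base}^l \rho_{fib}^m a$, which by Definition~\ref{def:classical} lies in $C^\infty(\overline{T}^*_{\mathrm{par}}\mathbb{R}^{n+1})$ precisely because $a$ is classical. By \eqref{eq:ell/char def 2} and the discussion following it, $\Char_{m,l}(q_L(a))$ is exactly the zero set of $a_0$ restricted to $\partial\overline{T}^*_{\mathrm{par}}\mathbb{R}^{n+1}$, and by Lemma~\ref{lem:ham vec rescale} together with the discussion after \eqref{eq:ham vec rescale} the rescaled field $H^{m,l}_a$ is a b-vector field, hence tangent to the boundary and thus restricting to a genuine vector field there. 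So I would first observe that it suffices to prove $H^{m,l}_a a_0 = g\, a_0$ on $\overline{T}^*_{\mathrm{par}}\mathbb{R}^{n+1}$ for some $g \in C^\infty(\overline{T}^*_{\mathrm{par}}\mathbb{R}^{n+1})$: restricting this identity to the boundary shows that $H^{m,l}_a a_0$ vanishes wherever $a_0$ does, and integrating the linear ODE $\tfrac{d}{ds}(a_0\circ\gamma) = (g\circ\gamma)(a_0\circ\gamma)$ along a boundary integral curve $\gamma$ of $H^{m,l}_a$ then shows that curves issuing from $\Char_{m,l}(q_L(a))$ remain in it. Phrasing tangency through this ODE, rather than through a naive pointwise tangency condition, is what keeps the argument correct even when the characteristic set fails to be a smooth submanifold.

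Next I would prove the identity from two inputs. The first is the antisymmetry $H_a a = \{a,a\} = 0$, immediate from the coordinate formula for $H_a$; with the Leibniz rule this gives, in the interior, $H_a a_0 = \big(H_a(\rho_{base}^l \rho_{fib}^m)\big)\, a$. The second is the b-vector field property of $H^{m,l}_a = \rho_{fib}^{m-1}\rho_{base}^{l-1} H_a$: since $H^{m,l}_a \rho_{base}$ is $\rho_{base}$ times a smooth function and $H^{m,l}_a \rho_{fib}$ is $\rho_{fib}$ times a smooth function, we may write $H_a \rho_{base} = \rho_{fib}^{1-m}\rho_{base}^{2-l}\, c_b$ and $H_a \rho_{fib} = \rho_{fib}^{2-m}\rho_{base}^{1-l}\, c_f$ with $c_b, c_f$ smooth on $\overline{T}^*_{\mathrm{par}}\mathbb{R}^{n+1}$. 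Substituting into $H_a(\rho_{base}^l \rho_{fib}^m) = l\rho_{base}^{l-1}\rho_{fib}^m H_a\rho_{base} + m\rho_{base}^l \rho_{fib}^{m-1} H_a\rho_{fib}$ collapses the weights to $\rho_{base}\rho_{fib}(l c_b + m c_f)$; multiplying by the prefactor $\rho_{fib}^{m-1}\rho_{base}^{l-1}$ from the definition of $H^{m,l}_a$ and by $\rho_{base}^{-l}\rho_{fib}^{-m}$ coming from $a = \rho_{base}^{-l}\rho_{fib}^{-m} a_0$ leaves a net prefactor of exactly $1$, so $H^{m,l}_a a_0 = (l c_b + m c_f)\, a_0$ in the interior; as both sides are smooth up to the boundary, the identity holds on all of $\overline{T}^*_{\mathrm{par}}\mathbb{R}^{n+1}$, and we take $g := l c_b + m c_f$.

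I do not expect a genuine obstacle: the content is essentially the classical fact that a Hamilton flow preserves its own characteristic level set, transported to the compactification. The points requiring a little care are keeping track of the four weight factors so that the net prefactor cancels to exactly $1$ — the one computation worth carrying out explicitly, and where the precise normalization $H^{m,l}_a = \rho_{fib}^{m-1}\rho_{base}^{l-1} H_a$ is used essentially; confirming, as is already recorded in Definition~\ref{def:classical} and \eqref{eq:ell/char def 2}, that $a_0$ is genuinely smooth and that its boundary restriction, hence $\Char_{m,l}(q_L(a))$, depends only on the principal symbol; and adopting the flow-invariance reading of tangency to $\Char_{m,l}(q_L(a))$ discussed above. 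Classicality of $a$ is in fact used only to guarantee that $a_0$ is a bona fide smooth function on $\overline{T}^*_{\mathrm{par}}\mathbb{R}^{n+1}$ — equivalently, that $\Char_{m,l}(q_L(a))$ is the vanishing locus of such a function — which is what makes both the statement and this argument meaningful.
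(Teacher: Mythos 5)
Your proof is correct and in substance the same as the paper's: reduce to showing that $H^{m,l}_a a_0$ vanishes on the zero set of $a_0$, using the antisymmetry $H_a a = 0$ together with the Leibniz rule. The paper applies Leibniz in the subscript of the Hamilton field, writing $H_a = a_0\, H_{\rho_{base}^{-l}\rho_{fib}^{-m}} + \rho_{base}^{-l}\rho_{fib}^{-m} H_{a_0}$ and immediately obtaining $H^{m,l}_a a_0 = a_0\, H^{m,l}_{\rho_{base}^{-l}\rho_{fib}^{-m}} a_0$; you instead expand $H_a(\rho_{base}^l\rho_{fib}^m a)$ directly and chase the weight factors through the $c_b, c_f$ coefficients. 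Both arrive at the same identity $H^{m,l}_a a_0 = g\, a_0$ with $g$ smooth up to the boundary. Your version is more explicit about the bookkeeping, and you also make precise a point the paper leaves implicit: tangency of $H^{m,l}_a$ to the possibly non-smooth set $\Char_{m,l}(q_L(a))$ should be read as flow-invariance, which is exactly what the linear ODE $\frac{d}{ds}(a_0\circ\gamma) = (g\circ\gamma)(a_0\circ\gamma)$ delivers, and which mere pointwise vanishing of $H^{m,l}_a a_0$ on $\Char$ would not. This is a genuine clarification, not a different proof.
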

\begin{proof}
Let  $a_0 = \langle \totz \rangle^{-l} \ang{\Zdual}^{-m} a \in
  S^{0,0}_{\mathrm{par}}$.     Since $H^{m,l}_a$ is tangent to the boundary of $\overline{T}^*_{\mathrm{par}}\mathbb{R}^{n + 1}$ and
  $\Char_{0,0}(q_L(a_0)) = \Char_{m,l} (q_L(a))$, it suffices to show that
  $H^{m,l}_a a_0 = 0$.  Using the product rule gives
  $$
H^{m,l}_a a_0 = a_0 H^{m,l}_{(\rho_{base}^{-l} \rho_{fib}^{-m})} a_0 +
\rho_{base}^{-l} \rho_{fib}^{-m} H^{0,0}_{a_0} a_0 = a_0 H^{m,l}_{(\rho_{base}^{-l} \rho_{fib}^{-m})} a_0
$$
which vanishes on all of $\Char_{0,0}(q_L(a_0))$.
\end{proof}

\begin{defn}\label{def:radial set}
For $A \in \Psipcl{m}{l}(\mathbb{R}^{n
  + 1})$, $A = q_L(a)$, the \textbf{radial set} is defined by 
\begin{equation}\label{eq:radial set}
\mathcal{R} = \{ q \in \Char_{m,l}(A)  : H^{m,l}_a \mbox{ vanishes at
} q \}.
\end{equation}
Equivalently, $\mathcal{R}$ is the set of stationary points of
the flow of $H^{m,l}_a$ on $\Char_{m,l}(A)$.
\end{defn}

It is very useful to note that in the interiors of fiber or spatial
infinity, as with the symbol, it is possible only to rescale the
Hamilton vector field in only one variable without losing information.
To avoid introducing further notation, we point out
informally that, for $a \in S^{m,l}_{\mathrm{par, cl}}$,
\begin{itemize}
\item In regions of the form $\{ |\totz| < C\}$, i.e.\ in bounded
  spatial sets, $ \rho_{fib}^{m-1} H_a = \rho_{base}^{-l + 1}H^{m,l}_a$
  extends smoothly to $\{\rho_{fib} = 0\}$.
  \item In regions of the form $\{ |\Zdual| < C\}$, i.e.\ in bounded
 momentum sets, $\rho_{base}^{l-1} H_a = \rho_{base}^{- m + 1}H^{m,l}_a$
  extends smoothly to $\{\rho_{base} = 0\}$.
\end{itemize}
In these regions, these vector fields are smooth positive multiples of
$H^{m,l}_a$ and thus their flows preserves $\Char_{m,l}(a)$ and are
simply non-degenerate reparametrizations of the $H^{m,l}_a$ flow.

We will also require parabolic pseudodifferential operators of variable spatial order.

\begin{defn}
	Given a \emph{weight function}  $\sw \in S^{0,0}_\mathrm{par, cl}$, a \emph{classical} symbol of order $(0,0)$, and $\delta\in (0,1/2)$, we define the following weighted symbol class
	\begin{equation}\label{symbolestdelta}
		S_{\delta,\mathrm{par}}^{m,\sw}=\{a\in\SC^\infty(T^*{\RR^{n+1}}):|\partial_z^\alpha\partial_t
                ^k \partial_\zeta ^\beta\partial_\tau^j a|\lesssim
                \langle \totz \rangle^{\sw-(1-\delta)(|\alpha|+k)+\delta
                  (|\beta|+2j)}\ang{\Zdual}^{m-|\beta|-2j}\}
	\end{equation}
\end{defn}

\begin{remark}
  One can see the necessity of the $\delta > 0$ loss to treat
  non-constant $\sw \in S^{0,0}_\mathrm{par, cl}$ by considering $\langle z
  \rangle^{\sw}$.  Indeed,
  $$
\partial_{z_i} \langle z \rangle^{\sw} =
\sw \langle z \rangle^{\sw - 1}\partial_{z_i} \langle z \rangle +\langle z \rangle^{\sw} \log \langle z \rangle \partial_{z_i}\sw
$$
and similar estimates for further derivatives show that this function
lies in $S^{0, \sw}_{\delta,  \mathrm{par}}$ for any $\delta > 0$ (but not $\delta = 0$).
\end{remark}

Quantisation of symbols in $S_{\delta,\mathrm{par}}^{m,\sw}$ for arbitrary $\delta\in(0,1/2)$ works in exactly the same way as for constant order symbols, giving rise to a class $\Psi_{\delta,\mathrm{par}}^{m, \sw}$ of variable order operators in our parabolic pseudodifferential calculus. We note that we have a similar containment as in \eqref{eq:basic containment}, namely 
\begin{equation}
S^{m,\sw}_{\mathrm{par}}(\mathbb{R}^{n + 1}) \subset \begin{cases} S^{m,\sw}_{1/2,
  \delta}(\mathbb{R}^{n + 1}), \quad m \geq 0 \\
  S^{m/2,\sw}_{1/2,
  \delta}(\mathbb{R}^{n + 1}), \quad m \leq 0 \end{cases}.
  \label{eq:basic containment delta}
\end{equation}

\subsection{Composition, $L^2$-boundedness, Sobolev spaces, and elliptic regularity}

To draw more detailed conclusions about the parabolic scattering
operators, we need to verify that the calculus behaves as expected.

\begin{prop}	\label{prop:parcalc}
Let $q_L(a) = A \in
\Psipd{m}{\sw}(\RR^{n+1})$ and $q_L(b) = B \in
  \Psipd{m'}{\sw'}(\RR^{n+1})$.  Then
\begin{itemize}
\item $A B \in
  \Psipd{m+m'}{\sw + \sw'}(\RR^{n+1})$;
\item $\sigma_{m + m', \sw + \sw'}(AB) = \sigma_{m, \sw}(A) \sigma_{m',\sw'}(B)$;
\item The commutator $[A,B]$ is in the space $\Psipd^{m+m'-1,\sw + \sw'-1 + \delta}$, and
$$\sigma_{m+m'-1,\sw + \sw'-1 + \delta}([A,B])=\frac{1}{i}\{a,b\}$$ 
where $\{a,b\}$ denotes the Poisson bracket:
	\begin{equation}\label{eq:poisson rel}
		\{a,b\}=H_a b '
	\end{equation}
\end{itemize}
\end{prop}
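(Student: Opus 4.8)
The plan is to reduce the variable-order statement to the standard symbol-calculus machinery, using the containment \eqref{eq:basic containment delta} to import off-the-shelf results, and then to extract the leading-order (principal symbol and commutator) information by the usual oscillatory-integral asymptotic expansion. First I would treat the composition $AB$: writing $A = q_L(a)$, $B = q_L(b)$, the Schwartz kernel of $AB$ is given by the usual formula, and the (left) symbol of $AB$ has the standard asymptotic expansion $\sigma_L(AB) \sim \sum_\gamma \frac{1}{\gamma!}\partial_\Zdual^\gamma a \, D_Z^\gamma b$, where $Z = (z,t)$ and $\Zdual = (\zeta,\tau)$. The point is to check that each term of this expansion lies in the expected class. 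Here one must be careful about the \emph{anisotropic} counting: a single $\tau$-derivative on $a$ lowers the fibre order by $2$ (since $R$ scales like $|\tau|^{1/2}$), while a $\zeta_j$-derivative lowers it by $1$; and differentiating $a$ in $\Zdual$ costs nothing in the base order, while differentiating $b$ in $Z$ \emph{raises} the base order by at most $\delta$ per derivative because of the variable weight (this is exactly the $\delta$-loss built into \eqref{symbolestdelta}, and is the reason one cannot work with $\delta = 0$). Tracking these two separate gradings, the term indexed by $\gamma$ lies in $S_{\delta,\mathrm{par}}^{m+m'-|\gamma|_{\mathrm{par}}, \mathsf{l}+\mathsf{l}'+\delta|\gamma| - |\gamma|_{\mathrm{base-gain}}}$, and in particular the full expansion sums (by asymptotic summation / Borel) to an element of $S_{\delta,\mathrm{par}}^{m+m',\mathsf{l}+\mathsf{l}'}$, with remainder in $S^{-\infty,-\infty}_{\mathrm{par}} = \mathcal S$. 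This gives the first bullet; the second bullet is then immediate, since all terms with $|\gamma| \geq 1$ lie in the subprincipal class $S^{m+m'-1,\mathsf{l}+\mathsf{l}'-1+\delta}_{\delta,\mathrm{par}}$ (one $\zeta$ or $\tau$ derivative always drops the fibre order by at least $1$), so only the $\gamma = 0$ term $ab$ survives modulo that ideal, and $\sigma(AB) = \sigma(A)\sigma(B)$ by definition of the principal symbol map \eqref{eq:principal symbol}.

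For the commutator, I would subtract the expansions for $AB$ and $BA$. The $\gamma = 0$ terms cancel ($ab - ba = 0$), and among the $|\gamma| = 1$ terms, the symmetric contributions cancel while the antisymmetric part assembles to $\sum_j (\partial_{\zeta_j}a \, D_{z_j} b - D_{z_j}a\,\partial_{\zeta_j}b) + (\partial_\tau a\, D_t b - D_t a\, \partial_\tau b) = \frac{1}{i}\{a,b\} = \frac{1}{i} H_a b$. The remaining terms ($|\gamma| \geq 2$, together with the subprincipal parts of the $|\gamma|=1$ terms after the cancellation) lie in $S^{m+m'-2, \ldots}_{\delta,\mathrm{par}}$ in fibre order, hence inside the claimed class. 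The one genuinely delicate point is the \emph{base} order of $[A,B]$: because a $D_Z$-derivative of a variable-order symbol costs $\delta$, the Poisson bracket $\{a,b\}$ sits a priori only in $S^{m+m'-1,\mathsf{l}+\mathsf{l}'-1+\delta}_{\delta,\mathrm{par}}$, not in the $\delta = 0$ class — which is exactly what the statement asserts (note the ``$+\delta$'' in the order), so nothing needs to be improved here; one just records that the loss is $\delta$ and no worse. I would verify this by the explicit estimate on $\partial_{z_j}\langle z\rangle^{\mathsf l}$-type terms already displayed in the Remark following \eqref{symbolestdelta}, applied to $\partial_{Z}b$ and to the weight-dependence of the higher expansion terms.

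The main obstacle, and the only place real work is needed, is the rigorous justification of the asymptotic expansion with \emph{uniform-up-to-infinity} control in the compactified space $\overline{T}^*_{\mathrm{par}}\mathbb{R}^{n+1}$ — i.e. that the oscillatory-integral remainder after $N$ terms genuinely lies in $S^{m+m'-N,\,\ast}_{\delta,\mathrm{par}}$ with the anisotropic fibre decay and the $\delta$-controlled base weight. In principle this is contained in Lascar's development \cite{lascar} for the local (anisotropic) calculus combined with the standard scattering-calculus remainder estimates (as in \cite{RBMSpec}), and the containment \eqref{eq:basic containment delta} into the Hörmander classes $S^{\ast,\ast}_{1/2,\delta}$ lets one invoke the classical $(1/2,\delta)$-calculus for the parts that are not anisotropically sharp; the anisotropic sharpening of the fibre order (getting $R$-decay rather than merely $\langle\zeta,\tau\rangle$-decay) is handled by repeating the stationary-phase/Taylor estimate with the parabolic weight $R$ in place of the isotropic one, exactly as in Lascar. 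I would therefore present this proof as: (i) cite the local composition theorem of \cite{lascar}; (ii) upgrade to global (spatially uniform) estimates using the radial compactification and the fact that all $Z$-behaviour is uniformly symbolic, as in the scattering calculus; (iii) read off the principal symbol and Poisson-bracket identities from the first two terms of the expansion, carefully bookkeeping the anisotropic fibre grading and the $\delta$-loss in the base grading.
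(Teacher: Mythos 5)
Your overall strategy coincides with the paper's, which disposes of this proposition in one line by citing the containment \eqref{eq:basic containment delta} into the standard $S^{\ast,\ast}_{1/2,\delta}$ scattering classes together with ``the standard expansion of the symbol of a product.'' Your three-step plan at the end --- Lascar's local composition theorem, upgrading to uniform estimates via the radial compactification, and reading off the principal symbol and Poisson bracket from the first two terms of the expansion --- is exactly what that one line gestures at, with the details filled in. So the route is the same.

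That said, the numerology in your anisotropic bookkeeping is off in a way that you should correct before relying on it. You write that differentiating $a$ in $\Zdual$ ``costs nothing in the base order,'' and that differentiating $b$ in $Z$ ``raises the base order by at most $\delta$ per derivative.'' Both statements are reversed relative to Definition \eqref{symbolestdelta}: each $\zeta_j$-derivative on $a$ \emph{costs} $\delta$ in the base weight and each $\tau$-derivative costs $2\delta$ (that is the meaning of the $+\delta(|\beta|+2j)$ exponent), while each $z_j$- or $t$-derivative on $b$ \emph{gains} $(1-\delta)$ of base decay (the $-(1-\delta)(|\alpha|+k)$ exponent), which is a loss of only $\delta$ relative to the classical count. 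Carrying this through, $\partial_{\zeta_j} a\, D_{z_j} b$ lies in $S^{m+m'-1,\,\mathsf{l}+\mathsf{l}'-1+2\delta}_{\delta,\mathrm{par}}$ while $\partial_\tau a\, D_t b$ lies in $S^{m+m'-2,\,\mathsf{l}+\mathsf{l}'-1+3\delta}_{\delta,\mathrm{par}}$; note in particular that for the $\tau$-term the extra unit of fibre decay does not automatically absorb the extra $\delta$ in the base exponent, since in the corner of $\overline{T}^*_{\mathrm{par}}\mathbb{R}^{n+1}$ one cannot trade $\rho_{\mathrm{fib}}$ for $\rho_{\mathrm{base}}$. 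Your qualitative conclusion --- that the Poisson bracket lives one fibre order and (almost) one spatial order below, with a $\delta$-loss --- is what the statement records and what the argument should produce, but the prose you wrote does not actually yield it; you should re-track the exponents for each $|\gamma|=1$ contribution and for the remainder, distinguishing the $\zeta$- and $\tau$-derivative cases throughout. This is a bookkeeping repair rather than a structural one; the approach itself is the paper's.
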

\begin{proof}
This can be obtained cheaply from \eqref{eq:basic containment delta} and the standard expansion of the symbol of a product. 

\end{proof}

The elliptic set $\Ell_{m,\sw}(A)$ for variable order operators $A$ is defined just as for the constant spatial order case. 
The microlocal elliptic parametrix
construction goes through in this context and we conclude:
\begin{prop}\label{prop:elliptic param}
  Let $A \in \Psipd{m}{\sw}(\mathbb{R}^{n + 1})$ and let $K$ be a
  compact subset of $\Ell_{m,\sw}(A)$.  Then there is $B \in
  \Psipd{-m}{-\sw}(\mathbb{R}^{n + 1})$ such that
  $$
K \cap \WF'(AB - Id) =  K \cap \WF'(BA - Id) = \varnothing .
  $$
\end{prop}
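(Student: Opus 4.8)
The plan is the standard microlocal parametrix construction, carried out in the parabolic scattering calculus. I would argue entirely at the level of symbols, using Proposition~\ref{prop:parcalc} to translate symbolic identities into operator identities up to one order of improvement in each of the parabolic-differential order and the spatial order (with the usual $\delta$-loss from the variable order). Since $K$ is a compact subset of the open set $\Ell_{m,\mathsf{l}}(A)$, I first choose a slightly larger set: an open neighbourhood $U$ of $K$ with $\overline{U} \subset \Ell_{m,\mathsf{l}}(A)$ compact, and a cutoff $\chi \in S^{0,0}_{\mathrm{par,cl}}$ which is $\equiv 1$ near $K$ and supported (as an element of $\overline{T}^*_{\mathrm{par}}\mathbb{R}^{n+1}$, i.e.\ essentially supported) in $U$. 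On $\overline{U}$ the symbol $a$ of $A$ satisfies the ellipticity lower bound \eqref{eq:ell/char def}, so $a$ is bounded below in absolute value by $c\langle Z\rangle^{\mathsf{l}}\langle\Zdual\rangle^m$ there, and hence $\chi/a$ is a well-defined element of $S^{-m,-\mathsf{l}}_{\delta,\mathrm{par}}$ (one checks the seminorm estimates \eqref{symbolestdelta}: each derivative falling on $1/a$ produces a factor of $a^{-1}$ times derivatives of $a$, all of which have the right weights by the symbol estimates for $a$, while derivatives on $\chi$ are harmless). Set $B_0 = q_L(\chi/a)$.

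Next I would run the usual Neumann-series correction. By Proposition~\ref{prop:parcalc}, $AB_0 \in \Psipd{0}{\delta}(\mathbb{R}^{n+1})$ and its principal symbol of order $(0,\delta)$ — more precisely, working with the classical symbol calculus, its symbol modulo $S^{-1,-1+\delta}_{\delta,\mathrm{par}}$ — equals $\chi$, so $AB_0 - \Id =: E_1 \in \Psipd{-1}{-1+\delta}(\mathbb{R}^{n+1})$ microlocally near $K$; that is, $\chi_1 E_1 \chi_1 \in \Psipd{-1}{-1+\delta}$ where $\chi_1$ is a slightly smaller cutoff, $\equiv 1$ near $K$. Because $-1+\delta<0$, iterating and using the composition rule gives $E_1^N$ with symbol order dropping to $(-N,-N(1-\delta))$, so I can asymptotically sum: choose $B' \in \Psipd{-m}{-\mathsf{l}}$ with $B' \sim B_0\big(\Id - E_1 + E_1^2 - \cdots\big)$ (an asymptotic sum in the calculus, valid since orders decrease in both indices). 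Then $AB' - \Id$ has $\WF'$ disjoint from $K$. The same construction on the left, using $\chi/a$ again as a starting approximation for a left parametrix, produces $B'' \in \Psipd{-m}{-\mathsf{l}}$ with $\WF'(B''A - \Id)$ disjoint from $K$; the standard argument $B'' = B''(AB') + \text{(smoothing near }K) = (B''A)B' + \cdots$ shows $B'$ and $B''$ agree microlocally near $K$, so one may take $B = B'$.

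The only genuinely calculus-specific points — and the places I would be most careful — are two. First, the $\delta$-loss: unlike the constant-order setting, the error term $E_1$ lives in $\Psipd{-1}{-1+\delta}$, not $\Psipd{-1}{-1}$; this is still summable because each factor gains a full order in the parabolic-differential index and, crucially, gains $1-\delta>0$ in the spatial index, so both indices tend to $-\infty$ along the Neumann series. Second, one must confirm that $\chi/a \in S^{-m,-\mathsf{l}}_{\delta,\mathrm{par}}$ rather than merely in a worse class — this is where the uniform-in-spacetime nature of ellipticity \eqref{eq:ell/char def} is used, together with the three explicit regional estimates listed after \eqref{eq:ell/char def}, to see that the lower bound on $|a|$ holds uniformly on $\overline{U}$ up to and including the corner. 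Both of these are routine once set up; I do not anticipate any real obstacle, as the argument is the standard parametrix construction transplanted to this calculus, and all the ingredients (composition, principal symbols, asymptotic summation, $\WF'$, the containment \eqref{eq:basic containment delta}) are already in place.
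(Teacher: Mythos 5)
Your proposal is correct and matches the approach the paper has in mind: the paper actually gives no proof of this proposition at all, simply asserting (in the sentence preceding its statement) that ``the microlocal elliptic parametrix construction goes through in this context,'' relying on the composition calculus of Proposition~\ref{prop:parcalc} exactly as you do. Your write-up fills in precisely the standard details — the cutoff $\chi/a$, the Neumann-series correction, the asymptotic summation, and the left/right agreement — together with the two calculus-specific points (the $\delta$-loss being harmless since $\delta<1/2$, and the uniform ellipticity lower bound guaranteeing $\chi/a \in S^{-m,-\mathsf{l}}_{\delta,\mathrm{par}}$) that the paper silently takes for granted.
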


The global version of this proposition and the H\"ormander ``square root trick'' then
imply that
\begin{equation}
  \label{eq:L2 bounded}
  A \in \Psi^{0,0}_{\mathrm{par}}(\mathbb{R}^{n + 1}) \implies A \colon L^2
  \longrightarrow L^2 \mbox{ is bounded.}
\end{equation}
Note this also follows from the containment \eqref{eq:basic
  containment}.

We define the parabolic weighted Sobolev spaces, initially with constant orders, 
analogously to the standard scattering spaces by
\begin{defn}
	\label{def:sobolev}
	\[H^{m,l}_{\mathrm{par}}:=\{u\in \mathcal{S}'(\RR^{n+1}):Au\in L^2(\RR^{n+1})\textrm{ for all }A\in\Psip{m}{l}\}.\]
      \end{defn}
      Note that the operator
      $$
\Lambda_{m,l} = \mathrm{Op}(\langle \totz \rangle^{-m} \ang{\Zdual}^{-l}) =
\langle z,t \rangle^{-m}  \mathcal{F}^{-1} \ang{\Zdual} ^{-l} \mathcal{F}.
      $$
lies in $\Psi^{-m,-l}$ and is manifestly invertible.  For any $M,N \in
\mathbb{R}$ it therefore defines an isomorphism
$$
\Lambda_{m,l} \colon H^{M,L}_{\mathrm{par}} \longrightarrow H^{M + m, L + l}_{\mathrm{par}},
$$
and we define a topology on $H^{M,L}_{\mathrm{par}}$ by
\begin{equation}
  \label{eq:Hml top}
  \| u \|_{H^{m,l}_{\mathrm{par}}} = \| \Lambda_{-m,-l} u \|_{L^2}.
\end{equation}
For any $A \in \Psip{m}{l}(\mathbb{R}^{n + 1})$ and $u \in
H^{M,L}_{\mathrm{par}}$, \eqref{eq:L2 bounded} and Proposition
\ref{prop:parcalc} yield the estimate
$$
\| A u \|_{H^{M - m, L - l}_{\mathrm{par}}} = \| (\Lambda_{-M + m, -L + l} A   \Lambda_{M, L})
  \Lambda_{-M, -L} u \|_{L^2} \le C   \| u \|_{H^{M,L}_{\mathrm{par}}},
  $$
  and thus:
      \begin{prop}\label{prop:basic bounde}
        Let $A \in \Psip{m}{l}(\mathbb{R}^{n + 1})$.  Then for any $M,
        L \in \mathbb{R}$, $A \colon H^{M,L} \longrightarrow H^{M - m, L - l}$ is bounded.
      \end{prop}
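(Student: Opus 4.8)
The plan is to reduce the claimed boundedness to the $L^2\to L^2$ boundedness of order-$(0,0)$ operators, \eqref{eq:L2 bounded}, by conjugating $A$ with the order-shifting isomorphisms $\Lambda_{m,l}$ introduced above. Recall from \eqref{eq:Hml top} that the $H^{M,L}_{\mathrm{par}}$-norm is by definition $\| u \|_{H^{M,L}_{\mathrm{par}}} = \| \Lambda_{-M,-L} u \|_{L^2}$, and likewise $\| Au \|_{H^{M-m,L-l}_{\mathrm{par}}} = \| \Lambda_{-M+m,-L+l} Au \|_{L^2}$, so it suffices to control the $L^2$-norm on the right. I would first note that every element of $\Psip{m}{l}$ maps $\mathcal{S}'(\RR^{n+1})$ to itself (a consequence of the containment \eqref{eq:basic containment}), so that $Au$ is a well-defined tempered distribution for $u \in H^{M,L}_{\mathrm{par}} \subset \mathcal{S}'(\RR^{n+1})$ and the manipulations below are legitimate.

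Next I would insert the identity $\Id = \Lambda_{M,L}\Lambda_{-M,-L}$, valid since $\Lambda_{M,L}$ is manifestly invertible, and write
\[
\| Au \|_{H^{M-m,L-l}_{\mathrm{par}}} = \big\| \big( \Lambda_{-M+m,-L+l}\, A\, \Lambda_{M,L} \big)\, \Lambda_{-M,-L} u \big\|_{L^2}.
\]
Setting $B := \Lambda_{-M+m,-L+l}\, A\, \Lambda_{M,L}$, and observing that $\Lambda_{-M+m,-L+l} \in \Psip{M-m}{L-l}$, $A \in \Psip{m}{l}$, and $\Lambda_{M,L} \in \Psip{-M}{-L}$, the composition result Proposition~\ref{prop:parcalc} gives $B \in \Psip{0}{0}(\RR^{n+1})$, the orders adding to $(M-m)+m+(-M)=0$ and $(L-l)+l+(-L)=0$. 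Then \eqref{eq:L2 bounded} yields $\|Bw\|_{L^2} \le C\|w\|_{L^2}$; taking $w = \Lambda_{-M,-L} u$ and using $\|\Lambda_{-M,-L}u\|_{L^2} = \|u\|_{H^{M,L}_{\mathrm{par}}}$ gives the claimed bound, with $C$ depending only on $A$ and on $M,L$.

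I do not anticipate any real obstacle: the argument is pure bookkeeping within the composition calculus together with the already-established $L^2$ boundedness of order-$(0,0)$ operators. The only points needing a moment's attention are (i) that $\Lambda_{M,L}$ is genuinely invertible with inverse $\Lambda_{-M,-L}$ in the calculus --- this is built into its definition as $\langle z,t\rangle^{-M}\mathcal{F}^{-1}\ang{\Zdual}^{-L}\mathcal{F}$ --- so that inserting $\Id = \Lambda_{M,L}\Lambda_{-M,-L}$ on $\mathcal{S}'(\RR^{n+1})$ is legitimate, and (ii) careful tracking of the signs of the orders in the triple composition so that they cancel to $(0,0)$. An entirely parallel alternative would be to bypass the conjugation and instead use \eqref{eq:basic containment} to place $A$ in a standard scattering symbol class $S^{m,l}_{1/2,0}$ (or $S^{m/2,l}_{1/2,0}$ when $m\le 0$) and invoke the classical weighted Sobolev mapping property there; the $\Lambda$-conjugation argument is cleaner and self-contained, so that is the one I would present.
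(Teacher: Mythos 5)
Your argument is correct and is precisely the one the paper gives: conjugate by the invertible order-shifting operators $\Lambda$, use Proposition~\ref{prop:parcalc} to see the conjugate lies in $\Psip{0}{0}$, and invoke \eqref{eq:L2 bounded}. You supply a few more words of justification but the decomposition and the key lemmas are identical.
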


Putting together this proposition with the microlocal elliptic
parametrix in Proposition \ref{prop:elliptic param}.
In $\Ell(P)$, the standard microlocal elliptic parametrix construction, caried out in the parabolic pseudodifferential calculus yields the following result.
\begin{prop}
	\label{prop:elliptic.estimate}
	Suppose $P\in\Psip{m}{l}$ and $Q,G\in\Psip{0}{0}$ such that $P$ and $G$ are elliptic on $\WF'(Q)$. Then if $GPu\in H_{\mathrm{par}}^{s-m,r-l}$, we have $Qu\in H^{s,r}$ with the estimate
	\begin{equation}
		\|Qu\|_{H^{s,r}}\leq C(\|GPu\|_{H^{s-m,r-l}}+\|u\|_{H^{M,N}})
	\end{equation}
	for any $M,N\in\RR$.
\end{prop}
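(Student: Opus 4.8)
The plan is to run the standard microlocal elliptic parametrix argument, now assembled inside the parabolic scattering calculus, using as ingredients the elliptic parametrix of Proposition~\ref{prop:elliptic param}, the composition and symbol calculus of Proposition~\ref{prop:parcalc}, and the Sobolev boundedness of Proposition~\ref{prop:basic bounde}. Since $\WF'(Q)$ is a closed, hence compact, subset of $\partial\overline{T}^*_{\mathrm{par}}\mathbb{R}^{n+1}$ contained in the open set $\Ell_{m,l}(P)\cap\Ell_{0,0}(G)$, I would first fix a compact set $K$ with $\WF'(Q)\subseteq K\subseteq\Ell_{m,l}(P)\cap\Ell_{0,0}(G)$. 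Applying Proposition~\ref{prop:elliptic param} to $P$ on $K$ produces $B\in\Psip{-m}{-l}$ with $K\cap\WF'(BP-\Id)=\varnothing$, and the construction yields $\WF'(B)$ contained in any prescribed neighbourhood of $K$, so we may assume $\WF'(B)\subseteq K$ (microlocalizing the parametrix if necessary). Applying the same proposition to the elliptic operator $G$ on the compact set $K\supseteq\WF'(B)$ then yields a microlocal left parametrix $\tilde G\in\Psip{0}{0}$ with $\WF'(B)\cap\WF'(\tilde G G-\Id)=\varnothing$.

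The key step is the operator identity
\[
Q \;=\; (QB\tilde G)\,(GP)\;+\;QB(\Id-\tilde G G)P\;+\;Q(\Id-BP),
\]
which I would apply to $u$ and estimate term by term. For the last term, $\WF'(Q)\cap\WF'(\Id-BP)=\varnothing$, so by the composition calculus $Q(\Id-BP)$ has empty operator wavefront set, hence lies in $\Psi^{-\infty,-\infty}_{\mathrm{par}}$; viewing it as an element of $\Psip{M-s}{N-r}$ and invoking Proposition~\ref{prop:basic bounde} gives $\|Q(\Id-BP)u\|_{H^{s,r}_{\mathrm{par}}}\le C\|u\|_{H^{M,N}_{\mathrm{par}}}$. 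For the middle term, $\WF'(B)\cap\WF'(\Id-\tilde G G)=\varnothing$ forces $QB(\Id-\tilde G G)$ to have empty wavefront set, hence so does $QB(\Id-\tilde G G)P$ (residual operators compose to residual operators, again by Proposition~\ref{prop:parcalc}), and the same argument bounds this term by $C\|u\|_{H^{M,N}_{\mathrm{par}}}$. For the first term, $QB\tilde G\in\Psip{-m}{-l}$ by composition, so Proposition~\ref{prop:basic bounde} gives $QB\tilde G\colon H^{s-m,r-l}_{\mathrm{par}}\to H^{s,r}_{\mathrm{par}}$ boundedly, whence $\|(QB\tilde G)(GPu)\|_{H^{s,r}_{\mathrm{par}}}\le C\|GPu\|_{H^{s-m,r-l}_{\mathrm{par}}}$, which is finite by hypothesis. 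Summing the three bounds yields both $Qu\in H^{s,r}_{\mathrm{par}}$ and the asserted estimate.

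There is no deep obstacle here; the substantive work is already packaged in Proposition~\ref{prop:elliptic param}. The one point deserving care is the \emph{a priori} regularity issue: since $u$ is only assumed tempered, $u\in H^{M,N}_{\mathrm{par}}$ holds for some sufficiently negative weights $(M,N)$, and the estimate is established for such weights, becoming automatic for smaller ones; no separate regularization step is needed precisely because every summand in the displayed identity is a bounded operator applied either to $u\in H^{M,N}_{\mathrm{par}}$ or to $GPu\in H^{s-m,r-l}_{\mathrm{par}}$. I would also record once, if it has not been noted already, the two bookkeeping facts used above — that $\WF'(AC)\subseteq\WF'(A)\cap\WF'(C)$, and that an operator with empty wavefront set is residual with Schwartz kernel, hence bounded between all weighted parabolic Sobolev spaces — both of which follow from \eqref{eq:parab wf} and the composition expansion of Proposition~\ref{prop:parcalc} exactly as in the standard scattering calculus \cite{RBMSpec}.
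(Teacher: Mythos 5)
Your proof is correct and follows exactly the standard microlocal elliptic parametrix argument that the paper invokes for Proposition~\ref{prop:elliptic.estimate} without writing it out. One minor imprecision: forcing $\WF'(B)\subseteq K$ while keeping $K\cap\WF'(BP-\Id)=\varnothing$ is not literally possible (after cutting off $B$ one only retains disjointness of $\WF'(BP-\Id)$ from $\WF'(Q)$, not from all of $K$), but this is also unnecessary, since for the middle term the inclusion $\WF'(Q)\subseteq K$ together with $K\cap\WF'(\tilde GG-\Id)=\varnothing$ already gives $\WF'\bigl(QB(\Id-\tilde GG)\bigr)\subseteq\WF'(Q)\cap\WF'(\Id-\tilde GG)=\varnothing$ with no control on $\WF'(B)$ needed.
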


Variable order spaces can be defined similarly. 
\begin{defn}
	\label{def:sobolevdelta}
	Let $\sw$ be a classical symbol in $
        S^{0,0}_\mathrm{par}$ satisfying $\sw\geq l$ and let $A$ be
        a fixed, classical, globally elliptic element of
        $\Psi^{m,\sw}_{\delta, \mathrm{par}}$
        with $\delta\in (0,1/2)$.  We define
	\[H_{\mathrm{par}}^{m,\sw}:=\{u\in H_\mathrm{par}^{m,l}:Au\in L^2(\RR^{n+1})\}\]
\end{defn}
We equip $H_{\mathrm{par}}^{m,\sw}$ with the norm
\begin{equation}
\|u\|_{H_{\mathrm{par}}^{m,\sw}}:= \|u\|_{H_{\mathrm{par}}^{m,l}}+\|Au\|_{L^2}.
\end{equation}
This imparts a Hilbert space structure on $H_{\mathrm{par}}^{m,\sw}$, and moreover this structure is independent of the choice of $l$ and $A$. To see the independence we note that by the standard elliptic parametrix construction we can choose $B$ such that $I=BA+R$ with $B\in\Psip{-m}{-\mathrm{I}}$ and $R\in \Psip{-\infty}{-\infty}$, for any other $\tilde{A}\in \Psi^{m,\sw}$ we can write $\tilde{A} u=\tilde{A}BA u+\tilde{A}Ru$ and use that $\tilde{A}B\in \Psi_{\delta,\mathrm{par}}^{0,0}$ to bound $$\|\tilde{A}u\|\lesssim \|u\|_{H_{\mathrm{par}}^{m,l}}+\|Au\|_{L^2}$$

\begin{rem}
Proposition \ref{prop:basic bounde} and	Proposition \ref{prop:elliptic.estimate} hold for variable order operators $A,P\in \Psip{m}{\sw}$ with only minor modifications to the proof. See for example \cite[Proposition~5.15]{grenoble}.
\end{rem}


\section{Geometry of the time-dependent Schr\"odinger equation}
\label{sec:schrod geom}

\subsection{Characteristic variety}\label{subsec:charvar}
Let $P$ denote the operator $D_t + \Delta_g + V$, where $D_t = -i \partial_t$, $\Delta_g$ is the (positive)  Laplacian on $\RR^n$ with respect to a metric $g = g(t)$ and the metric $g$ and potential $V$ are as in Section~\ref{subsec:intro}.  The operator $P$ 
lies in $\Diff^{2,0}_{\mathrm{par}}$  and has 
principal symbol (written using the Einstein summation convention)
$$
\sigma_{2,0}(P) = p(z, t, \zeta, \tau)  = \tau + g^{ij}(z,t) \zeta_i \zeta_j.
$$
In this section we will study this
operator using the structures developed in the previous section; in
particular we will identify its (parabolic) characteristic set, its radial set,
and explain the Hamiltonian dynamics thereon.

Recall that the characteristic set, defined in \eqref{eq:chardef}, is a subset of the boundary of the compactified parabolic cotangent bundle, and therefore has a component at spacetime infinity and a component at fibre infinity, which intersect at the corner (both spatial and fibre infinity). 

By assumption, in a neighbourhood of spacetime infinity the operator $P$ coincides with $P_0 = D_t + \Delta_0$, where $\Delta_0$ is the (positive) flat Laplacian. The symbol of this operator is $\tau + |\zeta|^2$, and thus at spacetime infinity and in regions of bounded $\Phi = (\zeta, \tau)$, the characteristic variety $\Sigma_0$ is given simply
by $\tau = - |\zeta|^2$. Near fibre infinity (and still near spacetime infinity),  
this can be written in the form 
$$
\{ \tau / R^2 = - |\zeta/R|^2 \} \cap \partial \overline{T}^*_{\mathrm{par}}\mathbb{R}^{n + 1},
$$
where $R$ is as in \eqref{eq:fiber weight function}, 
which shows that this set is a smooth submanifold of $\overline{T}^*_{\mathrm{par}}\mathbb{R}^{n + 1}$ intersecting both fibre infinity and
spatial infinity nontrivially and transversally.  (Note that  $\tau/R^2$ and $\zeta/R$ are `angular' variables that are smooth up to fibre infinity.)  
In a bounded spacetime region, near fibre infinity, the characteristic variety can be similarly written 
$$
\{ \tau / R^2 = - g^{ij}(z, t)(\zeta/R)_i (\zeta/R)_j \} \cap \partial \overline{T}^*_{\mathrm{par}}\mathbb{R}^{n + 1},
$$
which shows that the characteristic variety is a smooth codimension one submanifold of $\partial \overline{T}^*_{\mathrm{par}}\mathbb{R}^{n + 1}$ in this region. 

The Hamilton vector field of $P$ is 
\begin{equation}
   H_p = \frac{\partial}{\partial t} +  2 g^{ij}(z,t) \zeta_i \frac{\partial}{\partial z_j} - \frac{\partial g^{ij}}{\partial t}\zeta_i \zeta_j \frac{\partial}{\partial \tau} - \frac{\partial g^{ij}}{\partial z_k}\zeta_i \zeta_j \frac{\partial}{\partial \zeta_k}.
\end{equation}
In bounded regions of spacetime, the fiber
rescaled Hamilton vector field is simply $\frac{1}{R} H_p$. Restricting this to fibre infinity, that is, taking the limit as $R \to \infty$, we see that the coefficients of $\partial_t$ and $\partial_\tau$ vanish. Thus the flow at fibre-infinity takes place at one moment of time, say $t=t_0$ (reflecting `infinite propagation speed' for the time-dependent Schr\"odinger equation). If we let $\zeta' = \zeta/R$ then we obtain the rescaled flow equations 
$$
\dot z^i = 2 g^{ij}(z, t_0)  \zeta'_j, \quad \dot \zeta'_k = - \frac{\partial g^{ij}(z, t_0)}{\partial z_k}\zeta'_i \zeta'_j 
$$
which we recognize as the geodesic equations for the metric $g(t_0)$ at the fixed time $t_0$. Moreover, on the characteristic variety we have $|\zeta'| = 2^{-1/4}$, so, because of the uniform positive-definiteness of $g^{ij}$, we see that this rescaled Hamilton vector field is nonvanishing over the spacetime interior.

\subsection{Radial sets and the Hamilton Flow}
\label{subsec:radial}
We now determine the radial set (see Definition~\ref{def:radial set}) for $P$, and the nature of the Hamilton flow in a neighbourhood of the radial set. 

As has just been shown, the rescaled Hamilton vector field in the spacetime interior is nonvanishing. Radial points, if they exist, therefore lie over spacetime infinity, that is, in $\partial_{base} \overline{T}^*_{\mathrm{par}}\mathbb{R}^{n + 1}$. We will show
\begin{proposition}\label{prop:radial set geom}
	The radial set $\mathcal{R}$ of $P$ is a disjoint union 
	$\mathcal{R} = \mathcal{R}_+ \cup \mathcal{R}_-$ 
	of smooth submanifolds of $\partial_{base} \overline{T}^*_{\mathrm{par}}\mathbb{R}^{n + 1}$ of dimension $n$. The component $\mathcal{R}_+$ is a family of global sinks for the rescaled Hamilton vector field $H_p^{2,0}$, and $\mathcal{R}_-$ is a family of global sources for this rescaled Hamilton vector field. Every bicharacteristic $\gamma(s)$ of $P$ (meaning a flowline of $H_p^{2,0}$ within the characteristic variety $\Sigma(P)$) converges to $\mathcal{R}_+$ as $s \to \infty$ and  to $\mathcal{R}_-$ as $s \to -\infty$. 
\end{proposition}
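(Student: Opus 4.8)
\section*{Proof proposal}

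The plan is to reduce everything to an explicit computation near spacetime infinity, where $P$ agrees with the flat model $P_0 = D_t + \Delta_0$, read off the radial set and its dynamical character from the resulting (essentially linear) Hamilton flow, and then obtain the global convergence statement using the non-trapping hypothesis. As already noted before the statement, the rescaled Hamilton vector field is nonvanishing in the spacetime interior, so all radial points lie over $\partial_{base}\overline{T}^*_{\mathrm{par}}\mathbb{R}^{n+1}$, and there $p = p_0 = \tau + |\zeta|^2$, with $H_{p_0} = \partial_t + 2\zeta\cdot\partial_z$. Note this involves no momentum derivatives, so $\zeta$, $\tau$ and $\rho_{fib}$ are all conserved along bicharacteristics in the flat region.

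First I would identify $\mathcal{R}$. In the region of bounded momentum at $\partial_{base}$ one rescales only in the base (as in the remarks after Definition~\ref{def:radial set}) and writes $H_{p_0}$ in polar coordinates $(|Z|,\hat Z)$ with $\rho_{base} = 1/|Z|$; the radial component is $O(\rho_{base})$ and drops out at the boundary, leaving $\rho_{base}^{-1}H_{p_0}|_{\rho_{base}=0}$ equal to the spherical projection onto $T\hat Z$ of the constant vector $v = v(\zeta) := (2\zeta,1)$. This vanishes exactly when $\hat Z = \pm v/|v|$, producing two disjoint sheets, each diffeomorphic to $\mathbb{R}^n_\zeta$ (with $\tau = -|\zeta|^2$); the $+$ sheet lies over the top hemisphere $\{\hat Z_{(t)}>0\}$ and the $-$ sheet over the bottom one. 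The only characteristic points in the fibre interior over spacetime infinity with $\Phi = 0$ that are radial are the two poles of $\partial S^{n+1}_+$, which are the $\zeta = 0$ members of these sheets. At the corner, the $\partial_t$–term of $H^{2,0}_{p_0}$ is of lower order by the structure recorded after Lemma~\ref{lem:ham vec rescale}, and since on $\Sigma(P)$ at fibre infinity one always has $|\zeta/R| = 2^{-1/4}\neq 0$ (so only the $\zeta$–dominant chart matters), the same computation with $v$ replaced by a nonzero $z$–space vector proportional to $\zeta$ shows the radial points there are exactly $\hat Z = \pm\widehat\zeta$, $\hat Z_{(t)} = 0$; this is precisely the closure of the two sheets. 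Writing out the explicit parametrization near the corner (with $\hat Z_{(t)}$ a smooth function of $\rho_{fib}$ vanishing at $\rho_{fib}=0$) shows the closures $\mathcal{R}_\pm$ are smooth $n$–dimensional submanifolds, and they are disjoint because on $\mathcal{R}_+$ one has $\hat z\cdot\widehat\zeta = +1$ while on $\mathcal{R}_-$ one has $\hat z\cdot\widehat\zeta = -1$, with disjoint $\hat Z$–values at bounded momentum where this relation is not defined.

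Next, the source/sink property. Still in the flat region one computes $H^{2,0}_p\rho_{base} = -\rho_{fib}\,\rho_{base}\,(v\cdot\hat Z)$; at $\mathcal{R}_+$ the coefficient $-\rho_{fib}|v|$ is strictly negative, so the flow contracts $\rho_{base}$, while within $\{\rho_{base}=0\}$ the projected vector field is the gradient flow of $\hat Z\mapsto v\cdot\hat Z$ on the sphere, for which $\hat Z = +v/|v|$ is the unique attracting critical point and $\hat Z = -v/|v|$ the repelling one (the $\zeta$–direction is neutral and tangent to $\mathcal{R}_\pm$). Together with the analogous statement at the corner, this gives that $\mathcal{R}_+$ is a family of sinks and $\mathcal{R}_-$ a family of sources; equivalently one may linearize $H^{2,0}_p$ transverse to $\mathcal{R}_\pm$ and check that all eigenvalues share a nonzero sign.

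Finally, the global convergence. Bicharacteristics at fibre infinity stay there (the flow is tangent to $\{\rho_{fib}=0\}$) and are, up to reparametrization, geodesics of a fixed-time metric $g(t_0)$; by non-trapping they leave every compact spatial set, hence eventually coincide with straight lines, escape to spatial infinity with asymptotically radial direction, and so converge to the corner pieces of $\mathcal{R}_+$ as $s\to+\infty$ and of $\mathcal{R}_-$ as $s\to-\infty$. For a bicharacteristic not at fibre infinity one has $H^{2,0}_p t = \rho_{fib}\rho_{base}^{-1} > 0$, so $t$ is strictly increasing; if $t(s)\to+\infty$ then the orbit eventually lies in the flat region, where $\zeta$ is conserved and $z(t) = z_1 + 2\zeta(t-t_1)$, so $\hat Z\to (2\zeta,1)/|(2\zeta,1)|$ and the orbit converges to the corresponding point of $\mathcal{R}_+$; if instead $t(s)$ stays bounded, its $\omega$–limit set is a nonempty compact invariant set on which $t$ is constant, hence contained in $\{\rho_{fib}=0\}$, hence either at the corner (and so inside $\mathcal{R}_+$) or a trapped set for the geodesic flow of a single $g(t_0)$ — the latter excluded by non-trapping — so again the orbit converges to $\mathcal{R}_+$. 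The $s\to-\infty$ statement is identical with $\mathcal{R}_+$ and $\mathcal{R}_-$ interchanged. I expect this last, global, step to be the main obstacle: the identification of $\mathcal{R}_\pm$ and the source/sink property reduce to bookkeeping with the explicit flat flow, but controlling orbits that approach fibre infinity and the corner, and invoking non-trapping there cleanly, requires genuine care.
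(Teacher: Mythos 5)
Your proposal is correct and follows essentially the same strategy as the paper: restrict to the flat model $P_0$ near spacetime infinity, compute the rescaled Hamilton vector field in charts over the poles and equator of $\partial_{base}\overline{T}^*_{\mathrm{par}}\mathbb{R}^{n+1}$ (you package the polar computation invariantly as a spherical gradient flow of $\hat Z\mapsto v\cdot\hat Z$ with $v=(2\zeta,1)$, while the paper uses coordinates $w=z/t$, $\tilde w=w-2\zeta$), read off $\mathcal{R}_\pm$ and their disjointness from $\hat z\cdot\hat\zeta=\pm1$, check the source/sink property by linearization, and invoke non-trapping for the global convergence. The one minor difference is in the last step: the paper proves convergence by computing interior flat flow lines and appealing to continuity of the rescaled field up to the boundary, while you split more explicitly into orbits at fibre infinity (handled directly by non-trapping) and interior orbits (handled by monotonicity of $t$ and an $\omega$-limit-set argument); both routes are sound.
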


\begin{proof}
To prove this, we start by noting that in a neighbourhood of spacetime infinity, $P$ coincides with $P_0 = D_t + \Delta_0$, where $\Delta_0$ is the flat (positive) Laplacian on $\RR^n$. Thus we only need to consider the Hamilton flow for this flat model, which is 
\begin{equation}\label{eq:simple ham}
H_{p_0} = \frac{\partial}{\partial t} + 2 \zeta \cdot \frac{\partial}{\partial z}.
\end{equation}

We first consider the region spacetime region $\{ |t| \geq \epsilon |z|, t \geq C \}$ for arbitrary $C, \epsilon > 0$. In terms of Figure~\ref{fig:rad comp spacetime}, this is strictly in the `northern hemisphere'. 
In this region, $w := z/t$ is a coordinate on the spacetime boundary, we take $\rho_b = 1/t$ as a boundary defining function, and we rescale the Hamilton vector field by dividing by $\rho_b$, or equivalently multiplying by $t$; that is, we consider  
$$
t H_{p_0} = t \frac{\partial}{\partial t} + 2t \zeta \cdot \frac{\partial}{\partial z}.
$$
Using coordinates $(\rho_b, w, \zeta, \tau)$ which are valid near spacetime infinity and for bounded $\Phi = (\zeta, \tau)$, this is 
$$
- \rho_b \frac{\partial}{\partial \rho_b} + (2 \zeta - w) \cdot \frac{\partial}{\partial w}.
$$
Then changing coordinates to $(\rho_b, \tilde w, \zeta, \tau)$ where $\tilde w = w - 2\zeta$, we obtain 
$$
- \rho_b \frac{\partial}{\partial \rho_b}  - \tilde w \cdot \frac{\partial}{\partial \tilde w}.
$$
Thus the radial set in this region, which we denote $\mathcal{R}_+$, is given by $\rho_b = 0, \tilde w = 0$ and $\tau + |\zeta|^2 = 0$ (which is just the condition of lying in $\Sigma(P_0)$). It is clear that the rescaled Hamilton vector field is a sink near $\mathcal{R}_+$. Thus, in this region, we have 
\begin{equation}\label{eq:rad.interior}
\mathcal{R}_+ = \{ \rho_b = 0, \ \zeta  = \frac{w}{2}, \ \tau  = \frac{|w|^2}{4} \}, \quad w = \frac{z}{t}.
\end{equation}
Noting that $w$ is a coordinate on spacetime infinity in this region, we see that 
the radial set $\mathcal{R}_+$ is a \emph{graph over spacetime infinity} in this region. It also reflects the fully \emph{dispersive} nature of the Schr\"odinger equation: each frequency propagates in a different direction or at a different speed, and therefore ends up at a different point of spacetime infinity. 

There is an analogous radial set over the `southern hemisphere' (in terms of Figure~\ref{fig:rad comp spacetime}), where we restrict to  $\{ |t| \geq \epsilon |z|, t \leq -C \}$. As we will shortly show, this is a different component of the radial set, which we denote $\mathcal{R}_-$. If we now redefine our coordinates so that 
$\tilde \rho_b = -1/t$ (so that it is a nonnegative function), $w = z/|t|$ and $\tilde w = w + 2\zeta$, we have 
\begin{equation}\label{eq:rad.interior.2}
\mathcal{R}_- = \{ \rho_b = 0, \ w = \frac{z}{|t|} = -2\zeta, \ \tau + |\zeta|^2 = 0 \}
\end{equation}
which is a graph over (part of) the southern hemisphere. 
The rescaled Hamilton vector field $|t| H_{p_0}$ takes the form 
$$
 \rho_b \frac{\partial}{\partial \rho_b}  + \tilde w \cdot \frac{\partial}{\partial \tilde w},
$$
i.e.\ it is a source near $\mathcal{R}_-$. 

We now consider the case where $|z| \geq C |t|$ and $|z| \geq C$, that is, near the `equator' in terms of Figure~\ref{fig:rad comp spacetime}. Working in a small neighbourood of the equator, we may suppose without loss of generality that the first spatial coordinate $z_1$ is positive, and satisfies $z_1 \geq 1/2 \max_i |z_i|$. In that case, we may take the spacetime boundary defining function $\rho_b$ to be $1/z_1$. We also write $s = t/z_1$ and $v_j = z_j/z_1$ for $j \geq 2$. First working in a region where $\Phi$ is bounded, we rescale the Hamilton vector field by dividing by $\rho_b$, that is, multiplying by $z_1$. Using coordinates $(\rho_b, s, v_j, \zeta, \tau)$, we have 
$$
\rho_b^{-1} H_{p_0} = \big( 1 - 2s \zeta_1 \big) \frac{\partial}{\partial s} + \sum_{j \geq 2} \big( 2 \zeta_j - 2 \zeta_1 v_j \big) \frac{\partial}{\partial v_j} - 2 \zeta_1 \rho_b \frac{\partial}{\partial \rho_b}.
$$
We are interested in the region where $s$ is small, otherwise the previous calculation applies. In order for this vector field to vanish, we see from the $\partial_s$ coefficient that when $s$ is small, necessarily $|\zeta_1|$ is large, and either positive or negative depending on the sign of $s$. First taking the case that $\zeta_1$ is large and positive, we  use fibre boundary defining function $\rho_f = 1/\zeta_1$ and coordinates $\omega_j = \zeta_j/\zeta_1$ and $\sigma = \tau/|\zeta|^2$. We further rescale the Hamilton vector field by multiplying by $\rho_f$. An easy computation shows  
$$
\rho_b^{-1} \rho_f H_{p_0} = \big( \rho_f - 2s \big) \frac{\partial}{\partial s} + \sum_{j \geq 2} \big( 2 \omega_j - 2  v_j \big) \frac{\partial}{\partial v_j} - 2  \rho_b \frac{\partial}{\partial \rho_b}.
$$
Changing variables to $\tilde s = 2s - \rho_f$, $\tilde v_j = v_j - \omega_j$ then in coordinates $(\tilde s, \tilde v_j, \rho_b, \rho_f, \omega_j, \sigma)$ we have the vector field
\begin{equation}\label{eq:Hvf Rplus equator}
-2\tilde s \frac{\partial}{\partial \tilde s} -2  \sum_{j \geq 2} \tilde v_j \frac{\partial}{\partial \tilde v_j} - 2  \rho_b \frac{\partial}{\partial \rho_b}.
\end{equation}
In this region, where $\zeta_1 >> 0$, the radial set is given by $\{ \rho_b = 0, \tilde s = 0, \tilde v_j = 0, \sigma = -1 \}$.  These equations define a submanifold of dimension $n$ inside $\partial_{base} \overline{T}^*_{\mathrm{par}}\mathbb{R}^{n + 1}$, which is transverse to fibre infinity. It is not hard to check that where $s$ is strictly positive, this set coincides with the set $\mathcal{R}_+$ defined in \eqref{eq:rad.interior}. We write the equations for $\mathcal{R}_+$ using more natural coordinates in this region as 
\begin{equation}\label{eq:rad.corner}
\mathcal{R}_+ = \{ \rho_b = 0, \ s = \rho_f/2, \ \hat \zeta = \hat z, \ \tau/|\zeta|^2 = -1 \}.
\end{equation}
The rescaled Hamilton vector field in \eqref{eq:Hvf Rplus equator} is clearly a sink at $\mathcal{R}_+$ in this region. Notice that the projection to spacetime infinity gives the closed northern hemisphere, since $s = \rho_f/2 \geq 0$ in \eqref{eq:rad.corner}. Wherever $s$ is strictly positive, $\mathcal{R}_+$ is a graph, but it fails to be so at the boundary $s=0$: the graph `turns vertical' at the equator, and has a boundary at fibre-infinity. See Figure~\ref{fig:Rpm}. 

\begin{figure}\label{fig:Rpm}
	\centering
	\def\svgwidth{70mm}
	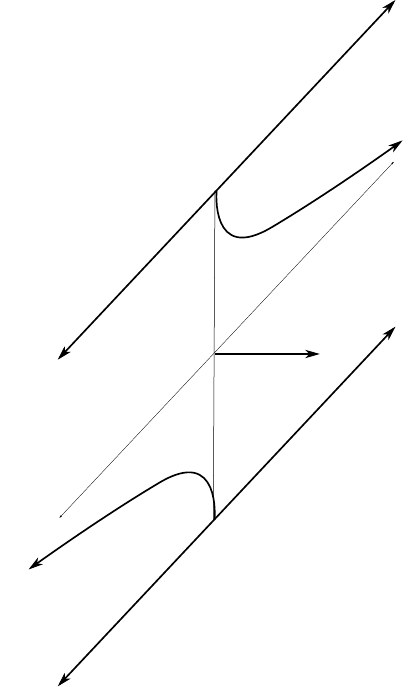
	\caption{Near the ``equator'' $t = 0$ in the boundary, the radial
		sets remain disjoint and intersect the corner normally.}  
\end{figure}

We next take the case that $\zeta_1$ is very negative. In this case we  use fibre boundary defining function $\rho_f = -1/\zeta_1$ and coordinates $\omega_j = \zeta_j/|\zeta_1|$ and $\sigma = \tau/|\zeta|^2$. We rescale the Hamilton vector field by multiplying by (the new) $\rho_f$ and repeat the calculation. We obtain the vector field 
$$
\rho_b^{-1} \rho_f H_{p_0} = \big( \rho_f + 2s \big) \frac{\partial}{\partial s} + \sum_{j \geq 2} \big( 2 \omega_j + 2  v_j \big) \frac{\partial}{\partial v_j} + 2  \rho_b \frac{\partial}{\partial \rho_b}.
$$
Changing variables to $\tilde s = 2s + \rho_f$, $\tilde v_j = v_j + \omega_j$ then in coordinates $(\tilde s, \tilde v_j, \rho_b, \rho_f, \omega_j, \sigma)$ we have the vector field
\begin{equation}\label{eq:Hvf Rminus equator}
2\tilde s \frac{\partial}{\partial \tilde s} + 2  \sum_{j \geq 2} \tilde v_j \frac{\partial}{\partial \tilde v_j} + 2  \rho_b \frac{\partial}{\partial \rho_b}.
\end{equation}
This part of the radial set coincides with $\mathcal{R}_-$ where $s < 0$, and can be expressed as 
\begin{equation}\label{eq:rad.corner.2}
\mathcal{R}_- = \{ \rho_b = 0, \ s = -\rho_f/2, \ \hat \zeta = -\hat z, \ \tau/|\zeta|^2 = -1 \}.
\end{equation}
Comparing \eqref{eq:rad.corner} and \eqref{eq:rad.corner.2}, we see that the two sets are disjoint. In fact, we have $\hat \zeta =  \hat z$ for $\mathcal{R}_+$, while $\hat \zeta =  -\hat z$ for $\mathcal{R}_-$. 
The rescaled Hamilton vector field in \eqref{eq:Hvf Rminus equator} is clearly a source at $\mathcal{R}_-$ in this region (which is another way to understand the disjointness of the two components). Similar to $\mathcal{R}_+$, the component $\mathcal{R}_-$ is a graph over the open southern hemisphere but turns vertical at the equator and reaches fibre-infinity there.

It remains to prove the last statement in the Proposition. For this we need the nontrapping assumption, that is, that for each fixed $t_0$, the metric $g(t_0)$ is nontrapping: every geodesic for $g(t_0)$ in $T^* \RR^n$ reaches spatial infinity both forwards and backwards. That means, in particular, that every bicharacteristic $\gamma(s)$ for $P$ coincides with a bicharacteristic for $P_0$ when $s$ is sufficiently negative or sufficiently positive. Recalling the discussion at the end of Section~\ref{subsec:charvar}, it follows that it suffices to prove the statement for $P_0$. 

Now because the rescaled Hamilton vector field has a smooth extension to the boundary of the compactified parabolic cotangent bundle, which is tangent to the boundary, it suffices to prove the statement for flow lines in the interior, and take a limit as the flow lines approach the boundary. So we consider an interior flow line for $P_0$, contained within $\{ p_0 = 0 \}$. These take the form for some $t_0, z_0, \zeta_0$, 
$$
t(s) = t_0 + s, \quad z(s) = z_0 + 2\zeta_0 s, \quad \zeta = \zeta_0, \quad \tau = - |\zeta_0|^2.
$$
As $s \to +\infty$ we see that $z/t$ converges to $2\zeta_0$, while $\tau$ is fixed at $-|\zeta_0|^2$. We see that this converges to a point of $\mathcal{R}_+$. Similarly, as $s \to -\infty$, we have $z/|t| \to -2\zeta_0$, $\tau = -|\zeta_0|^2$ so this converges to a point of $\mathcal{R}_-$.

\end{proof}

We see that the radial sets $\mathcal{R}_\pm$ are located as illustrated in Figure~\ref{fig:Rpm}: their projections to spacetime infinity meet, but they are nevertheless separated as subsets of the compactified phase space.

The final task in this section is to observe the symplectic nature of the radial sets. 
Let $\tilde \SR_\pm$ denote the $(n+1)$-dimensional submanifold of $\overline{T}^*_{\mathrm{par}}\mathbb{R}^{n + 1}$ uniquely determined by the following two conditions:  
\begin{itemize}
\item $\tilde \SR_\pm$ are invariant under spatial dilation, $Z \mapsto aZ$ for $a \in \RR_+$, and
\item The intersection of $\tilde \SR_\pm$ with $\partial_{base} \overline{T}^*_{\mathrm{par}}\mathbb{R}^{n + 1}$ is $\SR_\pm$.
\end{itemize}

\begin{lemma}\label{lem:Lagrangian} The submanifolds $\tilde \SR_\pm$ defined above are Lagrangian submanifolds for the standard symplectic form $d\zeta dz + d\tau dt$ on $\overline{T}^*_{\mathrm{par}}\mathbb{R}^{n + 1}$. 
\end{lemma}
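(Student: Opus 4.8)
The plan is to identify the interiors of $\tilde\SR_\pm$ as explicit subsets of $T^*\RR^{n+1}$, recognise each as half of the graph of an exact one-form, and then invoke the standard fact that the graph of an exact differential is Lagrangian.

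First I would read off the description of $\SR_\pm$ in the interior from the coordinate computations already carried out in the proof of Proposition~\ref{prop:radial set geom}. Translating the defining equations \eqref{eq:rad.interior} and \eqref{eq:rad.corner} of $\SR_+$ back into the ambient coordinates $(z,t,\zeta,\tau)$ — using that $\SR_+$ is swept into the interior by the spatial dilation $Z\mapsto aZ$ that characterises $\tilde\SR_+$ — one obtains
\[
\tilde\SR_+ \cap T^*\RR^{n+1} = \Big\{ (z,t,\zeta,\tau) : t>0,\ \zeta = \tfrac{z}{2t},\ \tau = -\tfrac{|z|^2}{4t^2} \Big\},
\]
and likewise, from \eqref{eq:rad.interior.2} and \eqref{eq:rad.corner.2},
\[
\tilde\SR_- \cap T^*\RR^{n+1} = \Big\{ (z,t,\zeta,\tau) : t<0,\ \zeta = \tfrac{z}{2t},\ \tau = -\tfrac{|z|^2}{4t^2} \Big\}.
\]
The compatibility of the two coordinate patches covering each $\SR_\pm$ (the ``hemisphere'' patch and the ``equator/corner'' patch) was already verified in the proof of Proposition~\ref{prop:radial set geom}; and the dilation-invariant extension exists and is unique precisely because $\SR_\pm$ meets $\partial_{base}\overline{T}^*_{\mathrm{par}}\RR^{n+1}$ transversally, so that the dilation orbits emanating from $\SR_\pm$ sweep out a single $(n+1)$-dimensional set. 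In particular each $\tilde\SR_\pm$ is a smooth $(n+1)$-manifold, being a graph over the open set $\{\pm t>0\}\subset\RR^{n+1}_{z,t}$.

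Next I would observe that, with the quadratic phase function $\phi(z,t) = |z|^2/(4t)$ — which is homogeneous of degree one in $Z$, so that $d\phi$ is invariant under the spatial dilation — one has $\partial_{z_i}\phi = z_i/(2t)$ and $\partial_t\phi = -|z|^2/(4t^2)$; hence the sets above are exactly the two halves $\{(z,t,d\phi(z,t)) : \pm t>0\}$ of the graph of $d\phi$ over $\{t\neq 0\}$. (This $\phi$ is, up to the usual normalisation, the phase appearing in \eqref{upm}.) The Lagrangian property is then immediate: writing $\theta = \zeta\,dz + \tau\,dt$ for the tautological one-form, so that $\omega = d\theta = d\zeta\,dz + d\tau\,dt$, the parametrisation $\iota : (z,t)\mapsto (z,t,d\phi(z,t))$ satisfies $\iota^*\theta = d\phi$, whence $\iota^*\omega = d(d\phi) = 0$; since $\dim\tilde\SR_\pm = n+1 = \tfrac12\dim T^*\RR^{n+1}$, these submanifolds are Lagrangian, and $\tilde\SR_\pm$ as defined are their closures in $\overline{T}^*_{\mathrm{par}}\RR^{n+1}$.

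The only real labour here is the bookkeeping in the first step — collapsing the several boundary coordinate systems of Proposition~\ref{prop:radial set geom} into the single clean formula $\zeta = z/(2t)$, $\tau = -|z|^2/(4t^2)$ — and this is essentially already done in that proof, so I do not anticipate a genuine obstacle. Once the explicit description is in hand, the conclusion is the textbook fact that the graph of an exact one-form is Lagrangian; if one preferred to avoid the coordinate identification, one could instead argue directly that $\iota^*\omega$ vanishes by differentiating the relations $\zeta = z/(2t)$, $\tau = -|z|^2/(4t^2)$, but this amounts to the same computation.
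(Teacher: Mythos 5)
Your proposal is correct and follows essentially the same route as the paper: both identify $\tilde\SR_\pm$ in the interior as $\{\zeta = z/(2t),\ \tau = -|z|^2/(4t^2),\ \pm t > 0\}$ and verify that $d\zeta\,dz + d\tau\,dt$ pulls back to zero there. Your added observation that this set is the graph of $d\phi$ with $\phi(z,t) = |z|^2/(4t)$ (the free Schr\"odinger phase appearing in \eqref{upm}, homogeneous of degree one so that $d\phi$ is dilation-invariant) packages the paper's direct calculation as the textbook fact $d(d\phi)=0$, but it is the same computation.
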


\begin{proof} Since $\tilde \SR_\pm$ are smooth submanifolds with boundary, of the correct dimension, it is only necessary to verify the Lagrangian condition, i.e.\ that the symplectic form vanishes when restricted to $\tilde \SR_\pm$, in the interior of $\tilde \SR_\pm$. In this region we can use the coordinates $(w = z/t, \rho = 1/t, \zeta, \tau)$, as in the beginning of the proof of Proposition~\ref{prop:radial set geom}. In these coordinates $\tilde \SR_\pm$ is given by 
$$
\tilde \SR_\pm = \{ \zeta = \frac{w}{2}, \tau = - \frac{|w|^2}{4}, \pm \rho > 0 \},
$$
and the symplectic form restricted to $\tilde \SR_\pm$ is 
$$
d \big( \frac{z}{2t} \big) dz + d \big( -\frac{|z|^2}{4t^2} \big) dt = - \frac{z}{2t^2} dt dz - \frac{z}{2t^2} dz dt = 0.
$$
\end{proof}


\section{Module regularity}
\label{sec:modregdef}
\subsection{Test modules}
In addition to the (parabolic) scattering Sobolev spaces considered in Definition \ref{def:sobolev} and their variable order analogues in Definition \ref{def:sobolevdelta}, we shall require the notion of iterated regularity with respect to a test module of pseudodifferential operators, introduced in \cite{HMV2004}. In this section we shall work exclusively with pseudodifferential operators with classical symbols, $\Psi_{\mathrm{par, cl}}^{m,l}(\RR^{n+1})$ (see Definition~\ref{def:classical}).

\begin{defn}\label{def:modreg} 
A test module of operators contained in $\Psipcl{1}{1}(\RR^{n+1})$ is a vector subspace of 
$\Psipcl{1}{1}(\RR^{n+1})$ that contains and is a module over $\Psipcl{0}{0}(\RR^{n+1})$, is finitely generated over $\Psipcl{0}{0}(\RR^{n+1})$, and is closed under commutators. 
\end{defn}

Consider an arbitrary test module $\SM\subset \Psip{1}{1}$, with generating set $\mathbf{A}=(\mathbf{A}_j)_{j=0}^N\subset \Psip{1}{1}$ where $\mathbf{A}_0=\mathrm{Id}$. 
Powers of the module can be defined in the natural way.
\begin{defn}
	\label{def:module.power}
	For arbitrary $\kappa\in\NN$ we define $\SM^{\kappa}$
	to be the $\Psip{0}{0}$-module generated by the set 
	\begin{equation}
		\{A_\alpha=\mathbf{A}^\alpha:|\alpha|\leq \kappa \},
	\end{equation}
where $\alpha = (\alpha_0, \dots, \alpha_N)$ is a multi-index and $\mathbf{A}^\alpha$ denotes the composition $$\mathbf{A}^\alpha := \mathbf{A}_0^{\alpha_0} \mathbf{A}_1^{\alpha_1} \dots \mathbf{A}_N^{\alpha_N}.$$ Notice that the ordering of the factors is immaterial  due to the fact that the module is by definition closed under commutators. 
	
	Equivalently, $\SM^{\kappa}$ is the module generated by all $\kappa$-fold products of elements of $\SM$. 
\end{defn}


Let $\SM$ be a test module. We define Sobolev spaces of functions with additional regularity with respect to $\SM$ as follows.
\begin{defn}
	\label{def:module.reg}
	Let $s$ and $l$ be real numbers, and $\kappa$ a natural number. We define the space of functions with $\SM$-module regularity of order $\kappa$ in $H^{s,l}_{\mathrm{par}}(\RR^{n+1})$ by 
	\begin{equation}\label{eq:1module}
		H_\SM^{s,l;\kappa}:=\{u\in H^{s,l}_{\mathrm{par}}(\RR^{n+1}):A u \in H^{s,l}_{\mathrm{par}}\textrm{ for all } A\in\SM^{\kappa}\}.
	\end{equation}
\end{defn}
Concretely, $u\in H_\SM^{s,l;\kappa}$ if and only if $\mathbf{A}^\alpha u\in H^{s,l}_\mathrm{par}$ for all $\alpha$ with $|\alpha| \leq \kappa$.
It shall be useful to assume additional regularity with respect to a fixed submodule  $\SN\subset \SM$ with generating set $\mathbf{B}=(\mathbf{B}_j)_{j=0}^{N'}$. To this end, we introduce the the following refinement of \eqref{eq:1module}.
\begin{defn}
	\label{def:2modules}
	We define the space of functions with $\SM$-module regularity of order $\kappa$ and $\SN$-module regularity of order $k$ in $H^{s,l}_{\mathrm{par}}(\RR^{n+1})$ by
	\begin{equation}\label{eq:2modules}
		H_\SM^{s,l;\kappa,k}:=\{u\in H^{s,l}_{\mathrm{par}}(\RR^{n+1}):AB u \in H^{s,l}_{\mathrm{par}}\textrm{ for all }A\in \SM^{\kappa}\textrm{ and }B\in\SN^{k}.\}
	\end{equation}
\end{defn}
We suppress the $\SN$ in our notation, as in this paper it will only ever be used with the specific module $\SN$ from Definition~\ref{def:module.def.characteristic}. Concretely, $u\in H_\SM^{s,l;\kappa,k}$ if and only if $\mathbf{A}^\alpha \mathbf{B}^\beta u\in H^{s,l}_\mathrm{par}$ for all $\alpha,\beta$ with $|\alpha| \leq \kappa$ and $|\beta|\leq k$.

We equip the spaces $H_\SM^{s,l;\kappa},H_\SM^{s,l;\kappa,k}$ with a Hilbert space structure by fixing a choice of generators $\mathbf{A},\mathbf{B}$ and taking
\begin{equation}
	\|u\|_{H_\SM^{s,l;\kappa}}^2:= \sum_{|\alpha|\leq \kappa } \|\mathbf{A}^\alpha  u\|_{H_\mathrm{par}^{s,l}}^2
\end{equation}
and
\begin{equation}
	\label{eq:mod.reg.sob.norm}
	\|u\|_{H_\SM^{s,l;\kappa,k}}^2:= \sum_{|\alpha|\leq \kappa }\sum_{|\beta|\leq k} \|\mathbf{A}^\alpha \mathbf{B}^\beta u\|_{H_\mathrm{par}^{s,l}}^2 .
\end{equation}
\begin{remark}
	Definition \ref{def:module.reg} and Definition \ref{def:2modules} generalise in the natural way to the case of variable spatial weight $\sw\in\Psip{0}{0}$.
\end{remark}

\subsection{The modules $\SM_\pm$ and $\SN$}

We now introduce two specific modules $\SM_{\pm}$, and a common submodule $\SN$ that shall be the modules of interest in this paper. 

\begin{defn}
	\label{def:module.def.characteristic}
	If $\SR=\SR_+\cup \SR_-$ is the radial set for the operator $P$ introduced in Section \ref{subsec:radial}, we define $\SM_\pm$ by 	
	\begin{equation}
	\SM_\pm :=	\{A\in \Psipcl{1}{1}(\RR^{n+1}) \mid 
		[\rho_{base}\sigma_{1,1}(A)] \, |_{\SR_{\pm}}=0\}. 
	\end{equation}
	We also define
	\begin{equation}\label{eq:Ndef}
		\SN:= \SM_+\cap \SM_-
	\end{equation}
and we write $H_\pm^{s,l;\kappa,k}$ for the module regularity space \eqref{eq:2modules} when $\SM = \SM_\pm$ is as in \eqref{def:module.def.characteristic} and $\SN$ is as in \eqref{eq:Ndef}. 
\end{defn}

\begin{remark} We restrict to classical operators precisely so the above definition makes sense. Indeed, the classical assumption means that $\rho_{base} \sigma_{1,1}(A)$ has an smooth extension to the compactified parabolic cotangent bundle $\overline{T}^*_{\mathrm{par}}\mathbb{R}^{n + 1}$, so the restriction of this function to $\SR_\pm$ is meaningful; this is not true for a general operator in $\Psip{1}{1}(\RR^{n+1})$. 
\end{remark}

\begin{proposition} The modules $\SM_\pm$, and therefore also $\SN$, are test modules in the sense of Definition~\ref{def:modreg}. 
\end{proposition}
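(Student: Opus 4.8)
The plan is to verify, for each sign, the four conditions of Definition~\ref{def:modreg} for $\SM_\pm$, and then to transfer them to $\SN=\SM_+\cap\SM_-$. For $A\in\Psipcl{1}{1}(\RR^{n+1})$ write $\sigma_\flat(A):=[\rho_{base}\sigma_{1,1}(A)]$, which by classicality of $A$ is a smooth function on $\overline{T}^*_{\mathrm{par}}\RR^{n+1}$ (as in the Remark following Definition~\ref{def:module.def.characteristic}), so that $\SM_\pm=\{A:\sigma_\flat(A)|_{\SR_\pm}=0\}$. From the (classical) symbol calculus one has: $\sigma_\flat$ is linear, with kernel $\Psipcl{0}{0}(\RR^{n+1})$ on $\Psipcl{1}{1}(\RR^{n+1})$ and onto (as a principal symbol map); $\sigma_\flat(QA)=\sigma_\flat(AQ)=\sigma_{0,0}(Q)\,\sigma_\flat(A)$ for $Q\in\Psipcl{0}{0}(\RR^{n+1})$; $[A,B]\in\Psipcl{1}{1}(\RR^{n+1})$ for $A,B\in\Psipcl{1}{1}(\RR^{n+1})$ (the $\delta$-loss in Proposition~\ref{prop:parcalc} being an artifact of variable orders); and $\sigma_\flat([A,B])$ restricted to $\partial_{base}\overline{T}^*_{\mathrm{par}}\RR^{n+1}$ equals the bracket, induced there by the natural scattering structure of that face, of $\sigma_\flat(A)$ and $\sigma_\flat(B)$. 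The first three conditions are then immediate: linearity shows $\SM_\pm$ is a subspace of $\Psipcl{1}{1}(\RR^{n+1})$; $\Psipcl{0}{0}(\RR^{n+1})=\ker\sigma_\flat\subset\SM_\pm$; and $\sigma_\flat(QA)=\sigma_{0,0}(Q)\sigma_\flat(A)$ vanishes on $\SR_\pm$ whenever $\sigma_\flat(A)$ does, so $\SM_\pm$ is a two-sided $\Psipcl{0}{0}(\RR^{n+1})$-module.

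Closure under commutators is the step where the geometry of the radial set enters. For $A,B\in\SM_\pm$ we must show $\sigma_\flat([A,B])$ vanishes on $\SR_\pm$; by the bracket formula above it suffices to show that the bracket on $\partial_{base}\overline{T}^*_{\mathrm{par}}\RR^{n+1}$ of two functions vanishing on $\SR_\pm$ again vanishes on $\SR_\pm$. Here I invoke Lemma~\ref{lem:Lagrangian}: the dilation-invariant extension $\tilde{\SR}_\pm$ is Lagrangian for the standard symplectic form, and since $\tilde{\SR}_\pm$ is a cone over $\SR_\pm$ in the dilation direction, $\SR_\pm$ is isotropic (indeed Legendrian) for the contact structure that $\partial_{base}\overline{T}^*_{\mathrm{par}}\RR^{n+1}$ carries as the boundary face of the parabolic scattering cotangent bundle; equivalently, the ideal of functions vanishing on $\tilde{\SR}_\pm$ is closed under Poisson bracket (the standard property of coisotropic, in particular Lagrangian, submanifolds), and this passes to the induced bracket at $\SR_\pm$. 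Near the corner one argues by continuity, using that $\SR_\pm$ meets fibre infinity transversally (Proposition~\ref{prop:radial set geom}). Hence $[A,B]\in\SM_\pm$. This really does use the special geometry: for a general submanifold the bracket of two vanishing functions need not vanish on it.

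The remaining condition, finite generation over $\Psipcl{0}{0}(\RR^{n+1})$, is where the real work lies. By Proposition~\ref{prop:radial set geom}, $\SR_\pm$ is a compact embedded smooth submanifold (with corner, meeting fibre infinity transversally) of the compact manifold-with-corners $\overline{T}^*_{\mathrm{par}}\RR^{n+1}$, so its vanishing ideal $\mathcal{I}_\pm=\{\phi\in C^\infty(\overline{T}^*_{\mathrm{par}}\RR^{n+1}):\phi|_{\SR_\pm}=0\}$ is finitely generated as a $C^\infty$-module: cover $\SR_\pm$ by finitely many coordinate patches in which it is the common zero set of some coordinate functions (boundary defining functions included where appropriate), and patch the resulting local defining functions with a partition of unity together with one globally fixed element of $\mathcal{I}_\pm$ that is strictly positive off $\SR_\pm$. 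Fix such generators $\phi_1,\dots,\phi_M$, put $Q_0=\Id$ and choose $Q_j\in\Psipcl{1}{1}(\RR^{n+1})$ with $\sigma_\flat(Q_j)=\phi_j$; then $Q_0,\dots,Q_M\in\SM_\pm$. Given any $A\in\SM_\pm$, $\sigma_\flat(A)\in\mathcal{I}_\pm$, so $\sigma_\flat(A)=\sum_{j=1}^M h_j\phi_j$ with $h_j\in C^\infty(\overline{T}^*_{\mathrm{par}}\RR^{n+1})$, and then $\sigma_\flat(A-\sum_j q_L(h_j)Q_j)=\sigma_\flat(A)-\sum_j h_j\phi_j=0$, so $A-\sum_j q_L(h_j)Q_j\in\ker\sigma_\flat=\Psipcl{0}{0}(\RR^{n+1})$, which lies in the $\Psipcl{0}{0}(\RR^{n+1})$-module generated by $Q_0$. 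Hence $\SM_\pm$ is generated over $\Psipcl{0}{0}(\RR^{n+1})$ by $Q_0,\dots,Q_M$.

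Finally, $\SN=\SM_+\cap\SM_-$ inherits the first three conditions at once (an intersection of $\Psipcl{0}{0}(\RR^{n+1})$-submodules that contain $\Psipcl{0}{0}(\RR^{n+1})$ and are closed under commutators is again such). For finite generation, note that $\SN=\{A\in\Psipcl{1}{1}(\RR^{n+1}):\sigma_\flat(A)|_{\SR_+\cup\SR_-}=0\}$, and since $\SR_+$ and $\SR_-$ are disjoint (Proposition~\ref{prop:radial set geom}) their union $\SR$ is again a compact embedded smooth submanifold, so the previous paragraph applies verbatim with $\SR$ in place of $\SR_\pm$. I expect the main obstacle to be the finite generation step — in particular arranging the partition-of-unity patching near the corner so as to produce a genuinely finite global generating set of $\mathcal{I}_\pm$ compatible with the corner structure of $\overline{T}^*_{\mathrm{par}}\RR^{n+1}$ along $\SR_\pm$ — together with the bookkeeping in the commutator step that identifies $\sigma_\flat([A,B])|_{\partial_{base}}$ with the natural boundary bracket and reconciles it with Lemma~\ref{lem:Lagrangian}.
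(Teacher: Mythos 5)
Your proof is logically correct, but the finite-generation step takes a genuinely different route from the paper's, and the difference has downstream consequences. The paper does not argue abstractly: it exhibits an \emph{explicit} generating set $\mathcal{G}_\pm$ (see \eqref{eq:tildeG.def}), built from the symmetries of the free Schr\"odinger operator (rotations $z_iD_{z_j}-z_jD_{z_i}$, Galilean boosts $tD_{z_j}-z_j/2$, the operator $\ang{Z}E_{-1}P$, an elliptic $E_1$, and two auxiliary operators), then verifies in Proposition~\ref{prop:R.char} that $\mathcal{G}_\pm$ is elliptic off $\SR_\pm$ and supplies $n+1$ local defining functions at every point of $\SR_\pm$, after which module membership follows by a local elliptic-parametrix reduction together with the decomposition \eqref{eq:abj}. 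Your argument — that the vanishing ideal $\mathcal{I}_\pm\subset C^\infty(\overline{T}^*_{\mathrm{par}}\RR^{n+1})$ of the compact submanifold $\SR_\pm$ is finitely generated by a partition-of-unity patching of local defining functions, then quantize — is a valid and shorter replacement for this proposition alone. What it does not buy you is everything the paper needs later: the $P$-positivity of Proposition~\ref{prop:positivity} is verified generator by generator on $\mathcal{G}_\pm$, and the intertwining relations \eqref{eq:2} with the module $\Nhat$ on the data space $\RR^n_\xi$ (which drive the entire Poisson-operator and scattering-matrix analysis) are stated for those specific operators; an abstractly chosen generating set would furnish neither. Your closure-under-commutators argument rests on the same input as the paper's, namely Lemma~\ref{lem:Lagrangian} and the coisotropic/Lagrangian property, but you pass to a vaguely specified ``induced bracket at $\SR_\pm$''; the paper avoids having to define a boundary bracket by first replacing $A_i$ with $\tilde A_i$ whose symbols are dilation-invariant near $\partial_{base}\overline{T}^*_{\mathrm{par}}\RR^{n+1}$, so that the ordinary Poisson bracket and the Lagrangian $\tilde\SR_\pm$ apply directly — you should either make your bracket precise or follow that reduction. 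One further slip, inherited from the paper's own Remark after Definition~\ref{def:module.def.characteristic}: $\rho_{base}\sigma_{1,1}(A)$ does \emph{not} extend smoothly to all of $\overline{T}^*_{\mathrm{par}}\RR^{n+1}$ (it blows up like $\rho_{fib}^{-1}$ at fibre infinity); the quantity that extends smoothly, and that Proposition~\ref{prop:R.char} actually works with where the corner matters, is $\rho_{base}\rho_{fib}\sigma_{1,1}(A)$. Since $\SR_\pm$ meets the corner, your defining-function argument must be phrased in terms of the latter.
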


\begin{proof} 
It is clear that $\SM_\pm$ are vector subspaces of $\Psipcl{1}{1}(\RR^{n+1})$ that contain and are modules over $\Psipcl{0}{0}$. It remains to show that these are finitely generated and are closed under commutators. 

Closedness under commutators follows from Lemma~\ref{lem:Lagrangian}. Indeed, if $A_1$ and $A_2$ are two module elements, then we can find $\tilde A_i$, $i = 1, 2$,  that differ from $A_i$ by an element of $\Psipcl{0}{0}(\RR^{n+1})$ and have symbol invariant under the scaling $Z \mapsto aZ$ near spacetime infinity. Indeed we just take the symbol of $\tilde A_i$ so that $\rho_{base} \sigma_{1,1}(\tilde A_i)$ is invariant under the scaling and to agree with  $\rho_{base} \sigma_{1,1}(A_i)$ at spacetime infinity. It is clear that $\tilde A_i$ are also in the module. Then the symbols of $\tilde A_1$ and $\tilde A_2$ vanish on $\tilde \SR_\pm$. We now use the standard fact in symplectic geometry that if a Hamiltonian is constant on a Lagrangian submanifold, then its Hamilton vector field is tangent to this submanifold. Denoting the symbols of $\tilde A_i$ by $\tilde a_i$, It follows that $H_{\tilde a_1} \tilde a_2$ vanishes on $\tilde \SR_\pm$. 
This is the principal symbol of $i[\tilde A_1, \tilde A_2]$, so it follows that $\rho_{base} \sigma_{1,1}(i[\tilde A_1, \tilde A_2])$ vanishes at $\SR_\pm$, so this operator is also in the module. But $[A_1, A_2]$ differs from $[\tilde A_1, \tilde A_2]$ by an operator of order $(0,0)$, so we see that $[A_1, A_2]$ also lies in the module $\SM_\pm$. 

To show that the modules $\SM_\pm$ are finitely generated, we will exhibit an explicit set of generators. To begin with, we introduce some useful cutoff functions. Taking $\chi\in\mathcal{C}^\infty(\RR)$ to be $0$ on $(-\infty,1/3]$ and is $1$ on $[2/3,\infty)$, we define
\begin{equation}
	\label{eq:north.pole.cutoff}
	\NP:=\chi\left(\frac{t}{\ang{Z}}\right)
\end{equation}
\begin{equation}
	\label{eq:south.pole.cutoff}
	\SP:=\chi\left(-\frac{t}{\ang{Z}}\right)
\end{equation}
\begin{equation}
	\label{eq:equatorial.cutoff}
	\EQ:=1-\NP-\SP
\end{equation}
where $\chi(1/|w|)$ is extended to be $1$ at $w=0$.
These functions can be regarded as cutoffs to the poles and equator of the boundary of space-time. 

Due to the parabolic nature of our calculus, we also need to work with microlocal square roots of $D_t$. This requires pseudodifferential cutoffs based on the sign of $\tau$. To this end we introduce
\begin{equation}
	\TP=\chi\left(\frac{2\tau}{\ang{R^2}}\right)
\end{equation}
\begin{equation}
	\TN=\chi\left(-\frac{2\tau}{\ang{R^2}}\right).
\end{equation}

Our candidate generating sets are then as follows.
\begin{equation}
	\label{eq:tildeG.def}
	\mathcal{G}_{\pm}:=\{z_iD_{z_j}-z_jD_{z_i},tD_{z_i}-z_i/2,\ang{Z}E_{-1}P,E_1, t\chi_{\mathrm{pol},\mp}E_1,B_{\pm}\}
\end{equation}
where $E_s=\mathrm{Op}(\ang{R^2}^{s/2})\in \Psip{s}{0}$ is elliptic, and  $B_{\pm}=\mathrm{Op}(b_{\pm})$, where $b_\pm\in S_{\mathrm{par}}^{1,1}$ is equal to the smooth function
\begin{equation}
	|z|(\rho \mp\sqrt{-\tau})\TN\EQ
\end{equation}
in a neighbourhood of the spacetime boundary.

We only show that $\mathcal{G}_+$ generates $\SM_+$, as the case for $\mathcal{G}_-$ and $\SM_-$ is similar. We claim that it suffices to show two properties of $\mathcal{G}_+$:
\begin{itemize}
\item For any point $q \in \partial_{base} \overline{T}^*_{\mathrm{par}}\mathbb{R}^{n + 1}$ not in $\SR_+$, there is an element of $\mathcal{G}_+$ elliptic at $q$, and 
\item For any point $q \in \SR_+$, there are $n+1$ elements of $\mathcal{G}_+$, say $A_1, \dots, A_{n+1}$, such that the functions $\rho_{base}\rho_{fib}\sigma_{1, 1}(A_i)$, viewed as functions on $\partial_{base} \overline{T}^*_{\mathrm{par}}\mathbb{R}^{n + 1}$, 
 have linearly independent differentials at $q$. 
\end{itemize}

We first prove this claim. Let $A$ be an arbitrary element of $\SM_+$. It suffices to show that $O A$ is in the module generated by $\mathcal{G}_+$ for any $O \in \Psipcl{0}{0}$ with arbitrarily small microsupport. Thus it suffices to prove assuming that the microsupport of $A$ is contained in a small neighbourhood of a point $q \in  \partial_{base} \overline{T}^*_{\mathrm{par}}\mathbb{R}^{n + 1}$.

If $q$ is not in $\SR_+$, then by the first property above, there is an element $E$ of $\mathcal{G}_+$ elliptic at $q$, and we can assume the microsupport of $A$ is contained in the elliptic set of $E$. Then by the standard elliptic construction, we have $A = Q E + R$ where $Q \in \Psipcl{0}{0}$ and $R \in \Psipcl{-\infty}{-\infty}$, hence $A$ is in the module generated by $\mathcal{G}_+$. 

If $q$ is in $\SR_+$, then there exist $A_1, \dots, A_{n+1} \in \mathcal{G}_+$ as in the second property above. Since $\SR_+ \subset \partial_{base} \overline{T}^*_{\mathrm{par}}\mathbb{R}^{n + 1}$ is a smooth submanifold of codimension $n+1$, it follows that $a_1 = \rho_{base}\rho_{fib} \sigma_{1, 1}(A_1), \dots, a_{n+1} = \rho_{base}\rho_{fib} \sigma_{1, 1}(A_{n+1})$ are defining functions for $\SR_+$ locally. That means that any function $a$ vanishing on $\SR_+$ and supported sufficiently close to $q$ can be expressed 
\begin{equation}\label{eq:abj}
a = \sum_{j=1}^{n+1} b_j a_j,
\end{equation}
locally near $q$, for some smooth functions $b_j$. In particular, given arbitrary $A \in \SM_+$, this is true for $a = \rho_{base}\rho_{fib} \sigma_{1, 1}(A)$. We can extend the $b_j$, which are smooth functions on $\partial_{base} \overline{T}^*_{\mathrm{par}}\mathbb{R}^{n + 1}$,  to classical symbols of order $(0,0)$ on $\overline{T}^*_{\mathrm{par}}\mathbb{R}^{n + 1}$; let $B_j$ be the left quantization of these symbols. Then \eqref{eq:abj} implies
$$
A = \sum_{j=1}^{n+1} B_j A_j + A', \quad A' \in \Psipcl{1}{0}.
$$
Since $E_1 \in \mathcal{G}_+$ is elliptic as an element of $\Psip{1}{0}$, similarly to the first case, 
we can write $A' = Q'E_1 + R'$ where $Q' \in \Psipcl{0}{0}$ and $R' \in \Psipcl{-\infty}{-\infty}$. Putting these together we have 
$$
A = \sum_{j=1}^{n+1} B_j A_j + Q'E_1 + R'
$$
which shows that $A$ is in the module generated by $\mathcal{G}_+$. 

The proof is therefore completed by the following Proposition. 
\end{proof}

\begin{prop}
	\label{prop:R.char}
	The radial sets $\SR$ and $\SR_\pm$ can be characterized as the set of common zeroes of the elements of $\mathcal{G}_+ \cap \mathcal{G}_-$ and $\mathcal{G}_\pm$: 
	\begin{equation}
		\label{eq:R.char}
		\SR=\bigcap_{G\in \SG_+\cap \SG_-} \Char_{1,1}(G)
	\end{equation}
	\begin{equation}
		\label{eq:R.char.2}
		\SR_\pm=\bigcap_{G\in \SG_\pm} \Char_{1,1}(G)
	\end{equation}
Equivalently, for each point $q$ not in each of these radial sets, there is an element of the corresponding generating set elliptic at $q$. 

Moreover, for any point $q$ of $\SR$, there are $n+1$ elements of $\mathcal{G}_+ \cap \mathcal{G}_-$, say $A_1, \dots, A_{n+1}$ such that $\rho_{base}\rho_{fib} \sigma_{1,1}(A_i)$, restricted to $\partial_{base} \overline{T}^*_{\mathrm{par}}\mathbb{R}^{n + 1}$, 
 have linearly independent differentials at $q$. 	
\end{prop}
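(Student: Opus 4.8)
The plan is to reduce everything to an explicit computation of the rescaled principal symbols $\rho_{base}\rho_{fib}\sigma_{1,1}(G)$ of the generators $G\in\mathcal{G}_\pm$, viewed as smooth functions on $\partial_{base}\overline{T}^*_{\mathrm{par}}\mathbb{R}^{n + 1}$, and to compare them against the coordinate descriptions \eqref{eq:rad.interior}--\eqref{eq:rad.corner.2} of $\mathcal{R}_\pm$ from Section~\ref{subsec:radial}. First I would record these symbols. Since $E_{-1}=\mathrm{Op}(\ang{\Zdual}^{-1})$ and $P=P_0$ near spacetime infinity, the rescaled $(1,1)$-symbol of $\ang{Z}E_{-1}P$ is literally $\ang{\Zdual}^{-2}(\tau+|\zeta|^2)$, so $\Char_{1,1}(\ang{Z}E_{-1}P)=\Sigma(P)$; the rescaled symbol of $E_1$ is $\rho_{base}$ (vanishing identically on the boundary); that of $t\chi_{\mathrm{pol},\mp}E_1$ is a multiple of $(t/\ang{Z})\chi_{\mathrm{pol},\mp}$; that of $tD_{z_i}-z_i/2$ a multiple of $\rho_{base}\rho_{fib}(t\zeta_i-z_i/2)$; that of $z_iD_{z_j}-z_jD_{z_i}$ a multiple of $\rho_{base}\rho_{fib}(z_i\zeta_j-z_j\zeta_i)$; and that of $B_\pm$ a multiple of $|z|(\rho\mp\sqrt{-\tau})\TN\EQ$ on its support. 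Comparing with \eqref{eq:rad.interior}--\eqref{eq:rad.corner.2} immediately gives $\mathcal{G}_\pm\subset\SM_\pm$ (which is what is actually needed to complete the finite-generation step): on $\mathcal{R}_\pm$ one has $\hat\zeta=\pm\hat z$ and $\zeta=z/(2t)$ over the open hemispheres, killing the rotation and Galilean symbols; $\chi_{\mathrm{pol},\mp}$ vanishes on the spacetime projection of $\mathcal{R}_\pm$; and $\mathcal{R}_\pm\subset\Sigma(P)$ handles $\ang{Z}E_{-1}P$.

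Next I would prove \eqref{eq:R.char.2}, equivalently that at each $q\notin\mathcal{R}_\pm$ some generator of $\mathcal{G}_\pm$ is elliptic at $q$, by a case analysis over the regions of Section~\ref{subsec:radial}. If $q\notin\Sigma(P)$, then $\ang{Z}E_{-1}P$ is elliptic. If $q\in\Sigma(P)$ lies over the open $\pm$-hemisphere, then $q\notin\mathcal{R}_\pm$ forces either $\zeta\neq z/(2t)$ (at finite frequency) or some $\hat\zeta_i\neq 0$ (at fibre infinity, where $\mathcal{R}_\pm$ sits only over the equator), so a Galilean generator is elliptic. If $q\in\Sigma(P)$ lies over the open $\mp$-hemisphere, then by \eqref{eq:rad.interior.2} the locus where every Galilean symbol vanishes (hence also every rotation symbol and $\ang{Z}E_{-1}P$) is exactly $\mathcal{R}_\mp$; on $\mathcal{R}_\mp$ away from the equator $t\chi_{\mathrm{pol},\mp}E_1$ is elliptic, and on $\mathcal{R}_\mp$ near the equator $B_\pm$ is. Finally at the corner over the equator, if $q\notin\mathcal{R}_\pm$ then $\sigma\neq-1$ ($\ang{Z}E_{-1}P$ elliptic, its rescaled symbol there being a multiple of $\sigma+1$), or $\hat\zeta\neq\pm\hat z$ (a rotation generator elliptic), or $s\neq\pm\rho_f/2$ ($tD_{z_1}-z_1/2$ elliptic, rescaled symbol a multiple of $2s\mp\rho_f$); the residual points, with $\hat\zeta=\mp\hat z$, $\sigma=-1$, $s=\mp\rho_f/2$, are points of $\mathcal{R}_\mp$ and are again handled by $B_\pm$. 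Since the only points at which we are forced to invoke the asymmetric generators $t\chi_{\mathrm{pol},\mp}E_1$ and $B_\pm$ lie in $\mathcal{R}_\mp\subset\mathcal{R}$, while every point of $\partial_{base}\overline{T}^*_{\mathrm{par}}\mathbb{R}^{n + 1}\setminus\mathcal{R}$ is caught by a generator in $\mathcal{G}_+\cap\mathcal{G}_-$ (namely $\ang{Z}E_{-1}P$, a Galilean, or a rotation), the intersection identity \eqref{eq:R.char} drops out of the same analysis.

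For the independence statement I would exhibit the $n+1$ generators in the two chart types. Over the open hemisphere, at $q\in\mathcal{R}_\pm$, the $n$ Galilean generators have rescaled symbols that are nonzero multiples of $\zeta_i-z_i/(2t)$ and $\ang{Z}E_{-1}P$ of $\ang{\Zdual}^{-2}(\tau+|\zeta|^2)$; in the coordinates $(w=z/t,\rho_b,\zeta,\tau)$ the first $n$ differentials involve $d\zeta_i$ and $dw_i$ and the last involves $d\tau$, so the $n+1$ differentials are independent. At the corner, in the chart of \eqref{eq:rad.corner} with coordinates $(\tilde s=2s\mp\rho_f,\ \tilde v_j=v_j\mp\omega_j,\ \rho_b,\ \rho_f,\ \omega_j,\ \sigma)$, the $n-1$ rotation generators $z_1D_{z_j}-z_jD_{z_1}$ have rescaled symbols that are nonzero multiples of $\tilde v_j$, the Galilean $tD_{z_1}-z_1/2$ of $\tilde s$, and $\ang{Z}E_{-1}P$ of $\sigma+1$; these $n+1$ functions form part of a coordinate system near $\mathcal{R}_\pm$, so their differentials are independent. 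All of these generators lie in $\mathcal{G}_+\cap\mathcal{G}_-$, as claimed.

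The main obstacle is the corner/equatorial analysis, where two points need care. First, at the corner the Galilean generators degenerate: their rescaled symbols at $\mathcal{R}_\pm$ all lie in the $2$-plane spanned by $ds$ and $d\rho_f$, so the angular directions must instead be supplied by the rotation generators, and one must verify that the count still comes out to exactly $n+1$ independent functions. Second, one must show that the generators $B_\pm$, which encode a microlocal square root of $D_t$ through the factor $\sqrt{-\tau}$ and are localized by $\TN\EQ$, are elliptic precisely on the part of $\Sigma(P)$ near the corner lying in $\mathcal{R}_\mp$ but not in $\mathcal{R}_\pm$; this is what allows $\mathcal{G}_\pm$ to distinguish $\mathcal{R}_\pm$ from $\mathcal{R}_\mp$, and it is the crux of the argument. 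The remaining steps are bookkeeping with the rescaled symbols in the charts of Section~\ref{subsec:radial}.
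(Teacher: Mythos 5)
Your proposal is correct and follows essentially the same route as the paper's proof: compute the rescaled $(1,1)$-symbols of the generators in the coordinate charts of Section~\ref{subsec:radial}, read off that their common zero locus on $\partial_{base}\overline{T}^*_{\mathrm{par}}\mathbb{R}^{n+1}$ is exactly $\SR$ via the graph descriptions \eqref{eq:rad.interior}--\eqref{eq:rad.corner.2}, observe that the cutoff factor $\chi_{\mathrm{pol},\mp}$ kills $t\chi_{\mathrm{pol},\mp}E_1$ on $\SR_\pm$ while making it elliptic on the polar part of $\SR_\mp$, and reduce the equatorial part of $\SR_\mp$ to the sign of $\hat z\cdot\hat\zeta - \sqrt{-\sigma}$ (which is $\hat z\cdot\hat\zeta - 1$ on $\Sigma(P)$), which distinguishes $\SR_+$ from $\SR_-$ and is precisely what makes $B_\pm$ do its job. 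Your $n+1$ generators with independent differentials ($n$ Galileans plus $\ang{Z}E_{-1}P$ over the open hemispheres; one Galilean, $n-1$ rotations, and $\ang{Z}E_{-1}P$ at the corner) coincide exactly with the paper's choices, and you correctly flag the $B_\pm$ ellipticity as the crux. The only minor imprecision is in the corner bookkeeping, where the $\SR_+$ and $\SR_-$ charts use different signs for $\rho_f=\pm1/\zeta_1$ and so cannot literally be handled in a single set of coordinates; but you clearly have the right picture, and splitting into the two chart choices (as the paper does) resolves this without new ideas.
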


\begin{proof}
	First we show that
	\begin{equation}
		\SR =\bigcap_{G\in \SG_+\cap\SG_-} \Char_{1,1}(G).
	\end{equation}
	
	Since $E_1\in \Psip{1}{0}$ is elliptic, we have that $\Char_{1,1}(E_1)=\bface$. Thus the set of common zeroes of these generating sets is contained in $\bface$. 
	
Away from the equator at spacetime infinity (the subset of spacetime infinity where $t/|z|= 0$), and where $\Phi = (\zeta, \tau)$ is finite, we	can use the coordinates $(w,\zeta,\tau)$ on $\bface$. Then consider the $n+1$ generating operators $A_i = 2tD_{z_i}-z_i$, $i = 1 \dots n$, and $A_{n+1} = \ang{Z}E_{-1}P$. As we are away from fibre-infinity we can consider $\sigma_{base, 1}(A_i) = t^{-1} \sigma_{1,1}(A_i)$. These functions are 
$2\zeta_i-w_i$ and $\tau + |\zeta|^2$, up to a smooth nonvanishing factor. The set of common zeroes is therefore precisely $\SR$, as in the proof of Proposition~\ref{prop:radial set geom}. Moreover, these functions have linearly independent differentials at each point of $\SR$. At fibre-infinity, which is disjoint from $\SR$, it is easy to check that at least one of these operators is elliptic. This proves the Proposition for 
$\SR$ away from the equator.

	At a point $q$ on the equator of $\bface$ and away from fibre-infinity, hence not lying on $\SR$, we can assume without loss of generality (similarly to the proof of Proposition~\ref{prop:radial set geom}) that $z_1 \geq \frac12 \max_i |z_i|>0$ and locally use the coordinates $s=t/z_1$ and $v_i=z_i/z_1$ for $2\leq i \leq n$ together with the fiber coordinates $\zeta,\tau$. In these coordinates $\sigma_{\mathrm{base},1,1}(2tD_{z_1}-z_1)$ is locally given by $2s\zeta_1-1$ up to a smooth nonvanishing factor. At $s=0$, this is nonzero and so $\Char_{1,1}(E_1)\cap \Char_{1,1}(2tD_{z_1}-z_1)$ is disjoint from this region, as is $\SR$.
	
	Near a point on the equator of $\bface$ near $\fface \cap \Char_{1,1}(2tD_{z_1}-z_1)$,  we can locally use the coordinates $s, v_j$ as above together with $\rho_b = 1/z_1$, $\omega_j=\zeta_j/\zeta_1$ for $2 \leq j \leq n$, $\sigma=\tau/|\zeta|^2$ and $\rho_{\mathrm{f}}=\pm 1/\zeta_1$, where the latter has sign chosen such that it is nonnegative, and therefore a local boundary defining function for $\fface\cap\bface\subset \bface$. In this case we choose $A_1 = 2tD_{z_1}-z_1$, $A_j = z_1D_{z_j}-z_jD_{z_1}$, $2 \leq j \leq n$ and $A_{n+1} = \ang{Z}E_{-1}P$. The corresponding functions $a_i = \rho_{base}\rho_{fib} \sigma_{1,1}(A_i)$ are $2s - \rho_{\mathrm{f}}$, $\omega_j - v_j$ and $\sigma + 1$, up to smooth nonvanishing factors. Again, the set of common zeroes is precisely $\SR$, according to 
Proposition~\ref{prop:radial set geom}. It is easy to check that these functions have linearly independent differentials in this region. This completes the proof of the proposition for $\SR$.

	We now prove \eqref{eq:R.char.2} for the top sign choice, with the argument for the bottom sign choice being similar. It suffices to show that the two additional generators $t\chi_{\mathrm{pol},-}E_1,B_+$ are both characteristic on $\SR_+$ but at least one of them is elliptic at any point on $\SR_-$. Due to the space-time cutoff factor, $t\chi_{\mathrm{pol},-}E_1$ is clearly characteristic on the entirety of $\SR_+$, and is elliptic on $\{|w| \leq 2\sqrt 2\}\cap \SR_-$.
	The restriction of $\sigma_{\mathrm{base},1,1}(B_+)$ to the interior of spatial infinity has symbol supported in the equatorial region $\spt(\EQ)\cap \{35\tau^2\geq 1+|\zeta|^4\}$.
	
	As  $(\rho-\sqrt{-\tau})$ is given by $\hat z\cdot \hat\zeta - 1$ on $\SR$ up to a smooth nonvanishing factor, it follows from \eqref{eq:rad.corner},\eqref{eq:rad.corner.2} that $B_+$ is elliptic on the part of $\mathrm{int}(\SR_-)$ in $\spt(\EQ)$ such that $\tau\leq -1/\sqrt{34}$. In fact the latter condition is superfluous, as from \eqref{eq:rad.interior} we see that $|w|\geq \sqrt{5}/2 \Rightarrow \tau \leq -\frac{5}{16}$. Hence every point in $\mathrm{int}(\SR_-)$ lies in $\Ell(G)$ for some $G\in \SG_+$.
	
	Similarly, if $q\in \partial(\SR_-)$, we can use the coordinates $\omega_j,\sigma,\rho_{\mathrm{f}}$ in this region and $\sigma_{\mathrm{base},1,1}(B_+)(q)$ is a nonzero multiple of $\hat z\cdot \hat\zeta-\sqrt{-\sigma}$. As $\sigma=-1$ on $\SR$ and $\hat z=-\hat{\zeta}$ on $\SR_-$, we obtain $\sigma_{\mathrm{base},1,1}(B_+)(q)\neq 0$ hence establishing that $B_+$ is elliptic on $\partial(\SR_-)$.
\end{proof}

The collections $\mathcal{G}_\pm$ generate $\SM_\pm$ globally, but for $u\in H^{s,l}_{\mathrm{par}}$ that is supported in certain subregions of $\RR^{n+1}$, we can characterise $H_{\mathcal{M}_{\pm}}^{s,l;\kappa,k}$ regularity using smaller collection of generators. Three such regions of interest are
\begin{enumerate}
	\item $\{\ang{Z}\leq R\}$ (Space-time interior)
	\item  $\{\ang{Z}\geq R > 0,|t| \geq \frac13 |z| \}$  (Space-time boundary, near poles)
	\item $\{\ang{Z}\geq R > 0,|t| \leq \frac23 |z| \}$ (Space-time boundary, near equator)
\end{enumerate}

In the space-time interior, we can use the single elliptic generator $E_1\in\Psip{1}{0}$.

\begin{prop}
	For any $R>1$ and for any distribution $u\in H_\mathrm{par}^{s,l}$ with $\spt(u)\subset \{\ang{Z}\leq R\}$, we have 
	\begin{equation}
		u\in H_{\SM_{\pm}}^{s,l;\kappa,k}\Leftrightarrow  u\in H_{\SM_{\mathrm{int}}}^{s,l;\kappa+k}\Leftrightarrow u \in H_{\mathrm{par}}^{s+\kappa+k,\infty}
	\end{equation}
	where $\SM_{\mathrm{int}}$ is the $\Psip{0}{0}$-module generated by 
	\begin{equation}
		\SG_{\mathrm{int}}:=\{E_1\}.
	\end{equation}
\end{prop}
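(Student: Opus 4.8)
The plan is to show that, for $u$ supported in $\{\ang{Z}\le R\}$, all three conditions are equivalent to the single scalar condition $E_1^{\kappa+k}u\in H^{s,l}_{\mathrm{par}}$. Two observations drive this. First, $E_1$ is a globally elliptic element of $\Psipcl{1}{0}$, and since operators of spatial order zero lie trivially in $\SM_\pm$ we have $E_1\in\SN=\SM_+\cap\SM_-$; hence $E_1^\kappa\in\SM_\pm^\kappa$, $E_1^k\in\SN^k$, and $\SM_{\mathrm{int}}^{\kappa+k}$ is generated over $\Psip{0}{0}$ by $\{E_1^j:0\le j\le\kappa+k\}$. Second, for a distribution $v$ supported in a fixed compact subset of $\RR^{n+1}$ the spatial decay order is immaterial: if $v\in H^{s',l'}_{\mathrm{par}}$ for some $l'$, then $v\in H^{s',m}_{\mathrm{par}}$ for every $m$, i.e.\ $v\in H_{\mathrm{par}}^{s',\infty}:=\bigcap_m H^{s',m}_{\mathrm{par}}$.

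I would prove this compact support fact first. Choose $\chi\in C_c^\infty(\RR^{n+1})$ with $\chi\equiv1$ on $\{\ang{Z}\le R\}$, so that $u=\chi u$. Multiplication by $\chi$ is a pseudodifferential operator with symbol $\chi(z,t)$, and since $\chi$ is smooth and compactly supported in spacetime, $\chi(z,t)\in S^{0,m}_{\mathrm{par}}$ for \emph{every} $m\in\RR$; thus multiplication by $\chi$ lies in $\Psip{0}{m}$ for every $m$ and, by Proposition~\ref{prop:basic bounde}, maps $H^{s',l'}_{\mathrm{par}}$ into $H^{s',l'-m}_{\mathrm{par}}$. Letting $m\to-\infty$ gives $v=\chi v\in H_{\mathrm{par}}^{s',\infty}$ for any compactly supported $v\in H^{s',l'}_{\mathrm{par}}$; in particular $u\in H_{\mathrm{par}}^{s,\infty}$ already, by the hypothesis $u\in H^{s,l}_{\mathrm{par}}$.

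Now for the equivalences. For the middle one: $u\in H_{\SM_{\mathrm{int}}}^{s,l;\kappa+k}$ means exactly that $E_1^j u\in H^{s,l}_{\mathrm{par}}$ for all $j\le\kappa+k$ (the $\Psip{0}{0}$-coefficients in $\SM_{\mathrm{int}}^{\kappa+k}$ being harmless by $L^2$-boundedness); since $E_1^{\kappa+k}$ is globally elliptic of order $(\kappa+k,0)$, elliptic regularity (Proposition~\ref{prop:elliptic.estimate} with $G=Q=\Id$) turns this into $u\in H^{s+\kappa+k,l}_{\mathrm{par}}$, equivalently $u\in H_{\mathrm{par}}^{s+\kappa+k,\infty}$ by the compact support fact. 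For the first equivalence, $(\Rightarrow)$: if $u\in H_{\SM_\pm}^{s,l;\kappa,k}$ then, applying the definition of that space with the admissible choice $A=E_1^\kappa\in\SM_\pm^\kappa$ and $B=E_1^k\in\SN^k$, we obtain $E_1^{\kappa+k}u=ABu\in H^{s,l}_{\mathrm{par}}$, hence $u\in H_{\SM_{\mathrm{int}}}^{s,l;\kappa+k}$ by the previous sentence. For $(\Leftarrow)$: suppose $u\in H_{\mathrm{par}}^{s+\kappa+k,\infty}$. Any $A\in\SM_\pm^\kappa$ lies in $\Psip{\kappa}{\kappa}$ and any $B\in\SN^k$ lies in $\Psip{k}{k}$ (each being a $\Psip{0}{0}$-combination of products of at most $\kappa$, resp.\ $k$, generators, all of which lie in $\Psipcl{1}{1}$), so $AB\in\Psip{\kappa+k}{\kappa+k}$ maps $H^{s+\kappa+k,l+\kappa+k}_{\mathrm{par}}$ to $H^{s,l}_{\mathrm{par}}$ by Proposition~\ref{prop:basic bounde}; since $u\in H_{\mathrm{par}}^{s+\kappa+k,\infty}\subset H^{s+\kappa+k,l+\kappa+k}_{\mathrm{par}}$, this gives $ABu\in H^{s,l}_{\mathrm{par}}$, i.e.\ $u\in H_{\SM_\pm}^{s,l;\kappa,k}$.

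The only point requiring genuine care is the compact support fact, which is precisely what renders the spatial weight $l$ — and hence the distinction between $\SM_+$, $\SM_-$ and $\SN$, all of which differ only through their behaviour at the radial sets at spacetime infinity — irrelevant for such $u$; everything else is bookkeeping with operator orders together with elliptic regularity for $E_1$.
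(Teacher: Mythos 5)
Your proof is correct and takes essentially the same route as the paper's: in both, the whole content is that $E_1$ is a globally elliptic generator lying in $\SN = \SM_+ \cap \SM_-$, so iterated $E_1$-regularity is equivalent to a gain of $\kappa+k$ parabolic derivative orders by elliptic regularity, and the compact spacetime support makes the spatial weight irrelevant. You are somewhat more explicit than the paper in two places that it leaves implicit: the justification that multiplication by a compactly supported smooth cutoff lies in $\Psip{0}{m}$ for every $m$ and hence upgrades $H^{s',l'}_{\mathrm{par}}$ to $H^{s',\infty}_{\mathrm{par}}$, and the reverse inclusion $H^{s+\kappa+k,\infty}_{\mathrm{par}}\subset H^{s,l;\kappa,k}_{\SM_\pm}$ via the order counts $\SM_\pm^\kappa\subset\Psip{\kappa}{\kappa}$, $\SN^k\subset\Psip{k}{k}$. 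These additions are correct and do not change the argument.
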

\begin{proof}
	From the definition of the module regularity Sobolev spaces, it is immediate that $u\in H_{\mathrm{par}}^{s+\kappa+k,\infty}\Rightarrow u\in H_{\SM_{\pm}}^{s,l;\kappa,k}\cap H_{\SM_{\mathrm{int}}}^{s,l;\kappa+k}$. Thus it suffices to show that membership in either module regularity space, together with the support condition, implies membership in $H_{\mathrm{par}}^{s+\kappa+k,\infty}$.
	
	Suppose $u$ lies in either of the two module regularity spaces. As the generator $E_1$ lies in both modules, this means that
	\begin{equation}
		u, E_1 u, \ldots, E_1^{k+\kappa }u \in H_{\mathrm{par}}^{s,l}
	\end{equation}
	and so by ellipticity it follows that $u\in H_{\mathrm{par}}^{s+k+\kappa,l}$. From the bounded support condition, we can immediately upgrade this to $u\in H_{\mathrm{par}}^{s+\kappa+k,\infty}$.
	
	On the other hand, if we assume $u\in H_{\SM_{\pm}}^{s,l;\kappa,k}$, we can exploit  the regularity of $u$ with respect to the elliptic generator $E_1\in\Psip{1}{0}$. From $u\in H_\mathrm{par}^{s,l}$ and $E_1^{\kappa+k}u\in H_\mathrm{par}^{s,l}$, it follows that $u\in H_\mathrm{par}^{s+\kappa+k,l}$. As $u$ is compactly supported in spacetime, we can once again upgrade this to $u\in H_{\mathrm{par}}^{s+\kappa+k,\infty}$ as required.	
\end{proof}

Similarly, although we shall not use this fact, for distributions supported in the polar and equatorial regions we can characterise $\SM_\pm$-regularity using a smaller collection of generators.

\begin{prop}
	For any $R>1$ and for any distribution $u\in H_\mathrm{par}^{s,l}$ with $\spt(u)\subset \{\ang{Z}\geq R,|t|\geq C|z|\}$, we have 
	\begin{equation}
		u\in H_{\SM_{\pm}}^{s,l;\kappa,k}\Leftrightarrow  u\in H_{\SM_{\pm,\mathrm{pol}}}^{s,l;\kappa,k}.
	\end{equation}
	where $\SM_{\mathrm{\pm,\mathrm{pol}}}$ is the $\Psip{0}{0}$-module generated by 
	\begin{equation}
		\SG_{\pm,\mathrm{pol}}:=\{tD_{z_i}-z_i/2,tE_{-1}P,E_1, t\chi_{\mathrm{pol},\mp}E_1\}
	\end{equation}
	and the submodule $\SN_{\mathrm{pol}}$ is the $\Psip{0}{0}$-module generated by $\SG_{+,\mathrm{pol}}\cap \SG_{-,\mathrm{pol}}$.
\end{prop}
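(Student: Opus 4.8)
The plan is to prove the two implications separately; only the direction ``$\Leftarrow$'' uses the hypothesis on $\supp(u)$.

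First I would record that $\SM_{\pm,\mathrm{pol}}\subseteq\SM_\pm$ and $\SN_{\mathrm{pol}}\subseteq\SN$ as $\Psipcl{0}{0}$-modules, which gives ``$\Rightarrow$'' for free. Three of the generators in $\SG_{\pm,\mathrm{pol}}$ --- namely $tD_{z_i}-z_i/2$, $E_1$ and $t\chi_{\mathrm{pol},\mp}E_1$ --- already lie in $\SG_\pm$, while $tE_{-1}P\in\SM_+\cap\SM_-=\SN$ because
$$
\rho_{base}\,\sigma_{1,1}(tE_{-1}P)=\big(t\rho_{base}\big)\,\sigma_{-1,0}(E_{-1})\,\big(\tau+g^{ij}\zeta_i\zeta_j\big)
$$
vanishes on $\SR_\pm\subset\Sigma(P)$. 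Since $\SG_{+,\mathrm{pol}}\cap\SG_{-,\mathrm{pol}}=\{tD_{z_i}-z_i/2,\ tE_{-1}P,\ E_1\}$, this also gives $\SN_{\mathrm{pol}}\subseteq\SN$. Hence $\SM_{\pm,\mathrm{pol}}^{\kappa}\SN_{\mathrm{pol}}^{k}\subseteq\SM_{\pm}^{\kappa}\SN^{k}$, so any $u\in H_{\SM_\pm}^{s,l;\kappa,k}$ satisfies $ABu\in H^{s,l}_{\mathrm{par}}$ for all $A\in\SM_{\pm,\mathrm{pol}}^\kappa$, $B\in\SN_{\mathrm{pol}}^k$, i.e.\ $u\in H_{\SM_{\pm,\mathrm{pol}}}^{s,l;\kappa,k}$. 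No hypothesis on $\supp(u)$ enters here.

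For ``$\Leftarrow$'' I would fix a spacetime cutoff $\psi$ equal to $1$ on a neighbourhood of $\supp(u)$ and supported in a slightly enlarged polar region $\{|t|\ge C'|z|,\ \ang{Z}\ge R'\}$ which --- since $\supp(u)$ lies in the polar region and, for $C$ large enough, away from the equatorial cutoff support --- can still be taken disjoint from $\supp(\EQ)$ near spacetime infinity; then $u=\psi u$. The main point is the single-generator reduction: modulo $\Psipcl{0}{0}$ (contained in every test module) and modulo $\Psipcl{-\infty}{-\infty}$, each generator $G$ of $\SM_\pm$, composed on the right by $\psi$ (and, where needed, by an auxiliary polar cutoff on the left), lies in the module generated by $\SG_{\pm,\mathrm{pol}}$. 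Indeed: (i) $E_1$, $tD_{z_i}-z_i/2$ and $t\chi_{\mathrm{pol},\mp}E_1$ are already in $\SG_{\pm,\mathrm{pol}}$; (ii) for the rotations, the operator identity $t(z_iD_{z_j}-z_jD_{z_i})=z_i(tD_{z_j}-z_j/2)-z_j(tD_{z_i}-z_i/2)$ together with the fact that $z_i/t,\,z_j/t$ are symbols of order $(0,0)$ on the polar region (where $|t|\sim\ang{Z}$ and $|z|\lesssim|t|$) writes $(z_iD_{z_j}-z_jD_{z_i})\psi$ as a $\Psipcl{0}{0}$-combination of $tD_{z_i}-z_i/2$ and $tD_{z_j}-z_j/2$; (iii) writing $\ang{Z}=(\ang{Z}/t)\,t$ with $\ang{Z}/t\in S^{0,0}_{\mathrm{par,cl}}$ on the polar region, $(\ang{Z}E_{-1}P)\psi$ becomes a $\Psipcl{0}{0}$-multiple of $tE_{-1}P\in\SG_{\pm,\mathrm{pol}}$; (iv) $B_\pm=\mathrm{Op}(b_\pm)$ has $b_\pm$ supported, in the spacetime variables, in $\supp(\EQ)$, which is disjoint from $\supp(\psi)$, so $B_\pm\psi\in\Psipcl{-\infty}{-\infty}$ by pseudolocality.

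Finally I would bootstrap these to products. For $|\alpha|\le\kappa$, $|\beta|\le k$ I write $\mathbf{A}^\alpha\mathbf{B}^\beta u=(\mathbf{A}^\alpha\mathbf{B}^\beta\psi)u$, insert a nested family $\psi\prec\psi_1\prec\psi_2\prec\cdots$ of polar cutoffs (all supported away from $\supp(\EQ)$ near spacetime infinity, $\psi_j\equiv 1$ near $\supp(\psi_{j-1})$) between consecutive factors --- the insertion errors being residual by pseudolocality --- then apply (i)--(iv) factor by factor and use that $\SM_{\pm,\mathrm{pol}}$ and $\SN_{\mathrm{pol}}$ are $\Psipcl{0}{0}$-modules closed under commutators to collect everything; the outcome is $\mathbf{A}^\alpha\mathbf{B}^\beta\psi=\sum_\ell C_\ell D_\ell+E$ with $C_\ell\in\Psipcl{0}{0}$, $D_\ell\in\SM_{\pm,\mathrm{pol}}^{\kappa'}\SN_{\mathrm{pol}}^{k'}$ for some $\kappa'\le\kappa$, $k'\le k$, and $E\in\Psipcl{-\infty}{-\infty}$. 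Since $u\in H_{\SM_{\pm,\mathrm{pol}}}^{s,l;\kappa,k}$ gives $D_\ell u\in H^{s,l}_{\mathrm{par}}$ and $Eu\in\mathcal S(\RR^{n+1})$, we get $\mathbf{A}^\alpha\mathbf{B}^\beta u\in H^{s,l}_{\mathrm{par}}$, hence $u\in H_{\SM_\pm}^{s,l;\kappa,k}$. I expect the only genuinely substantive step to be (iv), the elimination of $B_\pm$: this is precisely where the polar support hypothesis is used (and it is what fixes how large $C$ must be taken), since near the poles $B_\pm$ cannot be elliptic and its symbol is $\EQ$-localized to the equator; the remaining bookkeeping in the iteration is routine, the only care being to keep all intermediate operators inside the parabolic calculus and inside the correct power of the small module.
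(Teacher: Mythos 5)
The paper states this proposition without proof---it is one of two facts that the authors note ``we shall not use'' and accordingly leave unproven. So there is no paper argument to compare against; your proposal has to stand on its own, and by and large it does. The structure---show $\mathcal{M}_{\pm,\mathrm{pol}}\subset\mathcal{M}_\pm$ directly for ``$\Rightarrow$'', then for ``$\Leftarrow$'' insert polar cutoffs, reduce each generator of $\mathcal{G}_\pm$ to the polar generating set modulo $\Psipcl{0}{0}$ and $\Psipcl{-\infty}{-\infty}$, and bootstrap through nested cutoffs---is the natural argument, and your algebraic identities in (ii) and (iii) are correct.

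Two details deserve more care. First, in step (iv) you assert that $\supp(b_\pm)\cap\supp(\psi)$ is empty, but earlier in your own text you qualified this as holding only ``near spacetime infinity.'' Since the support hypothesis is merely $\ang{Z}\ge R$ with $R>1$ allowed arbitrarily close to $1$, part of $\supp(u)$ can sit in the bounded-spacetime region, where the equatorial cutoff $\EQ$ need not vanish on the polar cone and $B_\pm\psi$ is not residual. The fix is cheap but should be stated: the bounded-spacetime part of $B_\pm\psi$ lies in $\Psipcl{1}{-\infty}$, so by global ellipticity of $E_1$ it can be written as $QE_1+R$ with $Q\in\Psipcl{0}{-\infty}$ and $R$ residual, landing in the polar module; only the piece at spacetime infinity is killed by disjoint supports. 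Second, in the bootstrap you ``use that $\SM_{\pm,\mathrm{pol}}$ and $\SN_{\mathrm{pol}}$ are \dots closed under commutators.'' For $\mathcal{M}_\pm$ this is a theorem in the paper relying on the Lagrangian structure of $\tilde{\mathcal{R}}_\pm$, and it is not automatic for a sub-collection of generators; you either need to verify it (or verify that, after insertion of the polar cutoffs, all commutators of polar generators remain in the polar module modulo $\Psipcl{0}{0}$), or structure the re-ordering argument so that only commutators of $\Psipcl{0}{0}$ coefficients with generators arise, which do drop one power of the module without appeal to commutator closure. As you observe, the dependence on $C$ being large is unavoidable and correctly identified as the place where the polar support hypothesis is actually used.
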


\begin{prop}
	For any $R>1$ and for any distribution $u\in H_\mathrm{par}^{s,l}$ with $\spt(u)\subset \{\ang{Z}\geq R,|t|\leq C|z|\}$, we have 
	\begin{equation}
		u\in H_{\SM_{\pm}}^{s,l;\kappa,k}\Leftrightarrow  u\in H_{\SM_{\pm,\mathrm{eq}}}^{s,l;\kappa,k}
	\end{equation}
	where $\SM_{\mathrm{\pm,\mathrm{eq}}}$ is the $\Psip{0}{0}$-module generated by 
	\begin{equation}
		\SG_{\pm,\mathrm{eq}}:=\{z_iD_{z_j}-z_jD_{z_i},tD_{z_i}-z_i/2,\ang{Z}E_{-1}P,E_1,B_{\pm}\}
	\end{equation}
	and the submodule $\SN_{\mathrm{eq}}$ is the $\Psip{0}{0}$-module generated by $\SG_{+,\mathrm{eq}}\cap \SG_{-,\mathrm{eq}}$.
\end{prop}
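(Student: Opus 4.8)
The implication $u\in H_{\SM_\pm}^{s,l;\kappa,k}\Rightarrow u\in H_{\SM_{\pm,\mathrm{eq}}}^{s,l;\kappa,k}$ is automatic: $\SG_{\pm,\mathrm{eq}}\subset\SG_\pm$ as sets of operators, so $\SM_{\pm,\mathrm{eq}}\subseteq\SM_\pm$, and $\SN_{\mathrm{eq}}=\SN$ because both are generated by $\SG_{+,\mathrm{eq}}\cap\SG_{-,\mathrm{eq}}=\SG_+\cap\SG_-$, which (by Proposition~\ref{prop:R.char} and the module generation argument used to prove that $\SG_\pm$ generates $\SM_\pm$) generates exactly $\{A\in\Psipcl{1}{1}(\RR^{n+1}):[\rho_{base}\sigma_{1,1}(A)]|_{\SR}=0\}=\SM_+\cap\SM_-$. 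So the whole content is the converse, and since the only generator of $\SM_\pm$ absent from $\SG_{\pm,\mathrm{eq}}$ is $G:=t\chi_{\mathrm{pol},\mp}E_1$, everything reduces to showing that each occurrence of $G$ is redundant when testing against $u$ supported in the equatorial region. Throughout, call a word $W$ \emph{admissible} if $W=\mathbf{A}^\alpha\mathbf{B}^\beta$ with $|\alpha|\le\kappa$, $|\beta|\le k$, the $\mathbf{A}$'s drawn from the generators of $\SM_\pm$ and the $\mathbf{B}$'s from those of $\SN$.

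\textbf{The key local fact.} Let $\psi\in S^{0,0}_{\mathrm{par, cl}}(\RR^{n+1})$ be a space-time cutoff supported in a slightly enlarged equatorial region $\{\ang{Z}\geq R_0,\ |t|\leq C_0|z|\}$, with $R_0<R$ and $C_0$ still small enough that $\EQ>0$ there. I claim
$$
G\psi=\sum_j Q_j A_j + \widetilde R,\qquad A_j\in\SG_{\pm,\mathrm{eq}},\quad Q_j\in\Psipcl{0}{0},\quad \widetilde R\in\Psipcl{-\infty}{-\infty}.
$$
Indeed $G\psi\in\SM_\pm$ (its reweighted principal symbol still vanishes on $\SR_\pm$), and its operator wavefront set meets $\partial_{base}\overline{T}^*_{\mathrm{par}}\RR^{n+1}$ only over $\spt\psi$. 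Over $\spt\psi$ the collection $\SG_{\pm,\mathrm{eq}}$ already has the two properties needed for the generation argument: at each point not on $\SR_\pm$ some element is elliptic, and at each point of $\SR_\pm$ there are $n+1$ elements whose reweighted principal symbols have linearly independent differentials. This is exactly the equatorial case of the proof of Proposition~\ref{prop:R.char}: there the job of separating $\SR_+$ from $\SR_-$ near the poles was done by $t\chi_{\mathrm{pol},\mp}E_1$, but near the equator it is done by $B_\pm$, which was shown to be elliptic on all of $\SR_\mp\cap\spt\EQ$, while the $n+1$ transverse generators used at $\SR$ ($2tD_{z_i}-z_i$, rotations, $\ang{Z}E_{-1}P$) all lie in $\SG_{\pm,\mathrm{eq}}$. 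Decomposing $\rho_{base}\rho_{fib}\sigma_{1,1}(G\psi)$ against local defining functions of $\SR_\pm$ and absorbing the resulting order-$(1,0)$ remainder with the globally elliptic $E_1\in\SG_{\pm,\mathrm{eq}}$ produces the displayed identity, exactly as in the proof that $\SG_\pm$ generates $\SM_\pm$.

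\textbf{Localization and induction.} Choose nested cutoffs $\psi_0,\dots,\psi_M$ ($M=\kappa+k$) supported in successively enlarged equatorial regions, with $\psi_0\equiv1$ near $\spt u$ and $\psi_{i+1}\equiv1$ on a neighbourhood of $\spt\psi_i$. For any operator $A$ in the calculus one has $(1-\psi_{i+1})A\psi_i\in\Psipcl{-\infty}{-\infty}$, since every term of the symbol expansion of this composition carries the pointwise factor $(1-\psi_{i+1})\,D^\gamma_{(z,t)}\psi_i\equiv0$. Hence applying any generator to something of the form $\psi_i v$ gives $\psi_{i+1}A\psi_i v$ plus a residual operator applied to $v$; as residual operators map $\mathcal{S}'\to\mathcal{S}$ and all our generators map $\mathcal{S}\to\mathcal{S}$, and $u=\psi_0 u$, any admissible word $W$ obeys $Wu=\psi_M Wu+(\text{Schwartz})$. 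Now induct on the number of $G$-factors in $W$. With none, $Wu\in H^{s,l}_{\mathrm{par}}$ is precisely the hypothesis $u\in H_{\SM_{\pm,\mathrm{eq}}}^{s,l;\kappa,k}$. Otherwise write $W=W_2\,G\,W_1$ with $G$ the rightmost bad factor (so $W_1$, which contains every $\SN$-factor, has none), replace $GW_1u$ by $G\psi_M(W_1u)$ modulo a Schwartz term, insert the decomposition of $G\psi_M$ above, and commute the $\Psipcl{0}{0}$-coefficients $Q_j$ leftward through $W_2$ using Proposition~\ref{prop:parcalc}. The leading terms are $Q_j\,(W_2 A_j W_1)\,u$ with $W_2 A_j W_1$ admissible and with strictly fewer $G$-factors than $W$, hence controlled by the inductive hypothesis; the commutator and residual terms are shorter admissible words (or carry a Schwartz factor). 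One keeps track of the fact that every $\Psipcl{0}{0}$-coefficient produced is microsupported over the enlarged equatorial region, so the local fact keeps applying; the argument terminates because the word length is bounded by $M$. This gives $Wu\in H^{s,l}_{\mathrm{par}}$ for all admissible $W$, i.e.\ $u\in H_{\SM_\pm}^{s,l;\kappa,k}$.

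\textbf{Expected main obstacle.} The crux is the key local fact — verifying that deleting $t\chi_{\mathrm{pol},\mp}E_1$ from $\SG_\pm$ still leaves a generating set for $\SM_\pm$ over the equatorial region; this is genuinely a reprise of the equatorial portion of the proof of Proposition~\ref{prop:R.char}, where $B_\pm$ was already shown to separate the two radial sets throughout $\spt\EQ$. The remaining difficulty is purely bookkeeping: organizing the nested cutoffs and the commutations of $\Psipcl{0}{0}$-coefficients through words, and choosing the chain of constants $C<C_0<\dots<C_M$ small enough that all the supports of the $\psi_i$ stay inside $\{\EQ>0\}$, where the $B_\pm$-based generation argument is valid.
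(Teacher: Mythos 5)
The paper states this proposition (and the companion polar-region one) without proof, after remarking ``although we shall not use this fact,'' so there is nothing in the paper to compare against directly. Your argument is correct and is the natural one: the whole content is that deleting $t\chi_{\mathrm{pol},\mp}E_1$ from $\SG_\pm$ still leaves a locally generating set over the equatorial region, and you correctly identify this as precisely the equatorial half of the proof of Proposition~\ref{prop:R.char} (where the generators $2tD_{z_i}-z_i$, the rotations, and $\ang{Z}E_{-1}P$ cut out $\SR$ transversally, and $B_\pm$ — rather than $t\chi_{\mathrm{pol},\mp}E_1$ — is the generator elliptic on $\SR_\mp$, provided one stays inside $\{\EQ>0\}$, i.e.\ $|w|=|z|/|t|>\sqrt5/2$). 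Your observation that $\SG_{+,\mathrm{eq}}\cap\SG_{-,\mathrm{eq}}=\SG_+\cap\SG_-$ (so $\SN_{\mathrm{eq}}=\SN$) and that the nontrivial direction reduces to eliminating the single extra generator is also exactly right.

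Two places where the bookkeeping deserves a little more care. First, in ``commute the $\Psipcl{0}{0}$-coefficients $Q_j$ leftward through $W_2$'': a single application of Proposition~\ref{prop:parcalc} yields
$W_2Q_j=Q_jW_2+\sum_i A_1\cdots A_{i-1}[A_i,Q_j]A_{i+1}\cdots A_{|W_2|}$ with $[A_i,Q_j]\in\Psipcl{0}{0}$, and one must iterate the sweep until every order-$(0,0)$ factor has reached the far left. The resulting terms are of the form $Q'\,W'A_jW_1$ with $Q'\in\Psipcl{0}{0}$ and $W'$ a (possibly shorter) subword of $W_2$; since the reduction in $G$-count comes from replacing the rightmost $G$ by $A_j$ (not from shortening $W_2$), each such term still has strictly fewer $G$'s than $W$, and also word length $\le |W|$, so the double induction (on the number of $G$'s, and within that on word length) closes. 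This is implicit in your ``commutator and residual terms are shorter admissible words,'' but it is worth saying. Second, the nested cutoff constants: as you note, one needs the entire chain of supports to stay in $\{|w|>\sqrt5/2\}$ strictly, since on $\SR_\mp\cap\{|w|\le\sqrt5/2\}$ only the deleted generator $t\chi_{\mathrm{pol},\mp}E_1$ is elliptic, and the key local fact would fail. With $C<2/\sqrt5$ and the $\psi_i$ chosen close enough to $\psi_0$ this holds, and for the purposes of the proposition the constant $C$ is understood to be small enough (the paper's running example takes $C=2/3<2/\sqrt5$).
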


\subsection{Positivity properties}

When proving positive commutator estimates for module regularity spaces, the following notions of positivity are extremely useful.

\begin{defn}
	\label{def:p.pos}
	Let $\SM$ be a finitely generated  $\Psip{0}{0}$-module, with generators $A_0 = \Id, A_1,\ldots,A_N \subset \Psipcl{1}{1}$.
	We say $\SM$ is \emph{$P$-positive} on the subset $S\subseteq \Char(P)$ of spatial infinity if for each $j$, there exist $C_{jk}\in \Psip{1}{0}$ and $C_j'\in\Psip{0}{1}$ such that we have 
	\begin{equation}
		\label{eq:gen.commutators}
		i\ang{Z}[A_j,P]=\sum_{k=0}^N C_{jk}A_k+C_j'P
	\end{equation}
	with 
	\begin{equation}
		\sigma_{\mathrm{base},1,0}(C_{jk})|_S=0\textrm{ for }j\neq k
	\end{equation} and
	\begin{equation}
		\label{eq:module.positivity.diagonal}
		\Re(\sigma_{\mathrm{base},1,0}(C_{jj}))|_S\geq 0.
	\end{equation}
	Similarly, we say that $\SM$ is \emph{$P$-negative} on $S$ if the same conditions are satisfied with the inequality \eqref{eq:module.positivity.diagonal} reversed.
	We say $\SM$ is \emph{$P$-critical} on $S$ if $\SM$ is both \emph{$P$-positive} and \emph{$P$-negative} on $S$.
\end{defn}

We conclude the section by showing that the modules $\SM_\pm$ and $\SN$  satisfy positivity properties phrased using Definition \ref{def:p.pos}. We shall exploit these in Section \ref{subsec:module.positive.commutator.radial}.

\begin{prop}
	\label{prop:positivity}
	The modules defined in Definition \ref{def:module.def.characteristic} enjoy the following positivity properties.
	\begin{enumerate}[(i)]
		\item $\SM_+$ is $P$-positive at $\SR_+$;
		\item $\SM_-$ is $P$-negative at $\SR_-$;
		\item $\SN$ is $P$-critical at $\SR=\SR_+\cup \SR_-$.
	\end{enumerate}
\end{prop}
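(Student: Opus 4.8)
The plan is to check Definition~\ref{def:p.pos} generator by generator, for the explicit generating sets in \eqref{eq:tildeG.def}, and to produce the decomposition \eqref{eq:gen.commutators} uniformly. Since each of $\SM_\pm$ and $\SN$ is closed under commutators and contains the generator $\ang{Z}E_{-1}P$, for any generator $A_j$ the commutator $[A_j,\ang{Z}E_{-1}P]$ again lies in the module, so finite generation gives $[A_j,\ang{Z}E_{-1}P]=\sum_k\widetilde C_{jk}A_k$ with $\widetilde C_{jk}\in\Psip{0}{0}$. As $\ang{Z}E_{-1}$ is globally elliptic of order $(-1,1)$ with exact inverse $E_1\ang{Z}^{-1}\in\Psip{1}{-1}$, expanding $[A_j,\ang{Z}E_{-1}P]=\ang{Z}E_{-1}[A_j,P]+[A_j,\ang{Z}E_{-1}]P$ and composing on the left with $E_1\ang{Z}^{-1}$ yields
\[
i\ang{Z}[A_j,P]=\sum_k\bigl(i\ang{Z}E_1\ang{Z}^{-1}\widetilde C_{jk}\bigr)A_k-\bigl(i\ang{Z}E_1\ang{Z}^{-1}[A_j,\ang{Z}E_{-1}]\bigr)P,
\]
which has the required form with $C_{jk}\in\Psip{1}{0}$ and, since $[A_j,\ang{Z}E_{-1}]\in\Psip{-1}{1}$, $C_j'\in\Psip{0}{1}$. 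Two observations will be used repeatedly: (a) $\ang{Z}E_{-1}P$ is itself a right multiple of $P$, so its coefficient in any such decomposition can be absorbed into $C_j'$; and (b) if $[A_j,P]$ has operator wavefront set disjoint from a neighbourhood of $\SR_\pm$, then, as $[A_j,\ang{Z}E_{-1}]P$ has symbol divisible by $p$, the decomposition can be arranged so that every $C_{jk}$ vanishes near $\SR_\pm$.

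Next I would reduce the symbol conditions at $\SR_\pm$ to a computation with the Hamilton flow. Because $i\ang{Z}E_1\ang{Z}^{-1}$ is globally elliptic of order $(1,0)$ with principal symbol $i\ang{\Zdual}$, the conditions on $\sigma_{\mathrm{base},1,0}(C_{jk})|_{\SR_\pm}$ are equivalent to conditions on $\sigma_{\mathrm{base},0,0}(\widetilde C_{jk})|_{\SR_\pm}$, which in turn are determined by the principal symbol $\tfrac1i\{a_j,\ang{Z}\ang{\Zdual}^{-1}p\}$ of $[A_j,\ang{Z}E_{-1}P]$; on $\Sigma(P)$ (where $p=0$) this equals $i\ang{Z}\ang{\Zdual}^{-1}H_pa_j$. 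Writing $\bar a_k=\rho_{base}\rho_{fib}\sigma_{1,1}(A_k)$ for the smooth reweighted symbols, and using $\rho_{base}\ang{Z}=1$, $\rho_{fib}\ang{\Zdual}^{-1}=\rho_{fib}^2$, $H^{2,0}_p=\rho_{fib}\rho_{base}^{-1}H_p$ (cf.\ \eqref{eq:ham vec rescale}), together with $H^{2,0}_p\rho_{base}=c_b\rho_{base}$ and $H^{2,0}_p\rho_{fib}=c_f\rho_{fib}$, the identity $[A_j,\ang{Z}E_{-1}P]=\sum_k\widetilde C_{jk}A_k$ reweights near $\SR_\pm$ and along $\Sigma(P)$ to
\[
\sum_k\gamma_{jk}\,\bar a_k=H^{2,0}_p\bar a_j-(c_b+c_f)\,\bar a_j,\qquad \gamma_{jk}:=\tfrac1i\,\sigma_{\mathrm{base},0,0}(\widetilde C_{jk})\ \ (\text{real, since the }a_k\text{ are real}),
\]
and one finds $\Re\,\sigma_{\mathrm{base},1,0}(C_{jj})|_{\SR_\pm}=-\ang{\Zdual}\,\gamma_{jj}|_{\SR_\pm}$. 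Thus $P$-positivity at $\SR_\pm$ becomes $\gamma_{jj}|_{\SR_\pm}\le0$, $P$-negativity becomes $\gamma_{jj}|_{\SR_\pm}\ge0$, $P$-criticality becomes $\gamma_{jj}|_{\SR_\pm}=0$, and the off-diagonal requirement is $\gamma_{jk}|_{\SR_\pm}=0$ for $k\ne j$.

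The heart of the argument is then the sign computation, fed by the normal forms for $H^{2,0}_p$ near $\SR_\pm$ from the proof of Proposition~\ref{prop:radial set geom}: in each chart --- the interiors of the hemispheres \eqref{eq:rad.interior} and the equatorial corner \eqref{eq:Hvf Rplus equator}, \eqref{eq:Hvf Rminus equator}, plus the southern analogues --- the rescaled flow has $\SR_\pm$ as a family of sinks (at $\SR_+$) or sources (at $\SR_-$) in which $\rho_{base}$ and the defining functions supplied by the rotations and boosts are all eigenfunctions of $H^{2,0}_p$ with the same nonzero eigenvalue $c_b$ ($c_b<0$ at $\SR_+$, $c_b>0$ at $\SR_-$), the fibre defining function is flow-invariant ($c_f=0$), and the defining function supplied by $\ang{Z}E_{-1}P$ is flow-invariant along $\Sigma(P)$. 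Consequently: for the rotations and boosts, and for $E_1$ (whose reweighted symbol is $\rho_{base}$ itself), $\bar a_j$ is an eigenfunction of $H^{2,0}_p$ near $\SR_\pm$ with eigenvalue $c_b$, so the right-hand side above vanishes and $\gamma_{jk}|_{\SR_\pm}=0$ for all $k$; for $B_\pm$ the reweighted symbol equals, up to a nonvanishing factor, $\hat z\cdot\hat\zeta-\sqrt{-\sigma}$ near $\SR$, whose leading part at $\SR_\pm$ is a multiple of $\sigma+1\propto p$ (absorbed into $C_j'$ by (a)) with quadratically vanishing remainder, again giving $\gamma_{jk}|_{\SR_\pm}=0$; and $t\chi_{\mathrm{pol},\mp}E_1$ has symbol vanishing in a neighbourhood of $\SR_\pm$ (its spacetime cutoff avoids the hemisphere over which $\SR_\pm$ lies), so (b) applies. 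For the four generators of $\SN$ --- rotations, boosts, $E_1$, $\ang{Z}E_{-1}P$ --- the same conclusion holds at \emph{both} $\SR_+$ and $\SR_-$: the first three commute with the constant-coefficient operator $P_0=D_t+\Delta_0$, so $[\,\cdot\,,P]=[\,\cdot\,,P-P_0]$ has operator wavefront set inside the compact spacetime support of the coefficients of $P-P_0$, disjoint from $\bface\supseteq\SR$ (use (b)), while $[\ang{Z}E_{-1}P,P]=[\ang{Z}E_{-1},P]P$ is a right multiple of $P$ (use (a)). This gives (iii), and hence (i) and (ii) a fortiori.

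The main obstacle is the last step: bookkeeping the reweighting factors $\rho_{base},\rho_{fib},\ang{Z},\ang{\Zdual}$ --- in particular across the $\zeta$-dominant and $\tau$-dominant parts of the corner, where the fibre boundary defining function changes --- verifying the eigenvalue and flow-invariance statements in each chart of Proposition~\ref{prop:radial set geom}, and treating generators such as $tD_{z_i}-z_i/2$ away from the distinguished axis, whose reweighted symbols are combinations of the $n+1$ local defining functions rather than defining functions themselves; here one uses that the combination coefficients are flow-invariant at $\SR_\pm$, so that $\bar a_j$ remains an eigenfunction with eigenvalue $c_b$ and the cancellation persists. Given these, the verification that all error terms lie in the stated operator classes and that the off-diagonal symbols vanish is routine from the parabolic calculus of Section~\ref{sec:scatcalc}.
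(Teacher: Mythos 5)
Your approach is genuinely different from the paper's and is, at the conceptual level, a reasonable alternative route — but as written it has real gaps that go beyond bookkeeping, and you flag some of them yourself without resolving them.

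The paper proceeds by direct symbol computations: it uses $\sigma_{2,0}([A,P]) = iH_p a$ with $H_{p_0} = \partial_t + 2\zeta\cdot\partial_z$ near spacetime infinity, and exhibits an explicit decomposition \eqref{eq:gen.commutators} for each generator with the sign conditions checked by hand. You instead pass from $[A_j,P]$ to $[A_j,\ang{Z}E_{-1}P]$ (which is algebraically sound and a nice idea), recast the symbol conditions as a statement about eigenvalues of $H^{2,0}_p$ acting on the reweighted symbols $\bar a_j$ near $\SR_\pm$, and argue for the extra generators $B_\pm$, $t\chi_{\mathrm{pol},\mp}E_1$ by a Taylor expansion of $\bar a_j$ near $\SR_\pm$ into "multiple of $p$" plus "quadratic remainder." The main issues are:

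\emph{(1) The eigenvalue and flow-invariance claims are asserted, not verified.} You state that near $\SR_\pm$ the reweighted symbols of the rotations and boosts are all eigenfunctions of $H^{2,0}_p$ with eigenvalue $c_b$ and that $c_f=0$, but you then identify "verifying the eigenvalue and flow-invariance statements in each chart of Proposition~\ref{prop:radial set geom}" as the main obstacle and leave it. This is not a routine bookkeeping step: the eigenvalue $c_b$ changes between the polar and equatorial charts (it is $-1$ over the poles, $-2$ near the equator in the chosen normalizations), the reweighted symbol of a boost $tD_{z_i}-z_i/2$ is a \emph{combination} of the local defining functions in the equatorial chart rather than one of them, and the fibre boundary defining function changes form across the $\tau$-dominant and $\zeta$-dominant regions. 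Your closing sentence says "here one uses that the combination coefficients are flow-invariant at $\SR_\pm$," but that invariance is precisely what needs proof; the paper sidesteps all of this by computing $H_{p_0}a$ directly for the explicit symbol $a$.

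\emph{(2) The conclusion $\gamma_{jk}|_{\SR_\pm}=0$ for $B_\pm$ is stronger than what the argument as stated supports.} Your reasoning is: $\bar a_{B_\pm}$ equals (up to nonvanishing factors) a multiple of $p$ plus a quadratically vanishing remainder, so applying $H^{2,0}_p$ and re-expanding gives coefficients vanishing at $\SR_\pm$. This is plausible but requires: (i) that the quadratic remainder stays quadratic under $H^{2,0}_p$ uniformly up to and including the corner $\SR_\pm\cap\fface$; (ii) that the resulting Taylor coefficients can be chosen globally as smooth symbols in $S^{0,0}_{\mathrm{par}}$; and (iii) that the set $\bar a_1,\dots,\bar a_{n+1}$ furnishes genuine defining functions (with independent differentials) for $\SR_\pm$ at every point, including at the corner where the generating set is over-determined. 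None of this is shown. By contrast, the paper's explicit computation for $B_+$ yields the decomposition with diagonal coefficient $2\TN\EQ\ang{Z}\sqrt{-\tau}/|z|$, which is strictly positive (not zero) on the interior of $\SR_+\cap\spt(\TN\EQ)$, and that suffices. Your claim amounts to asserting that a different choice of decomposition makes the diagonal vanish; I believe this \emph{is} achievable (the linear part of $\hat z\cdot\hat\zeta - \sqrt{-\sigma}$ at $\SR_+$ really is a multiple of $\sigma+1$, since $\hat z\cdot\hat\zeta - 1 = -\tfrac12|\hat z - \hat\zeta|^2$ is quadratic), so you would in fact be proving the stronger statement that $\SM_\pm$ is $P$-critical at $\SR_\pm$, not merely $P$-positive — but you have not demonstrated it, and it is not "routine."

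For the common generators — rotations, boosts, $E_1$, $\ang{Z}E_{-1}P$ — your argument (commute with $P_0$, compact support of $P-P_0$, compose with the inverse of $E_{-1}\ang{Z}$) coincides in substance with the paper's, and is fine. The observation (b), that microsupport of $[A_j,P]$ away from $\SR_\pm$ lets one choose all $C_{jk}$ vanishing there, is correct but deserves the extra sentence invoking Proposition~\ref{prop:R.char} and a microlocal partition of unity. Overall, your plan is a viable alternative proof strategy, but the key sign computation for the large generators — which is the actual content of the proposition — is not carried out.
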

\begin{proof}
	The commutators of the first two differential generators with $P_\mathrm{euc}=\Delta_{\mathrm{euc}}+D_t$ are as follows
	\begin{equation}
		[z_iD_{z_j}-z_jD_{z_i},P_\mathrm{euc}]=0,
	\end{equation}
	\begin{equation}
		[tD_{z_j}-z_j/2,P_\mathrm{euc}]=0,
	\end{equation}
	As $g$ is Euclidean outside of a compact set in space-time, we have that $P_\mathrm{euc}-P$ is a compactly supported differential operator in $\Psip{2}{-\infty}$. This implies that the commutators 
	\begin{equation}
		i\ang{Z}[z_iD_{z_j}-z_jD_{z_i},P]
	\end{equation}
	and
	\begin{equation}
		i\ang{Z}[tD_{z_j}-z_j/2,P]
	\end{equation}
	are compactly supported differential operators in $\Psip{2}{-\infty}$. They can both be written in the form $CE+R$ for some $C\in \Psip{1}{-\infty}$ and $R\in \Psip{-\infty}{-\infty}$ by using the ellipticity of the generator $E$, moreover $\sigma_{\mathrm{base},1,1}(C)$ vanishes on $\SR$ in both instances as $C\in\Psip{1}{-\infty}$.
	The generator involving $P$ has commutator
	\begin{equation}
		i\ang{Z}[\ang{Z}E_{-1}P,P]=i\ang{Z}[\ang{Z}E_{-1},P]P
	\end{equation}
	which is also of the required form as $[\ang{Z}E_{-1},P]\in\Psip{0}{0}$.
	
	The elliptic generator $E_1$ itself is of lower order $(1,0)$, and so the corresponding commutator can also be written in the form
	\begin{equation}
		i\ang{Z}[E_1,P]=CE_1+R
	\end{equation}
	where $C\in \Psip{1}{0}$ and $R\in \Psip{-\infty}{-\infty}$. As $P=P_\mathrm{euc}$ outside of a compact set in space-time, the Poisson bracket $\{\sigma(E),\sigma(P)\}$ vanishes identically near the boundary of the space-time compactification. Consequently $C$ has base symbol vanishing on $\SR$. These computations establish that $\SN$ is $P$-critical at $\SR$.
	
	We now consider the two additional generators of $\SM_+$.	For $A\in\Psip{1}{1}=\mathrm{Op}(a)$, we have $\sigma_{2,0}([A,P])=iH_p a = i(\partial_t +2\zeta\cdot \partial_z )a$ in a neighbourhood of spatial infinity, and so applying this to the generator with symbol
	\[a=t\ang{R^2}^{1/2}\chi_{\mathrm{pol},-}\]
	we obtain 
	\begin{align}
		& i\ang{Z} \sigma_{2,0}([A,P])\\
		&=-\ang{R^2}^{1/2}\left[\chi_{\mathrm{pol},-}\ang{Z}+\chi' \left(\frac{-t}{\ang{Z}}\right)\left(\frac{-t-t|z|^2+2t^2 z\cdot \zeta}{\ang{Z}^2}\right) \right]\\
		&=\label{eq:first.large.gen.positive} \frac{\ang{Z}}{-t}a-\ang{R^2}^{1/2}\chi' \left(\frac{-t}{\ang{Z}}\right)\left(\frac{-t-t|z|^2+2t^2 z\cdot \zeta}{\ang{Z}^2}\right)
	\end{align}
	The prefactor of $a$ is non-negative on the support of $a$. The term $\ang{R^2}^{1/2}\chi'\cdot \left(-\frac{t}{\ang{Z}^2}\right)$ is of lower order $(-\infty,-1)$.
	The remaining terms can be written as
	\begin{equation}
		\label{eq:remains}
		\ang{R^2}^{1/2}\chi'\left(\frac{-t}{\ang{Z}^2}\right)\cdot \left(\frac{2t^2z\cdot\zeta-t|z|^2}{\ang{Z}}\right)
	\end{equation}
	and since \begin{equation}
		\frac{2t^2z\cdot\zeta-t|z|^2}{\ang{Z}^2}=\frac{2tz}{\ang{Z}^2}\cdot(t\zeta-z/2)
	\end{equation}
	the term in \eqref{eq:remains} is a sum of $S^{1,0}_\mathrm{par}$-multiples of $\sigma_{2,0}(tD_{z_i}-z_i/2)$, with each of the $S^{1,0}_\mathrm{par}$ coefficients vanishing on $\SR_+$ due to the cutoff factor.
	
	Thus we have shown
	\begin{equation}
		i\ang{Z}\sigma_{2,0}([A,P])=\sum_{k=0}^Nc_{jk}\sigma_{1,0}(A_k)
	\end{equation}
	for $c_{jk}\in S^{1,0}_{\mathrm{par}}$ with $\sigma_{\mathrm{base},1,0}(c_{jk})= 0$ on $\SR_+$.
	Quantising each symbol in this identity, we see that $tE_1\chi_{\mathrm{pol},-}$ satisfies the required positivity condition.
	
	Finally we compute $i\ang{Z}\sigma([B_+,P])$ using the Poisson bracket. We have
	\begin{equation}
		\begin{split}
			& i\ang{Z}\sigma([B_+,P])\\
			&=-\ang{Z}(\partial_t+2\zeta\cdot \partial_z)\left((z\cdot \zeta -|z|\sqrt{-\tau})\TN\EQ\right)\\
			&= 	-\TN\ang{Z}(\partial_t+2\zeta\cdot \partial_z)\left((z\cdot \zeta -|z|\sqrt{-\tau})\EQ\right)\\
			&=
			-\TN\ang{Z}\left(\left(2|\zeta|^2-\frac{2\sqrt{-\tau}z\cdot
				\zeta}{|z|}\right)\EQ \right.\\
			&\qquad - \left. (z\cdot \zeta
			-|z|\sqrt{-\tau})\chi'\left(\frac{|t|}{\ang{Z}}\right)\left(\frac{1+|z|^2-2tz\cdot
				\zeta}{\ang{Z}^3}\right)\right).\label{eq:ugly}
		\end{split}
	\end{equation}
	The second term in the final equation is in the form $\sum_{j=1}^n c_j(t\zeta_j-z_j/2)+c'$ with $c_j,c'\in S^{1,0}_{\mathrm{par}}$ and with every $c_j$ and $c'$ vanishing on $\SR_+$.
	The remaining term in \eqref{eq:ugly} can be written in the form
	\begin{align}
		-2\TN\EQ \ang{Z}\left(|\zeta|^2-\frac{\sqrt{-\tau}z\cdot \zeta}{|z|}\right)&= 2\TN\EQ\ang{Z}\frac{\sqrt{-\tau}}{|z|}(z\cdot \zeta -|z|\sqrt{-\tau})\\
		&- 2\TN\EQ\ang{Z}(\tau+|\zeta|^2).
	\end{align}
	Since the first term is a positive $S^{1,0}_{\mathrm{par}}$ multiple of $\sigma(B_+)$ and the second is a $S_{\mathrm{par}}^{0,1}$ multiple of $\sigma(P)$, quantising this identity we leads to the required positivity condition for $B_+$.
	
	Taking the opposite sign choices in the final two generators leads to an almost identical computation, with the different sign leading to a conclusion of $P$-negativity rather than $P$-positivity.
\end{proof}

\begin{remark}
	The notion of module regularity can be generalised to the setting where the $\mathbf{A}_j$ are only assumed to lie in $\Psip{s_j}{l_j}$ for collections of positive integers $(s_j)_{j=0}^N$ and $(l_j)_{j=0}^N$. In this setting, it can be useful to work with a \textit{reduced} version $\SM^{(\kappa)}$ of the module powers in Definition \ref{def:module.power} where the indices $\alpha$ in the generating set are restricted to those with $\mathbf{A}^\alpha\in\Psip{\kappa}{\kappa}$. We anticipate that this will be useful in obtaining multiplicative results for spaces with module regularity in a future publication. 
\end{remark}

\subsection{Density}
For later use, we prove the following density result for module regularity spaces $H_{\SM_{\pm}}^{s,\sw;\kappa,k}$. 

\begin{proposition}\label{prop:density} Suppose that the variable order $\sw$ is constant in a neighbourhood of the radial sets. Then the space $\mathcal{S}(\RR^{n+1})$ of Schwartz functions is dense in $H_{\SM_{\pm}}^{s,\sw;\kappa,k}$ for all $s \in \RR$ and $k, \kappa \in \NN$. 
\end{proposition}

\begin{proof} By microlocalizing, we can reduce to the following special cases:
\begin{itemize}
\item[(i)] Proving the same statement for constant spatial weight $\sw$;
\item[(ii)] Proving the statement for variable order but for $k = \kappa = 0$, that is, with module regularity absent. 
\end{itemize}

Indeed, near the radial sets our spatial weight $\sw$ is constant by assumption, while away from the radial sets, both the large and small modules are elliptic (see Proposition~\ref{prop:R.char}), in which case the module regularity space is microlocally identical to $\Hpar{s+k+\kappa}{\sw+ k + \kappa}$. 

In case (i), we first consider the case $s = \sw = 0$. We let $T_\epsilon$, for $\epsilon > 0$, be a family of parabolic scattering pseudodifferential operators of order $(-\infty, -\infty)$ such that the $S^{0,0}$-seminorms of $T_\epsilon$ are uniformly bounded, and $T_\epsilon \to \Id$ strongly. For example, one can take $T_\epsilon  = \Op(e^{-\epsilon (|z|^2 + t^2)} e^{-\epsilon (|\zeta|^4 + \tau^2)})$. Then it is not difficult to show that for any $A_1, \dots, A_j \in \Psipcl 1 1$, the multi-commutator $[A_1, [A_2, \dots [A_j, T_\epsilon] \dots ]]$ tends to zero strongly; we omit the proof. 
Given $u \in H_{\SM_{\pm}}^{0,0;\kappa,k}$, we define $u_j = T_{j^{-1}} u \in \mathcal{S}$. Then $u_j \to u$ in $L^2$, since $T_\epsilon \to \Id$ strongly. Moreover, for any product $A_1 \dots A_q B_1 \dots B_{q'}$ of at most $k$ elements of $\SM_\pm$ and at most $\kappa$ elements of $\SN$, we find that 
\begin{multline}
A_1 \dots A_q B_1 \dots B_{q'} u_j = A_1 \dots A_q B_1 \dots B_{q'} T_{j^{-1}} u = T_{j^{-1}}  A_1 \dots A_q B_1 \dots B_{q'} u \\ + \text{ commutator terms}.
\end{multline}
The first term on the RHS tends to $A_1 \dots A_q B_1 \dots B_{q'} u$ as $j \to \infty$. 
The  commutator factors all tend to zero strongly. We move these factors to the left, at the cost of double commutators, which we move to the left at the cost of triple commutators, and so on.  Eventually, we arrive at a sum of terms, the left factor of which is a multicommutator of module elements with $T_{j^{-1}}$ and the remaining factors are module elements. All of these multicommutator factors tend to zero strongly, and they act on a fixed function in $L^2$, using the fact that $u$ has module regularity of order $(k, \kappa)$. All terms other than the first one above therefore tend to zero in $L^2$ as $j \to \infty$. We deduce that 
$$
A_1 \dots A_q B_1 \dots B_{q'} u_j \to A_1 \dots A_q B_1 \dots B_{q'} u \text{ as } j \to \infty.
$$
We deduce that $u_j \to u$ in the topology of $H_{\SM_{\pm}}^{0,0;\kappa,k}$, proving the density in the case $s = \sw = 0$. 

For general constant $s$ and $\sw$, we choose an elliptic, invertible operator $F \in \Psipcl s {\sw}$. (To do this, we start with an elliptic operator of the form $\Op^w(f)$ where $f$ is a real elliptic symbol of order $(s, \sw)$; then $\Op^w(f)$ is formally self-adjoint and Fredholm, hence has a finite dimensional kernel, which consists of Schwartz functions due to elliptic regularity. Then $F = \Op^w(f) + \Pi$, where $\Pi$ is orthogonal projection onto the null space, is invertible, and $\Pi$ is an operator of order $(-\infty, -\infty)$, so $F \in \Psipcl s {\sw}$ as required.) Given $u \in H_{\SM_{\pm}}^{s, \sw;\kappa,k}$, we have $F u \in H_{\SM_{\pm}}^{0,0;\kappa,k}$. We choose Schwartz $u_j$ converging to $Fu$ in $H_{\SM_{\pm}}^{0,0;\kappa,k}$. Then we claim that $F^{-1} u_j$ converges to $u$ in $ H_{\SM_{\pm}}^{s, \sw;\kappa,k}$. The proof is a standard commutation argument, which we omit. This completes the proof in case (i). 

In case (ii), so now $\sw$ can be a variable order, we choose an elliptic invertible operator of order $(s, \sw)$ as above. Then given $u \in 
\Hpar s {\sw}$, $Fu$ is in $L^2$. We approximate $F u$ in $L^2$ by the Schwartz sequence $u_j = T_{j^{-1}} Fu$ as above, and then $F^{-1} u_j$ converges to $u$ in the topology of $\Hpar s {\sw}$. This proves case (ii). 
\end{proof}


\section{Fredholm estimates}\label{sec:propfred}

In this section, we show that the operator $P=D_t + \Delta_g+V$ is a Fredholm map between suitable function spaces, following closely the methodology introduced in 
\cite{VD2013}, and followed in \cite{NEH}, in which microlocal estimates, including radial points
propagation estimates, are combined to prove global Fredholm estimates. 

\subsection{Microlocal propagation estimates}\label{subsec:mic.prop.est}
Here we collect together various microlocal estimates for $P$. These are proved in the appendix for a general class of operators. Here we restate these estimates in the special case of the operator $P$ under consideration. 

We can distinguish four different estimates, each valid in a particular microlocal region. The first region is the elliptic region $\Ell(P)$. In this case we obtain an estimate without loss of derivatives. This was already stated as Proposition~\ref{prop:elliptic.estimate} but for ease of reference we restate it here. 
The second region is near the characteristic variety $\Sigma(P)$ and away from the radial sets. This is the region of principal-type propagation, and is essentially H\"ormander's original `propagation of singularities' (really propagation of regularity) estimate from \cite{Hormander:Existence}. The third region is near the radial sets. In this case, there are two estimates required, depending on whether the spatial regularity order is greater than or less than the threshold value of $-1/2$ (see the discussion in the Introduction). From the technical point of view, the significance of the threshold value is precisely  the different form that the radial point estimates necessarily take in the two cases.

The elliptic estimate, Proposition~\ref{prop:elliptic.estimate} in the particular case of our Schr\"odinger operator $P = D_t + \Delta_g + V$ takes the following form. 
\begin{prop}[Elliptic estimate]
	\label{prop:elliptic.estimate.P}
	Suppose that $Q,G\in\Psip{0}{0}(\RR^{n+1})$ are such that $P$ and $G$ are elliptic on $\WF'(Q)$, let $\sw$ be an arbitrary spacetime order, and let $M,N\in\RR$. Then if $GPu\in H_{\mathrm{par}}^{s-2,\sw}$, we have $Qu\in H^{s,\sw}$ with the estimate
	\begin{equation}\label{eq:elliptic.estimate}
		\|Qu\|_{H^{s,\sw}}\leq C_{M,N} (\|GPu\|_{H^{s-2,\sw}}+\|u\|_{H^{M,N}}).
	\end{equation}
\end{prop}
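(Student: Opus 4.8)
The plan is to obtain this as an immediate specialisation of the general microlocal elliptic estimate, Proposition~\ref{prop:elliptic.estimate}, to the orders $(m,l)=(2,0)$, combined with the extension to variable spatial orders recorded in the Remark following Definition~\ref{def:sobolevdelta}. Recall from Section~\ref{subsec:charvar} that $P\in\Diff^{2,0}_{\mathrm{par}}(\RR^{n+1})\subset\Psip{2}{0}(\RR^{n+1})$ with $\sigma_{2,0}(P)=\tau+g^{ij}(z,t)\zeta_i\zeta_j$; since this symbol vanishes on $\Sigma(P)$, the operator $P$ is not globally elliptic, and the hypothesis that $P$ be elliptic on $\WF'(Q)$ is precisely what makes an elliptic parametrix argument available microlocally over $\WF'(Q)$.

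First I would note that, since $P$ and $G$ are both elliptic on the compact set $K:=\WF'(Q)$, the composition $GP$ is elliptic on $K$ as an operator of order $(2,0)$, by the product formula of Proposition~\ref{prop:parcalc}. Applying the microlocal elliptic parametrix construction, Proposition~\ref{prop:elliptic param} --- in its variable-order form, cf.\ the Remark after Definition~\ref{def:sobolevdelta} and \cite[Proposition~5.15]{grenoble} --- to $A=GP$ on $K$, I obtain $B\in\Psipd{-2}{0}(\RR^{n+1})$ with $K\cap\WF'(BGP-\Id)=\varnothing$. Since $\WF'(Q)=K$ is disjoint from $\WF'(BGP-\Id)$, the product $R':=Q(BGP-\Id)$ has empty operator wavefront set, hence $R'\in\Psip{-\infty}{-\infty}$ (see the discussion around \eqref{eq:parab wf}). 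Setting $B':=QB\in\Psipd{-2}{0}(\RR^{n+1})$ we then have the operator identity
\[
Q = B'\,(GP) - R' .
\]

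Next I would read off the conclusion from the mapping properties of the calculus. Given $u$ (lying a priori in some $\Hpar{M}{N}$) with $GPu\in\Hpar{s-2}{\mathsf{l}}$, apply the identity above to $u$ to get $Qu = B'(GPu) - R'u$. Since $B'\in\Psipd{-2}{0}$, the boundedness statement of Proposition~\ref{prop:basic bounde} in its variable-order version gives $B'\colon\Hpar{s-2}{\mathsf{l}}\to\Hpar{s}{\mathsf{l}}$ boundedly, while $R'\in\Psip{-\infty}{-\infty}$ maps $\Hpar{M}{N}$ continuously into $\Hpar{s}{\mathsf{l}}$ (indeed into every weighted parabolic Sobolev space, being smoothing). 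Hence $Qu\in\Hpar{s}{\mathsf{l}}$ and
\[
\|Qu\|_{\Hpar{s}{\mathsf{l}}} \le \|B'(GPu)\|_{\Hpar{s}{\mathsf{l}}} + \|R'u\|_{\Hpar{s}{\mathsf{l}}} \le C\big(\|GPu\|_{\Hpar{s-2}{\mathsf{l}}} + \|u\|_{\Hpar{M}{N}}\big),
\]
which is the estimate \eqref{eq:elliptic.estimate}.

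There is no real obstacle here: all the analytic content already sits in the general elliptic parametrix and estimate available in the $\delta$-calculus. The only points needing (minor) care are the variable-order bookkeeping --- namely that an elliptic operator of order $(2,0)$ admits a microlocal parametrix in $\Psipd{-2}{0}$ with controlled wavefront set, and that the composition and $L^2$-boundedness statements of Propositions~\ref{prop:parcalc} and~\ref{prop:basic bounde} persist in $\Psi^{\bullet,\mathsf{l}}_{\delta,\mathrm{par}}$ --- and these are exactly what is asserted in the Remark after Definition~\ref{def:sobolevdelta} together with the references cited there.
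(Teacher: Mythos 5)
Your proposal is correct and follows essentially the same route the paper intends: the paper explicitly presents Proposition~\ref{prop:elliptic.estimate.P} as the specialisation of the general microlocal elliptic estimate, Proposition~\ref{prop:elliptic.estimate}, to $P\in\Diff^{2,0}_{\mathrm{par}}$ with variable spatial order handled via the Remark after Definition~\ref{def:sobolevdelta}. You go somewhat beyond the paper by also unwinding the parametrix argument (constructing $B\in\Psipd{-2}{0}$ with $\WF'(BGP-\Id)\cap\WF'(Q)=\varnothing$, writing $Q=QB\,(GP)-R'$ with $R'\in\Psi^{-\infty,-\infty}_{\mathrm{par}}$, and reading off the mapping properties), but that is exactly the content underlying Proposition~\ref{prop:elliptic.estimate}, so the reasoning agrees with the source. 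The only small point worth noting is that since $Q,G,P$ all have constant orders, the parametrix $B$ itself can be taken in the constant-order classical calculus; the $\delta$-calculus/variable-order machinery is needed only for boundedness on the spaces $H^{s,\mathsf{l}}_{\mathrm{par}}$, not for the operator $B$ itself — but this does not affect the validity of the argument.
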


\begin{remark}\label{rem:elliptic} For non-experts in microlocal analysis, we mention that this estimate is the microlocal analogue of the standard elliptic estimate in classical PDE theory: if $q, g$ are two $C_c^\infty$ functions with $\supp q \subset \{ g > 0 \}$ and if $P$ is a differential operator of order $2$ with smooth coefficients that is elliptic on the support of $g$, then we have  for any $M \in \RR$ the estimate  (in standard Sobolev spaces)
\begin{equation}\label{eq:ellipticPDE.estimate}
		\|q u\|_{H^{s}}\leq C_M (\|g Pu\|_{H^{s-2}}+\|u\|_{H^{M}}).
	\end{equation}
	The estimate is of course only interesting when $M$ is smaller than $s$. 
We think of the symbols of $Q$ and $G$ in \eqref{eq:elliptic.estimate} as cutoff functions, analogous to $q$ and $g$ in \eqref{eq:ellipticPDE.estimate}, but on phase space rather than just on spacetime. 	
	\end{remark}

The propagation of singularities (regularity) estimate, Proposition~\ref{prop:sing}, reads as follows. Note the loss of one order of regularity in both the spatial index $\sw$ and the differential order $s$, reflecting the fact that the characteristic variety $\Sigma(P)$ meets both spacetime-infinity and fibre-infinity. 

\begin{prop}[Propagation of regularity]
	\label{prop:sing.P}
	Let $Q,Q',G\in\Psip{0}{0}$ be operators of order $(0,0)$ with $G$ elliptic on $\WF'(Q)$. Let $\sw$ be a variable spacetime order that is non-increasing in the direction of the bicharacteristic flow of $P$. 
	
	Furthermore, suppose that for every $\alpha\in\WF'(Q)\cap\Sigma(P)  $ there exists $\alpha'$ such that $Q'$ is elliptic at $\alpha'$ and there is a forward bicharacteristic curve $\gamma$ of $P$ from $\alpha'$ to $\alpha$ such that $G$ is elliptic on $\gamma$.
	
	Then if $GP u\in H_{\mathrm{par}}^{s-1,\sw+1}$ and $Q'u\in H_{\mathrm{par}}^{s,\sw}$, we have $Q u\in H_{\mathrm{par}}^{s,\sw}$ with the estimate
\begin{equation}\label{eq:sing.P}	
	\|Qu\|_{H_{\mathrm{par}}^{s,\sw}}\leq C(\|Q' u \|_{H_{\mathrm{par}}^{s,\sw}}+\|GP u\|_{H_{\mathrm{par}}^{s-1,\sw+1}}+\|u\|_{H_{\mathrm{par}}^{M,N}})
	\end{equation}
	for any $M,N\in\RR$.
\end{prop}

\begin{remark}\label{rem:pos} Figure \ref{fig:prop} illustrates the setup of Proposition~\ref{prop:sing.P}. In words, the Proposition states that regularity of the function $u$ propagates from the microsupport of $Q'$, that is $\WF'(Q')$, to the microsupport of $Q$, provided that the regularity order is not greater at $\WF'(Q)$ than at the corresponding points of $\WF'(Q')$ (we cannot inexplicably gain regularity!) and provided that $Pu$ is sufficiently regular in a microlocal neighbourhood of all the bicharacteristics that traverse between $\WF'(Q')$ and $\WF'(Q)$. 
\end{remark}

\begin{figure}[h]
	\label{fig:prop}
	\centering
	\begin{tikzpicture}[scale=0.8]
		\draw (8,0) arc (0:180:8 and 4);
		\draw (8,0) arc (360:225:2);
		\draw (-8,0) arc (180:315:2);
		\draw (-6+1.414,-1.414) arc (135:45:6.48);
		\filldraw (6,0) circle (2pt);
		\filldraw (-6,0) circle (2pt);
		\draw[-{To[scale=2]}] (-6,0) .. controls (-4,1) and (-2,1.9) .. (0,2);
		\draw (0,2) .. controls (2,1.9) and (4,1) .. (6,0);
		\draw (-6,0) circle (30pt);
		\draw (-6,-1.4) node {$\mathrm{supp}(q')$};
		\draw (6,0) circle (15pt);
		\draw (6,-0.9) node {$\mathrm{supp}(q)$};
		\draw (0,-0.1) node {$\mathrm{supp}(g)$};
		\draw (0,1.5) node {$\gamma$};
		\draw (-5.8,-0.2) node {$\alpha'$};
		\draw (6.2,-0.2) node {$\alpha$};
	\end{tikzpicture}
	\caption{The hypotheses of Proposition \ref{prop:sing.P} is that for each $\alpha\in \mathrm{supp}(q)\cap \Sigma(P)$ there is a bicharacteristic segment $\gamma$, contained within $\mathrm{supp}(g)$, connecting $\alpha'\in \mathrm{supp}(q')$ to $\alpha$. Notice that we only need this condition for $\alpha\in \Sigma(P)$, otherwise the stronger elliptic estimate is available. }
\end{figure}
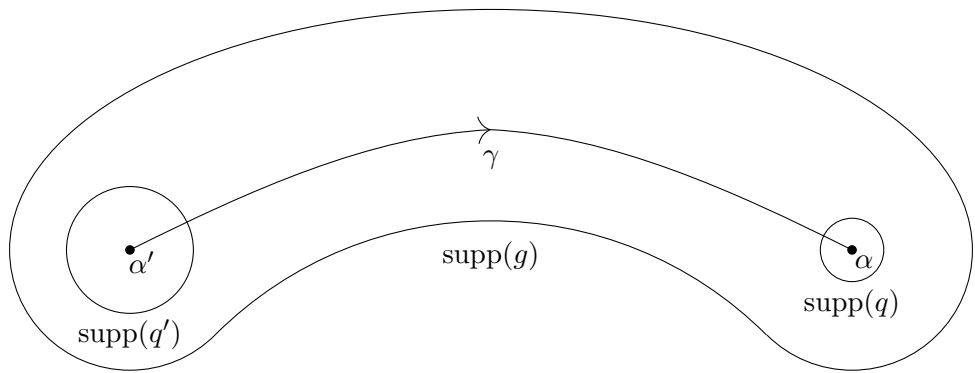

Notice that the estimate above only gives a trivial estimate if $\WF'(Q)$ meets the radial set. This is because the bicharacteristic flow is stationary on the radial sets, so we would need $Q'$ also elliptic at the radial set, which means the conclusion would be no stronger than the assumption. 

For estimates valid near the radial points, we refer to Propositions \ref{prop:above} and \ref{prop:below} in the Appendix, based on estimates due to Melrose \cite{RBMSpec} and Vasy \cite{VD2013}, adapted here to the parabolic calculus. 
Specializing to the case of the time dependent Schr\"{o}dinger operator $P=\Delta_g+D_t+V\in\Psip{2}{0}$, and iterating the results to give an arbitrary gain of regularity compared to the background regularity assumption, gives the following results.
We state them for constant orders for simplicity, as that is all that our arguments require.

\begin{prop}[Below threshold radial point estimate]
	\label{prop:belowschrod}
	Suppose $r<-\frac{1}{2}$. Assume there exists a neighbourhood $U$ of $\mathcal{R}_\pm$ and $Q', G\in\Psi_{\mathrm{par}}^{0,0} $ such that for every $\alpha\in \Sigma_P \cap U \setminus \mathcal{R}_\pm$ the bicharacteristic $\gamma$ through $\alpha$ enters $\Ell(Q')$ whilst remaining in $\Ell(G)$. Then there exists $Q\in\Psi_{\mathrm{par}}^{0,0}$ elliptic on $\mathcal{R}_\pm$ such that if $u \in H_\mathrm{par}^{M,N}$,  $Q'u\in H_\mathrm{par}^{s,r}$ and $GP u\in H_\mathrm{par}^{s-1,r+1}$, then $Qu\in H_\mathrm{par}^{s,r}$ and for all $M,N\in\RR$, there exists $C>0$ such that
	\begin{equation}\label{eq:radial.schrod.below}
		\|Qu\|_{H_\mathrm{par}^{s,r}}\leq C(\|Q'u\|_{H_\mathrm{par}^{s,r}} +\|GP u\|_{H_\mathrm{par}^{s-1,r+1}}+\|u\|_{H_\mathrm{par}^{M,N}}).
	\end{equation}
\end{prop}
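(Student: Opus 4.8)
The plan is to obtain Proposition~\ref{prop:belowschrod} as a specialisation of the abstract below-threshold radial point estimate of the Appendix, Proposition~\ref{prop:below}, applied to $P = D_t + \Delta_g + V$ and then iterated in the differential order. Thus the argument has three parts: (i) verifying that $P$ meets the structural hypotheses of Proposition~\ref{prop:below} at $\mathcal{R}_\pm$; (ii) checking that the abstract threshold quantity attached to $\mathcal{R}_\pm$ equals $-1/2$, so that ``$r < -1/2$'' is exactly the abstract below-threshold condition; and (iii) bootstrapping the abstract estimate --- which gains only a fixed number of derivatives over the assumed background order --- up to an arbitrary order $s$.

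For (i), Proposition~\ref{prop:radial set geom} already provides everything needed: $\mathcal{R}_+$ is a smooth $n$-dimensional submanifold of $\partial_{base}\overline{T}^*_{\mathrm{par}}\mathbb{R}^{n+1}$ that is a sink for the rescaled Hamilton vector field $H_p^{2,0}$, and $\mathcal{R}_-$ is the corresponding source. The normal forms computed there --- for instance $-\rho_b\,\partial_{\rho_b} - \tilde w\cdot\partial_{\tilde w}$ near $\mathcal{R}_+$ over the interior of the northern hemisphere, and $-2\tilde s\,\partial_{\tilde s} - 2\sum_j \tilde v_j\,\partial_{\tilde v_j} - 2\rho_b\,\partial_{\rho_b}$ near $\mathcal{R}_+$ at the corner, cf.\ \eqref{eq:Hvf Rplus equator} --- exhibit the linearisation of the flow transverse to $\mathcal{R}_\pm$ as non-degenerate with all eigenvalues of the same sign, which is precisely the non-degeneracy condition required by Proposition~\ref{prop:below}. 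The only delicate point is uniformity up to the corner $\mathcal{R}_\pm \cap \partial_{fib}\overline{T}^*_{\mathrm{par}}\mathbb{R}^{n+1}$, where (see Figure~\ref{fig:Rpm} and \eqref{eq:rad.corner}) the radial set ``turns vertical'' and meets fibre infinity transversally; but the coordinates $(\tilde s, \tilde v_j, \rho_b, \rho_f, \omega_j, \sigma)$ of \eqref{eq:Hvf Rplus equator} already supply the normal form across the corner.

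For (ii), since $V$ is real the operator is formally self-adjoint, $P = P^*$, so no subprincipal correction enters the positive commutator, and the abstract threshold is governed purely by the rate at which the spatial defining function $\rho_{base}$ is contracted (at $\mathcal{R}_+$) or expanded (at $\mathcal{R}_-$) by the rescaled flow. From the normal forms above this rate is $2$, and with a commutant of order $(s, r)$ microlocalised near $\mathcal{R}_\pm$ the principal part of the commutator acquires a definite sign at $\mathcal{R}_\pm$ exactly when $r < -1/2$; this is consistent with the heuristic in the Introduction, namely that the model asymptotics \eqref{upm} lie in $\ang{z, t}^{-l}L^2(\mathbb{R}^{n+1})$ for $l < -1/2$ but not for $l = -1/2$. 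The source case $\mathcal{R}_-$ is handled symmetrically, reversing the roles of forward and backward bicharacteristics and the sign of the commutant.

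For (iii), one applies Proposition~\ref{prop:below} repeatedly: starting from the background hypothesis $u \in H_{\mathrm{par}}^{M,N}$, each application raises the differential order near $\mathcal{R}_\pm$ by the fixed increment --- this stays legitimate because $r < -1/2$ is below threshold at every stage --- and the conclusion of one step is fed back as the background assumption for the next, until order $s$ is reached; at the last step one takes $Q$ elliptic on $\mathcal{R}_\pm$. The hypothesis that every bicharacteristic through a point of $\Sigma(P) \cap U \setminus \mathcal{R}_\pm$ enters $\Ell(Q')$ while remaining in $\Ell(G)$ is exactly what lets Proposition~\ref{prop:sing.P} transport the $Q'$-control along bicharacteristics into any punctured neighbourhood of $\mathcal{R}_\pm$ that the radial estimate demands. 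Summing the resulting estimates, and absorbing the auxiliary microlocalisers into the $Q'$- and error-terms, gives \eqref{eq:radial.schrod.below}. The main obstacle is part (ii): carefully matching the abstract threshold quantity --- which in general mixes the contraction rate of $\rho_{base}$, the commutant order, and the imaginary part of the subprincipal symbol of $P$ --- with the concrete value $-1/2$, uniformly along $\mathcal{R}_\pm$ including across the corner.
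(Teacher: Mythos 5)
Your proposal follows the same route as the paper: specialise the general below-threshold radial point estimate, Proposition~\ref{prop:below}, to $P$ using the source/sink geometry established in Proposition~\ref{prop:radial set geom}, verify that the structural functions $\beta,\beta_1,q$ satisfy the required conditions (positive, vanishing, vanishing near spacetime infinity), read off the threshold value $-1/2$, and iterate (Remark~\ref{rem:iteration}) to reach arbitrary order $s$ while absorbing the intermediate $Q''$ term into the background norm; the treatment of the corner of $\mathcal{R}_\pm$ at fibre infinity via the coordinates of~\eqref{eq:Hvf Rplus equator} is also as in the paper. One small inaccuracy in your explanation of part~(ii): the threshold $r=-1/2$ is not governed by the contraction rate $\beta=2$ --- that constant factors out as a positive multiplier and only its sign matters --- rather, the threshold is determined by the spacetime order $l=0$ of $P$ (through the relation $l' = 2r-l+1$) together with the vanishing of $q$ and $\beta_1$ on $\SR_\pm$; since you arrive at the correct value and flag this step as the delicate one, this does not affect the validity of the argument.
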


\begin{prop}[Above threshold radial point estimate]
	\label{prop:aboveschrod}
	Suppose $r>r'>-\frac{1}{2}$ and $s>s'$. Assume that $G\in\Psip{0}{0}$ is elliptic at $\SR_\pm$. Then there exists $Q\in\Psip{0}{0}$ elliptic at $\SR_\pm$ such that, if $u \in H_\mathrm{par}^{M,N}$, $Gu\in H_\mathrm{par}^{s',r'}$ and $GP u\in H_\mathrm{par}^{s-1,r+1}$, then $Qu\in H_\mathrm{par}^{s,r}$ and for all $M,N\in\RR$, there exists $C>0$ such that
	\begin{equation}\label{eq:radial.schrod.above}
		\|Qu\|_{H_\mathrm{par}^{s,r}}\leq C(\|GP u\|_{H_\mathrm{par}^{s-1,r+1}}+\|Gu\|_{H_\mathrm{par}^{s',r'}}+\|u\|_{H_\mathrm{par}^{M,N}}).
	\end{equation}
\end{prop}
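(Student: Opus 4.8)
The plan is to obtain Propositions~\ref{prop:aboveschrod} and \ref{prop:belowschrod} as special cases of the general radial-point propagation estimates established in the Appendix, namely Propositions~\ref{prop:above} and \ref{prop:below}, by checking that $P = D_t + \Delta_g + V$ satisfies the abstract structural hypotheses of that framework and then iterating to upgrade the bounded gain of regularity in those abstract statements to an arbitrary one. The potential $V$ lies in $\Psip{0}{-\infty}$ and $g(t) - \delta$ is compactly supported in spacetime, so neither affects the principal symbol $p = \tau + g^{ij}(z,t)\zeta_i\zeta_j$ nor the structure of $P$ near $\partial\overline{T}^*_{\mathrm{par}}\RR^{n+1}$, where the radial sets live; in particular all of the local geometry relevant here is that of the flat model $P_0 = D_t + \Delta_0$.

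The hypotheses to verify are exactly those established in Proposition~\ref{prop:radial set geom} and its proof. There we showed: $\Sigma(P)$ is a smooth hypersurface of $\partial\overline{T}^*_{\mathrm{par}}\RR^{n+1}$; the radial set splits as $\mathcal{R} = \mathcal{R}_+ \sqcup \mathcal{R}_-$, with each $\mathcal{R}_\pm$ a smooth $n$-dimensional submanifold of $\partial_{base}\overline{T}^*_{\mathrm{par}}\RR^{n+1}$, transverse to fibre infinity; and $\mathcal{R}_+$ is a sink, $\mathcal{R}_-$ a source, for $H_p^{2,0}$. Non-degeneracy of the linearisation — the remaining hypothesis — is visible in the explicit normal forms computed in that proof: near $\mathcal{R}_+$ in the spacetime interior $H^{2,0}_p$ takes the form $-\rho_b\partial_{\rho_b} - \tilde w\cdot\partial_{\tilde w}$ (after \eqref{eq:rad.interior}), and near the corner $\partial\mathcal{R}_+$ it is $-2\tilde s\partial_{\tilde s} - 2\sum_j \tilde v_j\partial_{\tilde v_j} - 2\rho_b\partial_{\rho_b}$ as in \eqref{eq:Hvf Rplus equator}, with the analogous expansions at $\mathcal{R}_-$; these are manifestly non-degenerate contractions/expansions, the one subtlety being that the fibre boundary defining function $\rho_{fib}$ does not appear at the corner, i.e.\ it is a neutral normal direction there. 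Finally one records the threshold computation: writing $P$ in the form required by the abstract radial-point lemma and conjugating by the weight $\langle Z\rangle^{r}$, the imaginary part of the subprincipal-type term at $\mathcal{R}_\pm$, measured against the contraction rate of $H^{2,0}_p$, equals $-\tfrac12 - r$, so that $r > -\tfrac12$ is the ``above threshold'' regime and $r < -\tfrac12$ the ``below threshold'' regime (consistent with the heuristic in the Introduction that $(4\pi i t)^{-n/2}e^{i|z|^2/t}f(z/t) \in \langle Z\rangle^{-l}L^2$ precisely for $l < -\tfrac12$).

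With these verified, Proposition~\ref{prop:aboveschrod} follows by applying the above-threshold estimate of Proposition~\ref{prop:above} at $\mathcal{R}_+$, resp.\ $\mathcal{R}_-$: given $Gu \in H^{s',r'}_{\mathrm{par}}$ with $r' > -\tfrac12$ and $GPu \in H^{s-1,r+1}_{\mathrm{par}}$, that estimate produces $Q_0 u \in H^{s_1, r}_{\mathrm{par}}$ with $Q_0$ elliptic at $\mathcal{R}_\pm$ and $s_1$ exceeding the a priori differential order $s'$ by a fixed amount. Since away from the corner $\langle R\rangle$ is bounded and so $H^{s,r}_{\mathrm{par}}$ locally only sees the weight $r$, the differential order is inert there and the only content is at the corner; to raise the differential order from $s'$ to an arbitrary $s$ there, one iterates the estimate a finite number of times, choosing a decreasing chain $Q \prec \dots \prec Q_1 \prec Q_0 \prec G$ of operators elliptic at $\mathcal{R}_\pm$ and feeding the conclusion of each step back as the a priori hypothesis of the next (the decay order stays fixed at $r > -\tfrac12$ throughout, so one remains in the above-threshold regime at every stage). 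Absorbing the finitely many constants yields \eqref{eq:radial.schrod.above}; the freedom in $M, N$ is part of the abstract statement (the $H^{M,N}_{\mathrm{par}}$ term is a relatively compact error). Proposition~\ref{prop:belowschrod} is obtained in the same way from Proposition~\ref{prop:below}: below threshold the radial set behaves dynamically like an ordinary point, so no a priori control at $\mathcal{R}_\pm$ itself is available or needed, and one instead imports regularity along the bicharacteristic flow from the cutoff $Q'$ — whence the hypothesis that every bicharacteristic through a point of $\Sigma(P) \cap U \setminus \mathcal{R}_\pm$ reaches $\Ell(Q')$ while staying in $\Ell(G)$ — and again iterates.

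The main obstacle is not the abstract invocation but the bookkeeping at the corner $\partial\mathcal{R}_\pm = \mathcal{R}_\pm \cap \partial_{fib}\overline{T}^*_{\mathrm{par}}\RR^{n+1}$: one must make sure that the positive-commutator argument underlying Proposition~\ref{prop:above} genuinely permits raising the differential order despite the neutral $\rho_{fib}$-direction there, that the commutant can be chosen simultaneously adapted to the $\rho_b$ (decay) and $\rho_{fib}$ (differential) scales with the signs dictated by the normal forms above, and that no step of the iteration lowers the effective decay order below $-\tfrac12$. This is exactly where the normal-form computations from the proof of Proposition~\ref{prop:radial set geom} must be matched precisely against the hypotheses of the Appendix propositions; away from the corner the argument reduces to the classical scattering-calculus radial-point estimate in the decay order alone.
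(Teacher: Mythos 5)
Your proposal follows the paper's own route exactly: Propositions~\ref{prop:aboveschrod} and \ref{prop:belowschrod} are obtained by specialising the general radial-point estimates Propositions~\ref{prop:above} and \ref{prop:below} to $P\in\Psi^{2,0}_{\mathrm{par}}$ (where $\beta=2$, $\beta_1=0$, $q=0$, hence threshold $r=-1/2$) and then iterating as in Remark~\ref{rem:iteration} with a nested chain of microlocalizers. The only slight wrinkle in your write-up is the remark that ``the decay order stays fixed at $r>-1/2$ throughout'' — in fact the iteration generically raises $r'$ towards $r$ in half-steps alongside the differential order, but since all intermediate weights remain above threshold this does not affect the argument.
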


\begin{remark} We see that the below threshold estimate \eqref{eq:radial.schrod.below} looks the same as \eqref{eq:sing.P} but with the additional assumption that $r$ is below the threshold value of $-1/2$. On the other hand, the above threshold estimate is a bit different: we do \emph{not} need to assume that we have microlocal regularity at the same order $(s,r)$ at some other place $\WF'(Q)$, but instead, we \emph{do} need to assume a priori that we have regularity at some order $(s', r')$ where $r'$ is already above threshold. The proposition then tells us we can bootstrap this to $(s,r)$-regularity, provided that $Pu$ is suitably regular. This difference is crucial as it means that we have a starting place for proving $(s,r)$-regularity: that is, we can deduce $(s,r)$ regularity, say for a solution to $Pu = 0$,  without having to already know it somewhere else. This explains why our function spaces introduced below in \eqref{eq:Y space}, \eqref{eq:X space} impose above threshold regularity at one of the radial sets. On the other hand, to propagate regularity all the way to the other radial set the regularity needs to be below threshold at the other radial set so that Proposition~\ref{prop:belowschrod} can be applied. 
\end{remark}

\subsection{Global Fredholm estimate --- variable order case}
\label{subsec:global.fredholm}
In this subsection, we combine the estimates in the preceding subsection
into a single global estimate that will suffice to establish the
Fredholm property for $P=\Delta_g+D_t+V$ as a map between two
suitable variable order Sobolev spaces. We choose real two constants $l, m$ with $l < -1/2 < m$ --- that is, $l$ is below, and $m$ above, threshold --- and 
fix a weight function $\sw_+\in S^{0,0}_{\mathrm{par}}$ with the properties 
\begin{enumerate}[(i)]
	\item $\sw_+(z,\zeta)\in [l, m]$;
	\item $\sw_+=l$ in a neighbourhood $\SU_+$ of $\SR_+$ and $\sw_+=m$ in a neighbourhood $\SU_-$ of $\SR_-$;
	\item $\sw_+$ is nonincreasing along the bicharacteristics of $P$
\end{enumerate}
and take 
\begin{equation}\label{eq:sw+-}
	\sw_-=-1-\sw_+.
\end{equation}
In some cases, it is convenient to assume additionally that 
\begin{enumerate}[(iv)]
	\item $l > -3/2$ and $m \leq l+1$. 
\end{enumerate}

\begin{remark} Proposition~\ref{prop:radial set geom} shows that assumption (iii) is compatible with (i) and (ii). \end{remark}

We then define the variable order Sobolev spaces
\begin{equation}
	\SY^{s,\sw_{\pm}}=H_\mathrm{par}^{s,\sw_{\pm}}\label{eq:Y
		space}
\end{equation}
and
\begin{equation}
	\SX^{s,\sw_{\pm}}=\{u\in \SY^{s,\sw_\pm}:Pu\in
	H_\mathrm{par}^{s-1,\sw_\pm+1}\}\label{eq:X space}
\end{equation}

We then have the following global Fredholm estimate for $P$.
\begin{prop}
	\label{prop:fredholmest}
	For the above choice of weight functions $\sw_{\pm}$ satisfying (i) -- (iii) above,  arbitrary $s\in\RR$, $M<s$, $N<l$, and all $u\in\SX^{s,\sw_{\pm}}$ we have
	\begin{equation}
		\label{eq:fredholmest}
		\|u\|_{s,\sw_{\pm}}\leq C(\|Pu\|_{s-1,\sw_{\pm}+1}+\|u\|_{M,N})
	\end{equation}
	
\end{prop}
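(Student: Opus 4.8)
The plan is to follow the by-now-standard strategy of \cite{VD2013}: cover the boundary $\partial\overline{T}^*_{\mathrm{par}}\mathbb{R}^{n+1}$ by finitely many microlocal regions, apply in each the appropriate microlocal estimate from Section~\ref{subsec:mic.prop.est}, and then patch the pieces together with a microlocal partition of unity and the elliptic parametrix of Proposition~\ref{prop:elliptic param}, absorbing every error term into $\|u\|_{M,N}$ or --- after an interpolation step --- back into the left-hand side. I will do this for the $+$ sign; the $-$ case is identical once the roles of the source $\mathcal{R}_-$ and the sink $\mathcal{R}_+$ are interchanged (note that $\mathsf{l}_-=-1-\mathsf{l}_+$ is non-increasing along the \emph{backward} flow). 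The organising principle is Proposition~\ref{prop:radial set geom}: every bicharacteristic of $P$ runs from $\mathcal{R}_-$ to $\mathcal{R}_+$, while by conditions (i)--(iii) the weight $\mathsf{l}_+$ equals $-1/2+\ep$ (above threshold) near the source $\mathcal{R}_-$, equals $-1/2-\ep$ (below threshold) near the sink $\mathcal{R}_+$, stays in $[-1/2-\ep,-1/2+\ep]$ throughout, and is non-increasing along the flow. This dictates the order of the argument: seed the estimate at $\mathcal{R}_-$, propagate it forward, and close it up at $\mathcal{R}_+$.

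Concretely I would proceed in four stages. \textbf{(1)} On the elliptic set, Proposition~\ref{prop:elliptic.estimate.P} gives $\|A_0u\|_{s,\mathsf{l}_+}\lesssim\|Pu\|_{s-1,\mathsf{l}_++1}+\|u\|_{M,N}$ for $A_0$ microsupported in $\Ell(P)$, using the inclusion $H^{s-1,\mathsf{l}_++1}_{\mathrm{par}}\hookrightarrow H^{s-2,\mathsf{l}_+}_{\mathrm{par}}$. \textbf{(2)} Near $\mathcal{R}_-$, where $\mathsf{l}_+$ is constantly $-1/2+\ep$ on a full neighbourhood, I apply the above-threshold radial estimate, Proposition~\ref{prop:aboveschrod}, with $r=-1/2+\ep$; this produces $A_-$ elliptic at $\mathcal{R}_-$ with $\|A_-u\|_{s,\mathsf{l}_+}$ bounded by $\|Pu\|_{s-1,\mathsf{l}_++1}$, an a priori term $\|Gu\|_{s',r'}$ with $s'<s$, $-1/2<r'<-1/2+\ep$, and $\|u\|_{M,N}$. \textbf{(3)} For each $\alpha\in\Sigma(P)$ outside fixed neighbourhoods of $\mathcal{R}_\pm$, the backward bicharacteristic through $\alpha$ converges to $\mathcal{R}_-$ and so enters $\Ell(A_-)$; since $\mathsf{l}_+$ is non-increasing along the flow, Proposition~\ref{prop:sing.P} propagates the control from stage (2) to $\alpha$, and by compactness of $\Sigma(P)$ minus those neighbourhoods finitely many such estimates suffice. \textbf{(4)} Near $\mathcal{R}_+$, where $\mathsf{l}_+\equiv-1/2-\ep$, I apply the below-threshold radial estimate, Proposition~\ref{prop:belowschrod}, with $r=-1/2-\ep$: because $\mathcal{R}_+$ is a sink, the backward bicharacteristics through nearby points first traverse the region already controlled in stages (1)--(3), which furnishes the required a priori term $\|Q'u\|_{s,r}$ (bounded using $\mathsf{l}_+\geq-1/2-\ep$ and the earlier estimates). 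Patching stages (1)--(4) via Proposition~\ref{prop:elliptic param} --- writing $\Id$ as a sum of $\Psip{0}{0}$ operators composed with $A_0$, $A_-$, $A_+$ and the propagation operators, plus a residual term --- yields \eqref{eq:fredholmest}.

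The one genuinely delicate point --- and the main obstacle --- is the a priori term $\|Gu\|_{s',r'}$ in stage (2). Since $r'>-1/2>N$, it is not dominated by $\|u\|_{M,N}$, so it cannot simply be dropped. The resolution is interpolation: Proposition~\ref{prop:aboveschrod} allows $s'$ and $r'$ to be chosen freely subject to $s'<s$ and $r'>-1/2$, so I take $(s',r')=\theta\,(s,-1/2+\ep)+(1-\theta)\,(M,N)$ with $\theta\in(0,1)$ close enough to $1$ that $r'>-1/2$ (possible because $N<-1/2-\ep$). Interpolation between the parabolic Sobolev spaces $H^{M,N}_{\mathrm{par}}$ and $H^{s,-1/2+\ep}_{\mathrm{par}}$ then gives, for any $\delta>0$, $\|Gu\|_{s',r'}\leq\delta\|Gu\|_{s,-1/2+\ep}+C_\delta\|u\|_{M,N}$; and since $G$ is microsupported where $\mathsf{l}_+$ is constantly $-1/2+\ep$, $\|Gu\|_{s,-1/2+\ep}\lesssim\|u\|_{s,\mathsf{l}_+}+\|u\|_{M,N}$, so this offending term contributes only $\delta\|u\|_{s,\mathsf{l}_+}$ to the final patched estimate and is absorbed into the left-hand side once $\delta$ is small. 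Everything else --- matching the variable order $\mathsf{l}_+$ with the constant orders of the radial estimates near $\mathcal{R}_\pm$ using condition (ii), the order comparisons coming from condition (i), and boundedness of zeroth-order parabolic operators on these Sobolev spaces --- is routine.
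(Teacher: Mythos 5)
Your proposal follows exactly the same four-region decomposition as the paper's proof (elliptic set, a neighbourhood of the source $\mathcal{R}_-$ above threshold, the propagation region, a neighbourhood of the sink $\mathcal{R}_+$ below threshold), invokes Propositions~\ref{prop:elliptic.estimate.P}, \ref{prop:aboveschrod}, \ref{prop:sing.P}, \ref{prop:belowschrod} in the same order driven by the same monotonicity of $\mathsf{l}_+$ along the flow, and disposes of the remaining a priori term $\|Gu\|_{s',r'}$ by the same interpolation-and-absorption argument. The only cosmetic difference is that the paper names its microlocal cutoffs $Q_1,\dots,Q_4,G_1,\dots,G_4$ and writes out the chained substitution of the four estimates explicitly, while you describe the patching more briefly; the content is identical.
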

\begin{proof}
	We shall prove \eqref{eq:fredholmest} in the case of the weight function $\sw_+$. The proof for $\sw_-$ is essentially identical, with the roles of $\mathcal{R}_{\pm}$ swapped.
	
	We begin by choosing $Q_1,Q_2,Q_3,Q_4,G_1,G_2,G_3,G_4\in\Psip{0}{0}$ such that 
	\begin{enumerate}[(i)]
		\item $G_j$ is elliptic on $\WF'(Q_j)$,
		\item $P$ is elliptic on $\WF'(Q_1)$,
		\item $\WF'(Q_3)\subset\WF'(G_3)\subset \SU_+$,
		\item $\WF'(Q_4) \subset \WF'(G_4) \subset\SU_-$,
		\item $\WF'(Q_2)$ is disjoint from  $\SR_\pm$,
		\item Every $\alpha$ in a punctured neighbourhood of $\SR_+$ on $\Sigma_P$ lies on forward bicharacteristic $\gamma$ from a point $\alpha'\in \Ell(Q_2)$, with the bicharacteristic lying entirely in $\Ell(G_3)$,
		\item Every $\alpha\in \WF'(Q_2)\cap \Sigma_P$ lies on a forward bicharacteristic $\gamma$ from a point $\alpha'\in\Ell(Q_4)$, with the bicharacteristic lying entirely in $\Ell(G_2)$,
		\item $Q_1+Q_2+Q_3+Q_4=\mathrm{Id}$.
	\end{enumerate}
	We can now apply Proposition \ref{prop:aboveschrod} using the operators $Q_4$ and $G_4$. We can replace the spatial order $r$ with the variable weight $\sw_+$ in this estimate, as $\sw_+$ is constant (equal to $m$) in $\WF'(G_4)$. This yields the estimate
	\begin{equation}
		\label{eq:fred4}
		\|Q_4 u \|_{s,\sw_+} \leq C(\|G_4Pu\|_{s-2,\sw_++1}+\|G_4u\|_{s',r'}+\|u\|_{M,N})
	\end{equation} 
	for any $M,N,s,s'\in\RR$ and $-1/2<r'<m$.
	Similarly, we apply Proposition \ref{prop:belowschrod} to the operators $Q_3,Q_2$ to give
	\begin{equation}
		\label{eq:fred3}
		\|Q_3u\|_{s,\sw_+}\leq C(\|Q_2 u\|_{s,\sw_+}+\|G_3Pu\|_{s-1,\sw_++1}+\|u\|_{M,N}).
	\end{equation}
	Away from the radial sets, we can control $\|Q_1u\|$ and $\|Q_2u\|$ using the microlocal elliptic estimate of Proposition  \ref{prop:elliptic.estimate} and the real principal type propagation result of Proposition \ref{prop:sing} respectively. For the latter, we use $\WF'(Q_4)$ as a source of regularity, given the dynamical condition (vii).
	Consequently we have an estimate
	\begin{equation}
		\label{eq:fred2}
		\|Q_2u\|_{s,\sw_+}\leq C(\|Q_4u\|_{s,\sw_+}+\|G_2Pu\|_{s-1,\sw_++1}+\|u\|_{M,N}).
	\end{equation}
In the elliptic region, we weaken \eqref{eq:elliptic.estimate} to 	
	\begin{equation}
		\label{eq:fred1}
		\|Q_1u\|_{s,\sw_+}\leq C(\|G_1Pu\|_{s-1,\sw_+ +1}+\|u\|_{M,N})
	\end{equation}
so that the norm of $G_1 Pu$ agrees with the norms for $G_i Pu$ with $i = 2 \dots 4$.

Without loss of generality, we can assume that the constants $C$ in estimates \eqref{eq:fred4} --- \eqref{eq:fred1} are equal, and exceed $1$. Then, we 
 estimate $$\|u\|_{s,\sw_+}\leq \|Q_1u\|_{s,\sw_+}+ 2C\|Q_2u\|_{s,\sw_+}+\|Q_3u\|_{s,\sw_+}+4C^2 \|Q_4u\|_{s,\sw_+}$$ and combine the estimates in \eqref{eq:fred1}, \eqref{eq:fred2},  \eqref{eq:fred3} and \eqref{eq:fred4}. This combination allows us to absorb the $Q_2u$ and $Q_4u$ terms on the RHS by those on the LHS. 
	This gives (with a new constant $C$) 
	\begin{equation}
		\label{eq:fred5}
		\|u\|_{s,\sw_+}\leq C(\|Pu\|_{s-1,\sw_++1}+\|G_4u\|_{s',r'}+\|u\|_{M,N}).
	\end{equation}
	For $r'=(-1/2, m)$ and appropriate choices of $s'\in (M,s)$ and $\eta\in(0,1)$, Sobolev interpolation and Young's inequality then give
	\begin{align}
		\|G_4u\|_{s',r'}&\leq \|G_4u\|_{s,m}\|^{1-\eta}\|G_4 u\|^\eta_{M,N}\\
		&\leq \frac{1}{2}\|u\|_{s,m}+C\|u\|_{M,N}
	\end{align}
	for a suitable constant $C$. As $\sw_+=m$ on $\WF'(G_4)$, we can replace the constant order $m$ with the weight $\sw_+$ and absorb this term into the left-hand side of \eqref{eq:fred5}, allowing us to conclude \eqref{eq:fredholmest}.
\end{proof}

We now show, following \cite[Theorem 21.7]{Ho4} and \cite[Section 4.3]{Vasy2013},  that the estimate of Proposition  \ref{prop:fredholmest} implies that $P$ is a Fredholm map.

\begin{prop}
	\label{prop:fredholm}
	For $s\in \RR$, the map $P:\mathcal{X}^{s,\sw_{\pm}}\to\mathcal{Y}^{s-1,\sw_{\pm}+1}$ for either sign choice is a Fredholm map.
\end{prop}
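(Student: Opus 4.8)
The plan is to deduce the Fredholm property from the a priori estimate \eqref{eq:fredholmest}, together with its counterpart for $\mathsf{l}_-$, by the now-standard functional-analytic argument of Vasy \cite{VD2013}. The two inputs are: (a) the estimate \eqref{eq:fredholmest}, whose error term is a norm on a space into which $\mathcal{X}^{s,\mathsf{l}_\pm}$ embeds compactly; and (b) the fact that $P=P^*$ (here we use that $V$ is real) together with the observation that, via the $L^2$ pairing, the weight $\mathsf{l}_-=-1-\mathsf{l}_+$ is dual to $\mathsf{l}_+$ in the precise sense that the dual of $\mathcal{Y}^{s-1,\mathsf{l}_++1}=H^{s-1,\mathsf{l}_++1}_{\mathrm{par}}$ is $H^{1-s,-\mathsf{l}_+-1}_{\mathrm{par}}=H^{1-s,\mathsf{l}_-}_{\mathrm{par}}$. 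I shall carry out the argument for $\mathsf{l}_+$; the case of $\mathsf{l}_-$ is identical with the roles of $\mathcal{R}_+$ and $\mathcal{R}_-$ interchanged.

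First I would record the compactness input. Since $M<s$ and $N<-1/2-\ep=\inf\mathsf{l}_+$, the inclusion $H^{s,\mathsf{l}_+}_{\mathrm{par}}\hookrightarrow H^{M,N}_{\mathrm{par}}$ is compact; this is the Rellich-type statement for the parabolic scattering Sobolev spaces, proved exactly as in the standard scattering calculus using compactness of $\overline{T}^*_{\mathrm{par}}\RR^{n+1}$ and the strict gain in both the differential and the spacetime order. Because $\|u\|_{\mathcal{X}^{s,\mathsf{l}_+}}$ is equivalent to $\|u\|_{s,\mathsf{l}_+}+\|Pu\|_{s-1,\mathsf{l}_++1}$, estimate \eqref{eq:fredholmest} reads $\|u\|_{\mathcal{X}^{s,\mathsf{l}_+}}\le C(\|Pu\|_{\mathcal{Y}^{s-1,\mathsf{l}_++1}}+\|u\|_{H^{M,N}_{\mathrm{par}}})$ with $\mathcal{X}^{s,\mathsf{l}_+}\hookrightarrow H^{M,N}_{\mathrm{par}}$ compact. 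Two soft consequences follow. On $\ker P\subset\mathcal{X}^{s,\mathsf{l}_+}$ the $\mathcal{X}$-norm is dominated by the compact error norm, so $\ker P$ is finite-dimensional. Passing to a closed complement of $\ker P$ and running the usual sequential-compactness argument (if no uniform bound $\|u\|_{\mathcal{X}^{s,\mathsf{l}_+}}\le C'\|Pu\|_{\mathcal{Y}^{s-1,\mathsf{l}_++1}}$ held there, a normalised sequence would be $H^{M,N}_{\mathrm{par}}$-precompact hence, via \eqref{eq:fredholmest} applied to differences, $\mathcal{X}$-Cauchy, forcing a unit-norm element of $\ker P$ in the complement), one upgrades \eqref{eq:fredholmest} on the complement, whence $P$ has closed range.

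It then remains to bound the cokernel. Since $\ran P$ is closed, $\operatorname{coker}P$ is isomorphic to the annihilator $(\ran P)^\perp\subset(\mathcal{Y}^{s-1,\mathsf{l}_++1})^*=H^{1-s,\mathsf{l}_-}_{\mathrm{par}}$. If $v$ lies in this annihilator, then $\langle v,P\phi\rangle=0$ for every $\phi\in C_c^\infty(\RR^{n+1})$, since $C_c^\infty(\RR^{n+1})\subset\mathcal{S}(\RR^{n+1})\subset\mathcal{X}^{s,\mathsf{l}_+}$ (recall $P\colon\mathcal{S}\to\mathcal{S}$); hence $P^*v=0$ as a distribution, and as $V$ is real, $P^*=P$, so $Pv=0$. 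In particular $v\in\mathcal{X}^{1-s,\mathsf{l}_-}$ with $Pv=0$, so \eqref{eq:fredholmest} applied with the weight $\mathsf{l}_-$ and order $1-s$ (legitimate as \eqref{eq:fredholmest} holds for arbitrary order and for both signs) gives $\|v\|_{1-s,\mathsf{l}_-}\le C\|v\|_{H^{M',N'}_{\mathrm{par}}}$ with $M'<1-s$, $N'<-1/2-\ep=\inf\mathsf{l}_-$. The same compact-embedding argument as above shows this space of $v$, hence $\operatorname{coker}P$, is finite-dimensional, so $P\colon\mathcal{X}^{s,\mathsf{l}_+}\to\mathcal{Y}^{s-1,\mathsf{l}_++1}$ is Fredholm. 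I do not expect a genuine obstacle here: the substantive content is already in Proposition~\ref{prop:fredholmest}, and the only mildly technical point is the compactness of the variable-order embedding $H^{s,\mathsf{l}_+}_{\mathrm{par}}\hookrightarrow H^{M,N}_{\mathrm{par}}$, which is entirely parallel to its scattering-calculus analogue.
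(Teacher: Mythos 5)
Your proof is correct and follows essentially the same three-step Vasy-style argument as the paper: finite-dimensional kernel from the estimate plus compactness of the embedding $H^{s,\mathsf{l}_+}_{\mathrm{par}}\hookrightarrow H^{M,N}_{\mathrm{par}}$, closed range via a sequential-compactness contradiction, and finite-dimensional cokernel via the duality $(\mathcal{Y}^{s-1,\mathsf{l}_++1})^*=H^{1-s,\mathsf{l}_-}_{\mathrm{par}}$, formal self-adjointness, and the estimate for the opposite sign. The only cosmetic difference is that you phrase the closed-range step as establishing a uniform bound on a complement of $\ker P$ by contradiction, whereas the paper runs the convergence argument directly; these are equivalent.
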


\begin{proof}
	The argument is essentially identical for the two sign choices, and so we can take the positive sign without loss of generality. 
	On $\ker(P)\subset \mathcal{X}^{s,\sw_+}$, the estimate \eqref{eq:fredholmest} simplifies to 
	\begin{equation}\label{eq:kernel.est}
		\|u\|_{s,\sw_+}\leq C \|u\|_{M,N}.
	\end{equation}
	From compactness of the embedding $H^{s,\sw_+}_\mathrm{par}\subset H^{M,N}_\mathrm{par}$, it follows that the identity map restricted to $\ker(P)\subset H^{M,N}_\mathrm{par}$ is compact, and so $\ker(P)$ is finite-dimensional.
	
	Next we show that the range of $P$ is closed. To this end, we take a sequence of $u_j\in \mathcal{X}^{s,\sw_+}$ with $u_j\in \ker(P)^\perp$ and $Pu_j$ converging to some $f$ in $H^{s-1,\sw_++1}_\mathrm{par}$. Then first we observe that  $\|u_j\|_{M,N}$ is uniformly bounded. If this were not the case, then we could pass to a subsequence with $\|u_j\|_{M,N}\to\infty$ and then making the rescaling $\hat u_j=u_j/\|u_j\|_{M,N}$, an application of  \eqref{eq:fredholmest} to $\hat{u}_j$ together with the compactness of the embedding  $H^{s,\sw_+}_\mathrm{par}\subset H^{M,N}_\mathrm{par}$ allows us to deduce convergence in $H_\mathrm{par}^{s,\sw_+}$ of a subsequence $\hat{u}_j$ to a limit $v\in\ker(P)$. As $u_j\in \ker(P)^\perp$, it follows that $v=0$, which is a contradiction as we have $\|\hat{u}_j\|_{M,N}=1$ by construction.
	The boundedness of $\|u_j\|_{M,N}$ just demonstrated immediately implies boundedness of $\|u_j\|_{s,\sw_++1}$ from \eqref{eq:fredholmest}. Once more exploiting the compactness of the embedding  $H^{s,\sw_+}_\mathrm{par}\subset H^{M,N}_\mathrm{par}$, it follows that a subsequence $u_j$ is convergent in $H_\mathrm{par}^{M,N}$. Since $Pu_j$ is convergent in $H_\mathrm{par}^{s-1,\sw_++1}$, \eqref{eq:fredholmest} implies that this subsequence is convergent to some $u$ in  $\mathcal{X}^{s,\sw_+}$ with $Pu=f$ hence proving that the range of $P$ is closed.
	
	It remains to show that the cokernel of $P$ is finite-dimensional. The formal self-adjointness of $P$ gives an identification of $\coker(P)$ with the set of $v\in (H_{\mathrm{par}}^{s-1,\sw_++1})^*=H_{\mathrm{par}}^{1-s,\sw_-}$ such that $Pv=0$. Now the same argument used to establish finite dimensionality of $\ker(P)$ can be used (with $s$ replaced by $1-s$ and with the opposite sign choice for our spatial weight), provided we take $M <\min(s,1-s)$.
\end{proof}

\subsection{Module regularity estimates away from radial sets}
\label{subsec:module.positive.commutator}

The estimates in Proposition \ref{prop:elliptic.estimate} and Section \ref{subsec:positive.commutator} have analogues in the setting of Sobolev spaces with module regularity, as introduced in Definition~\ref{def:module.def.characteristic}. We state these results for the particular operator $P=\Delta_g+D_t+V$.

First we prove an analogue of Proposition \ref{prop:elliptic.estimate.P}.

\begin{prop}
	\label{prop:module.elliptic.estimate.old}
	Suppose $Q,G\in\Psip{0}{0}$ are such that $P$ and $G$ are elliptic on $\WF'(Q)$, let $\sw$ be an arbitrary variable order, and let $M,N\in\RR$. Then if $GP u\in H_\pm^{s-2,\sw;\kappa,k}$, we have $Qu\in H_\pm^{s,\sw;\kappa,k}$ with the estimate
	\begin{equation}
		\|Qu\|_{H_\pm^{s,\sw;\kappa,k}}\leq C(\|GP u\|_{H_\pm^{s-2,\sw;\kappa,k}}+\|u\|_{H^{M,N}}).
	\end{equation}

\end{prop}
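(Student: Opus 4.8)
The plan is to deduce the module-regularity estimate from the ordinary microlocal elliptic parametrix (Proposition~\ref{prop:elliptic param}) together with one auxiliary fact, which I will call the \emph{module mapping lemma}: any $T\in\Psip{m}{0}$ (constant orders) is bounded $T\colon H_\pm^{s,\mathsf{l};\kappa,k}\to H_\pm^{s-m,\mathsf{l};\kappa,k}$ for all $\kappa,k\in\NN$ and all variable spatial orders $\mathsf{l}$. Everything then assembles in a couple of lines.

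I would prove the module mapping lemma by induction on the total module order $\kappa+k$. The base case $\kappa=k=0$ is Proposition~\ref{prop:basic bounde} in its variable spatial-order form. For the inductive step, given a product $\mathbf{A}^\alpha\mathbf{B}^\beta$ of at most $\kappa$ generators of $\SM_\pm$ and at most $k$ generators of $\SN$, write $\mathbf{A}^\alpha\mathbf{B}^\beta T = T\,\mathbf{A}^\alpha\mathbf{B}^\beta + [\mathbf{A}^\alpha\mathbf{B}^\beta,T]$. The first term is controlled by the base case applied to $\mathbf{A}^\alpha\mathbf{B}^\beta u\in H_\mathrm{par}^{s,\mathsf{l}}$. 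For the commutator I expand by the Leibniz rule and then commute the surviving module-generator factors to the left of the lower-order pieces; the crucial point is that every module generator lies in $\Psipcl{1}{1}$ with \emph{constant} orders, and the parabolic-calculus commutator of constant-order classical operators gains one full order in both the fibre and the base variable, so no spatial order is lost in any of these commutations. The result is that $[\mathbf{A}^\alpha\mathbf{B}^\beta,T]$ is a finite sum $\sum \mathbf{A}^{\alpha'}\mathbf{B}^{\beta'}D$ with $|\alpha'|\le\kappa$, $|\beta'|\le k$, $|\alpha'|+|\beta'|\le\kappa+k-1$, and each $D\in\Psip{m}{0}$; applying the inductive hypothesis to each $Du$ finishes the step.

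Granting the lemma, the proposition is immediate. Since $P$ and $G$ are elliptic on $\WF'(Q)$, the operator $GP\in\Psip{2}{0}$ is elliptic of order $(2,0)$ on $\WF'(Q)$, so Proposition~\ref{prop:elliptic param} and the composition calculus produce $\tilde B\in\Psip{-2}{0}$ and $R\in\Psip{-\infty}{-\infty}$ with the exact operator identity $Q=\tilde B\,(GP)+R$. Hence $Qu=\tilde B(GPu)+Ru$. By hypothesis $GPu\in H_\pm^{s-2,\mathsf{l};\kappa,k}$, so the module mapping lemma with $T=\tilde B$ (so $m=-2$) gives $\tilde B(GPu)\in H_\pm^{s,\mathsf{l};\kappa,k}$ with norm $\lesssim \|GPu\|_{H_\pm^{s-2,\mathsf{l};\kappa,k}}$; and $Ru\in\mathcal{S}(\RR^{n+1})$, whose every module-regularity norm is bounded by $\|u\|_{H_\mathrm{par}^{M,N}}$ (applying module generators to a Schwartz function keeps it Schwartz). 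Adding the two contributions gives $Qu\in H_\pm^{s,\mathsf{l};\kappa,k}$ with the asserted estimate.

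The only place where genuine care is required is the combinatorial bookkeeping in the inductive step of the module mapping lemma: one must check that after expanding and reordering $[\mathbf{A}^\alpha\mathbf{B}^\beta,T]$, the module-generator products that appear still respect both caps (at most $\kappa$ copies of an $\SM_\pm$-generator and at most $k$ copies of an $\SN$-generator) while having strictly smaller total length, so that the induction on $\kappa+k$ genuinely closes. This is exactly the type of argument carried out in \cite{grenoble} and \cite{NEH}, and it is why the statement is naturally proved by iteration rather than in a single step.
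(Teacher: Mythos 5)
Your proof is correct and follows essentially the same approach as the paper: both use the elliptic parametrix $Q=\tilde B\,(GP)+R$ together with a commutator argument showing that the order-$(-2,0)$ parametrix preserves module regularity. The only difference is organizational---you isolate the commutation bookkeeping into a standalone module mapping lemma proved by induction on $\kappa+k$, whereas the paper carries out the same expansion inline, moving $C$ (your $\tilde B$) to the far left of the generator product and collecting the resulting multicommutators $C_S\in\Psip{-2}{0}$.
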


\begin{proof}
	An elliptic parametrix construction allows us to write 
	\begin{equation}
		Q=CGP +R
	\end{equation}
	where $C\in\Psip{-2}{0}$ satisfies $\WF'(C)\subseteq \WF'(Q)$.
	Then for a collection $A_1,A_2,\ldots,A_{n_1}$ of elements of $\SM_\pm$ and a collection $B_1,B_2,\ldots,B_{n_2}$ of elements of $\SN$, we can compute
	\begin{align}
		\label{eq:ell.mod.commest}
		A_1\ldots A_{n_1}B_1\ldots B_{n_2}Qu&= A_1\ldots A_{n_1}B_1\ldots B_{n_2}(CGP +R)u\\
		&=CA_1\ldots A_{n_1}B_1\ldots B_{n_2} GP u + \tilde{R}u\\
		&+\sum_{j=1}^{n_1} A_1\ldots A_{j-1}[C,A_j]A_{j+1}\ldots A_{n_1}B_1\ldots B_{n_2}GP u\\
		&+\sum_{j=1}^{n_1} A_1\ldots A_{n_1}B_1\ldots B_{j-1}[C,B_j]B_{j+1}\ldots B_{n_2}GP u.
	\end{align}
	We can move the commutators to the left of the final two terms by incurring terms involving a double commutator and one fewer module generator in the product that does not lie in a commutator. Iterating this process, we obtain 
	\begin{align}\label{eq:commutators}
		A_1\ldots A_{n_1}B_1\ldots B_{n_2}Qu
		&=CA_1\ldots A_{n_1}B_1\ldots B_{n_2} GL u + \tilde{R}u\\
		&+\sum_{S} C_S(\prod_{A\notin S} A)GP u
	\end{align}
	where $S$ ranges over all nonempty subsets of $\{A_1,\ldots,A_{n_1},B_1,\ldots,B_{n_2}\} $, and $C_S$ is a multi-commutator involving only $C$ and operators from $S$.
	In particular, this means that the operator $C_S$ lies in $\Psip{-2}{0}$ and so the $H_{\mathrm{par}}^{s,\sw}$ norm of the  multi-commutator terms as well as that of the first RHS term in \eqref{eq:commutators} is controlled by  $H_{\mathrm{par}}^{s-2,\sw;\kappa,k}$.
	
	After fixing $M,N\in\RR$, we conclude
	\begin{equation}
		\|A_1\ldots A_{n_1}B_1\ldots B_{n_2}Qu\|_{H_{\mathrm{par}}^{s,\sw}}\leq C(\|GP u\|_{H_\pm^{s-2,\sw;\kappa,k}}+\|u\|_{H_{\mathrm{par}}^{M,N}}).
	\end{equation}
	Summing over all choices of $A_j$ and $B_j$ from our generating set completes the proof.
\end{proof}

We also have a module regularity version of Proposition \ref{prop:sing.P}. As we will only apply this result away from the radial set of $P$, we include this as an additional convenient assumption. 
\begin{prop}
	\label{prop:module.propagation}
	Let $Q,Q',G\in\Psip{0}{0}$ be operators of order $(0,0)$ with $G$ elliptic on $\WF'(Q)$, and such that $\WF'(Q'), \WF'(G)$ are disjoint from the radial set $\SR$.  Let $\sw$ be a variable spacetime order that is non-increasing in the direction of the bicharacteristic flow of $P$.

	Furthermore, suppose that for every $\alpha\in\WF'(Q)\cap\Sigma_P  $ there exists $\alpha'$ such that $Q'$ is elliptic at $\alpha'$ and there is a forward bicharacteristic curve $\gamma$ of $P $ from $\alpha'$ to $\alpha$ such that $G$ is elliptic on $\gamma$.
	
	Then if $GP u\in H^{s-1,\sw+1;\kappa,k}$ and $Q'u\in H^{s,\sw;\kappa,k}$, we have $Q u\in H^{s,\sw;\kappa,k}$ with the estimate
	
	\[\|Qu\|_{H_\pm^{s,\sw;\kappa,k}}\leq C(\|Q' u \|_{H_\pm^{s,\sw;\kappa,k}}+\|GP u\|_{H_\pm^{s-1,\sw+1;\kappa,k}}+\|u\|_{H_{\mathrm{par}}^{M,N}})\]
	for any $M,N\in\RR$.
\end{prop}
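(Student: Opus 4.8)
The proof follows the template of Proposition~\ref{prop:module.elliptic.estimate.old}: one reduces the module-regularity statement to the plain propagation estimate of Proposition~\ref{prop:sing.P} by an inductive commutator argument, the induction parameter being the total module order. By definition of the spaces, $Qu\in H_\pm^{s,\mathsf{l};\kappa,k}$ means precisely that $\mathbf{A}^\alpha\mathbf{B}^\beta Qu\in H^{s,\mathsf{l}}_{\mathrm{par}}$ for all $|\alpha|\le\kappa$, $|\beta|\le k$, where $\mathbf{A},\mathbf{B}$ are the fixed generating sets of $\SM_\pm$ and $\SN$. I would induct on $j=|\alpha|+|\beta|$; the base case $j=0$ is exactly Proposition~\ref{prop:sing.P}. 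To make this rigorous before $u$ is known to carry the claimed module regularity, one runs the whole argument with regularised generators $\mathbf{A}\Lambda_r,\mathbf{B}\Lambda_r$, with $\Lambda_r\in\Psip{-\infty}{0}$ for $r>0$, bounded uniformly in $\Psip{0}{0}$ and tending to the identity as $r\to 0$; all estimates below are uniform in $r$, and one lets $r\to 0$ at the end.

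For the inductive step, fix a product $L=\mathbf{A}^\alpha\mathbf{B}^\beta$ of $j$ generators and write $P(Lu)=L(Pu)+[P,L]u$. The commutator is expanded by the Leibniz rule, using (i) that $\SM_\pm$ and $\SN$ are closed under commutators, so each $[A_i,A_j]$ is again a $\Psip{0}{0}$-combination of generators, and (ii) the structural identity $i\ang{Z}[A_j,P]=\sum_m C_{jm}A_m+C_j'P$ with $C_{jm}\in\Psip{1}{0}$ and $C_j'\in\Psip{0}{1}$, which is part of the $P$-positivity/$P$-negativity of $\SM_\pm$ and the $P$-criticality of $\SN$ established in Proposition~\ref{prop:positivity}. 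Moving the resulting $\Psi$-factors to the left (incurring only lower-order commutators) expresses $P(Lu)$ as a finite sum of: the term $L(Pu)$; terms $D\cdot(\text{at most }j-1\text{ generators applied to }Pu)$ with $D\in\Psip{0}{0}$; and \emph{remainder} terms $D\cdot(\text{at most }j\text{ generators applied to }u)$ with $D\in\Psip{1}{-1}$ — and with additional spacetime decay of $D$ for the remainders coming from the $\SN$-generators, since $P$ coincides with the flat model near spacetime infinity and the corresponding Poisson brackets vanish there, exactly as in the proof of Proposition~\ref{prop:positivity}. Inserting a cutoff $G$ elliptic on $\WF'(Q)$ and on the relevant bicharacteristic segments with $\WF'(G)\cap\SR=\varnothing$, the term $L(Pu)$ and the lower-order-in-module terms are controlled in $H^{s-1,\mathsf{l}+1}_{\mathrm{par}}$ by $\|GPu\|_{H_\pm^{s-1,\mathsf{l}+1;\kappa,k}}$, by $\|Q'u\|_{H_\pm^{s,\mathsf{l};\kappa,k}}$, and by the induction hypothesis, just as in Proposition~\ref{prop:module.elliptic.estimate.old} (using the elliptic parametrix to commute $G$ past the generators).

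The genuinely new point is the treatment of the remainder terms, which carry the full module order $j$ and so are not covered by the induction hypothesis and, because of the loss of $j$ derivatives in putting the generators on $u$, are not simply background terms either. Here one does not apply Proposition~\ref{prop:sing.P} as a black box but re-runs its positive-commutator proof with the module generators incorporated into the commutant (essentially $B=L^*q^*\chi qL$). The commutator identity above guarantees that the error terms produced by $[P,L]$ stay within module order $j$ and carry an extra factor of net negative spatial order (the $\ang{Z}^{-1}$), which is exactly the room needed: a Cauchy–Schwarz estimate pairs each such error as an inner product between two $j$-generator quantities through an operator of negative spatial order, and Young's inequality splits it into a small multiple of the top-order norm $\|Lu\|_{H^{s,\mathsf{l}}_{\mathrm{par}}}$ near $\WF'(q)$, absorbed on the left, together with a term at strictly lower spatial weight (and lower differential order) — which is then either a background $H^{M,N}_{\mathrm{par}}$ term or is removed by a secondary induction, for fixed $j$, that bootstraps the spatial weight upward in steps of less than one from a very negative weight. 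The main obstacle is this double induction and the accompanying bookkeeping of the commutator expansion; the hypothesis that $\WF'(Q')$ and $\WF'(G)$ avoid $\SR$ is what keeps the module generators non-degenerate along the whole relevant part of phase space and lets the microlocal partition and the ordinary propagation estimate be iterated freely along the bicharacteristics.
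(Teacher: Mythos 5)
Your argument takes a genuinely different, and much heavier, route than the paper's, and the key simplification you missed is one you actually state at the very end without acting on it. Since $\WF'(Q)$, $\WF'(Q')$, and $\WF'(G)$ all avoid the radial set $\SR$, one is working entirely in the region where, by Proposition~\ref{prop:R.char}, some generator of the small module $\SN$ is elliptic. A microlocal partition of unity on these microsupports then shows that the module norms $\|\cdot\|_{H^{s,\mathsf l;\kappa,k}_\pm}$ are \emph{equivalent} to the ordinary parabolic norms $\|\cdot\|_{H^{s+\kappa+k,\mathsf l+\kappa+k}_{\mathrm{par}}}$: every product of $\kappa+k$ module elements, each of order $(1,1)$, is elliptic there, so $\kappa+k$ orders of module regularity is the same thing as $\kappa+k$ extra orders of fibre and spatial regularity. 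Once this equivalence is noted, the proposition is nothing more than Proposition~\ref{prop:sing} applied at the shifted orders; no commutator expansion of $[P,\mathbf{A}^\alpha\mathbf{B}^\beta]$, no $P$-positivity input, no regularisation, and no double induction is needed.

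Your proposal instead rebuilds the full positive-commutator machinery with the generators inserted into the commutant. That is the scheme the paper reserves for the module radial-point estimates (Propositions~\ref{prop:mod.belowschrod} and \ref{prop:mod.aboveschrod}), where it cannot be avoided because the module degenerates on $\SR$ and the order-trading trick fails. Away from $\SR$ it is overkill, and the crucial step in your sketch --- the ``secondary induction that bootstraps the spatial weight upward'' to absorb the remainder terms of full module order --- is left unverified: without the sign information from $P$-positivity (which is a condition on $\SR$ only, not globally), it is not immediate that the net $\ang{Z}^{-1}$ gain closes the absorption uniformly in the regularisation parameter, especially since your regulariser $\Lambda_r\in\Psip{-\infty}{0}$ does not regularise the spatial order, which is where the a priori regularity is actually lacking. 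So the approach is plausible in outline but gappy at exactly the point that the paper's one-line reduction eliminates.
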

\begin{proof}
	From Proposition \ref{prop:R.char}, for any $B\in\Psip{0}{0}$ with $\WF'(B)\cap (\SR_+\cup \SR_-)=\emptyset$, we can use a microlocal partition of unity to write $B=B_1+\ldots+B_m$ where each $\WF'(B_j)$ is contained in $\Ell(A_j)$ for some $A_j\in \SN$. As such we have that the norms $\|B_jv\|_{H^{s,r;\kappa,k}_{\pm}}$ and $\|B_jv\|_{H_{\mathrm{par}}^{s+\kappa+k,r+\kappa+k}}$ are equivalent. Summing in $j$ we obtain eqivalence between $\|Bv\|_{H^{s,r;\kappa,k}_{\pm}}$ and $\|Bv\|_{H_{\mathrm{par}}^{s+\kappa+k,r+\kappa+k}}$. We can then directly apply Proposition \ref{prop:sing} to complete the proof, noting that the operators $Q,Q',G$ in these two  propositions enjoy this same microsupport condition.
\end{proof}

\subsection{Module regularity estimates near the radial sets}
\label{subsec:module.positive.commutator.radial}

We now adapt the results of Section \ref{subsec:positive.commutator.radial} to the module regularity spaces $H_{\SM_\pm}^{s,r;\kappa,k}$ and the particular operator $P =\Delta_g+D_t+V$.

\begin{prop}
	\label{prop:mod.belowschrod}
	Suppose $r<-1/2$. Assume that there exists a  neighbourhood $U$ of $\SR_+$  and $Q', G\in\Psip{0}{0}$ such that for every $\alpha\in\Sigma_P \cap U\setminus \SR_+$ the bicharacteristic $\gamma$ through $\alpha$ enters $\Ell(Q')$ whilst remaining in $\Ell(G)$. Then there exists $Q\in\Psip{0}{0}$ elliptic on $\SR_+$ such that if $u \in H_\mathrm{par}^{M,N}$, $Q'u\in H_+^{s,r;\kappa,k}$, and $GP u\in H^{s-1,r+1;\kappa,k}_+$, then $Qu\in H^{s,r;\kappa,k}_+$. Moreover, there exists $C>0$ such that 
	\begin{equation}\label{eq:mod.radial.schrod.below}
		\|Qu\|_{H_+^{s,r;\kappa,k}}\leq C(\|Q'u\|_{H_+^{s,r;\kappa,k}} +\|GP u\|_{H_+^{s-1,r+1;\kappa,k}}+\|u\|_{H_\mathrm{par}^{M,N}})
	\end{equation}
\end{prop}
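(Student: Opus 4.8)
The plan is to prove \eqref{eq:mod.radial.schrod.below} by induction on the total module order $\kappa+k$, the base case $\kappa=k=0$ being precisely the scalar below-threshold radial point estimate, Proposition \ref{prop:belowschrod}, which already encodes the fact that $\SR_+$ is a sink (Proposition \ref{prop:radial set geom}). For the inductive step I would run a module-decorated positive commutator argument localised at $\SR_+$, of exactly the type underlying Proposition \ref{prop:belowschrod} itself. Fix generators $\mathbf{A}=(\mathbf{A}_j)_{j=0}^{N}$ of $\SM_+$ and $\mathbf{B}=(\mathbf{B}_i)_{i=0}^{N'}$ of $\SN$, with $\mathbf{A}_0=\mathbf{B}_0=\Id$, and recall $\SN\subseteq\SM_+$. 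Let $\Xi=\Xi_\delta$ be the commutant from the proof of Proposition \ref{prop:belowschrod} — a parabolic operator elliptic at $\SR_+$, microsupported in a small neighbourhood of $\SR_+$ inside $U\cap\Ell(G)$, carrying a regularising parameter $\delta\in(0,1]$ so that all the pairings below are finite given only the a priori bound $u\in H^{M,N}_{\mathrm{par}}$ — and let $Q\in\Psip{0}{0}$ be elliptic at $\SR_+$ with $\WF'(Q)$ inside the elliptic set of $\Xi_0$. Writing $u_{\alpha\beta}=\mathbf{A}^\alpha\mathbf{B}^\beta u$ for $|\alpha|\le\kappa$, $|\beta|\le k$, I would consider $\Lambda=\sum_{|\alpha|\le\kappa,\,|\beta|\le k}(\mathbf{A}^\alpha\mathbf{B}^\beta)^*\Xi^*\Xi(\mathbf{A}^\alpha\mathbf{B}^\beta)$ and the identity obtained by evaluating $2\,\mathrm{Im}\langle Pu,\Lambda u\rangle$ in two ways.

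Expanding this pairing produces four kinds of contributions, to be handled in the following order. (i) The principal radial term: commuting $\Xi^*\Xi$ past $P$ inside each summand reproduces, for each $(\alpha,\beta)$, the radial-point positive term $c\sum_{\alpha,\beta}\|\Xi'u_{\alpha\beta}\|^2$ with the definite sign of the scalar below-threshold estimate ($\Xi'$ the square root appearing there). (ii) Module-commutator terms, where one factor $\mathbf{A}_j$ or $\mathbf{B}_i$ is commuted through $P$: by Definition \ref{def:p.pos} together with Proposition \ref{prop:positivity}(i) for $\SM_+$ and (iii) for $\SN$ (both applicable at $\SR_+\subset\SR$), one has $i\ang{Z}[\mathbf{A}_j,P]=\sum_\ell C_{j\ell}\mathbf{A}_\ell+C'_jP$ with $\sigma_{\mathrm{base},1,0}(C_{j\ell})|_{\SR_+}=0$ for $\ell\ne j$ and $\Re\sigma_{\mathrm{base},1,0}(C_{jj})|_{\SR_+}\ge0$; the resulting diagonal contribution $\langle C_{jj}u_{\alpha\beta},\Xi^*\Xi u_{\alpha\beta}\rangle$ has, in the below-threshold regime, the same sign as (i) and may be discarded, the off-diagonal contributions are bounded by $\epsilon\sum\|\Xi'u_{\alpha\beta}\|^2$ after shrinking $\WF'(Q)$ (as $\sigma_{\mathrm{base}}(C_{j\ell})$ vanishes at $\SR_+$) and absorbed, and the $C'_jP$ contributions are moved onto $GPu$, producing the term $\|GPu\|_{H_+^{s-1,r+1;\kappa,k}}$. (iii) Lower-order terms, from double commutators and from commutators among the $\mathbf{A}_j$'s (which remain in $\SM_+$): these contain strictly fewer module factors and are controlled by the inductive hypothesis. (iv) The cutoff errors from $[P,\Xi]$, supported in a compact subset of $\Sigma_P$ in a punctured neighbourhood of $\SR_+$ (hence disjoint from $\SR$): by the dynamical hypothesis these points lie on forward bicharacteristics emanating from $\Ell(Q')$ and remaining in $\Ell(G)$, so the module propagation estimate Proposition \ref{prop:module.propagation} bounds them by $\|Q'u\|_{H_+^{s,r;\kappa,k}}$ up to $\|u\|_{H^{M,N}_{\mathrm{par}}}$. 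Rearranging, absorbing the $\epsilon$- and lower-order terms, passing $\delta\downarrow0$ with uniform constants, and using ellipticity of $Q$ at $\SR_+$ then yields \eqref{eq:mod.radial.schrod.below}.

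I expect the main obstacle to be the bookkeeping in groups (ii) and (iii): when several generators are commuted past $P$ and past one another, one must verify that the surviving top-order coefficient is a sum of the $\Re\sigma_{\mathrm{base}}(C_{jj})$'s — all of the favourable sign at $\SR_+$ in the below-threshold case — while every term carrying an off-diagonal factor $C_{j\ell}$ or a reduced number of module factors is genuinely of lower order for the induction. This is exactly what the $P$-positivity of $\SM_+$ and $P$-criticality of $\SN$ at $\SR_+$ in Proposition \ref{prop:positivity} are engineered to deliver, and the inclusion $\SN\subseteq\SM_+$ ensures the $k$ extra $\SN$-factors are also $P$-positive there. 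The remaining subtlety, routine but essential, is the regularisation: since $u$ is only assumed in $H^{M,N}_{\mathrm{par}}$ a priori, one works throughout with $\Xi_\delta$ and the correspondingly regularised module powers, checks that all error constants are uniform in $\delta$, and only then passes to the limit.
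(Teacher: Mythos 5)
Your proposal follows essentially the same route as the paper's proof: induction on module order with base case Proposition~\ref{prop:belowschrod}, a positive-commutator argument with commutant $\sum_\alpha A_\alpha^* A A_\alpha$ (where $A_\alpha$ is a module product), use of Proposition~\ref{prop:positivity} to control the module-commutator matrix, and Proposition~\ref{prop:module.propagation} for the cutoff errors in the punctured neighbourhood of $\SR_+$. The minor differences are bookkeeping: the paper sums only over the top level $|\alpha'|=\kappa$, $|\alpha''|=k$ (the set $S_{\kappa,k}$) and handles lower orders entirely by induction, extracts a positive square root $B^*B+\tilde R$ from $C'+B_1^*B_1\otimes\mathbb{I}$ rather than absorbing off-diagonals by smallness near $\SR_+$, and regularizes $u$ via $u'=S_\eta u$ (after the commutant regularization in the Appendix) rather than regularizing the module powers; these choices are interchangeable and your version is sound.
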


\begin{prop}
	\label{prop:mod.aboveschrod} 
	Suppose $r>r'>-\frac{1}{2}$ and $s>s'$. Assume that $G\in\Psip{0}{0}$ is elliptic at $\SR_\pm$. Then there exists $Q\in\Psip{0}{0}$ elliptic at $\SR_\pm$ such that, if $u \in H_\mathrm{par}^{M,N}$, $Gu\in H_+^{s',r';\kappa,k}$ and $GP u\in H_+^{s-1,r+1;\kappa,k}$, then $Qu\in H_+^{s,r;\kappa,k}$ and there exists $C>0$ such that
	\begin{equation}\label{eq:mod.radial.schrod.above}
		\|Qu\|_{H_+^{s,r;\kappa,k}}\leq C(\|GP u\|_{H_+^{s-1,r+1;\kappa,k}}+\|Gu\|_{H_+^{s',r';\kappa,k}}+\|u\|_{H_{\mathrm{par}}^{M,N}}).
	\end{equation}
\end{prop}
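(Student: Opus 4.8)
The plan is to deduce Proposition~\ref{prop:mod.aboveschrod} from the general module radial-point estimate established in the appendix (Section~\ref{subsec:positive.commutator.radial}), which holds for any operator whose test module is $P$-positive at the radial set under consideration. By Proposition~\ref{prop:positivity}(i) the module $\SM_+$ is $P$-positive at $\SR_+$ (and by (ii) $\SM_-$ is $P$-negative at $\SR_-$, which handles the $\SR_-$ case after reversing signs), so the hypotheses of the abstract statement are met, and what remains is to describe how the module estimate is bootstrapped, by induction on the total module order $\kappa+k$, from the scalar radial estimate Proposition~\ref{prop:aboveschrod}, which is the base case $\kappa=k=0$.

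First I would fix generators $\mathbf{A}=(A_j)_{j=0}^N$ of $\SM_+$ and $\mathbf{B}=(B_j)_{j=0}^{N'}$ of $\SN$ with $A_0=B_0=\Id$, assume the estimate for all module orders of total degree $<\kappa+k$, and — using the definition \eqref{eq:mod.reg.sob.norm} of the module norm together with a nested family of cutoffs elliptic at $\SR_+$ — reduce to producing $Q$ elliptic at $\SR_+$ with
\[
\| \mathbf{A}^\alpha \mathbf{B}^\beta Q u \|_{H^{s,r}_{\mathrm{par}}} \le C\big( \| GPu \|_{H_+^{s-1,r+1;\kappa,k}} + \| Gu \|_{H_+^{s',r';\kappa,k}} + \| u \|_{H^{M,N}_{\mathrm{par}}} \big)
\]
for every $\alpha,\beta$ with $|\alpha|\le\kappa$, $|\beta|\le k$; the products with $|\alpha|+|\beta|<\kappa+k$ are covered by the inductive hypothesis, so the content lies in the top-order products. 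For such a product I would set $w=\mathbf{A}^\alpha\mathbf{B}^\beta u$, which solves $Pw=\mathbf{A}^\alpha\mathbf{B}^\beta Pu+[P,\mathbf{A}^\alpha\mathbf{B}^\beta]u$, expand the commutator by the Leibniz rule into single commutators $[P,A_j]$ or $[P,B_j]$ flanked by generator products of total degree $\kappa+k-1$, and substitute the $P$-positivity relations $i\ang{Z}[A_j,P]=\sum_k C_{jk}A_k+C_j'P$ from \eqref{eq:gen.commutators} (and their $\SN$-analogues, which by $P$-criticality have vanishing diagonal base symbol at $\SR$).

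This sorts the commutator into three kinds of terms: (i) terms carrying a right factor of $P$, which are moved to the $\|GPu\|$-side after commuting the cutoffs through — the correction commutators there drop one module order or one derivative and are absorbed by Proposition~\ref{prop:module.elliptic.estimate.old} and the inductive hypothesis; (ii) off-diagonal terms $C_{jk}A_k(\cdots)$ with $k\ne j$, whose symbol $\sigma_{\mathrm{base},1,0}(C_{jk})$ vanishes on $\SR_+$, so that after shrinking $\WF'(Q)$ towards $\SR_+$ they are dominated by the main positive term; and (iii) diagonal terms $C_{jj}A_j(\cdots)$, still of full degree $\kappa+k$ but with $\Re\sigma_{\mathrm{base},1,0}(C_{jj})\ge 0$ on $\SR_+$ — precisely the sign that reinforces, rather than spoils, the radial positivity (this is exactly why $P$-positivity, and not merely $P$-criticality, of $\SM_+$ is needed). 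With the commutator so organized, I would run the positive-commutator estimate underlying Proposition~\ref{prop:aboveschrod} verbatim, with a commutant of the schematic form (cutoff near $\SR_+$)$\,\cdot\,$(spatial weight)$\,\cdot\,$(products of module generators) and a mollifier regularization converging to the identity as in the appendix: the threshold hypothesis $r>-1/2$ makes the principal term at $\SR_+$ carry the required sign, the diagonal contributions (iii) add to it, the off-diagonal contributions (ii) are absorbed, and the a priori hypothesis $Gu\in H_+^{s',r';\kappa,k}$ with $s'<s$, $-1/2<r'<r$ legitimises the limit, exactly as in the scalar case. The $\SR_-$ statement follows identically with all signs reversed, using Proposition~\ref{prop:positivity}(ii).

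I expect the main obstacle to be the sorting step above: one must check that \emph{every} term produced by the Leibniz expansion splits, using only the structural data \eqref{eq:gen.commutators}, into a piece with a definite favourable sign at $\SR_+$, a piece whose symbol vanishes on $\SR_+$ (hence absorbable after microlocalisation), and pieces of strictly lower module degree or carrying a right factor of $P$. Packaging this uniformly is the purpose of the abstract $P$-positivity formalism of Section~\ref{subsec:positive.commutator.radial}; once it is in place, verifying it for $P=\Delta_g+D_t+V$ reduces entirely to Proposition~\ref{prop:positivity}, and the remaining work — the Leibniz bookkeeping and the regularisation limit — is routine and parallels the scalar radial-point argument.
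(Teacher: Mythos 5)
Your overall scheme — induction on the module orders, Leibniz expansion of the commutator $[A_\alpha^*AA_\alpha, P]$, sorting the resulting terms into those carrying a right factor of $P$, off‑diagonal terms whose symbol vanishes at the radial set, and diagonal terms with a favourable sign — is the paper's method, and the sorting at $\SR_+$ using $P$-positivity of $\SM_+$ from Proposition~\ref{prop:positivity}(i) is exactly what the paper does. The treatment of the error terms (absorbing by the inductive hypothesis and by the elliptic estimate Proposition~\ref{prop:module.elliptic.estimate.old}) and the regularisation/limiting argument also match.

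There is, however, a genuine gap in your treatment of the source $\SR_-$. The proposition is about $\SM_+$-module regularity (the $H_+$ spaces) at \emph{both} radial sets, and your parenthetical "(and by (ii) $\SM_-$ is $P$-negative at $\SR_-$, which handles the $\SR_-$ case after reversing signs)" does not engage: $\SM_-$-positivity information is irrelevant to $\SM_+$-module norms, and the extra $\SM_+$-generators $t\chi_{\mathrm{pol},-}E_1$ and $B_+$ carry no sign control at $\SR_-$ from Proposition~\ref{prop:positivity}, so the diagonal term $\Re\sigma_{\mathrm{base},1,0}(C_{jj})$ in the Leibniz expansion may have the wrong sign there, spoiling the commutator positivity. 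The paper's way around this is the observation — which your argument lacks — that, by Proposition~\ref{prop:R.char}, $\SM_+$ is elliptic at $\SR_-$. Near $\SR_-$ one may therefore trade $\SM_+$-module order for ordinary Sobolev order, reducing to the case in which the only module factors applied are $\SN$-generators; since $\SN$ is $P$-\emph{critical} at $\SR$ (so the diagonal base symbols $\sigma_{\mathrm{base},1,1}(C_{jj})$ actually \emph{vanish} on $\SR_-$, not just have a sign), the scalar above-threshold positivity is then undisturbed. Without this ellipticity reduction step your argument does not close at $\SR_-$.
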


The statements of Proposition \ref{prop:mod.belowschrod} and Proposition \ref{prop:mod.aboveschrod} also hold with $H_+,\SR_{\pm}$ replaced with $H_-,\SR_{\mp}$ with the obvious modifications to their proof.

\begin{proof}
	The proof of Proposition \ref{prop:mod.belowschrod} and Proposition \ref{prop:mod.aboveschrod} proceeds along similar lines to the proofs of Proposition \ref{prop:belowschrod} and Proposition \ref{prop:aboveschrod}, by iterative use of a positive commutator estimate.
	The commutator $i([A,P ]+(P -P ^*)A)$, as in \eqref{eq:commutatorAL}, where $A\in\Psip{2s-1}{2r+1}$ has principal symbol $a$ defined in \eqref{eq:symboldef}, is replaced by 
	\begin{equation}
		\label{eq:newcomm}
		i[A_\alpha^* AA_\alpha ,P ] + 2 \operatorname{Im} V A_\alpha^* AA_\alpha
	\end{equation}
	where $\alpha=(\alpha',\alpha'')\in \NN^{N+1}\times \NN^{N'+1}$ and
	\begin{equation}
		A_{\alpha}:=\mathbf{A}^{\alpha'}\mathbf{B}^{\alpha''}=\prod_{j=0}^N \mathbf{A}_j^{\alpha_j}\prod_{k=0}^{N'} \mathbf{B}_k^{\alpha_k'}
	\end{equation}
	is a product of the generators of $\SM_+$ that lies in $\SM_+^{\kappa} \SN^{k}$.

	We treat the addition of the $A_\alpha$ factors in an inductive manner, and suppose that the conclusions of Proposition \ref{prop:mod.belowschrod} and Proposition \ref{prop:mod.aboveschrod} hold for all $(\kappa',k')<(\kappa,k)$, that is for all pairs $(\kappa',k')\neq (\kappa,k)$ with $\kappa'\leq \kappa$ and $k'\leq k$. The case $(\kappa, k) = (0,0)$ is of course provided by Propositions \ref{prop:belowschrod} and \ref{prop:aboveschrod}. 
	
	Using \eqref{eq:gen.commutators} we obtain
	\begin{equation}
		\label{eq:module.comm}\begin{aligned}
		i[A_\alpha^* AA_{\alpha} ,P ]&=A_\alpha ^*(A(\sum_j \alpha_jC_{jj})\rho_{\mathrm{base}}A_\alpha +A_\alpha^* \rho_{\mathrm{base}}(\sum_j \alpha_jC_{jj}^*)A)A_\alpha\\
		&+ \sum_{|\beta|=|\alpha|\,\beta\neq \alpha} A_\alpha^* AC_{\alpha\beta}\rho_{\mathrm{base}}A_\beta+\sum_{|\beta|=|\alpha|,\beta\neq \alpha} A_\beta^*\rho_{\mathrm{base}} C_{\alpha\beta}^*AA_{\alpha}\\
		&+ A_\alpha^* i[A,P ] A_\alpha\\
		&+ \sum_\alpha A_\alpha^* AE_\alpha P + \sum_\alpha P {E_\alpha}^* A A_\alpha
	\end{aligned}\end{equation}
	where
	\begin{enumerate}
		\item $E_\alpha\in\SM_+^{\kappa'}\SN^{k'}$ with $(\kappa',k')<(\kappa,k)$
		\item $\sigma_{\mathrm{base},1,1}(C_{\alpha\beta})|_{\SR_+})=0$
		\item $\Re(\sigma_{\mathrm{base},1,1}(C_{jj}))|_{\SR_+} \geq 0$
	\end{enumerate}
	The first term has nonnegative symbol on $\SR_+$ from Proposition \ref{prop:positivity}, the second term has sign determined by that of $i[A,P ]$, which has symbol \eqref{eq:comm}, the terms in the third line are characteristic on $\SR_+$ by Proposition \ref{prop:positivity} and finally the remaining terms are regarded as error terms.
	The identity \eqref{eq:module.comm} is analogous to \cite[Eq. (3.23)]{NEH}.
	
	We now assume that we are in the below threshold case, that is $r<-1/2$.
	
	In order to concisely write down the contribution of the first two lines of \eqref{eq:module.comm} to the commutator estimates, we introduce a matrix of operators in $\Psip{2s}{2r}$, with rows and columns indexed by multi-indices $\alpha$ with $|\alpha'|=\kappa$ and $|\alpha''|=k$. We introduce the notation for the indexing set 	\begin{align}\label{eq:def.index.set}
		S_{\kappa,k}:&=\{\alpha=(\alpha',\alpha'')\in \NN^{N+1}\times \NN^{N'+1}:|\alpha'|=\kappa, \  |\alpha''|=k \}.
	\end{align}
	For $\alpha,\beta\in S_{\kappa,k}$, the aforementioned matrix of operators is given by
	\begin{equation}
		C_{\alpha\beta}'=\begin{cases}
			(A(\sum_j \alpha_jC_{jj})\rhob+\rhob(\sum_j \alpha_jC_{jj}^*)A	&(\alpha=\beta)\\
			AC_{\alpha\beta}\rhob+\rhob C_{\alpha\beta}^*A &(\alpha\neq \beta)
		\end{cases}.
	\end{equation}
	Now let $u'\in H_{+}^{s,l;\kappa,k}$ and take $v_\alpha=A_\alpha u'$. 
	
	We compute formally, referring  to Section~\ref{sec:fredholm} for the regularization arguments needed to justify various steps in the computation. In matrix notation, we have obtained the identity 
	\begin{equation}\begin{aligned}
		\sum_{\alpha\in S_{\kappa,k}}\ang{i[A_\alpha^*AA_\alpha,P ]u',u'}&= \ang{C'v,v}
		+\ang{(i[A,P ] \otimes \mathbb{I})v, v}\\
		&
		+ 2\Re(\sum_{\alpha\in S_{\kappa,k}}\ang{v_\alpha,AE_\alpha P u'})
	\end{aligned}\label{eq:module.comm.2}\end{equation}
	where $\mathbb{I}$ indicates the $|S_{\kappa,k}| \times |S_{\kappa,k}|$ identity matrix. Using \eqref{eq:naive.below.threshold}, we obtain 
	\begin{equation}\begin{aligned}
		\sum_{\alpha\in S_{\kappa,k}}\ang{i[A_\alpha^*AA_\alpha,P ]u',u'}&= \ang{(C' + (B_1^*B_1 - B_2^*B_2 + F + R) \otimes \mathbb{I})v, v}\\
		&
		+ 2\Re(\sum_{\alpha\in S_{\kappa,k}}\ang{v_\alpha,AE_\alpha P u'}).
	\end{aligned}\label{eq:module.comm.22}\end{equation}

	From the nonnegativity conditions on the $C_{jj},C_{\alpha\beta}$, and the strict positivity of  the symbol of $B_1$, we see that the matrix $C' + B_1^* B_1 \otimes \mathbb{I}$ is diagonal with strictly positive entries on $\SR_+$. 	As such, we may write
	\begin{equation}
		C'+ B_1^* B_1 \otimes \mathbb{I}=B^*B+\tilde{R}
	\end{equation}
	where the symbol of $B$ is a positive matrix and $\tilde{R}$ is a matrix of operators in $\Psip{2s-1}{2r-1}$. This allows us to write \eqref{eq:module.comm.22} as (dropping the $\mathbb{I}$ tensor factor for brevity)	\begin{equation}\label{eq:commest}\begin{aligned}	
		\sum_{\alpha\in S_{\kappa,k}}\ang{u',i[A_\alpha^*AA_\alpha,P ]u'}&= \|Bv\|^2-\|B_2  v\|^2 -\ang{Fv, v} + \ang{(-R + \tilde{R})v,v}  \\
		&	+ 2\Re(\sum_{\alpha\in S_{\kappa,k}}\ang{v_\alpha,AE_\alpha P u'})
	\end{aligned}\end{equation}
	We estimate 	the $\|Bv\|^2$ term by using identity \eqref{eq:commest} and bounding all the other terms that appear there.

	We first estimate the commutator term $\ang{u',i[A_\alpha^*AA_\alpha,P ],u'}$. To do this, we use the identity 
$$
\ang{i[A_\alpha^*AA_\alpha,P ]u',u'} = -2 \Im \ang{A A_\alpha u', A_\alpha P u'} - \ang{u', (P-P^*) A_\alpha A A_\alpha u'}.
$$
The second term is trivial to estimate, since the symbol of $a$ has disjoint support from that of $V - \overline{V} = P - P^*$, so this operator is order $(-\infty, -\infty)$. This term is therefore bounded by $\| u \|_{H^{M,N}_\mathrm{par}}^2$ for any $M$ and $N$. The first term is estimated using 	
	\begin{equation}
		2|\ang{AA_\alpha P u',A_\alpha u'}|=2|\ang{\rhob^{1/2}A^{1/2}A_\alpha u',\rhob^{-1/2}A^{1/2}A_\alpha P u'}|.
	\end{equation}
	Summing over $\alpha$ and applying a weighted Young inequality gives the upper bound
	for the commutator term of 
	\begin{equation}\label{eq:commest2}\begin{gathered}
		\ep\|\rhob^{1/2}A^{1/2}v\|_{H_\mathrm{par}^{1/2,0}}^2+\ep^{-1}\sum_{\alpha\in S_{\kappa,k}}\|\rhob^{-1/2}A^{1/2}A_\alpha P u'\|_{H_\mathrm{par}^{-1/2,0}}^2 + \| v \|_{H^{M,N}_\mathrm{par}}^2 .
\end{gathered}	\end{equation}
We choose $Q'' \in \Psip{0}{0}$ to be microlocally the identity on $\WF'(A)$. 
	The terms $F$, $R$ and $\tilde R$ terms in \eqref{eq:commest} are estimated  as in \eqref{key2.5} and \eqref{key2.75} giving
	\begin{equation}
	|\ang{Fv, v}| + |\ang{(-R + \tilde{R})v,v}| \leq C  
		 C \Big(\|GP v\|_{H_\mathrm{par}^{s-1,r+1}}+ \|Q''v\|_{H_\mathrm{par}^{s-1/2, r'}})+\|v\|_{H_\mathrm{par}^{M,N}}).
	\end{equation}
	The term $\|B_2v\|^2$ is estimated as in Proposition \ref{prop:below}, using the standard propagation estimate of Proposition \ref{prop:sing}. This leads to a $\| Q' v \|_{H_\mathrm{par}^{s,r}}^2$ term on the RHS. The term $\|B_1v\|^2$ can be discarded as it has the same sign as $\|B v\|^2$.

	It remains to consider the term in the last line of \eqref{eq:commest}. The weighted Young inequality gives
	\begin{align}\label{eq:commest3}
		2\sum_{\alpha\in S_{\kappa,k}}|\ang{v_\alpha,AE_\alpha P  u'}|&=2\sum_{\alpha\in S_{\kappa,k}}|\ang{\rhob ^{1/2}A^{1/2}v_\alpha,\rhob^{-1/2}A^{1/2}E_\alpha P  u'}|\\
		&\leq \ep \|\rhob^{1/2}A^{1/2}v\|^2_{H_\mathrm{par}^{1/2,0}}+\ep^{-1}\sum_{\alpha\in S_{\kappa,k}}\|\rhob^{-1/2}A^{1/2}E_\alpha P u'\|^2_{H_\mathrm{par}^{-1/2,0}}
	\end{align}
	
	We combine \eqref{eq:commest},\eqref{eq:commest2}, \eqref{eq:commest3} to bound $Bv$, in particular we have
	\begin{align}
		\|Bv\|^2 &\leq C(\|Q' v\|^2_{H_\mathrm{par}^{s,r}}+\|GP v\|^2_{H_\mathrm{par}^{s-2,r}}+\|Q''v\|_{H_\mathrm{par}^{s-1/2,r'}}^2+\|v\|_{H_\mathrm{par}^{M,N}}^2\\
		&+2\ep\|\rhob^{1/2}A^{1/2}v\|_{H_\mathrm{par}^{1/2,0}}^2\\
		&+\ep^{-1}\left(\sum_{\alpha \in S_{\kappa,k}}\|\rhob^{-1/2}A^{1/2}A_\alpha P u'\|_{H_\mathrm{par}^{-1/2,0}}^2+\sum_{\alpha \in S_{\kappa,k}}\|\rhob^{-1/2}A^{1/2}E_\alpha P u'\|_{H_\mathrm{par}^{-1/2,0}}^2\right).
	\end{align}

We now choose $Q \in \Psip{0}{0}$ to be microlocally equal to the identity near $\SR_+$ and such that $\WF'(Q)$ is contained in the elliptic set of $B$. Then for any $M,N$ there is $C$ such that 
$$
\|Qv\|_{H_\mathrm{par}^{s,r}}\leq \| B v\|_{H_\mathrm{par}^{s,r}} + \|u' \|_{H_\mathrm{par}^{M,N}}.
$$
From the definition \eqref{eq:mod.reg.sob.norm} of the norm in
	Sobolev spaces with module regularity, an estimate for
	$\sum_{\alpha\in S_{\kappa,k}}\|A_\alpha Qu'\|$ in fact gives
	an estimate on  $\|Qu'\|_{H_+^{s,r;\kappa,k}}$. On the other hand, 
	$A_\alpha Qu' = Qv + [A_\alpha, Q]u'$ and the last term is microsupported away from the radial set, so we can estimate the $H_\mathrm{par}^{s,r}$ norm of $[A_\alpha, Q]u'$ by 
\begin{equation}\label{eq:prop.sing.est}
	 \| Q' u' \|_{H_\mathrm{par}^{s,r; \kappa, k}} +  \| G P u' \|_{H_\mathrm{par}^{s-1,r+1; \kappa, k}}   + \| u' \|_{H_\mathrm{par}^{M,N}} ,
\end{equation}
	using Proposition~\ref{prop:module.propagation}. We argue similarly with the other terms. Hence we obtain the estimate expressed in terms of module regularity spaces: 
	Hence,  we obtain 
	\begin{equation} \label{eq:Qu'} \begin{aligned}
		\|Qu'\|_{H_+^{s,r;\kappa,k}}^2 &\leq C(\|Q' u'\|^2_{H_+^{s,r;\kappa,k}}+\|GP u'\|^2_{H_+^{s-2,r;\kappa,k}}+\|Q'' u'\|_{H_+^{s-1/2,r';\kappa,k}}^2+\|u'\|_{H_\mathrm{par}^{M,N}}^2\\
		&+2\ep\|\rhob^{1/2}A^{1/2}u'\|_{H_+^{1/2,0;\kappa,k}}^2\\
		+\ep^{-1}\bigg( &\sum_{\alpha\in S_{\kappa,k}} \|\rhob^{-1/2}A^{1/2}A_\alpha P u'\|_{H_\mathrm{par}^{-1/2,0}}^2+\sum_{\alpha \in S_{\kappa,k}}\|\rhob^{-1/2}A^{1/2}E_\alpha P u'\|_{H_\mathrm{par}^{-1/2,0}}^2\bigg)
	\end{aligned}\end{equation}
	The term in the second line is bounded by
	\begin{equation}
		2\ep\|Q''u'\|_{H_+^{s,r;\kappa,k}}^2 \leq 4 \ep \Big( \|Qu'\|_{H_+^{s,r;\kappa,k}}^2 + \|(Q''-Q)u'\|_{H_+^{s,r;\kappa,k}}^2 \Big)
	\end{equation}
	and the $Qu'$ term can be absorbed into the left-hand side for sufficiently small $\epsilon$, while the $(Q''-Q)u'$ term can be estimated as in \eqref{eq:prop.sing.est} as $Q''-Q$ is microsupported away from $\SR_+$. The terms in the final line are controlled by $\|GP u'\|_{H_+^{s-1,r+1;\kappa,k}}$ from the ellipticity of $G$ on $\WF'(A)$. This yields the estimate 
	\begin{multline}\label{key}
		\|Qu'\|_{H_+^{s,r;\kappa,k}}\leq C \Big(\|Q'u'\|_{H_+^{s,r;\kappa,k}} +\|GP u'\|_{H_+^{s-1,r+1;\kappa,k}}+\|Q''u'\|_{H_+^{s-1/2,r-1/2;\kappa,k}} \\ +\|u'\|_{H_\mathrm{par}^{M,N}}\Big).
	\end{multline}
Iterating the estimate as in Remark~\ref{rem:iteration}, the lower-order term is subsumed into the $\|u'\|_{H_\mathrm{par}^{M,N}}$ term and we obtain 
	\begin{equation}\label{key2}
		\|Qu'\|_{H_+^{s,r;\kappa,k}}\leq C\Big(\|Q'u'\|_{H_+^{s,r;\kappa,k}} +\|GP u'\|_{H_+^{s-1,r+1;\kappa,k}}+\|u'\|_{H_\mathrm{par}^{M,N}}).
	\end{equation}

We now consider $u$ satisfying the conditions of Proposition~\ref{prop:mod.belowschrod}. By the inductive assumption, we know that $Qu$ is in $H_+^{s,r;\kappa',k'}$ for all $(\kappa', k') < (\kappa, k)$. We now regularize $u$ by letting $u' = u'(\eta) = S_\eta u$ for each $\eta > 0$, where 
$$
S_\eta = q_L \Big( \frac{\rhob}{\rhob + \eta} \,  \frac{\rhof}{\rhof + \eta} \Big).
$$
It is easy to check that $S_\eta$ is in $\Psip{-1}{-1}$ for each $\eta > 0$, and $\Psip{0}{0}$ in a uniform sense, that is, with seminorms uniformly bounded as $\eta \to 0$. Moreover, $S_\eta$ tends to the identity operator in the strong operator topology of $\Psip{0}{0}$, and in the operator norm topology in $\Psip{-\epsilon}{-\epsilon}$ for any $\epsilon > 0$. Then $Qu'(\eta)$ is in $H_+^{s,r;\kappa,k}$ for each $\eta > 0$ and the above estimate \eqref{key2} is valid. Then we examine the behaviour of the terms as $\eta \to 0$. Let $\tilde Q$, $\tilde Q'$, $\tilde G$ satisfy the same conditions as $Q, Q'$ and $G$ but with $\WF'(\tilde Q)$ contained in the elliptic set of $Q$ and similarly for the other operators. Then the assumption that $Q'u \in H_+^{s,r;\kappa,k}$ implies that $\tilde Q' u'$ is uniformly in $H_+^{s,r;\kappa,k}$. Similarly, because $GPu$ is in $H_+^{s-1,r+1;\kappa,k}$, $\tilde GP u'$ is uniformly in $H_+^{s-1,r+1;\kappa,k}$. We deduce from \eqref{key2} (with operators $\tilde Q$, $\tilde Q'$ and $\tilde G$) that $\tilde Qu'$ is \emph{uniformly} in $H_+^{s,r;\kappa,k}$. It follows that $\tilde Qu'$ has a weak limit in $H_+^{s,r;\kappa,k}$, as well as converging strongly to $\tilde Qu$ in a weaker topology, say $H_+^{s-2,r-2;\kappa,k}$ using the inductive assumption on $u$ and the norm convergence of $S_\eta$ in $\Psip{-1}{-1}$. Now redefining $Q$ to be $\tilde Q$, it follows that $Qu$ is in $H_+^{s,r;\kappa,k}$ and satisfies the estimate \eqref{eq:mod.radial.schrod.below}.

	We now turn our attention to the above threshold case, that is Proposition \ref{prop:mod.aboveschrod}. 
	From Proposition \ref{prop:R.char}, the module $\SM_+$ is elliptic at $\SR_-$ and hence on $U_-$ for sufficiently small $U_-$. Consequently all functions in \eqref{eq:mod.radial.schrod.above} are microlocalised to regions where the norms $H_+^{s,r;\kappa,k}$ and $H_+^{s+k,r+k;\kappa,0}$ are equivalent for any $s,r\in\RR$ and so it suffices to treat the case $k=0$. 
	
	We can now run the same argument as in the below threshold case, using  \eqref{eq:naive.above.threshold} to handle the commutator $[A,P ]$. As we are now working in a neighbourhood of the source $\SR_-$, the second sign choice is applicable.
	The difference this makes to \eqref{eq:commest} is that  both the $B_1$ and $B_2$ terms in the second line will now be positive, and so the $B_2v$ term can be dropped without the need for an application of the propagation theorem Proposition \ref{prop:sing}.
	
	Note that since $k=0$, we need only consider $A_\alpha$ that are products of elements of $\SN$, and so Proposition \ref{prop:positivity} still applies to show that $\sigma_{\mathrm{base},1,1}(C_{jj})|_{\SR_-}= 0$ in this case. The rest of the proof proceeds in parallel with the below threshold case.

	The analogues of Proposition \ref{prop:mod.belowschrod} and Proposition \ref{prop:aboveschrod} in the module regularity spaces $H_-^{s,r;\kappa,k}$ and switched roles of the radial set components $\SR_\pm$ have an almost identical proof. The primary difference is that the module $\SM_-$ is $P $-negative by Proposition \ref{prop:positivity}, and so the matrices of operators $C'$ is now negative-definite. In the below-threshold argument, this leads to a change in the sign of the $\|Bv\|^2$ in \eqref{eq:commest}. However, we have also switched the roles of the source $\SR_-$ and sink $\SR_+$, giving corresponding changes to the signs of the second line of \eqref{eq:commest}, and so the proof goes through without further changes. The above-threshold argument is adapted similarly.
\end{proof}

\subsection{Global module regularity estimates}
We can combine our microlocal propagation estimates on module regularity spaces in the same way to obtain global (semi-)Fredholm estimates. 

\begin{prop}
	\label{mod.fredholm}
	(i) Constant spatial order. Fix constants $s\in\RR$ and $l\in(-3/2,-1/2)$. Then for any $k\geq 0$ and $\kappa \geq 1$, and any real numbers $M$ and $N$, we have an estimate
\begin{equation}
\label{eq:mod.fredholmest}
\|u\|_{\SX_\pm^{s,l;\kappa,k}}\leq C(\|Pu\|_{\SY_\pm^{s-1,l+1;\kappa,k}}+\|u\|_{M,N}).
\end{equation}

(ii) Variable spatial order. Let $\sw_\pm$ satisfy assumptions (i) -- (iii) and \eqref{eq:sw+-} of Section~\ref{subsec:global.fredholm}. Then for any $k\geq 0$ and $\kappa \geq 0$, and any real numbers $M$ and $N$, we have an estimate
\begin{equation}
\label{eq:mod.fredholmest.kappa.zero}
\|u\|_{s,\sw_\pm;\kappa,k}\leq C(\|Pu\|_{s-1,\sw_\pm +1;\kappa,k}+\|u\|_{M,N}).
\end{equation}
\end{prop}

\begin{proof} We combine our estimates in the same way as in the proof of Proposition~\ref{prop:fredholmest}, using Proposition \ref{prop:module.elliptic.estimate.old}, Proposition \ref{prop:module.propagation}, Proposition \ref{prop:mod.aboveschrod} and Proposition \ref{prop:mod.belowschrod} replacing Proposition \ref{prop:elliptic.estimate}, Proposition \ref{prop:sing}, Proposition \ref{prop:aboveschrod} and Proposition \ref{prop:belowschrod} respectively.
We note that in case (i), since $l$ is below threshold, we cannot apply Proposition \ref{prop:mod.aboveschrod} directly at the above-threshold radial set. However, at the above threshold radial set, the module $\SM_\pm$ is elliptic. So estimate is equivalent to the estimate obtained by increasing $l$ by $1$ and reducing $\kappa$ by $1$. This is the reason for the assumption that $\kappa \geq 1$: we must have at least one order of module regularity at the radial set $\SR_\mp$ to ensure that $u$ is above threshold there. 
\end{proof}

\section{Solvability of the time-dependent
  equation}\label{sec:solv}

\subsection{Invertibility on variable order spaces}
\label{sec:inv proof sec}
In this section, we prove Theorem \ref{thm:sob.invertible}, which we
restate and slightly extend as follows.
\begin{theorem}\label{thm:main thm restate}
Let $P$ be as in \eqref{eq:inhomogeneous equation} and \eqref{eq:Pconditions}, and assume 
$\sw_\pm$ are variable orders satisfying (i) -- (iii) in Section \ref{subsec:global.fredholm}.  Then for any $s \in
\mathbb{R}$, the mappings \eqref{eq:P.mapping} are invertible.

The inverse $\Rout$ to \eqref{eq:P.mapping} with the $+$ sign, and the
inverse $\Rin$ to \eqref{eq:P.mapping} with the $-$ sign, are defined
independently of the choices of $s$ and $\sw_\pm$, in the sense
that for $\Rout$, if $v \in \mathcal{Y}^{s - 1,
  \sw_++1} \cap \mathcal{Y}^{s' - 1, \sw_+' +1}$ for $s, s'$
and two different choices $\sw_+, \sw_+'$, both
satisfying assumptions (i) -- (iii) of Section \ref{subsec:global.fredholm}, and we denote 
by $u \in \mathcal{Y}^{s - 1,
  \sw_+}, u' \in \mathcal{Y}^{s' - 1,
  \sw_+'}$ the unique distributions with $P u = v = Pu'$, then
$u \equiv u'$.
\end{theorem}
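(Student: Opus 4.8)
The plan is to treat the two assertions of Theorem~\ref{thm:main thm restate} separately, the first being essentially already in place. For the \emph{invertibility} of $P:\mathcal{X}^{s,\mathsf{l}_\pm}\to\mathcal{Y}^{s-1,\mathsf{l}_\pm+1}$, Proposition~\ref{prop:fredholm} makes this a Fredholm map; with $V$ real (as assumed here) $P=P^*$, and since $(\mathcal{Y}^{s-1,\mathsf{l}_++1})^*=H_{\mathrm{par}}^{1-s,\mathsf{l}_-}=\mathcal{Y}^{1-s,\mathsf{l}_-}$, the cokernel of the $+$ map is identified with the kernel of the $-$ map and vice versa; so once $\ker P$ is shown to be trivial on every $\mathcal{X}^{s,\mathsf{l}_\pm}$, both maps are invertible, and $\Rout$ (resp. $\Rin$) is $P^{-1}$ on $\mathcal{Y}^{s-1,\mathsf{l}_++1}$ (resp. $\mathcal{Y}^{s-1,\mathsf{l}_-+1}$). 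The key point I would record is a sharpened form of the triviality-of-kernel argument, namely: \textbf{if} $w\in H_{\mathrm{par}}^{M,N}$ for some $M,N$, $Pw=0$, and $w$ is microlocally in $H_{\mathrm{par}}^{M,r'}$ near $\mathcal{R}_-$ for \emph{some} $r'>-\tfrac12$, \textbf{then} $w=0$; and symmetrically with $\mathcal{R}_-$ replaced by $\mathcal{R}_+$. This criterion uses nothing about which fixed space $w$ belongs to, only the above-threshold behaviour at one radial set, and that is exactly what the independence statement requires.

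To prove the sharpened criterion I would run the three standard microlocal steps. First, $\mathcal{R}_-$ is a source (Proposition~\ref{prop:radial set geom}) and $w$ is above threshold there, so Proposition~\ref{prop:aboveschrod} (applied with $s'=M$, $r'$ as in the hypothesis, and $s,r$ arbitrary) gives that $w$ is microlocally Schwartz in a neighbourhood of $\mathcal{R}_-$. Second, letting $T>0$ be such that $g(t)=\delta$ and $V=0$ for $|t|>T$, the dynamics in Proposition~\ref{prop:radial set geom} show that every point of $\Sigma(P)$ over the closed bottom hemisphere of spacetime infinity is the forward endpoint of a bicharacteristic issuing from $\mathcal{R}_-$; so Proposition~\ref{prop:sing.P} with a large \emph{constant} spatial order (admissible, being non-increasing) together with the elliptic estimate Proposition~\ref{prop:elliptic.estimate.P} off $\Sigma(P)$ propagates the Schwartz property and shows that $w$ is Schwartz on $\{t<-T\}$. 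Third, $t\mapsto\|w(\cdot,t)\|_{L^2(\mathbb{R}^n_z)}$ is then finite and smooth for $t<-T$, tends to $0$ as $t\to-\infty$, and is constant there because $w$ solves the free Schr\"odinger equation $D_t w+\Delta_0 w=0$ on $\{t<-T\}$; hence $w\equiv0$ for $t<-T$, and forward uniqueness for the Cauchy problem for $P$ gives $w\equiv0$. The independence statement is now immediate: given $v\in\mathcal{Y}^{s-1,\mathsf{l}_++1}\cap\mathcal{Y}^{s'-1,\mathsf{l}_+'+1}$ for two admissible weights with parameters $\epsilon,\epsilon'$, put $u=P^{-1}v\in\mathcal{X}^{s,\mathsf{l}_+}$, $u'=P^{-1}v\in\mathcal{X}^{s',\mathsf{l}_+'}$ and $w=u-u'$; then $Pw=0$, $w\in H_{\mathrm{par}}^{M,N}$ with $M=\min(s,s')$ and some $N$, and since $\mathsf{l}_+\equiv-\tfrac12+\epsilon$ and $\mathsf{l}_+'\equiv-\tfrac12+\epsilon'$ near $\mathcal{R}_-$, $w$ is microlocally in $H_{\mathrm{par}}^{M,-1/2+\delta}$ near $\mathcal{R}_-$ with $\delta=\min(\epsilon,\epsilon')>0$, so the criterion gives $w=0$, i.e.\ $u=u'$. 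For $\Rin$ the argument is identical with $\mathcal{R}_+$ (a sink) and $t\to+\infty$ in place of $\mathcal{R}_-$ and $t\to-\infty$, using backward propagation out of $\mathcal{R}_+$ and backward uniqueness.

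The genuinely non-formal step is the passage, inside the sharpened criterion, from ``$w$ above threshold near $\mathcal{R}_-$'' to ``$w$ Schwartz on $\{t<-T\}$'' and then to the restriction of $w$ to the time slices $\{t=\mathrm{const}\}$: one must combine the above-threshold radial estimate, the forward propagation estimate and the elliptic estimate uniformly over the non-compact lower half of spacetime infinity (including the equatorial directions $|z|\to\infty$ at fixed $t$), and then justify restricting $w$ to $\{t=\mathrm{const}\}$ and differentiating $\|w(\cdot,t)\|_{L^2}^2$, despite $w$ being known a priori only in a low-regularity weighted parabolic space. This is precisely the work already carried out in establishing $\ker P=0$; the only new content in the last assertion of the theorem is the observation that that argument depends solely on the above-threshold hypothesis at $\mathcal{R}_-$ (respectively $\mathcal{R}_+$) and not on membership in any single space $\mathcal{X}^{s,\mathsf{l}_+}$ (respectively $\mathcal{X}^{s,\mathsf{l}_-}$).
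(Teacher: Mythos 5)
Your overall strategy — reduce to $\ker P = 0$ via the Fredholm index, establish triviality of the kernel from above-threshold behaviour at one radial set, and deduce independence of $\Rout$/$\Rin$ from the choice of $s,\mathsf{l}_\pm$ by observing that the kernel argument uses only this above-threshold hypothesis — is in line with the paper's approach. The paper handles independence by inserting a common weight $\mathsf{l}_+''$ dominated by both $\mathsf{l}_+$ and $\mathsf{l}_+'$ and invoking invertibility in $\mathcal{X}^{s,\mathsf{l}_+''}$; your "sharpened criterion" is a clean reformulation of the same idea. Where your proposal genuinely diverges from the paper is in how it closes the kernel argument: you want to show $w$ is Schwartz on all of $\{t<-T\}$, then exploit $L^2$-norm conservation of the free evolution and forward uniqueness. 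The paper instead only establishes Schwartz decay in \emph{cones} $\{ t \ll 0, \ |z|/|t| \le C\}$, and then uses module regularity together with a Fourier representation and stationary phase.

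The difference is not cosmetic — there is a gap in your argument at precisely the point you flag as "non-formal." The step from "$w$ is microlocally in $H_{\mathrm{par}}^{S,L}$ for all $S,L$ away from $\mathcal{R}_+$" to "$w$ is Schwartz on $\{t < -T\}$" does not follow, because the closure of $\{t < -T\}$ in the radially compactified spacetime contains the equator $\{t/\langle Z\rangle = 0\}$, and the radial set $\mathcal{R}_+$ reaches the equator (its boundary lies at the corner, as in \eqref{eq:rad.corner}, with $s = \rho_f/2 \ge 0$ and $\rho_b = \rho_f = 0$). Consequently, for $t$ large and negative and $|z|/|t|$ large, the relevant directions at spacetime infinity are equatorial, and the microlocal regularity argument does not exclude wavefront set there. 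Without control of the equatorial directions you cannot conclude that $\|w(\cdot,t)\|_{L^2(\RR^n_z)}$ is finite, let alone that it tends to $0$; indeed the paper explicitly records only that $u(\cdot,t_0)\in \langle z\rangle^{1/2+\epsilon}L^2(\RR^n)$, which permits polynomial growth in $z$. That control is precisely what the module-regularity estimates of Section~\ref{sec:modregdef} supply, and what your proposal omits. Without that ingredient (or a substitute for it), the $L^2$-conservation step fails, and so your proof of the sharpened criterion is incomplete. If you add the module-regularity argument to control the equatorial directions, you could then either run the stationary-phase argument of the paper or attempt the $L^2$-conservation route; but the latter would still need the additional work of upgrading the weighted-$L^2$ trace $u(\cdot,t_0)\in \langle z\rangle^{1/2+\epsilon}L^2$ to genuine $L^2$, which is not automatic.
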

\begin{proof}
	From the Fredholm property established in Proposition \ref{prop:fredholm} and the formal self-adjointness of $P$, it suffices to show that $\ker(P)=0$ for either sign choice in \eqref{eq:P.mapping}.
The argument is essentially identical for the two sign choices so without loss of generality we take a solution $u\in \mathcal{X}^{s,\sw_+}$ to $Pu=0$ and show that $u=0$.

First, since $\sw_+=m > -1/2$ in a neighbourhood of $\SR_-$, we have that $u$ is microlocally above threshold in a neighbourhood of $\SR_-$. An application of Proposition \ref{prop:aboveschrod} allows us to deduce that in fact $u$ is microlocally in $H_\mathrm{par}^{S,L}$ for all $S,L\in\RR$ in a neighbourhood of $\SR_-$. From Proposition \ref{prop:sing} and Proposition \ref{prop:elliptic.estimate}, it follows in fact that $u$ is microlocally in $H_\mathrm{par}^{S,L}$ for all $S,L\in \RR$ everywhere except possibly at $\SR_+$. In particular, $u$ is Schwartz in cones $\{(z,t)\in\RR^{n+1}:t \ll 0,|z|/|t| < C\}$ for arbitrary $C>0$.

We can also apply the propagation theorems for module regularity spaces. In particular, the microlocal triviality of $u$ near $\SR_-$ implies that $u\in H_+^{S,L;\mathcal{K},K}$ for all $S,\mathcal{K},K\geq 0$ microlocally near $\SR_-$, and Proposition \ref{prop:module.elliptic.estimate.old} and Proposition \ref{prop:mod.belowschrod} imply that $u\in H_+^{S,-1/2-\ep;\mathcal{K},K}$ for all $S,\mathcal{K},K\geq 0$.

Taking the spatial Fourier transform, we have 
\begin{equation}
	\label{eq:FT.def.a}
	u(z, t) =\SF^{-1}_{\zeta \to z} (e^{-it|\zeta|^2}a)
\end{equation}
for some $$a(\zeta, t) \in e^{it|\zeta|^2}H^{-1/2-\ep,-1/2-\ep}(\RR^{n+1})\subset \mathcal{S}'(\RR^{n+1})$$
where the Sobolev space is a standard weighted Sobolev space in the $(\zeta,t)$ variables (i.e. not parabolic).	

Since $D_z$ and $2tD_z - z$ and $P_0=D_t+D_z\cdot D_z$ lie in powers of the small module $\mathcal{N}$ and since 

\begin{equation}
	\label{eq:commute.data}
	P_0^\gamma D_z^\beta(2tD_z-z)^{\alpha} u=\SF^{-1}_{\zeta\to z}( e^{-it|\zeta|^2}D_t^\gamma \zeta^\beta D_\zeta^\alpha a),
\end{equation}

the condition $u\in H_+^{s,-1/2-\ep;\mathcal{K},K}$ implies that 
$$\zeta^\alpha D_\zeta^\beta D_t^\gamma a\in e^{it|\zeta|^2}H^{-1/2-\ep,-1/2-\ep} $$	
for all $\alpha,\beta,\gamma$. The Leibniz rule then implies that 
$$\zeta^\alpha t^\beta D_\zeta^\gamma D_t^\delta (\chi a)\in e^{it|\zeta|^2}H^{-1/2-\ep,-1/2-\ep}$$
for all $\alpha,\beta,\gamma,\delta$ and any smooth bump function $\chi(t)$. Hence
$\chi a\in\mathcal{S}$, and the same is true of $\chi u$ from \eqref{eq:FT.def.a}.

We now exploit the fact that our metric is Euclidean for $t\ll 0$. As $P=P_0$ for $t\ll 0$, we have $D_ta=0$ in the region $\RR_\zeta^n\times (-\infty,-T]$ for some $T$, and so the traces $a(\cdot,t)$ for $t\in (-\infty,t]$ are a fixed Schwartz function $a(\zeta)$
If we fix $z/t$ and take $t\to -\infty$, we can apply stationary phase to the convergent integral expression for the inverse Fourier transform
\[u(z,t)= (2\pi)^{-n} \int e^{i(\frac{z}{t}\cdot \zeta-|\zeta|^2)t}a(\zeta)\, d\zeta.\]
The phase is stationary at $\zeta=z/2t$ and from the condition
that $u$ is Schwartz in negative time cones, the leading term
in the stationary phase expansion must vanish. In particular
we must have $a(\zeta)=0$ for $|\zeta| \leq C/2$. As $C$ is arbitrary, we conclude
that $a$ is identically zero, and hence so is
$u(z,t)$ for $t \leq -T$. 

As $\chi u\in \mathcal{S}$ for any bump function $\chi(t)$, the norm $E(t):=\|u(\cdot,t)\|_{L^2(dg_t)}^2$ is a non-negative  differentiable function of $t$ that vanishes for $t\leq -T$. We can write
$$E(t):= \int_{\mathbb{R}^n} |u|^2 \rho \,dz  $$ for a smooth positive function $\rho(z,t)$, equal to $1$ outsize of a compact set and we can compute $E'(t)$ by differentiating under the integral because $\chi u\in \mathcal{S}$. Most terms cancel as in the standard proof of conservation of $L^2$-mass for $P_0$, and we are left with
\begin{align*}
	E'(t)&= \int |u|^2 \rho'(t) \, dz\leq CE(t)
\end{align*}  
for a global constant $C$. Hence $E=0$ identically by Gr\"{o}nwall, from which we conclude that $u=0$ identically. This establishes triviality of $\ker(P)$, and hence invertibility of \eqref{eq:P.mapping}.

Finally, we show that the values of $\Rin$ and $\Rout$ are
defined independently of $s$ and $\sw_\pm$ satisfying
the assumptions of Section \ref{subsec:global.fredholm}.  Focusing on
$\Rout$, choose any pair of pairs $s, \sw_+$ and
$s', \sw_+'$ satisfying assumptions (i) -- (iii) with constants $(l,m)$ and $(l',m')$ respectively, and let $\sw_+'' \in S^{0,0}_{\mathrm{par}}(\mathbb{R}^{n +1})$ be a
function satisfying assumptions (i) -- (iii) for some weights $(l'' , m'')$ such that $\sw_+'' \leq \min (\sw_+, \sw_+')$. 
Then $P:\mathcal{X}^{s,\sw_+''}\to
\mathcal{Y}^{s-1,\sw_+'' + 1}$ is also invertible by
the proof above, and since $\mathcal{X}^{s,\sw_+'},
\mathcal{X}^{s,\sw_+} \subset
\mathcal{X}^{s,\sw_+''}$, the uniqueness holds.

\end{proof}

\subsection{Invertibility on module regularity spaces}

We can use Theorem \ref{thm:sob.invertible} to obtain an invertibility theorem regarding $P$ as a map between Sobolev spaces with module regularity, as in \eqref{eq:Xmoddef} and \eqref{eq:mod.invertible.intro}. 

First, we record the following inclusion between module regularity
spaces and variable order spaces.
\begin{prop}
	\label{lem:mod.vs.variable}
Assume that $\sw_+$ satisfies assumptions (i) --- (iv) of Section~\ref{subsec:global.fredholm}. 	For $\kappa\geq 1$, we have the inclusion
	\begin{equation}
		\label{eq:mod.vs.variable}
		H_+^{s,l;\kappa,k}\subset H_\mathrm{par}^{s,\sw_+}
	\end{equation}
	for $l$ as in assumptions (i) and (ii). 
\end{prop}
\begin{proof}
	It suffices to establish \eqref{eq:mod.vs.variable} for $\kappa=1,k=0$. Let $u\in H_+^{s,l;1,0}$ and fix a neighbourhood $U\subset \overline{\phantom{}^{\mathrm{sc}}T^*\RR^{n+1}}$ of $\SR_+$ on which $\sw_+=l$, and form a finite cover of $\overline{\phantom{}^{\mathrm{sc}}T^*\RR^{n+1}}$ consisting of $U,U_1,\ldots U_m$, where each $U_j$ is disjoint from $\SR_+$ and such that each $U_j$ lies in $\Ell(A_j)$ for some $A_j\in \SM_+$. 
	
	We then quantise a partition of unity subordinate to this cover, and denote the microlocal cutoffs by $Q,Q_1,\ldots,Q_m\in \Psip{0}{0}$.
	Now since $u\in H_{\mathrm{par}}^{s,l}$, we have $Qu\in H_{\mathrm{par}}^{s,l}$. Since $\sw_+=l$ on $\WF'(Q)$, it follows that $Qu\in H_\mathrm{par}^{s,\sw_+}$.
	
	On the other hand, since $A_ju\in H^{s,l}_\mathrm{par}$, and $\WF'(Q_j)\subset \Ell(A_j)$, microlocal elliptic regularity implies $Q_ju\in H_\mathrm{par}^{s+1,l+1}\subset H_\mathrm{par}^{s,\sw_+}$ for each $j$, where the final containment is a consequence of $l+1\geq m = \max(\sw_+)$ using assumption (iv).
\end{proof}

\begin{theorem}
	\label{thm:mod.invertible}
	Fix $s\in\RR$ and $l\in(-3/2,-1/2)$. Let $\mathcal{X}_\pm^{s,l;\kappa,k}$ and $\mathcal{Y}_\pm^{s-1,l+1;\kappa,k}$ be as in \eqref{eq:Xmoddef} and \eqref{eq:mod.invertible.intro}. 
 Then for any $k\geq 0$ and $\kappa \geq 1$, the map
	\begin{equation}
		\label{eq:mod.invertible}
		P:\mathcal{X}_\pm^{s,l;\kappa,k}\to \mathcal{Y}_\pm^{s-1,l+1;\kappa,k}
	\end{equation}
	is a Hilbert space isomorphism.
	
	For any $k\geq 0$ and $\kappa \geq 0$, provided that $\sw_\pm$ satisfy assumptions (i) -- (iii) and \eqref{eq:sw+-}, 
	\begin{equation}
		\label{eq:mod.invertible kappa zero}
		P:\mathcal{X}_\pm^{s,\sw_\pm ; \kappa,k}\to
		\mathcal{Y}_\pm^{s-1,\sw_\pm + 1;\kappa,k}
	\end{equation}
	are Hilbert space isomorphisms.
\end{theorem}
\begin{proof}
We choose $\sw_\pm$ to satisfy assumptions (i) -- (iv) of Section~\ref{subsec:global.fredholm} with respect to $l$, which is possible since $l > -3/2$. Then Proposition \ref{lem:mod.vs.variable} gives inclusions  $H_+^{s,l;\kappa,k}\subset H_\mathrm{par}^{s,\sw_+}$ and $H_+^{s-1,l+1;\kappa,k}\subset H_\mathrm{par}^{s-1,\sw_++1}$.
	Hence by Theorem \ref{thm:sob.invertible} we have the following diagram
              \begin{equation}\label{eq:ogspin}
\begin{tikzcd}        
    	\SX_+^{s,l;\kappa,k}  \arrow[hook]{d} & 	\SY_+^{s-1,l+1;\kappa,k}  \arrow[hook]{d} \\
   \SX_\mathrm{par}^{s,\sw_+}   \arrow{r} & \SY_\mathrm{par}^{s-1,\sw_++1}
\end{tikzcd}
\end{equation}
	We now show the restriction of $P$ to $\mathcal{X}_+^{s,l;\kappa,k}$ yields an isomorphism \eqref{eq:mod.invertible} by showing it is a bounded bijection. Boundedness is immediate from the definition of these spaces, and injectivity is immediate from the injectivity of the second row of \eqref{eq:ogspin}. 
	
It remains to prove surjectivity of $P : \mathcal{X}_+^{s,l;\kappa,k} \to \mathcal{Y}_+^{s-1,l+1;\kappa,k}$. 
Let $f$ be an element of $\mathcal{Y}_+^{s-1,l+1;\kappa,k}$. We exploit the density of Schwartz functions $\mathcal{S}(\RR^{n+1})$ in $\mathcal{Y}_+^{s-1,l+1;\kappa,k}$, as shown in Proposition~\ref{prop:density}. So let $f_j$ be Schwartz functions converging to $f$ in $\mathcal{Y}_+^{s-1,l+1;\kappa,k}$. We define $u_j := P_+^{-1} f_j$. Then, according to the propagation estimates of Section~\ref{subsec:mic.prop.est}, $u_j$ is microlocally trivial away from the below-threshold radial set $\Rp$. (To see this, note that we can take the order $r$ or $\sw$ in Propositions~\ref{prop:elliptic.estimate.P}, \ref{prop:sing.P} and \ref{prop:aboveschrod} to be arbitrarily large outside any neighbourhood of the below-threshold radial set, here $\Rp$.) Moreover, we can interpret this as arbitrary module regularity away from $\Rp$, and then by Proposition~\ref{prop:mod.belowschrod}, this module regularity propagates into $\Rp$. Thus $u_j$ is in $\mathcal{X}_+^{s,l;\kappa',k'}$ for arbitrary $(\kappa', k')$. In particular, from \eqref{eq:mod.fredholmest} (taking $M \leq s$ and $N < -1/2$) and Theorem~\ref{thm:main thm restate}, we have
\begin{equation}
\| u_i - u_j \|_{\SX_+^{s, l; \kappa, k}} \to 0 \text{ as } i, j \to \infty.
\end{equation}
Thus, $u_j$ is a Cauchy sequence in $\mathcal{X}_+^{s,l;\kappa,k}$, and hence has a limit $u \in \mathcal{X}_+^{s,l;\kappa,k}$. Finally, since $P$ is continuous $\mathcal{X}_+^{s,l;\kappa,k} \to \mathcal{Y}_+^{s-1,l+1;\kappa,k}$, 
$$
Pu = P (\lim_{j \to \infty} u_j) = \lim_{j \to \infty} P u_j = \lim_{j \to \infty} f_j = f,
$$
showing that $P$ is surjective on module regularity spaces.


The second statement follows via similar reasoning.

\begin{remark} The corresponding proof of invertibility on module regularity spaces in \cite{NEH} has a gap. In \cite[Proof of Theorem 2.4]{NEH}, the analogue of estimate \eqref{eq:mod.fredholmest}, that is, \cite[Equation (3.31)]{NEH}, is asserted without first establishing a priori that $u$ is in the appropriate space $\SX_+^{s, \ell; \kappa, k}$. The gap may be filled by arguing as above, that is, using the density of Schwartz functions in this space and then considering a Cauchy sequency of Schwartz functions converging to $Pu$. The authors thank Yilin Ma for bringing this gap to our attention. 
\end{remark}

\end{proof}

      \subsection{The final state problem for Schwartz
        data}\label{sec:final state}  Let
      $f$ lie in the Schwartz space $\mathcal{S}(\mathbb{R}^n)$, and
      the  operator 
\begin{equation}
  \label{eq:poisson}
  \mathcal{P}_0(f) = (2\pi)^{-n} \int e^{-it|\zeta|^2} e^{i z \cdot \zeta} f(\zeta) d\zeta =
  \left( \mathcal{F}^{-1}_{\zeta \to z} (e^{-it|\zeta|^2} f(\zeta)) \right) (z, t).
\end{equation}
This gives the unique solution to $P_0 u = (D_t + \Delta_0) u = 0$ whose
incoming and outgoing data are $f$, meaning
\begin{equation}
  \label{eq:asymptotics free}
\lim_{t \to \pm \infty} (4\pi it)^{n/2}  e^{- i t|\zeta|^2}
\mathcal{P}_0f(2t \zeta, t) \equiv \lim_{\substack{t \to \pm \infty \\  z/2t \to  \zeta}} (4\pi it)^{n/2}  e^{- i |z|^2/4t}
\mathcal{P}_0f(z, t) = f(\zeta),
\end{equation}
as follows easily from the stationary phase lemma applied to the $\zeta$ integral in \eqref{eq:poisson}.  For this reason we
refer to $\mathcal{P}_0$ as the ``free Poisson'' operator. It
is also the operator which solves the free Schr\"odinger equation for
initial data $\hat f$.

We now define the Poisson operators for the perturbed operator $P = D_t + \Delta_g + V$. 
\begin{defn}\label{def:pert pois}
Let $P$ be as in the Introduction.  Define the Poisson operators $\Poim, \Poip$ 
by
\begin{equation}
  \label{eq:4}
  \begin{aligned}
  \Poim f &= \mathcal{P}_0 f - \Rout P \mathcal{P}_0 f   = \Big( \mathcal{P}_0 - \Rout (P - P_0) \mathcal{P}_0 \Big) f, \\
   \Poip f &= \mathcal{P}_0 f - \Rin P \mathcal{P}_0 f   = \Big( \mathcal{P}_0 - \Rin (P - P_0) \mathcal{P}_0 \Big) f,
\end{aligned} \end{equation}
where $\Rout$, resp. $\Rin$ are the outgoing, resp. incoming
propagators for $P$ (see Theorem~\ref{thm:main thm restate}).
\end{defn}

\begin{prop}\label{prop:final state schwartz}
The operator $\Poip$ solves the final state problem for $f \in
\mathcal{S}(\mathbb{R}^n)$, meaning $P \Poip f = 0$ and
$$
\lim_{\substack{t \to \infty , \  z/2t \to  \zeta}} (4\pi it)^{n/2}  e^{- i |z|^2/4t}
\mathcal{P}_+f(z, t) = f(\zeta)
$$
\end{prop}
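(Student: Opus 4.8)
The plan is to read off both assertions from the decomposition $\Poip f = \mathcal{P}_0 f - w$, where $w := \Rin\big((P-P_0)\mathcal{P}_0 f\big)$, together with the fact that $\Rin$ is the \emph{backward} propagator and the free asymptotics \eqref{eq:asymptotics free}.

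First I would check that $v := (P-P_0)\mathcal{P}_0 f$ is a Schwartz function on $\RR^{n+1}$. Since $\mathcal{P}_0 f = \mathcal{F}^{-1}_{\zeta\to z}\big(e^{-it|\zeta|^2}f(\zeta)\big)$, every spacetime derivative $\partial_t^k\partial_z^\alpha \mathcal{P}_0 f = \mathcal{F}^{-1}_{\zeta\to z}\big((-i|\zeta|^2)^k(i\zeta)^\alpha e^{-it|\zeta|^2}f(\zeta)\big)$ is Schwartz in $z$, locally uniformly in $t$, and $P-P_0 = \Delta_{g(t)}-\Delta_0+V$ is a differential operator whose coefficients are compactly supported in spacetime; hence $v$ is smooth and compactly supported, so Schwartz. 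In particular $v$ lies in $\mathcal{Y}^{s-1,\mathsf{l}_\pm+1}=H^{s-1,\mathsf{l}_\pm+1}_{par}$ for every $s$ and every admissible weight, so Theorem~\ref{thm:sob.invertible} gives $w = \Rin v = \Poip f - \mathcal{P}_0 f \in \mathcal{X}^{s,\mathsf{l}_-}$ for all $s\in\RR$.

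Next, $P\Poip f = 0$: one has $P\mathcal{P}_0 f = P_0\mathcal{P}_0 f + (P-P_0)\mathcal{P}_0 f = v$ because $P_0\mathcal{P}_0 f = 0$, while $P\Rin v = v$ since $\Rin$ inverts $P$, so $P\Poip f = P\mathcal{P}_0 f - P\Rin v = 0$. For the asymptotic statement, the crucial point is that $\Rin$ is the backward propagator, so with $T_+ = \sup\{t:(z,t)\in\supp v\}<\infty$ we have $\supp w\subset\{t\le T_+\}$. Granting this, for $t>T_+$ the perturbed solution coincides identically with the free one, $\Poip f(z,t) = \mathcal{P}_0 f(z,t)$, and the desired limit is precisely \eqref{eq:asymptotics free}, which follows from the stationary phase lemma applied to the $\zeta$-integral in \eqref{eq:poisson}.

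The one ingredient that is not pure bookkeeping is the support statement $\supp w\subset\{t\le T_+\}$, which I regard as the main obstacle. It is recorded (for $R^\pm$ applied to smooth compactly supported data) in Section~\ref{subsec:intro}; if one wishes to argue it directly it follows from an energy identity. For $t>T_+$ we have $Pw=0$, i.e.\ $\partial_t w = -i(\Delta_{g(t)}+V)w$; since $w\in H^{s,\mathsf{l}_-}_{par}$ for all $s$, the trace $w(\cdot,t)$ lies in $L^2(\RR^n)$ for each $t$, and using self-adjointness of $\Delta_{g(t)}$ and reality of $V$ one obtains $\tfrac{d}{dt}\|w(\cdot,t)\|_{L^2}^2 = 2\Re\langle -i(\Delta_{g(t)}+V)w,w\rangle = 0$, so $\|w(\cdot,t)\|_{L^2}$ is constant for $t>T_+$. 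On the other hand $\mathsf{l}_- = -1/2+\epsilon$ near $\mathcal{R}_+$ — the part of phase space controlling $t\to+\infty$ — so $w\in\mathcal{X}^{1,\mathsf{l}_-}$ forces $\int_1^\infty t^{1-2\epsilon}\|w(\cdot,t)\|_{L^2(\RR^n)}^2\,dt<\infty$ and hence $\liminf_{t\to\infty}\|w(\cdot,t)\|_{L^2}=0$; combining, $w(\cdot,t)\equiv0$ for $t>T_+$. Everything else — the verification that $v$ is Schwartz, the identity $P\Poip f=0$, and the reduction of the limit to the already-established free asymptotics \eqref{eq:asymptotics free} — is routine.
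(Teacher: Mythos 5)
Your proof is correct in outline and yields a slightly stronger conclusion than the paper's, but it takes a genuinely different route, and the key step as written has a gap. The paper's proof shows, via the above-threshold radial estimate (Proposition~\ref{prop:aboveschrod}), propagation (Proposition~\ref{prop:sing.P}), and elliptic estimates (Proposition~\ref{prop:elliptic.estimate.P}), that $w := \Rin(P-P_0)\mathcal{P}_0 f$ is rapidly decaying in $\{|z|/t < c,\ t > 0\}$; this already suffices to conclude that the asymptotic of $\Poip f$ agrees with the free one. You instead assert exact support $\supp w \subset \{t \le T_+\}$ and deduce $\Poip f = \mathcal{P}_0 f$ identically for $t > T_+$.

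You correctly flag the support statement as the crux, and it is not proved in the body of the paper. But the energy argument you offer has a genuine gap: the claim that $w\in\mathcal{X}^{1,\mathsf{l}_-}$ with $\mathsf{l}_- = -1/2+\epsilon$ near $\mathcal{R}_+$ ``forces'' a weighted-in-time bound on $\|w(\cdot,t)\|_{L^2(\RR^n)}$ conflates the \emph{microlocal} order $\mathsf{l}_-$, which varies over cotangent directions as well as over spacetime, with a spacetime weight. On a slice $\{t = t_0\}$ with $t_0$ large, the trace $w(\cdot,t_0)$ can a priori carry Fourier content pointing into microlocal regions where $\mathsf{l}_- \le -1/2$, and no weighted trace bound follows from membership in the variable-order space alone. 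To close this, you must first invoke the radial and propagation estimates to upgrade $w$ to be microlocally rapidly decaying away from $\mathcal{R}_-$, so that over $\{t>T_+,\ |z|/t<C\}$ the only phase-space contribution is near $\mathcal{R}_+$ where the weight is above threshold; but that is precisely the content of the paper's proof. Thus the two routes ultimately rest on the same microlocal input, with your argument adding a pleasant exact-vanishing conclusion at the cost of the $L^2$-conservation step (which implicitly uses $V$ real). There is also a harmless sign slip: membership in $H^{0,-1/2+\epsilon}_{par}$ in the relevant region corresponds to $\int_1^\infty t^{-1+2\epsilon}\|w(\cdot,t)\|^2_{L^2}\,dt<\infty$, not $t^{1-2\epsilon}$, though either form still gives $\liminf_{t\to\infty}\|w(\cdot,t)\|_{L^2}=0$.
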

\begin{proof}
  This is a consequence of Theorem \ref{prop:true scattering map}
  below, but can be seen directly for Schwartz data quite easily
  since.  Indeed, for $f \in \mathcal{S}(\mathbb{R}^n)$,
  $P \Poip f = (P - P_0) \mathcal{P}_0 f - (P - P_0)\mathcal{P}_0 f =
  0$. In the region $|z|/t < c, t > 0$, we have
  $u_- := \Rin (P - P_0) \mathcal{P}_0 f$ is Schwartz. This follows from the propositions in Section~\ref{subsec:mic.prop.est}. In detail, we know that $u_-$ is above threshold at $\SR_+$ as it is in the image of $\Rin$, so we can take $s$ and $r$ as large as we like in \eqref{eq:radial.schrod.above}, giving microlocal regularity of any order in a neighbourhood of $\SR_+$. Using Proposition~\ref{prop:sing.P} this then propagates to $\Sigma(P) \setminus \SR_-$, while microlocal regularity in the elliptic region is immediate from Proposition~\ref{prop:elliptic.estimate.P}.  
  Therefore, $\Poip f$ is a solution to the equation which agrees with 
  $\mathcal{P}_0 f$ to infinite order in $|z|/t < c, t > 0$, for arbitrary $c$, and thus
  also satisfies \eqref{eq:asymptotics free}.
\end{proof}


\section{Poisson operator and scattering map}
\label{sec:scat mat}
\subsection{Mapping properties of the free Poisson operator}
We will now discuss finer mapping
properties of the Poisson operator and scattering operator.

Note that, by the density of Schwartz functions in the space of tempered
distributions, it is easy to see that $\mathcal{P}_0$ extends from
$\mathcal{S}(\mathbb{R}^n)$ to a mapping from
$\mathcal{S}'(\mathbb{R}^{n})$ to $\ker(P_0) \cap
\mathcal{S}'(\mathbb{R}^{n + 1})$.

Throughout this section, we assume that $\sw_\pm$ satisfy (i) -- (iii)
at the beginning of Section~\ref{subsec:global.fredholm}, with $l = -1/2 - \epsilon$ and $m = -1/2 + \epsilon$ for some small $\epsilon > 0$, as well as \eqref{eq:sw+-}. In addition, we assume that  both $\sw_\pm$ are equal to $-1/2$ on $\Sigma(P)$ outside small neighbourhoods of the radial sets. We then define 
%
\begin{equation}\begin{gathered}
\swmin = \min(\sw_+, \sw_-),  \\
\swmax = \max(\sw_+, \sw_-)
\end{gathered}\end{equation}  
and note that $\swmin + \swmax = -1$ due to \eqref{eq:sw+-}.

We begin with an identity that we will find useful on several
occasions. To state it, we choose microlocal cutoffs $Q_-$ and $Q_+$
such that $Q_- + Q_+ = \Id$, and so that $Q_-$ is microlocally equal
to the identity in a neighbourhood of $\Radm$ and microlocally trivial
in a neighbourhood of $\Radp$ (and, consequently, vice versa for
$Q_+$).


\begin{lemma}\label{lem:commidentity} Let $Q_-, Q_+$ be a microlocal partition as described above. Then for any $u \in \mathcal{S}'(\RR^{n+1})$ satisfying $Pu = 0$, we have 
\begin{equation}\label{Geq:commidentity7}
u = (P_+^{-1} - P_-^{-1}) [P, Q_+] u.
\end{equation}
\end{lemma}

\begin{proof} 
We observe that $Q_+u$ is microlocally trivial near $\R_-$. We can therefore find a variable order $\mathsf{t}_+$ satisfying (i) -- (iii) of Section~\ref{subsec:global.fredholm} and a real $s$ such that $Q_+u \in H^{s, \mathsf{t}_+}_{\mathrm{par}}$. By Theorem~\ref{thm:main thm restate}, $P_+^{-1}$ is a left inverse to $P$ on this space, so we have 
$$
Q_+ u = P_+^{-1} P Q_+ u.
$$
Similarly, we have 
$$
Q_- u = P_-^{-1} P Q_- u.
$$
Since $Pu = 0$ we have $P Q_+ u = [P, Q_+] u$ and $P Q_- u = [P, Q_-] u$. As we have $Q_+ + Q_- = \Id$, we find $[P, Q_+] = - [P, Q_-]$ and so we obtain
$$
u = Q_+ u + Q_- u = P_+^{-1} [P, Q_+] u + P_-^{-1} [P, Q_-] u = (P_+^{-1} - P_-^{-1})[P, Q_+] u.
$$
\end{proof}

Due to our assumptions on $\sw_\pm$, we can impose the additional assumption that 
\begin{equation}\label{eq:mpavo}
\sw_\pm = -1/2 \text{ on }\WF'([P, Q_+]).
\end{equation}
 We will then say that  $Q_-, Q_+$ is a \emph{microlocal partition adapted to the variable orders $\sw_\pm$}.

The following lemma establishes a fundamental mapping property of
the free Poisson operator $\mathcal{P}_0$ on variable order spaces. 

\begin{lemma}\label{lem:freePoissoniso}
The mapping
  \begin{equation}
    \label{eq:poisson basic bound}
    \mathcal{P}_0 \colon  L^2(\mathbb{R}^n_\xi) \longrightarrow H^{1/2,
     \swmin}_{\mathrm{par}}(\mathbb{R}^{n + 1}) \cap \ker \left( D_t
      + \Delta_0 \right)
  \end{equation}
  is a bounded isomorphism.
  \end{lemma}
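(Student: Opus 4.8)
The plan is to prove Lemma~\ref{lem:freePoissoniso} by first establishing the mapping property, then invertibility, via explicit computation on the Fourier side combined with the mapping properties of the parabolic calculus already developed.

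\textbf{Step 1: Reduce to a symbolic statement.} Recall $\mathcal{P}_0 f = \mathcal{F}^{-1}_{\zeta \to z}(e^{-it|\zeta|^2} f(\zeta))$. First I would observe that $u = \mathcal{P}_0 f$ automatically lies in $\ker(D_t + \Delta_0)$, so the content is the quantitative bound $\|u\|_{H^{1/2,\swmin}_{par}} \simeq \|f\|_{L^2}$. The key algebraic fact is that on the characteristic set $\Sigma_0 = \{\tau = -|\zeta|^2\}$ one has $R^4 = |\zeta|^4 + \tau^2 = 2|\zeta|^4$, so $R \simeq \langle \zeta \rangle$ there; thus the parabolic differential order $1/2$ effectively measures $\langle \zeta \rangle^{1/2}$ worth of regularity, i.e. half a spatial derivative of $f$. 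But $u(\cdot, t)$ is a unitary (on $L^2(\RR^n)$) image of $f$ for each $t$, uniformly, so this half-derivative does not directly come for free; the point is the \emph{microlocal concentration} of $u$ near the radial sets $\mathcal{R}_\pm$, where the spatial weight $\swmin = \min(\mathsf{l}_+, \mathsf{l}_-)$ equals $-1/2 - \epsilon$, below threshold, so the weight is gentle enough to absorb the borderline behaviour of \eqref{upm}.

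\textbf{Step 2: Boundedness.} I would decompose phase space with a microlocal partition of unity adapted to $\Sigma_0$: one piece $Q_0$ microsupported away from $\Sigma_0$ (the elliptic region of $P_0$), and pieces near $\Sigma_0$ further split into a piece near $\mathcal{R}_+$, a piece near $\mathcal{R}_-$, and a piece over the principal-type part and the interior. Away from $\Sigma_0$, $u$ is in fact Schwartz microlocally (apply $D_t + \Delta_0$ which is elliptic there and kills $u$), so that piece is controlled in any space. Near the radial sets and along $\Sigma_0$, I would estimate directly: writing $\|u\|_{H^{1/2,\swmin}_{par}}^2 = \|\Lambda_{-1/2,0}\, \langle Z\rangle^{-\swmin} u\|_{L^2}^2$ up to commutator errors of lower order (using Proposition~\ref{prop:parcalc}), and then computing the $L^2(dz\,dt)$ norm as an iterated integral. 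For fixed $t$, Plancherel in $z$ turns the spatial-weight factor $\langle Z\rangle^{-\swmin}$ into a parabolic-type operator and the integrand becomes, schematically, $\int |(\text{weight})(z,t,\zeta)|^2 |\hat u(\zeta,t)|^2$-type expressions with $\hat u(\zeta,t) = e^{-it|\zeta|^2} f(\zeta)$; the oscillation $e^{-it|\zeta|^2}$ is what makes the $t$-integral converge. The crucial calculation — essentially a stationary phase / scaling estimate — is that $\int_{\RR} \langle z, t\rangle^{-2\swmin} \,dt$, evaluated along the relevant phase-space localization (where $z \sim 2t\zeta$), is comparable to $\langle z \rangle^{-2\swmin - 1} \langle \zeta \rangle^{\text{something}}$, and the $-1$ gained from the $t$-integration combined with $\swmin < -1/2$ makes everything square-integrable in $z$ with the loss exactly matched by the $\langle R\rangle^{1/2} \simeq \langle\zeta\rangle^{1/2}$ in the differential order, yielding $\lesssim \|f\|_{L^2}^2$. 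This is the step I expect to be the main obstacle: getting the exponents to balance on the nose, which is precisely why the threshold value is $-1/2$ and why one needs $\min(\mathsf{l}_+,\mathsf{l}_-)$ rather than a generic weight.

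\textbf{Step 3: Invertibility.} For surjectivity and the lower bound, I would invoke the uniqueness part: any $u \in H^{1/2,\swmin}_{par} \cap \ker(D_t + \Delta_0)$ has, by the radial point and propagation estimates of Section~\ref{subsec:mic.prop.est} applied to $P_0$ (noting $\swmin$ is below threshold everywhere so only the below-threshold radial estimate Proposition~\ref{prop:belowschrod} is needed), a well-defined trace $u(\cdot, t_0) \in \langle z\rangle^{\swmin + \text{(reg)}} L^2$ on time slices, and the Sobolev trace theorem plus the bound $\swmin < -1/2 < 1/2 - \epsilon$ gives $u(\cdot, 0) \in \langle z \rangle^{-1/2-\epsilon} L^2 \subset L^2$... actually here I would instead argue: by uniqueness of solutions to $P_0 u = 0$ with prescribed slice $u(\cdot,0)$, $u = \mathcal{P}_0(\mathcal{F}(u(\cdot,0)))$ after noting $u(\cdot,0) = \mathcal{F}^{-1}f$; one then needs $f \in L^2$, which follows from the already-established two-sided bound in Step~2 applied in reverse (the bound $\|f\|_{L^2} \lesssim \|u\|_{H^{1/2,\swmin}_{par}}$ being the reverse inequality in the same computation, since all inequalities in Step~2 are in fact equivalences up to constants once one checks the elliptic piece does not destroy information). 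Thus $\mathcal{P}_0$ is a bounded bijection onto the stated space, hence an isomorphism by the open mapping theorem.
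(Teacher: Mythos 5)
Your overall architecture — prove boundedness, observe injectivity, then prove surjectivity — is the right shape, but both nontrivial steps are left as sketches with genuine gaps, and the paper's actual argument sidesteps the computations you flag as ``the main obstacle'' entirely.

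For boundedness, the paper does not estimate the $L^2(dz\,dt)$ norm directly. Instead it observes the identity $\mathcal{P}_0\mathcal{P}_0^* = i(2\pi)^{-n}(\Rout_0 - \Rin_0)$ (both sides being the Fourier multiplier $(2\pi)^{1-n}\delta(\tau+|\xi|^2)$), and then reads off the bound from the already-established mapping properties of the propagators: $\Rout_0 - \Rin_0$ maps $H^{s-1,\swmax+1}_{par}$ into $H^{s,\mathsf{l}_+}_{par}+H^{s,\mathsf{l}_-}_{par}\subset H^{s,\swmin}_{par}$, and since $-\swmax-1=\swmin$, taking $s=1/2$ makes the target exactly dual to the source, so a $TT^*$ argument gives the bound $\mathcal{P}_0\colon L^2\to H^{1/2,\swmin}_{par}$. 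This completely avoids the stationary-phase / scaling estimate you set up in Step~2 — an estimate you explicitly label as the step you ``expect to be the main obstacle'' and never carry out. The exponent balancing you gesture at is real (it is precisely why $s=1/2$ and $\swmin$ are what they are), but in your proposal it remains a conjecture rather than a proof.

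For surjectivity, your trace-based argument has two problems. First, parabolic order $1/2$ is half a unit short of the differential regularity needed for a genuine restriction to a time slice in this anisotropic calculus; you would have to first bootstrap regularity using the equation, which you do not do. Second, and more seriously, you derive $f\in L^2$ from ``the reverse inequality in the same computation, since all inequalities in Step~2 are in fact equivalences up to constants'' — but Step~2 only aimed for the upper bound $\|u\|\lesssim\|f\|$, and the claim that those inequalities are equivalences is asserted, not shown. The paper's surjectivity argument is self-contained and quite different: write $Q_-=I-Q_+$ with $Q_+$ microlocally the identity near $\mathcal{R}_+$ and trivial near $\mathcal{R}_-$, observe $Q_\pm u\in\mathcal{X}^{1/2,\mathsf{l}_\pm}$ with $P_0Q_\pm u=[P_0,Q_\pm]u\in H^{-1/2,\mathsf{l}_\pm+1}_{par}$, apply $\Rout_0,\Rin_0$, and use $[P_0,Q_+]=-[P_0,Q_-]$ to collapse to $u=(\Rout_0-\Rin_0)[P_0,Q_+]u$; the $TT^*$ identity then exhibits $u=\mathcal{P}_0 f$ with $f=-i(2\pi)^n\mathcal{P}_0^*[P_0,Q_+]u\in L^2$. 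In short: the correct key lemma here is the operator identity $\mathcal{P}_0\mathcal{P}_0^*=i(2\pi)^{-n}(\Rout_0-\Rin_0)$, which your proposal does not invoke, and without it both Step~2 and Step~3 are incomplete.
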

  \begin{proof}
    First note that
\begin{equation}
\mathcal{P}_0 \mathcal{P}_0^* = i (2\pi)^{-n} \big( {(P_0)}_+^{-1} - {(P_0)}_-^{-1} \big),
\label{eq:PP0}
\end{equation}
as can be verified by explicit computation. In fact, both represent the Fourier multiplier $(2\pi)^{1-n} \delta(\tau + |\xi|^2)$.  For any $s \in \mathbb{R}$,
$$
{(P_0)}_+^{-1} - {(P_0)}_-^{-1}  \colon H^{s-1,  \swmax + 1}_{\mathrm{par}}(\mathbb{R}^{n + 1})
\longrightarrow H^{s,  \sw_{+}}_{\mathrm{par}}(\mathbb{R}^{n + 1}) +
H^{s,  \sw_{-}}_{\mathrm{par}}(\mathbb{R}^{n + 1}) \subset H^{s,
  \swmin}_{\mathrm{par}}(\mathbb{R}^{n + 1}).
$$
We apply a $TT^*$ argument to this bounded mapping, 
for which we require the range to be contained in the dual of the
domain, i.e.\
$$
H^{s,
  \swmin}_{\mathrm{par}}(\mathbb{R}^{n + 1})  \subset (H^{s-1,  \swmax +
  1}_{\mathrm{par}}(\mathbb{R}^{n + 1}))^*.
$$
Choosing $s=1/2$, then since $-\swmax - 1 = \swmin$, we see that the desired containment holds, and thus $\mathcal{P}_0^*$ maps $H^{-1/2,
     \swmax + 1}_{\mathrm{par}}(\mathbb{R}^{n + 1})$ into $L^2(\RR^n)$. Dually, we conclude that $\mathcal{P}_0$ maps $L^2(\RR^n)$ into $H^{1/2,\swmin}_{\mathrm{par}}(\mathbb{R}^{n + 1})$.

The operator $\mathcal{P}_0$ is obviously injective, as the restriction to $t=0$ is the inverse Fourier transform.  So it remains only to show
that it is surjective.  Thus, let $u \in H^{1/2,
  \swmin}_{\mathrm{par}}(\mathbb{R}^{n + 1}) \cap \ker \left( D_t
      + \Delta_0 \right)$. We employ a microlocal partition adapted to the $\sw_\pm$ and combine \eqref{Geq:commidentity7} (for the free operator $P_0$) and \eqref{eq:PP0} to obtain 
$$
u = -i (2\pi)^{n} \mathcal{P}_0 \mathcal{P}_0^* [P_0, Q_+] u. 
$$
We notice that $[P_0, Q_+] u$ is in $H^{-1/2,
  1/2}_{\mathrm{par}}(\RR^{n+1})$ using \eqref{eq:mpavo}, and thus is
contained in $H^{-1/2, \swmax + 1}_{\mathrm{par}}(\RR^{n+1})$ also by \eqref{eq:mpavo}. Thus, $f :=
-i(2\pi)^n\mathcal{P}_0^* [P_0, Q_+] u$ is in $L^2$ using the mapping property of $\mathcal{P}_0^*$ just proved. It follows that $u = \mathcal{P}_0 f$ where $f \in L^2$, proving the surjectivity. 
%
%
%
%
%
  \end{proof}
  
Recall the small module $\mathcal{N}$ defined in Definition \ref{def:module.def.characteristic}. 
Consider the generators
\begin{equation}\label{eq:small gens}
  \begin{gathered}
\Id, \quad z_k D_{z_j} - z_j D_{z_k} , \quad 2 tD_{z_j} - z_j, \quad
D_{z_j}, \quad E \langle z, t
\rangle (D_t +
\Delta_0 ).  
  \end{gathered}
\end{equation}
with $E \in \Psi^{-1, 0}$ globally elliptic.

We have the commutation relations of these generators with the Poisson operator $\mathcal{P}_0$:
\begin{equation}
  \label{eq:2}
  \begin{split}
  \Id \mathcal{P}_0 f &=  \mathcal{P}_0 (\Id f) \\
(z_j D_{z_l} - z_l D_{z_j})\mathcal{P}_0 f &=  \mathcal{P}_0 ((\xi_j D_{\xi_l} - \xi_lD_{\xi_j})f) \\
(2 tD_{z_j} - z_j) \mathcal{P}_0 f &= \mathcal{P}_0 (D_{\xi_j} f)  \\
P_0  \mathcal{P}_0 &\equiv 0 \\
D_{z_j} \mathcal{P}_0f &=    \mathcal{P}_0 (\xi_j f)
  \end{split}
\end{equation}

The Poisson operator thus intertwines the action of these generators with the following operators on $\RR^n_{\xi}$:
\begin{equation}
\Id, \quad \xi_j D_{\xi_l} - \xi_lD_{\xi_j}, \quad D_{\xi_j}, \quad \xi_j.
\label{eq:Nhatgen}\end{equation}
It is trivial to check that these operators are in $\Psisc^{1,1}(\RR^n)$ and are closed under commutators. 
They therefore generate a module which we denote $\Nhat$. We let $\Nhatgen$ denote the finite set of generators in \eqref{eq:Nhatgen}. 
(We remark here that we replaced the generator $E_1$ of $\mathcal{N}$ with the $D_{z_j}$ in \eqref{eq:small gens} for convenience. The reason for doing so is that, if we take $E_1 = (1 + D_t^2 + \Delta_z^2)^{1/4}$, then this is intertwined with $(1 + 2 |\xi|^4)^{1/4}$. We find it more convenient to replace this with factors $\xi_j$ which lead to the same module $\Nhat$.)

This leads to the definition of spaces of incoming/outgoing data $\Hdata^k(\RR^n_\xi)$ that will be suitable domain spaces for the free Poisson operator viewed as mapping into module regularity spaces. 
\begin{defn}
For $k \in \NN$, we define the Hilbert space $\Hdata^k(\RR^n_\xi)$ by 
\begin{equation}
\mathcal{W}^k(\mathbb{R}^n_\xi) = \{ f \in L^2(\mathbb{R}^n_\xi, d\xi)
\mid A_1 \dots A_j  f \in L^2(\mathbb{R}^n_\xi, d\xi) \, \forall \,
A_i \in \Nhatgen, 1 \leq i \leq j \leq k \}. \label{eq:Hdata}
\end{equation}
The norm in this Hilbert space is defined by 
$$
\| f \|^2 = \sum \| A_1 \dots A_j f \|_2^2,
$$
where the sum is over all $j$-tuples $(A_1, \dots, A_j)$ of generators for $0 \leq j \leq k$. (when $j=0$ this is of course just the $L^2$ norm of $f$.)

For $k \in \NN$, we define the spaces of negative order by 
$$
\mathcal{W}^{-k}(\mathbb{R}^n_\xi) = \{f =  \sum_{\substack{A_1, \dots, A_j \in \Nhatgen \\ j \leq k}} A_1 \dots A_j f_{A_1,  \dots,  A_j} \mid f_{A_1,  \dots,  A_j} \in L^2(\mathbb{R}^n_\xi, d\xi) \}.
$$
The squared norm of $f$ in this Hilbert space is the infimum of 
$$
\sum_{A_1 \dots A_j} \| f_{A_1,  \dots,  A_j} \|_2^2
$$
over all representations of $f$ in this form,   where $(A_1, \dots, A_j)$ are distinct $j$-tuples of elements of $\Nhatgen$ with $1 \leq j \leq k$. 
\end{defn}

Standard considerations show that $\Hdata^{-k}(\RR^n)$ is the dual space of $\Hdata^k(\RR^n)$. Recalling that $H^{s,
     \sw; k}_{\mathcal{N}}(\mathbb{R}^{n + 1})$ denotes the module regularity space of order $k$ with respect to $\mathcal{N}$, we show 
  \begin{proposition}\label{prop:freePoissonbounds} 
For $k \in \NN$, the mapping
    \begin{equation}
    \label{eq:poisson better bound}
    \mathcal{P}_0 \colon  
    \mathcal{W}^k(\mathbb{R}^n_\xi) \longrightarrow H^{1/2,
     \sw_{min}; k}_{\mathcal{N}}(\mathbb{R}^{n + 1}) \cap \ker \left( D_t
      + \Delta_0 \right)
  \end{equation}
    is a bounded isomorphism. 
    
Moreover, if $Q \in \Psip{0}{0}(\RR^{n+1})$ is such that the intersection of its microsupport with the characteristic variety is contained in the set where $\sw_{min} = -1/2$ (which implies that it is microsupported away from the radial sets), then for all integers $k \in \ZZ$ (positive or negative) we have 
\begin{equation}\label{eq:QP0}
Q \mathcal{P}_0 \colon  
    \mathcal{W}^k(\mathbb{R}^n_\xi) \longrightarrow H^{k+1/2, k-1/2}_{\mathrm{par}}(\mathbb{R}^{n + 1}) 
  \end{equation}
  is bounded. 
  \end{proposition}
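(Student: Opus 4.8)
The plan is to treat the two assertions separately, since the first is essentially a module-regularity upgrade of Lemma~\ref{lem:freePoissoniso} while the second is a purely away-from-radial-sets statement that can be read off from the intertwining relations \eqref{eq:2}. For the first assertion, the key input is that the free Poisson operator intertwines the generators \eqref{eq:small gens} of $\mathcal{N}$ with the generators \eqref{eq:Nhatgen} of $\Nhat$, as recorded in \eqref{eq:2} (note the generator $P_0 = D_t+\Delta_0$ simply annihilates the range, so it contributes nothing). First I would check boundedness: for $f \in \mathcal{W}^k(\RR^n)$ and any $j$-tuple $(A_1,\dots,A_j)$ of generators of $\mathcal{N}$ with $j \le k$, repeated application of \eqref{eq:2} gives $A_1\cdots A_j \mathcal{P}_0 f = \mathcal{P}_0(\hat A_1 \cdots \hat A_j f)$ where $\hat A_i$ is the corresponding generator of $\Nhat$ (with the convention that strings containing the $P_0$-generator vanish). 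Since $\hat A_1 \cdots \hat A_j f \in L^2(\RR^n)$ by definition of $\mathcal{W}^k$, Lemma~\ref{lem:freePoissoniso} gives $A_1\cdots A_j\mathcal{P}_0 f \in H^{1/2,\swmin}_{par}$, with norm controlled by $\|f\|_{\mathcal{W}^k}$. Summing over all such tuples yields exactly $\mathcal{P}_0 f \in H^{1/2,\swmin;k}_{par,\mathcal{N}}$ with the required bound. Since $\mathcal{P}_0 f$ always solves $P_0 u = 0$, the range lands in the asserted intersection.

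For surjectivity and the inverse bound in the first assertion, I would run the argument of Lemma~\ref{lem:freePoissoniso} but keep track of module regularity. Given $u \in H^{1/2,\swmin;k}_{par,\mathcal{N}} \cap \ker(D_t+\Delta_0)$, set $f = -i(2\pi)^n \mathcal{P}_0^* v$ where $v = [P_0,Q_+]u$, exactly as in that lemma, so that $u = \mathcal{P}_0 f$; the issue is to show $f \in \mathcal{W}^k(\RR^n)$. Since $\mathcal{P}_0$ restricted to $t=0$ is the inverse Fourier transform, one has $f = \mathcal{F}_{z\to\xi}(u|_{t=0})$ (after accounting for the $e^{-it|\xi|^2}$ phase, which is trivial at $t=0$), and the trace $u|_{t=0}$ makes sense since $u$ has positive parabolic differential order. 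The commutation relations \eqref{eq:2} then translate the statement $A_1\cdots A_j u \in H^{1/2,\swmin}_{par}$ into $\hat A_1 \cdots \hat A_j f \in L^2$ after taking the $t=0$ trace; more directly, since the $\hat A_i$ are elliptic at infinity in a joint sense (their principal symbols are the obvious functions on $\Psisc^{1,1}(\RR^n)$), iterated application shows $f \in \mathcal{W}^k$. The two bounds together give the claimed isomorphism.

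For the second assertion I would argue directly. Let $Q \in \Psip{0}{0}(\RR^{n+1})$ have $\WF'(Q)\cap\Sigma(P_0)$ contained in the set where $\swmin = -1/2$; in particular $\WF'(Q)$ is disjoint from $\SR_\pm$, so by Proposition~\ref{prop:R.char} (applied to the generators of $\mathcal{N}$, equivalently $\mathcal{G}_+\cap\mathcal{G}_-$) the module $\mathcal{N}$ is elliptic on $\WF'(Q)$. Using a microlocal partition of unity subordinate to a cover by elliptic sets of single generators and the elliptic parametrix of Proposition~\ref{prop:elliptic param}, one shows that on $\WF'(Q)$ the norm $\|Q v\|_{H^{k+1/2,k-1/2}_{par}}$ is equivalent to $\|Q v\|_{H^{1/2,\swmin;k}_{par,\mathcal{N}}}$ (gaining $k$ orders of both differential and spatial regularity from $k$ applications of elliptic generators, with $\swmin = -1/2$ there giving the spatial order $k - 1/2$); for $k \le -1$ the same equivalence holds by duality. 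Combining this with the first assertion for $k \ge 0$, and with a duality argument using $\Hdata^{-k} = (\Hdata^k)^*$ and the fact that $\mathcal{P}_0^*$ maps $H^{-k-1/2,-k+1/2}_{par}$ boundedly into $\Hdata^{-k}$ near $\WF'(Q)$ for $k \le -1$, gives \eqref{eq:QP0}.

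The main obstacle I expect is the trace/surjectivity step in the first assertion: one must justify that the $t=0$ restriction of $u$ is well-defined and that the module regularity of $u$ on spacetime translates cleanly into $L^2$ module regularity of the Fourier transform of that restriction, i.e.\ that the intertwining \eqref{eq:2} is compatible with restriction to $t=0$ and that no regularity is lost. The cleanest route is probably to avoid traces altogether and instead mimic the $TT^*$/commutator computation \eqref{eq:free commutator magic} while commuting generators of $\mathcal{N}$ through $\Rout_0 - \Rin_0$ and $[P_0,Q_+]$, using that these generators commute with $P_0$ modulo lower order and that $\Rout_0,\Rin_0$ have the mapping properties on module regularity spaces supplied by Theorem~\ref{thm:mod.invertible} (with flat metric and $V\equiv 0$).
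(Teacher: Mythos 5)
Your treatment of the boundedness direction of the first assertion and of \eqref{eq:QP0} for $k \ge 0$ coincides with the paper's: both come directly from the intertwining relations \eqref{eq:2}, Lemma~\ref{lem:freePoissoniso}, and ellipticity of $\mathcal{N}$ on $\WF'(Q)$.

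Two points where you depart from the paper. First, the surjectivity/inverse step of the first assertion: you flag a concern about the $t = 0$ trace and then offer a heavier workaround through Theorem~\ref{thm:mod.invertible}. Neither is needed. Since $\mathcal{P}_0$ is injective on $\mathcal{S}'(\RR^n)$ (its $t=0$ trace is the inverse Fourier transform), one can argue purely at the level of Lemma~\ref{lem:freePoissoniso}: given $u \in H^{1/2,\swmin;k}_{par,\mathcal{N}} \cap \ker P_0$, the lemma gives $u = \mathcal{P}_0 f$ with $f \in L^2$; for a generator $A \in \mathcal{N}$, the intertwining relation yields $A u = \mathcal{P}_0(\hat A f)$ as tempered distributions, and since $Au$ still lies in $H^{1/2,\swmin}_{par} \cap \ker P_0$, the lemma and injectivity force $\hat A f \in L^2$. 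Iterating up to order $k$ gives $f \in \Hdata^k$ with the bound. This is the intended "follows immediately."

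Second, and more seriously, the duality argument you propose for $k \le -1$ has a gap. Dualizing the established bound $Q\mathcal{P}_0 \colon \Hdata^{j} \to H^{j+1/2,j-1/2}_{par}$ for $j \ge 0$ produces $\mathcal{P}_0^* Q^* \colon H^{-j-1/2,-j+1/2}_{par} \to \Hdata^{-j}$ for $j \ge 0$, i.e.\ a statement about $\mathcal{P}_0^* Q^*$ with target $\Hdata^m$ only for $m \le 0$. But the "fact" you invoke requires $\mathcal{P}_0^* Q^*$ mapping into $\Hdata^m$ with $m = -k \ge 1$; that is not supplied by the dualization, and a statement of that form is exactly what you would need \eqref{eq:noQP} to establish — so the reasoning is circular. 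The paper avoids this by working directly from the definition of $\Hdata^{-k}$: any $f \in \Hdata^{-k}$ has a representation $f = \sum A_{i_1}\cdots A_{i_j} f'$ with $j \le |k|$ and $f' \in L^2$, the reverse intertwining gives $\mathcal{P}_0 f = \sum \tilde A_{i_1}\cdots \tilde A_{i_j}\mathcal{P}_0 f'$ with $\tilde A \in \mathcal{N} \subset \Psipcl{1}{1}$, and since $Q \mathcal{P}_0 f' \in H^{1/2,-1/2}_{par}$ (using $\swmin = -1/2$ on $\WF'(Q)$), applying at most $|k|$ operators of order $(1,1)$ lands in $H^{1/2 - |k|, -1/2 - |k|}_{par} = H^{k+1/2,k-1/2}_{par}$ as claimed. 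You should replace the duality step with this direct representation argument.
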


\begin{proof} The first statement follows immediately from
  Lemma~\ref{lem:freePoissoniso} and commutation identities
  \eqref{eq:2}. The second statement for $k\ge 0$ follows from the
  first and the observation that the module $\mathcal{N}$ is elliptic
  on the microsupport of $Q$, so $k$ orders of module regularity gains
  us $k$ in both the differential and spacetime orders of
  regularity. 
For  $k >0$, by definition of the space $\Hdata^{-k}(\RR^n)$ it suffices to consider $f$ of the form $f = \sum A_1 \dots A_j f'$ where $A_i \in \Nhatgen$, $j \leq k$ and $f' \in L^2$. Using the commutation properties, $\mathcal{P}_0 f$ is equal to a sum of up to $k$ module elements applied to $\mathcal{P}_0 f'$, which we know lies in the space $H^{1/2,\sw_{min}}_{\mathrm{par}}(\mathbb{R}^{n + 1})$. Since these module elements are order $(1,1)$, we find that $Q \mathcal{P}_0 f$ is in the space $H^{1/2 - k,-1/2 - k}_{\mathrm{par}}(\mathbb{R}^{n + 1})$. 
\end{proof}

\subsection{Perturbed Poisson operator}
It is essentially tautological that there is a scattering map
  $$
S_0 \colon \mathcal{W}^k(\mathbb{R}^n_\xi) \lra 
    \mathcal{W}^k(\mathbb{R}^n_\xi)
    $$
    extending the map taking $f \in
    \mathcal{S}(\mathbb{R}^n_\xi)$ to $S_0(f)(\xi') = \lim_{\xi' = z/(2t) , \ t \to  \infty}  e^{- i |z|^2/4t}
    \mathcal{P}_0f.$
   Indeed, we have just seen that this map is the identity; it
can be thought of as the map taking the incoming data of solutions produced by
$\mathcal{P}_0$ to their outgoing data, though these data are equal
for the free equation.  

We now study the forward and advanced propagators $\Poim, \Poip$ for
the perturbed operator $P$ from Definition \ref{def:pert pois}.  Directly
from the definition we see that they $\Poim$ and $\Poip$ extend to the
space of tempered distributions, in particular they are defined on $\Hdata^k$ for any
integer $k$.  We then have an analogue (in fact, a slight strengthening) of Proposition~\ref{prop:freePoissonbounds} for the perturbed Poisson operator. The following proposition is the same as the first part of Theorem~\ref{thm:Poissonbounds-intro}. 

  \begin{proposition}\label{prop:Poissonbounds} 
 For $k \in \NN$, the range of the Poisson operator $\Poip$ on $\mathcal{W}^k(\RR^n)$ is precisely 
\begin{equation}\label{eq:Pkpos}
\{ u \in \mathcal{X}_+^{1/2, \sw_+; k,0}(\RR^{n+1}) + \mathcal{X}_-^{1/2, \sw_-; k,0}(\RR^{n+1}) \mid Pu = 0 \}, 
\end{equation}
i.e. that is, those elements of $\mathcal{X}^{1/2, \sw_+} + \mathcal{X}^{1/2, \sw_-}$ in the kernel of $P$ having module regularity of order $k$. %

For $k \leq -1$, the range of $\Poip$ on $\mathcal{W}^k(\RR^n)$ is precisely 
\begin{equation}\label{eq:Pkneg}
\{ u \in H_{\mathrm{par}}^{k+1/2, k-1/2}(\RR^{n+1}) \mid Pu = 0 \}. 
\end{equation}

In either case, we characterise the range of $\Poip$ on
$\mathcal{W}^k(\RR^n)$ as those elements of the null space of $P$ that
are microlocally in $H_{\mathrm{par}}^{k+1/2, k-1/2}(\RR^{n+1})$ on
$\Sigma(P) \setminus \mathcal{R}$. That is, provided $Q \in
\Psip{0}{0}$ is microsupported away from the radial sets, the map
\begin{equation}\label{eq:QP}
Q \Poip \colon  
    \mathcal{W}^k(\mathbb{R}^n_\xi) \longrightarrow H^{k+1/2, k-1/2}_{\mathrm{par}}(\mathbb{R}^{n + 1}) 
  \end{equation}
  is bounded.

%
%
%
  
  \end{proposition}
 
\begin{remark} Analogous statements hold for $\Poim$. \end{remark}
  
\begin{proof} 
We first prove the statement \eqref{eq:QP}. This will be 
deduced from \eqref{eq:poisson better bound}, the identity \eqref{eq:4} relating the free and perturbed Poisson operator, and mapping properties of the resolvent.

Let $f \in \mathcal{W}^k(\mathbb{R}^n)$. By \eqref{eq:4} we have
\begin{equation}\label{eq:Poipf}
\Poip f = \Poi_0 f - \Rin P \Poi_0 f,
\end{equation}
and we have already shown the required regularity for the $\Poi_0 f$ term in \eqref{eq:QP0}. We now consider the other term on the RHS. 

Since $P - P_0$ is compactly supported in spacetime, one can choose a
$G  \in \Psi^{0,0}_{\mathrm{par}}$ which is supported near spacetime infinity and microsupported near $\mathcal{R}_+$ such
that $G(P - P_0) \equiv 0$.  Thus $ G P \mathcal{P}_0
f = G (P - P_0) \mathcal{P}_0
f \equiv 0$.  Since $u' = \Rin  P \mathcal{P}_0 f$ is above
threshold near $\mathcal{R}_+$, the estimate in Proposition
\ref{prop:aboveschrod} applies to $u'$, and for any $Q$ with $\WF' Q  \subset
\Ell(G)$ we have that for any $K, S, M, N \in \mathbb{R}$ and $r'$ above threshold, 
\begin{multline*}
\|Q \Rin P \mathcal{P}_0 f\|_{H_\mathrm{par}^{S,K}} \\
\leq C\Big(\|G P \Rin P\mathcal{P}_0 f\|_{H_\mathrm{par}^{S - 1,K+1}}
+  \|G  \Rin P\mathcal{P}_0 f\|_{H_\mathrm{par}^{s',r'}}+\|\Rout P \mathcal{P}_0 f\|_{H_\mathrm{par}^{M,N}}\Big) \\
= C\Big(  \|G  \Rin P\mathcal{P}_0 f\|_{H_\mathrm{par}^{s',r'}}+\|\Rin P \mathcal{P}_0 f\|_{H_\mathrm{par}^{M,N}}\Big) \phantom{33}
\end{multline*}
where the last line follows since $G P \Rin P\mathcal{P}_0 f = G
P\mathcal{P}_0 f \equiv 0$. We see that on the microsupport of $Q$,
the second term on the RHS in \eqref{eq:Poipf} is microlocally trivial, and therefore $\Poip f$ satisfies the required microlocal regularity at least close to $\Radp$. 
Because $u = \Poip f$ satisfies $Pu = 0$, we can apply propagation of regularity, that is Theorem~\ref{prop:sing.P}, to deduce this regularity everywhere on $\Sigma(P) \setminus \mathcal{R}$.

We next show that the range of $\Poip$ is included in \eqref{eq:Pkpos}, when $k \geq 0$. Let $u = \Poip f$. We choose a microlocal partition adapted to $\sw_+$ and apply \eqref{Geq:commidentity7}. Notice that $[P, Q_+]$ has order $(1,-1)$ so the term $[P, Q_+] u$ is in $H_{\mathrm{par}}^{k-1/2, k+1/2}(\RR^{n+1})$ using \eqref{eq:QP}. This clearly belongs to both $\Y_+^{-1/2, \sw_+ +1; k, 0}$ and $\Y_+^{-1/2, \sw_- +1; k, 0}$ since both $\sw_\pm$ are equal to $-1/2$ on the microsupport of $[P, Q_+]$. Applying Theorem~\ref{thm:mod.invertible}, we find that $u$ lies in \eqref{eq:Pkpos}.  

To show that the range of $\Poip$ for $k < 0$ lies in \eqref{eq:Pkneg}, 
we start the same way: we use \eqref{Geq:commidentity7} again, and the fact that the term $[P, Q_+] u$ is in $H_{\mathrm{par}}^{k-1/2, k+1/2}(\RR^{n+1})$ using \eqref{eq:QP}. In this case we apply the resolvent mapping property on variable order spaces, Theorem~\ref{thm:main thm restate}, with a judicious choice of $\sw_\pm$. Namely, for the operator $P_+^{-1}$, we choose $\sw_+$ chosen to be equal to $k-1/2$ on all bicharacteristic segments between $\WF'([P, Q_+])$ and $\Radp$, and for $R_-$, we make a similar choice for $\sw_-$ (with $\Radm$ replacing $\Radp$). The sum of the two spaces $H_{\mathrm{par}}^{k+1/2, \sw_+}(\RR^{n+1}) + H_{\mathrm{par}}^{k+1/2, \sw_-}(\RR^{n+1})$ is then equal to \eqref{eq:Pkneg}. 

It remains to show that the Poisson operator $\Poip$ maps surjectively to the spaces in \eqref{eq:Pkpos} and \eqref{eq:Pkneg}. This is postponed until after Proposition~\ref{prop:PPidentity}. 
%
%
%
%
%
%
%
\end{proof}

\begin{remark}
There is an apparent problem with \eqref{eq:Poipf}: it looks at first sight as though $\Poip f$ is less regular (in the differential sense) than $\mathcal{P}_0 f$ since applying $P$ loses two orders of regularity and $\Rin$ gains back only one order. We circumvent this difficulty by using the compact support of $P - P_0$ and propagation of regularity. If $P - P_0$ were not compactly supported --- even if it decayed quite rapidly at spacetime infinity --- this argument could not be used. Instead, we would need to use an approximate Poisson operator adapted to $P$, similarly to what is done for the Helmholtz equation in \cite{MZ96}. The point is that the free Poisson operator has the `wrong phase function', adapted to $P_0$ not $P$, and only if they agree (at least at the principal symbol level) near spacetime infinity 
can we effectively treat the Poisson operator $\Poip$ as a perturbation of the free Poisson operator. 
\end{remark}

\begin{remark} It is interesting that one can bootstrap from small module regularity to full module regularity. This arises because all the modules are elliptic on $\Sigma(P) \setminus \mathcal{R}$, so small module regularity and large module regularity are equivalent there.
\end{remark}

\begin{corollary}\label{cor:Schwartz}
$\mathcal{P}^*$ maps $\mathcal{S}(\RR^{n+1})$ to $\mathcal{S}(\RR^n)$. 
\end{corollary}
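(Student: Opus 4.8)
The plan is to exploit the explicit formula for $\Poip$ in Definition~\ref{def:pert pois} together with the adjointness of the two propagators. Since $\Poip$ is continuous from $\mathcal{S}'(\RR^n)$ to $\mathcal{S}'(\RR^{n+1})$, its transpose $\Poip^*$ is automatically continuous from $\mathcal{S}(\RR^{n+1})$ to $\mathcal{S}'(\RR^n)$, so the only task is to upgrade the target to $\mathcal{S}(\RR^n)$; the argument for $\Poim$ will be identical after swapping $\Rout\leftrightarrow\Rin$ and $\mathcal{R}_+\leftrightarrow\mathcal{R}_-$. First I would record that $(\Rin)^* = \Rout$ — this follows from the formal self-adjointness $P = P^*$ together with the duality $\mathsf{l}_- = -1-\mathsf{l}_+$ of the weight functions of Section~\ref{subsec:global.fredholm}, exactly in the manner the cokernel of $P$ was identified in the proof of Proposition~\ref{prop:fredholm} — and that $P_0\mathcal{P}_0 = 0$, so that Definition~\ref{def:pert pois} yields, for any $v \in \mathcal{S}(\RR^{n+1})$, the identity in $\mathcal{S}'(\RR^n)$
\begin{equation*}
\Poip^* v = \mathcal{P}_0^* v - \mathcal{P}_0^*\,(P - P_0)^*\,\Rout v .
\end{equation*}
It then suffices to establish: (a) $\mathcal{P}_0^*$ maps $\mathcal{S}(\RR^{n+1})$ continuously into $\mathcal{S}(\RR^n)$; and (b) $(P - P_0)^*\,\Rout v \in C_c^\infty(\RR^{n+1}) \subset \mathcal{S}(\RR^{n+1})$ for $v$ Schwartz, continuously in $v$.

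For (a) I would use the explicit expression \eqref{eq:poisson}: up to a fixed constant, $\mathcal{P}_0^* v$ is the full spacetime Fourier transform $\widehat v$ restricted to the paraboloid $\{\tau = -|\zeta|^2\}$ and pulled back to $\RR^n_\zeta$ via the smooth, proper embedding $\zeta \mapsto (\zeta, -|\zeta|^2)$. Since $\widehat v$ is Schwartz and this embedding has polynomially bounded derivatives, the chain rule shows the pullback is Schwartz on $\RR^n_\zeta$ with seminorms controlled by those of $v$; this is a routine computation, and the same applies verbatim to inputs in $C_c^\infty(\RR^{n+1})$.

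For (b) the essential input is the microlocal regularity theory of Section~\ref{subsec:mic.prop.est}. Because $v$ is Schwartz it lies in every $\mathcal{Y}^{s-1,\mathsf{l}_+ + 1}$, so by Theorem~\ref{thm:main thm restate} the element $u := \Rout v$ lies in $\mathcal{X}^{s,\mathsf{l}_+}$, hence in $H^{s,\mathsf{l}_+}_{\mathrm{par}}$, for every $s \in \RR$. Near $\mathcal{R}_-$ the weight $\mathsf{l}_+$ equals $-1/2 + \epsilon$, which is above threshold, and $Pu = v$ has empty parabolic scattering wavefront set, so the above-threshold radial point estimate Proposition~\ref{prop:aboveschrod} applies with $s$ and the spatial order arbitrarily large and shows $u$ is microlocally Schwartz in a neighbourhood of $\mathcal{R}_-$. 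Propagating this forward along the bicharacteristic flow with Proposition~\ref{prop:sing.P}, and using elliptic regularity Proposition~\ref{prop:elliptic.estimate.P} off $\Sigma(P)$, I would conclude — exactly as in the proof of Theorem~\ref{thm:main thm restate} — that $u$ is microlocally Schwartz on all of $\partial\overline{T}^*_{\mathrm{par}}\RR^{n+1} \setminus \mathcal{R}_+$. Since $\mathcal{R}_+$ lies over spacetime infinity, no phase-space point above a fixed compact subset of $\RR^{n+1}$ is near $\mathcal{R}_+$, so $\WF(u) = \varnothing$ over the spacetime interior, i.e.\ $u \in C^\infty(\RR^{n+1})$. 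As $P - P_0$ is a differential operator of order at most $2$ whose coefficients are supported in a fixed compact set $K$, the function $(P-P_0)^* u$ is then smooth and supported in $K$, which gives (b), with the continuity of $v \mapsto (P-P_0)^*\Rout v$ into $C^\infty_K(\RR^{n+1})$ coming from the boundedness statements in the same propositions.

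Combining (a), (b) and the displayed identity, both terms on the right lie in $\mathcal{S}(\RR^n)$ and depend continuously on $v$, so $\Poip^*\colon \mathcal{S}(\RR^{n+1}) \to \mathcal{S}(\RR^n)$ is continuous. I expect the only real obstacle to be step (b): verifying that $\Rout v$ is genuinely smooth over every compact spacetime region, i.e.\ that its singularities are confined to $\mathcal{R}_+$ at infinity. This is not automatic since $P$ is not elliptic, and must be extracted from the chain of radial-point, propagation, and elliptic estimates; everything else is bookkeeping and routine Fourier analysis. A more abstract alternative would be to dualize, for each $k \ge 1$, the isomorphism $\Poip \colon \mathcal{W}^{-k}(\RR^n) \to \ker P \cap H^{-k+1/2,\,-k-1/2}_{\mathrm{par}}(\RR^{n+1})$ furnished by Theorem~\ref{thm:Poissonbounds-intro}, using that $\mathcal{S}(\RR^{n+1}) \subset H^{k-1/2,\,k+1/2}_{\mathrm{par}}(\RR^{n+1})$ embeds into the dual of the right-hand space, to obtain $\Poip^* v \in \mathcal{W}^k(\RR^n)$ for all $k$, hence $\Poip^* v \in \bigcap_k \mathcal{W}^k(\RR^n) = \mathcal{S}(\RR^n)$; this is cleaner but relies on the full range characterization for negative module-regularity orders.
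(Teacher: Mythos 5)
Your proof is correct, but it follows a genuinely different route from the paper's one-line argument. The paper dualizes the boundedness statement \eqref{eq:noQP}, namely $\mathcal{P}_\pm : \Hdata^k(\RR^n) \to H^{k+1/2,k-1/2}_{par}(\RR^{n+1})$ for $k \le -1$, to deduce $\mathcal{P}^* : H^{m-1/2,m+1/2}_{par}(\RR^{n+1}) \to \Hdata^m(\RR^n)$ for $m \ge 1$, and then intersects over $m$ using $\bigcap_m \Hdata^m = \mathcal{S}(\RR^n)$. You instead take the adjoint of the explicit formula for $\Poip$ from Definition~\ref{def:pert pois}, reducing the claim to (a) the elementary Fourier-analytic fact that restriction of $\widehat v$ to the paraboloid preserves Schwartz, and (b) a microlocal argument that $(P-P_0)^*\Rout v \in C_c^\infty(\RR^{n+1})$, which you run via the above-threshold radial estimate at $\mathcal{R}_-$ plus propagation and ellipticity. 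What your route buys is independence from the negative-order $\Hdata^{-k}$ theory of Proposition~\ref{prop:Poissonbounds} (the paper's proof is shorter only because that infrastructure is already in place), and a concrete picture of why the statement is true; what it costs is two auxiliary facts you invoke without full justification: the identity $(\Rin)^* = \Rout$, which is true by formal self-adjointness of $P$ and the duality $\mathsf{l}_- = -1-\mathsf{l}_+$ but is nowhere stated in the paper, and the continuity of $v \mapsto (P-P_0)^*\Rout v$ into $\mathcal{S}(\RR^{n+1})$, which does follow from the uniformity of the propagation estimates but deserves more than the sentence you give it. You also flag the paper's dualization argument yourself as the "more abstract alternative," so you clearly see both routes; the one you chose to carry out is sound.
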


\begin{proof}
Dualizing \eqref{eq:Pkneg}, we find that  $\mathcal{P}^*$ maps $H^{k-1/2, k+1/2}_{\mathrm{par}}(\RR^{n+1})$ to 
$\Hdata^{k}(\RR^n)$ for $k \geq 1$. Taking the intersection over all such $k$ yields the corollary.
\end{proof}

We now prove an analogue of Proposition 3.4 of \cite{NLSM}. 

\begin{proposition}\label{prop:limits}
Let $v \in H^{-1,\sw_{max} + 1; k}_{par, \mathcal{N}}(\RR^{n+1})$ with $k \geq 2$ and $\epsilon > 0$. Then $u_+ := \Rout v$ is such that the limits 
\begin{equation}\label{eq:tlimit}
\mathcal{L}_+ u_+(\zeta) := \lim_{t \to +\infty} (4\pi it)^{n/2} e^{-it|\zeta|^2} u_+(2t \zeta, t) 
\end{equation} 
and
\begin{equation}\label{eq:zerolimit}
\mathcal{L}_- u_+(\zeta) := \lim_{t \to -\infty} (4\pi it)^{n/2} e^{-it|\zeta|^2} u_+(2t \zeta, t) 
\end{equation}
exist in $\ang{\zeta}^{1/2 + \epsilon}\Hdata^{k-2}(\RR^n_\zeta)$, with the limit \eqref{eq:zerolimit} identically zero. 

Moreover, we have estimates 
\begin{equation}\begin{gathered}
\| \mathcal{L}_+ u_+ \|_{\ang{\cdot}^{1/2 + \epsilon}\Hdata^{k-2}} \leq C \| v \|_{H^{1/2, \sw_{max} + 1; 1, k}_{par, +}},  \\
\| t^{n/2} e^{-it|\zeta|^2/4} u_+(t \zeta, t) - \mathcal{L}_+ u_+ \|_{\ang{\cdot}^{1/2 + \epsilon}\Hdata^{k-2}} = O(t^{-\epsilon'}), \quad t \to \infty
\end{gathered}\end{equation}
for $\epsilon'$ sufficiently small. A similar statement is true for $u_- := \Rin v$, with a zero limit $\mathcal{L}_+ u_-$ as $t \to +\infty$ and a (potentially) nonzero limit $\mathcal{L}_- u_-$ as $t \to -\infty$. 
\end{proposition}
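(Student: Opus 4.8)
The plan is to reduce to the free case $\mathcal{P}_0$, where the limits $\mathcal{L}_\pm$ can be computed by stationary phase, and then control the correction term $\Rout v - $ (free model piece) by the propagation estimates of Section~\ref{subsec:mic.prop.est}, showing it contributes nothing to the limit in the relevant weighted norm. First I would use the identity \eqref{eq:4} together with Proposition~\ref{prop:freePoissonbounds} and Proposition~\ref{prop:Poissonbounds}: writing $u_+ = \Rout v$ and decomposing $v$ microlocally as $v = Q_+ v + Q_- v + Q_0 v$ (with $Q_\pm$ supported near $\SR_\pm$ and $Q_0$ away from both radial sets), I would observe that $Q_0 v$ and $Q_- v$ produce pieces of $u_+$ that are, by the radial point estimate Proposition~\ref{prop:aboveschrod} near $\SR_-$ (above threshold in the image of $\Rout$) and by Proposition~\ref{prop:elliptic.estimate.P} and Proposition~\ref{prop:sing.P} elsewhere, of arbitrarily high parabolic regularity and decay in the region $|z|/t < c$, $t \to -\infty$ — hence Schwartz there, giving the vanishing of \eqref{eq:zerolimit}. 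The genuinely surviving piece near $t\to+\infty$ is governed by the behaviour near $\SR_+$.

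Second, for the limit \eqref{eq:tlimit} I would exploit that near $\SR_+$ (and in the region $|z|/t$ bounded, $t\to+\infty$) the perturbed solution $u_+$ agrees to all orders with the free model $\mathcal{P}_0 g$ for an appropriate $g \in \Hdata^{k}(\RR^n)$; concretely, from \eqref{eq:4}, $\Poip$-type reasoning shows that modulo a Schwartz-in-the-cone error, $u_+ = \mathcal{P}_0\big(\Rout(P-P_0)\mathcal{P}_0(\cdots)\big)$-corrections reduce the problem to the free Poisson operator, for which \eqref{eq:asymptotics free} gives exactly the stationary-phase limit $\mathcal{L}_+(\mathcal{P}_0 g) = g$. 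The module intertwining relations \eqref{eq:2} then transfer the $k$ orders of $\mathcal{N}$-module regularity on $u_+$ to $k$ orders of $\Nhatgen$-regularity on $g$, i.e. $g \in \Hdata^{k}$; losing two module orders to the stationary phase remainder (one factor of $t^{-1}$ per application of the quadratic phase's Hessian correction, absorbed into two generators $2tD_{z_j}-z_j$) yields membership of $\mathcal{L}_+ u_+$ in $\ang{\zeta}^{1/2+\epsilon}\Hdata^{k-2}$, with the $\ang{\zeta}^{1/2+\epsilon}$ weight coming precisely from the threshold gap $\swmin = -1/2$ away from the radial sets together with the Sobolev trace theorem in the time slices, exactly as in the proof of Theorem~\ref{thm:sob.invertible}. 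The quantitative rate $O(t^{-\epsilon'})$ comes from the next term in the stationary phase expansion, whose size is measured after trading one more module order; this forces $\epsilon'$ small (and $\le 1$, with a further small loss from the $\ang{\zeta}^{\epsilon}$ weight). The estimate $\|\mathcal{L}_+ u_+\|_{\ang{\cdot}^{1/2+\epsilon}\Hdata^{k-2}} \le C\|v\|_{H^{1/2,\swmax+1;1,k}_{par,+}}$ is then just the composition of the bounded maps $v \mapsto u_+$ (Theorem~\ref{thm:mod.invertible}), $u_+ \mapsto g$ (the intertwining plus trace), and the identity $\mathcal{L}_+\mathcal{P}_0 = \mathrm{Id}$.

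Third, the statement for $u_- = \Rin v$ is obtained by the obvious time-reversal symmetry of the whole construction, with the roles of $\SR_+$ and $\SR_-$ (equivalently $t\to+\infty$ and $t\to-\infty$) interchanged: near $t\to+\infty$ the solution $u_-$ is Schwartz in the cone $|z|/t < c$ by the above-threshold radial estimate at $\SR_+$ applied to the image of $\Rin$, giving $\mathcal{L}_+ u_- \equiv 0$, while the surviving limit $\mathcal{L}_- u_-$ as $t\to-\infty$ is analysed by the same stationary-phase-plus-module-intertwining argument.

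The main obstacle I expect is the \emph{uniformity} of the stationary phase estimate across all of frequency space simultaneously with the module-regularity bookkeeping: the phase $\tfrac{z}{t}\cdot\zeta - |\zeta|^2$ has a nondegenerate critical point at $\zeta = z/2t$, but to get convergence in a \emph{weighted Sobolev-type} norm on $\RR^n_\zeta$ — rather than pointwise in $\zeta$ — one must differentiate the stationary phase expansion under the generators $\xi_j$, $D_{\xi_j}$, $\xi_jD_{\xi_l}-\xi_lD_{\xi_j}$ and track that each differentiation costs at most the allotted orders; equivalently, one must commute the rescaled-and-modulated evaluation map $u \mapsto t^{n/2}e^{-it|\zeta|^2}u(2t\zeta,t)$ past the module $\mathcal{N}$ and land in $\Nhat$, with errors that are genuinely $o(1)$ as $t\to\infty$ in the right topology. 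Making this quantitative — and in particular pinning down the admissible range of $\epsilon'$ — is where the real work lies; everything else is assembling already-established propagation and mapping estimates.
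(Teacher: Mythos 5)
Your proposal takes a genuinely different route from the paper's, and unfortunately there is a gap at the crucial step. The paper's proof is a \emph{direct} computation with $\tilde u(\zeta, t) := (4\pi it)^{n/2}e^{-it|\zeta|^2}u_+(2t\zeta, t)$: it exploits the algebraic identity, valid when you differentiate in $t$ at fixed $\zeta = z/(2t)$,
\begin{equation*}
D_t\big|_{\zeta}\,\tilde u = (4\pi it)^{n/2}e^{-it|\zeta|^2}\Big(v(2t\zeta, t) - t^{-2}\big(tD_z - \tfrac{z}{2}\big)\cdot\big(tD_z - \tfrac{z}{2}\big)u_+(2t\zeta, t)\Big),
\end{equation*}
so that both terms on the right are controlled by exactly the module regularity assumed on $v$ (two module orders land on $u_+$, accounting for the loss $k\mapsto k-2$). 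Then $D_t|_\zeta\tilde u \in t^{-1/2-\epsilon}\ang{\zeta}^{1/2+\epsilon}L^2(dt\,d\zeta)$, which is integrable in $t$ on $[T,\infty)$, and the limit exists by the fundamental theorem of calculus in the Banach-space-valued sense. No stationary phase is needed, and no smoothness beyond the finite module orders.

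Your plan instead reduces to the free Poisson operator $\mathcal{P}_0$ via stationary phase, which does not work here for two reasons. First, the proposition is about $u_+ = R^+v$ for a \emph{general source} $v \in H^{-1,\mathsf{l}_{max}+1;k}_{par,\mathcal{N}}$, not about an element of $\ker P$; since $v$ does not vanish near spacetime infinity (it merely decays in a weighted-Sobolev sense), you have $Pu_+ = P_0 u_+ = v \neq 0$ in the region $|z|/t < c$, $t$ large, and so $u_+$ is simply not a free solution there — the claim that ``$u_+ = \mathcal{P}_0 g$ modulo a Schwartz-in-the-cone error'' is false in general, and the identity \eqref{eq:4}, which is for the Poisson operators acting on boundary data, does not apply to $R^+ v$. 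Second, even if you could reduce to a free model, the stationary-phase expansion needs genuine smoothness of the data to control the remainder, whereas the hypothesis provides only $k$ finite orders of module regularity on $v$; the paper sidesteps this entirely by trading the oscillation for the module element $tD_z - z/2$. Your bookkeeping of ``losing two orders to the stationary-phase remainder'' is an attempt to reproduce the same $k \mapsto k-2$ count, but the mechanism you invoke doesn't exist in this setting. The vanishing of $\mathcal{L}_- u_+$ is also argued somewhat loosely; the paper's actual argument is that $u_+$ is above threshold microlocally away from $\mathcal{R}_+$, hence $\tilde u \in t^{1/2-\epsilon}L^2(dt\,d\zeta)$ on $(-\infty,-1]\times B(0,R)$, which is incompatible with a nonzero limit as $t\to-\infty$ — not a claim of Schwartz decay. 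The missing idea in your write-up is the differentiation-at-fixed-$\zeta$ identity above, which is the engine of the whole proof.
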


\begin{proof}
We prove the statement only for $u_+$ as the proof for $u_-$ is
essentially the same with the incoming and outgoing radial sets
switched.  Define
$$
\tilde u(\zeta, t) = (4\pi i t)^{n/2} e^{-it|\zeta|^2} u_+(2t \zeta, t).
$$
We will compute the partial derivative of $\tilde u(\zeta, t)$, i.e.\
with $\zeta$ fixed, which we denote by $D_t \rvert_{\zeta}$ to avoid
confusion with the partial derivative with respect to $t$ with $z$ fixed.

Then, using $\zeta = z/(2t)$ and $D_t = P - D_z \cdot D_z$, we can write
\begin{equation}\begin{gathered}
D_t \rvert_{\zeta} \tilde u(\zeta, t) = 
 (4\pi it)^{n/2} e^{-it|\zeta|^2}  
 \Big( - \frac{in}{2t} - |\zeta|^2 + (\frac{z}{t} \cdot D_z + D_t) \Big) u_+(2t \zeta, t)  \\
 = (4\pi it)^{n/2} e^{-it|\zeta|^2} \Big( P - t^{-2}  (t D_z -
 \frac{z}{2} ) \cdot (t D_z - \frac{z}{2}) \Big) u_+(2t \zeta, t) \\
  = (4\pi it)^{n/2} e^{-it|\zeta|^2}\left( v(2t\zeta,t)  - \left( t^{-2}  (t D_z -
 \frac{z}{2} ) \cdot (t D_z - \frac{z}{2}) \right) u_+(2t \zeta, t) \right)
 \end{gathered}\label{Dtzeta}\end{equation}
 We recognize the factor $t D_{z_i} - z_i/2$ as an element of the
 module $\mathcal{N}$. So we are now in a similar position to the
 proof in \cite{NEH}. By Theorem \ref{thm:mod.invertible}, $u_+
 \in H^{0, \sw_+; 0, 2}$, which allows us to conclude that the
 second term in the parenthesis has
 $$
\left((t D_z -
 \frac{z}{2} ) \cdot (t D_z - \frac{z}{2}) \right) u_+(2t \zeta, t)  \in H^{0, \sw_+; 0, 0}
 $$
 Moreover, from the assumption on $v$, using: (1) 
 $\Psi^{1,0}_{\mathrm{par}} \subset \mathcal{N}$, (2) that $\sw_{max} = -1/2 + \epsilon$ near the radial sets, and (3) that $\mathcal{N}$ is
 elliptic away from the radial sets, we have
 $$
v \in H^{-1,\sw_{max} + 1; 2}_{par, \mathcal{N}}(\RR^{n+1})
\subset H^{0,\sw_{max} + 1; 1}_{par, \mathcal{N}}(\RR^{n+1})
\subset H^{0, 1/2 + \epsilon}_{\mathrm{par}} = \ang{(t,z)}^{-1/2 - \epsilon}
L^2(dtdz)
 $$
 Thus, on $0 < T < t$
\begin{multline}
D_t \tilde u (\zeta, t)  \rvert_{\zeta} \in   \ang{(t,z)}^{-1/2 - \epsilon}t^{n/2}
L^2(dtdz) + \ang{(t,z)}^{1/2 + \epsilon}t^{n/2} \ang{t}^{-2} L^2(dt dz) \\ \subset t^{-1/2 - \epsilon} \ang{\zeta}^{1/2 + \epsilon} L^2(dt d\zeta).
\label{eq:Dtuliesin}\end{multline}
 This is in $$t^{-\epsilon'} L^1([T, \infty)_t; \ang{\zeta}^{1/2 +
  \epsilon}L^2(\RR^n_\zeta))$$ for  $0 < \epsilon' < \epsilon$. We can
thus integrate the $t$-derivative, for fixed $\zeta$, of $\tilde u$,
viewed as a function of $t$ with values in $\ang{\zeta}^{1/2 +
  \epsilon}L^2(\RR^n_\zeta)$, out to infinity, showing that the limit
exists. Moreover, the convergence is at a rate of $O(t^{-\epsilon'})$
as we see by integrating $D_t \rvert_{\zeta} \tilde u$ back from $t = \infty$. 

Now to prove the result for $k > 2$, we observe that applying module element $2t D_{z_i} - z_i$ to $u_+$ is equivalent to applying $D_{\zeta_i}$ to $\tilde u$. In the same way, applying module element $z_i D_{z_j} - z_j D_{z_i}$ to $u_+$ is equivalent to applying $\zeta_i D_{\zeta_j} - \zeta_j D_{\zeta_i}$ to $\tilde u$. Moreover, since $2t D_{z_i} - z_i$ and $D_{z_i}$ are both module elements, it follows that multiplication by $z_i = (2t D_{z_i} - z_i) - 2t \cdot D_{z_i}$ maps $u$ to $\ang{t}H^{0, -1/2 - \epsilon}_{\mathrm{par}}$. Since multiplication by $\ang{t}$ commutes with both $2t D_{z_i} - z_i$ and $D_{z_i}$, we can iterate this argument, showing that for  $|\alpha| \leq k$, and $t$ large, multiplication by $z^\alpha$ maps to $\ang{t}^{|\alpha|} H^{0, -1/2 - \epsilon}_{\mathrm{par}}$, and hence, multiplication by $\zeta^\alpha$ maps to $H^{0, -1/2 - \epsilon}_{\mathrm{par}}$ for $t \geq 1$. This means that we can apply compositions of up to $k$ generators of $\Nhat$ to $\tilde u$, improving  \eqref{eq:Dtuliesin} to 
\begin{equation}
D_t \rvert_{\zeta}  \tilde u \in  t^{-1/2 - \epsilon} \ang{\zeta}^{1/2 + \epsilon} \Hdata^{k-2}(dt d\zeta).
\label{eq:Dtuliesink}\end{equation}
Repeating the argument above shows that the limit \eqref{eq:tlimit} exists in the $\ang{\zeta}^{1/2 + \epsilon}\Hdata^{k-2}$ topology, and thus the limit lies in $\ang{\zeta}^{1/2 + \epsilon}\Hdata^{k-2}(\RR^n_\zeta)$. 

Exactly the same argument shows that the limit of \eqref{eq:zerolimit} exists as $t \to -\infty$. However, because $u$ is obtained by applying the \emph{outgoing} propagator $\Rout$ to $v$, $u$ is above threshold (that is, in $H^{0, -1/2 + \epsilon}_{\mathrm{par}}$) microlocally away from $\Radp$. So in any region of the form $t \leq -1$, $|\zeta| \leq R$, we have $u' \in t^{1/2 - \epsilon} L^2(dt dz)$, which amounts to $\tilde u$ being in $t^{1/2 - \epsilon} L^2(dt d\zeta)$ for $(t, \zeta) \in (-\infty, -1] \times B(0, R)$. This is incompatible with the existence of the limit $\mathcal{L}_- u$ in \eqref{eq:zerolimit} unless the limit function vanishes in $B(0, R)$. Since $R$ is arbitrary this proves that $\mathcal{L}_- u = 0$.

 \end{proof}
 
 \begin{corollary}
 The incoming and outgoing Poisson operators $\Poim, \Poip$ satisfy 
 \begin{equation}
  \mathcal{L}_- \mathcal{P}_- = \mathcal{L}_+ \mathcal{P}_+  = \Id.
  \end{equation}
  \end{corollary}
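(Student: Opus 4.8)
The plan is to read off both identities directly from Definition~\ref{def:pert pois}, the free-data asymptotics \eqref{eq:asymptotics free}, and Proposition~\ref{prop:limits}. I would treat $\mathcal{L}_+ \Poip = \Id$ first, the identity $\mathcal{L}_- \Poim = \Id$ being obtained in exactly the same way after interchanging the roles of $\SR_+$ and $\SR_-$ (equivalently, of $\Rout$ and $\Rin$). Fix $f$ for which the construction makes sense, say $f \in \mathcal{S}(\RR^n)$, or more generally $f \in \Hdata^k(\RR^n)$ with $k$ large. By \eqref{eq:4} together with $P_0 \mathcal{P}_0 \equiv 0$, we have $\Poip f = \mathcal{P}_0 f - \Rin v$ with $v := (P - P_0)\mathcal{P}_0 f$.

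The next step is to check that $v$ has the regularity required to apply Proposition~\ref{prop:limits}. Since $P - P_0$ is a compactly supported differential operator of order $(2,0)$ and $\mathcal{P}_0 f$ is smooth (for Schwartz $f$; for $f \in \Hdata^k$ one invokes Proposition~\ref{prop:freePoissonbounds} instead to control $\mathcal{P}_0 f$ in the relevant module regularity space), the distribution $v$ is compactly supported in spacetime, hence lies in $H^{-1,\mathsf{l}_{max}+1;k}_{par,\mathcal{N}}(\RR^{n+1})$ for every $k \geq 2$ — on a compact spacetime set, $\mathcal{N}$-module regularity of any order reduces, via the single elliptic generator $E_1$, to ordinary parabolic Sobolev regularity. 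Thus Proposition~\ref{prop:limits}, applied with $u_- = \Rin v$, yields that $\mathcal{L}_+(\Rin v) = 0$ as a limit in $\ang{\zeta}^{1/2+\epsilon}\Hdata^{k-2}(\RR^n_\zeta)$, with the convergence quantitative.

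Finally, I would combine this with $\mathcal{L}_+ \mathcal{P}_0 f = f$, which is precisely \eqref{eq:asymptotics free}. Since both component limits exist (one in the Schwartz, resp.\ weighted module, topology and the other in $\ang{\zeta}^{1/2+\epsilon}\Hdata^{k-2}$), they may be added, giving $\mathcal{L}_+(\Poip f) = \mathcal{L}_+ \mathcal{P}_0 f - \mathcal{L}_+(\Rin v) = f - 0 = f$. Symmetrically, $\Poim f = \mathcal{P}_0 f - \Rout v$; Proposition~\ref{prop:limits} applied to $u_+ = \Rout v$ gives $\mathcal{L}_-(\Rout v) = 0$, and $\mathcal{L}_- \mathcal{P}_0 f = f$ by \eqref{eq:asymptotics free}, so $\mathcal{L}_- \Poim = \Id$.

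There is no serious obstacle: this is a bookkeeping argument. The one point that needs care is verifying that $v = (P-P_0)\mathcal{P}_0 f$ genuinely lands in the hypothesis space $H^{-1,\mathsf{l}_{max}+1;k}_{par,\mathcal{N}}$ of Proposition~\ref{prop:limits} — this is exactly where the compact spacetime support of $P - P_0$ enters — and ensuring the two limits being summed live in compatible topologies. If one wishes to state the corollary for all $f \in \Hdata^k(\RR^n)$ with $k$ large rather than merely Schwartz $f$, the boundedness of $\mathcal{P}_0$ on module regularity spaces from Proposition~\ref{prop:freePoissonbounds} supplies the needed regularity of $v$, after which the argument above goes through verbatim.
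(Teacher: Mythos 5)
Your argument is correct and is precisely the fleshing-out of the paper's one-line proof, which cites exactly \eqref{eq:asymptotics free} and Proposition~\ref{prop:limits}: you split $\Poip f = \mathcal{P}_0 f - \Rin v$ with $v = (P-P_0)\mathcal{P}_0 f$ via Definition~\ref{def:pert pois}, use the compact spacetime support of $P-P_0$ (and ellipticity of the generator $E_1$ there) to put $v$ in the hypothesis space of Proposition~\ref{prop:limits}, which gives $\mathcal{L}_+(\Rin v) = 0$, and then combine with $\mathcal{L}_+\mathcal{P}_0 f = f$ from \eqref{eq:asymptotics free}. The only point I would sharpen is the regularity bookkeeping for non-Schwartz $f \in \Hdata^k$: applying the second-order operator $P - P_0$ to $\mathcal{P}_0 f \in H^{1/2,\mathsf{l}_{\min};k}_{par,\mathcal{N}}$ costs two differential orders, so in general one only gets $v \in H^{-1,\mathsf{l}_{\max}+1;k'}_{par,\mathcal{N}}$ for some $k' < k$; for Schwartz $f$ this is moot, and for general $f$ it simply means requiring $k$ large enough, which matches the paper's usage.
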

  
  \begin{proof} This is an immediate consequence of \eqref{eq:asymptotics free} and the Proposition~\ref{prop:limits}. 
  \end{proof}

We will also need the following operator identity. 

\begin{proposition}\label{prop:PPidentity}
We have the identity
\begin{equation}\label{eq:PPidentity}
\mathcal{P}_+ \mathcal{P}_+^* = \mathcal{P}_- \mathcal{P}_-^* = i (2\pi)^{-n} \big( \Rout - \Rin \big).
\end{equation}
\end{proposition}

\begin{proof}
We first note that we have already shown that each of these three operators is bounded from $H^{-1/2, \sw_{max} + 1}_{\mathrm{par}}(\RR^{n+1})$ to $H^{1/2, \sw_{min}}_{\mathrm{par}}(\RR^{n+1})$. So to prove the equality, we need only consider the action on a dense subspace, such as $\mathcal{S}(\RR^{n+1})$. 

We thus consider $v \in \mathcal{S}(\RR^{n+1})$ and let $u_+ = \Rout v$, $u_- = \Rin v$ and $u = u_+ - u_-$, which therefore solves $Pu = 0$. 
Then $u_\pm$ are in $H^{1/2, \sw_{min}; N}_{par, \pm}$ for every $N$, i.e. they have infinite module regularity. Following the proof of  Proposition~\ref{prop:limits}, therefore, $u$ has an expansion as $t \to \pm \infty$, which we write in the form 
\begin{equation}\begin{gathered}
u = (4\pi it)^{-n/2} e^{i|z|^2/4t} \Big( f_\pm(\zeta) + O_{\Hdata^N_{\zeta}}(t^{- \epsilon'}) \Big) , \quad \zeta = \frac{z}{2t} \in B(0, R), \quad t \to \pm \infty,  \\
\in |z|^{1/2 + \epsilon} \ang{\zeta}^{-N} L^2(dz \, dt), \quad |\zeta| \geq R, 
\end{gathered}\label{eq:asympt}\end{equation}
for $t$ large and $N \in \NN$ and some $\epsilon' > 0$. Here the $f_\pm(\zeta)$ are in $\ang{\zeta}^{1/2 + \epsilon}\Hdata^k(\RR^n_\zeta)$ for arbitrary $k$, hence Schwartz functions, and do not depend on the choice of $R$. The second estimate follows from the fact, shown in the proof of Proposition~\ref{prop:limits}, that module regularity of order $k$ allows us to gain a factor $(\ang{t} /|z|)^k$. 

We now prove a pairing identity for two functions $u_1$ and $u_2$ such that $Pu_i$ in $\mathcal{S}(\RR^{n+1})$. Each function has such an expansion as $t \to \pm \infty$, with data $f_i^+$ as $t \to +\infty$ and $f_i^-$ as $t \to -\infty$. 
Then we claim that the following identity holds: 
\begin{equation}
\int_{\RR^{n+1}} ( u_1 \overline{P u_2} - P u_1 \overline{u_2} ) dg(t) dt = -i (2\pi)^{-n} \int_{\RR^n_\zeta} \big( f_1^+(\zeta) \overline{f_2^+(\zeta)} - f_1^-(\zeta) \overline{f_2^-(\zeta)} \big) \, d\zeta.
\label{eq:pairing}\end{equation}

To prove \eqref{eq:pairing} we write the LHS as 
$$
\lim_{R \to \infty} \int \chi\big( \frac{t}{R} \big) \chi\big( \frac{|z|}{R^2} \big) ( u_1 \overline{P u_2} - P u_1 \overline{u_2} ) \, dg(t) \, dt
$$
where $\chi \in C_c^\infty(\RR)$ is identically equal to $1$ near zero. 
Since $P$ is formally self-adjoint, we can shift derivatives from one factor of $P$ to the other and the only non-cancelling terms will be those where a derivative hits one of the $\chi$ factors. When derivatives hit the second $\chi$ factor, the integrand is supported where $|t| \leq C |z|^{1/2}$ and $|z| \sim R^2$. Using the second line of \eqref{eq:asympt} we see that the limit as $R \to \infty$ of these terms is zero. 

So now consider when a $t$-derivative hits the first $\chi$ factor. These occur when $|t| \sim R$, where $g(t)$ is the flat metric for $R$ sufficiently large. So we can write these terms as 
$$
\lim_{R \to \infty} -i \int \chi'\big( \frac{t}{R} \big) \chi\big( \frac{|z|}{R^2} \big)  u_1 \overline{ u_2}  \, dz \, \frac{dt}{R}.
$$
We substitute the first line of \eqref{eq:asympt} for $u_i$ and notice that only the leading order asymptotic of each contributes to the limit. Moreover, the second $\chi$ factor is $1$ on a ball $B(0, cR)$ in the $\zeta$ variable.  We further write $dz = (2t)^{-n} d\zeta$, change integration variable to $\zeta$ and we obtain \eqref{eq:pairing}, since 
$$
\int_0^\infty \chi'(s) ds = 1, \quad \int_{-\infty}^0 \chi'(s) ds = -1. 
$$

We now apply the pairing formula \eqref{eq:pairing} with $u_1 = \Poim a$, for some $a \in \mathcal{S}(\RR^n)$, and with $u_2 = u_-$ as defined above, i.e.\ $u_2 = \Rin v$. Then we find that $P u_1 = 0$, $f_1^- = a$ and $f_2^+ = 0$, so we obtain 
$$
\ang{\Poim a, v}_{L^2(\RR^{n+1})} = i (2\pi)^{-n}\ang{a, f_2^-}_{L^2(\RR^n)},
$$
from which follows $\Poim^* v = i (2\pi)^{-n} f_2^-$. We may also express $u = \Poim f_2^-$ as $u$ is the unique solution to $Pu = 0$ with incoming data $f_2^-$. We conclude that 
$$
\Poim \Poim^* v = i (2\pi)^{-n} \Poim f_2^- = i (2\pi)^{-n} u = i (2\pi)^{-n} \big( \Rout - \Rin \big) v  ,
$$
proving the proposition. 
\end{proof}

\begin{proof}[Completion of the proof of Proposition~\ref{prop:Poissonbounds}]
We need to show that $\Poip$ with domain $\mathcal{W}^k(\mathbb{R}^n)$
surjects onto \eqref{eq:Pkpos} when $k \ge 0$ and \eqref{eq:Pkneg}
when $k \le -1$. To do this, we let $u$ be an element of \eqref{eq:Pkpos} and use \eqref{Geq:commidentity7} together with \eqref{eq:PPidentity} to write
\begin{equation}
u = -i (2\pi)^n \Poip \Poip^* [P, Q_+] u.
\end{equation}
Thus, it clearly suffices to show that $\Poip^*[P, Q_+] u$ is in $\mathcal{W}^k$. Observe that, since $[P, Q_+]$ has order $(1, -1)$, and is microsupported away from the radial sets, $[P, Q_+] u$ is in $H_{\mathrm{par}}^{k-1/2, k+1/2}(\RR^{n+1})$. We choose a microlocal cutoff $Q$ that is microsupported away from the radial sets, and microlocally the identity on $\WF'([P, Q_+])$, and write 
\begin{equation}
u = -i (2\pi)^n \Big( \Poip \Poip^* Q^* [P, Q_+] u + \Poip \Poip^* (\Id - Q^*) [P, Q_+] u \Big).
\end{equation}
Using the dual of \eqref{eq:QP} (with $k$ replaced by $-k$), we find that $\Poip^* Q^* [P, Q_+] u$ is in $\mathcal{W}^k$ as required. On the other hand, by the microlocal support assumptions, $(\Id - Q^*) [P, Q_+] u$ is in $\mathcal{S}(\RR^{n+1})$, and by Corollary~\ref{cor:Schwartz}, we have $\Poip^* (\Id - Q^*) [P, Q_+] u$ is in $\mathcal{S}(\RR^n)$, which is even better. 

Surjectivity for \eqref{eq:Pkneg} is proved in exactly the same way. 
\end{proof}

\begin{proof}[Proof of Theorem~\ref{thm:Poissonbounds-intro}] The combination of Propositions~\ref{prop:Poissonbounds} and \ref{prop:limits} establishes\break Theorem~\ref{thm:Poissonbounds-intro}.
\end{proof}

\subsection{Scattering map}

The following theorem is a slight elaboration of Theorem~\ref{thm:sc-intro}. 

\begin{theorem} \label{prop:true scattering map}
  The scattering map, defined initially for $f \in
  \Hdata^k(\mathbb{R}^n)$ with $k \ge 2$ by
  $$
S(f) = \lim_{\substack{t \to \infty , \  z/2t \to  \zeta}} (4\pi it)^{n/2}  e^{- i |z|^2/4t}
\mathcal{P}_+f(z, t) \in \langle \zeta \rangle^\epsilon  \Hdata^{k - 2}(\mathbb{R}^n)
  $$
  in fact satisfies that
  \begin{equation}
    \label{eq:S mapping true}
    S \colon \Hdata^k(\mathbb{R}^n) \longrightarrow \Hdata^k(\mathbb{R}^n)
  \end{equation}
  is bounded, and extends naturally to a continuous mapping for all
  $k \in \mathbb{Z}$.
\end{theorem}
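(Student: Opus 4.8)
The plan is to derive \eqref{eq:S mapping true} by factoring the scattering operator through the Poisson operators. First I would establish that $S$, as initially defined on $\Hdata^k(\RR^n)$ for $k \geq 2$, can be written as $S = \mathcal{L}_+ \Poip$, where $\mathcal{L}_+$ is the limiting operator from Proposition~\ref{prop:limits}. Indeed, $\Poip f$ is the unique element of $\ker P$ with incoming data $f$ (Corollary to Proposition~\ref{prop:limits}), and by definition of $S$ its outgoing data is $\mathcal{L}_+ \Poip f$; so $Sf = \mathcal{L}_+ \Poip f$. The same reasoning gives $\mathcal{L}_- \Poim = \Id$, and since both $\Poip f$ and $\Poim(Sf)$ are the global solution with outgoing data $Sf$, uniqueness of the solution forces $\Poip f = \Poim(Sf)$. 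Thus $S$ is determined by the relation $\Poip = \Poim S$ on $\Hdata^k$, $k \geq 2$.

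The crucial step is to upgrade this to all $k \in \ZZ$ and establish boundedness on $\Hdata^k$. For this I would use the adjoint identities. From Proposition~\ref{prop:PPidentity} we have $\Poip \Poip^* = \Poim \Poim^* = i(2\pi)^{-n}(\Rout - \Rin)$, so $\Poip \Poip^* = \Poim \Poim^*$. Combined with the factorization $\Poip = \Poim S$ (valid a priori on a dense subspace such as $\mathcal{S}(\RR^n)$, using density of Schwartz functions in $\Hdata^k$), we get $\Poim S \Poip^* = \Poim \Poim^*$, hence $\Poim(S\Poip^* - \Poim^*) = 0$. Since $\Poim$ is injective — it is an isomorphism onto its range by Proposition~\ref{prop:Poissonbounds} — we conclude $S \Poip^* = \Poim^*$, equivalently $\Poip S^* = \Poim$ on the appropriate dual spaces. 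Dualizing Proposition~\ref{prop:Poissonbounds}: $\Poip^*$ maps $H^{k-1/2,\mathsf{l}_{max}+1}_{par, \mathcal{N}}$ onto $\Hdata^k(\RR^n)$ (the first statement of that proposition, dualized, for $k \in \NN$), and similarly $\Poim^*$ maps the same space onto $\Hdata^k(\RR^n)$; for $k$ negative one uses instead \eqref{eq:noQP} and its dual, namely $\Poipm^*$ maps $H^{-k-1/2, -k+1/2}_{par}$ boundedly to $\Hdata^k$. Writing $S = \Poim^* (\Poip^*)^{-1}$ on the range and invoking the fact that $\Poip^*$ is surjective with bounded inverse on a complement of its kernel, one reads off that $S$ is bounded $\Hdata^k \to \Hdata^k$.

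More concretely, I would argue as follows for $k \geq 2$: given $f \in \Hdata^k(\RR^n)$, by surjectivity of $\Poip^*$ (dual of the isomorphism \eqref{eq:perturbed poisson better bound}) there is $v$ with $\Poip^* v = f$ and $\|v\|_{H^{k-1/2,\mathsf{l}_{max}+1}_{par,\mathcal{N}}} \lesssim \|f\|_{\Hdata^k}$; then $Sf = S\Poip^* v = \Poim^* v$, and boundedness of $\Poim^*$ on the same space gives $\|Sf\|_{\Hdata^k} \lesssim \|v\| \lesssim \|f\|_{\Hdata^k}$. For general $k \in \ZZ$, including $k \leq 1$, the identity $S\Poip^* = \Poim^*$ persists (both sides bounded on the relevant spaces, agree on a dense set), and the same surjectivity-plus-boundedness argument applies using the dual of \eqref{eq:noQP} together with the mapping statements in Proposition~\ref{prop:Poissonbounds}; one also checks that the two definitions of $S$ — via $\mathcal{L}_+\Poip$ for $k \geq 2$ and via $\Poim^*(\Poip^*)^{-1}$ in general — agree on the overlap $k \geq 2$, which follows from the uniqueness in the factorization $\Poip = \Poim S$. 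The main obstacle I anticipate is bookkeeping the precise Sobolev and module-regularity indices so that the dualized mapping properties of $\Poipm^*$ land exactly in $\Hdata^k$ with matching source spaces, and handling the non-surjectivity/kernel issue of $\Poip^*$ cleanly — i.e. verifying that $S$ is well-defined by $S\Poip^* = \Poim^*$ independently of the choice of preimage $v$, which again reduces to injectivity of $\Poip$ (equivalently $\ker \Poip^{*\perp}$ matching up) via the identity $\Poip = \Poim S$ and injectivity of $\Poim$.
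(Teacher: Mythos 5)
Your proposal takes a genuinely different route from the paper. The paper also uses Proposition~\ref{prop:PPidentity}, but rather than trying to ``divide'' by $\Poip^*$ it derives an \emph{explicit formula} $S = (2\pi)^n\Poip^*\,i[P,A]\,\Poim$ by writing $u = (\Rin-\Rout)[P,A]u$ for a microlocal cutoff $A$ equal to the identity near $\SR_-$ and trivial near $\SR_+$; the mapping property on every $\Hdata^k$ then drops out of the factor-by-factor bounds because $[P,A]\in\Psi^{1,-1}_{par}$ is microsupported away from the radial sets, so the sharp estimates \eqref{eq:QP} and their duals apply directly, with the $(1,-1)$ order of the commutator exactly cancelling the imbalance between the fiber and spatial indices.

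The gap in your argument is the asserted surjectivity of $\Poip^*$ onto $\Hdata^k$ with a bounded right inverse, which is what lets you write ``choose $v$ with $\Poip^*v=f$ and $\|v\|\lesssim\|f\|$.'' Dualizing the isomorphism \eqref{eq:perturbed poisson better bound}, $\Poip\colon\Hdata^k\to H^{1/2,\swmin;k}_{par,\mathcal{N}}\cap\ker P$, gives an isomorphism from the quotient dual of that target space onto $\Hdata^{-k}$, \emph{not} onto $\Hdata^k$: the module-regularity index changes sign under duality. To get surjectivity onto $\Hdata^k$ for $k\geq 2$ you would need an isomorphism statement for $\Poip$ at negative module indices, i.e.\ a precise characterization of the range of $\Poip$ on $\Hdata^{-k}$. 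This is exactly the content of the second half of Theorem~\ref{thm:Poissonbounds-intro}, which is not available at this stage; the corresponding statement in Proposition~\ref{prop:Poissonbounds}, namely \eqref{eq:noQP}, only gives boundedness, not surjectivity with controlled inverse. Your index bookkeeping also breaks down: the dual of $H^{1/2,\swmin;k}_{par,\mathcal{N}}$ has fiber order $-1/2$ and module regularity $-k$, not fiber order $k-1/2$, so the claimed source space $H^{k-1/2,\swmax+1}_{par,\mathcal{N}}$ for $\Poip^*$ is not correct. (The well-definedness of $Sf=\Poim^*v$ independent of the preimage $v$ is fine, granted these points, since $\Poip$ and $\Poim$ have the same range and hence $\ker\Poip^*=\ker\Poim^*$, but the surjectivity-with-bounded-inverse step has no support.) The paper's explicit-formula proof avoids this entirely: the microlocalizers $Q,Q'$ in $S=(2\pi)^n\Poip^*Q\,i[P,A]\,Q'\Poim$ let one use the away-from-radial-set bounds, which are proven for all $k\in\ZZ$, without ever needing to invert $\Poip^*$.
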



\begin{proof} 
Let $f \in \Hdata^k(\RR^n)$, and let $u = \Poim f$. As in the previous proof, we use \eqref{Geq:commidentity7} together with \eqref{eq:PPidentity} to express 
\begin{equation}\begin{gathered}
u =  -i (2\pi)^n \Poim \Poim^* [P, Q_+] u = -i (2\pi)^n \Poip \Poip^* [P, Q_+] u .
\end{gathered}\end{equation}

It follows that the outgoing data for $u$ is $\mathcal{L}_+ u = -i (2\pi)^n \Poip^* [P, Q_+] u$. That is, the scattering map $S$ has the form (similarly to \cite[Proposition 5.1]{Vasy1998}) 
\begin{equation}\label{eq:S}
S = -i (2\pi)^n \Poip^* [P, Q_+] \Poim.
\end{equation}

Next, Corollary~\ref{cor:Schwartz} shows that, up to an operator mapping $\Hdata^k(\RR^n)$ to $\mathcal{S}(\RR^n)$ for any $k \in \NN$, this is equal to 
\begin{equation}
-i (2\pi)^n \Poip^*  Q \, [P, Q_+] \, Q' \Poim
\end{equation}
where $Q, Q'$ are operators of order $(0,0)$ that are microlocally the identity on $\WF'([P, A])$ and microlocally trivial in a neighbourhood of the radial sets. 

The mapping property then follows from
Proposition~\ref{prop:Poissonbounds}. In fact, $Q' \Poim$ maps
$\Hdata^k$ to $H^{k+1/2, k-1/2}_{\mathrm{par}}$; the operator $[P, Q_+]$ is order
$(1, -1)$ so maps to $H^{k-1/2, k+1/2}_{\mathrm{par}}$; and then the adjoint
operator $\Poip^* Q$ maps to $\Hdata^k$. Thus $S$ (defined this way)
extends to a map on all $\Hdata^k$. This completes the proof. 

\end{proof}

  


\section{Appendix: Global propagation of regularity and Fredholm estimates}
\label{sec:fredholm}
In this appendix we prove the general propagation of regularity results on the full phase space $\overline{T}^*_{\mathrm{par}}\mathbb{R}^{n
	+ 1}$.  

We treat more general
parabolic differential operators $\newP \in \Psipcl m l$ and establish two microlocal estimates controlling $u$ in
terms of itself and $\newP u$. 
The first of these estimates, Proposition~\ref{prop:sing}, is microlocalised to the subset of the characteristic variety $\Sigma(\newP )$ where the renormalised Hamiltonian vector field $H^{m,l}$ of $L$ is nonvanishing, and amounts to the standard propagation of regularity theorem in the parabolic setting.
The second estimate is microlocalised to a neighbourhood of the radial set $\SR$ where $H^{m,l}$ vanishes, and in this region we employ radial set estimates as introduced by Melrose \cite{RBMSpec}.

\subsection{Positive commutator estimates away from radial sets}
\label{subsec:positive.commutator}

In the subset of $\Char(\newP )$ where the renormalised Hamiltonian vector field $H^{m,l}$ is nonvanishing, we have positive commutator estimates analogous to H\"{o}rmander's propagation theorem for real principal type operators.

\begin{prop}
	\label{prop:sing}
	Let $\newP \in\Psipcl{k}{l}$ be an operator of real principal type and let $Q,Q',G\in\Psip{0}{0}$ with $G$ elliptic on $\WF'(Q)$.
	
	Assume that $\mathsf{m}$ is a variable spacetime order that is nonincreasing in the direction of the bicharacteristic flow of $L$. Furthermore, suppose that for every $\alpha\in\WF'(Q)\cap\Sigma_\newP  $ there exists $\alpha'$ such that $Q'$ is elliptic at $\alpha'$ and there is a forward bicharacteristic curve $\gamma$ of $\newP $ from $\alpha'$ to $\alpha$ such that $G$ is elliptic on $\gamma$.
	
	Then if $G\newP u\in H_{\mathrm{par}}^{s-k+1,\mathsf{m} -l +1}$ and $Q'u\in H_{\mathrm{par}}^{s,\mathsf{m}}$, we have $Q u\in H_{\mathrm{par}}^{s,\mathsf{m}}$ with the estimate
	
	\[\|Qu\|_{H_{\mathrm{par}}^{s,\mathsf{m}}}\leq C(\|Q' u \|_{H_{\mathrm{par}}^{s,\mathsf{m}}}+\|G\newP u\|_{H_{\mathrm{par}}^{s-k+1,\mathsf{m}-l+1}}+\|u\|_{H_{\mathrm{par}}^{M,N}})\]
	for any $M,N\in\RR$.
\end{prop}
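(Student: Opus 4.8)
The plan is to prove Proposition~\ref{prop:sing} by a positive commutator (energy) argument, following H\"ormander's original method as adapted to the scattering-type setting in \cite{VD2013} and \cite{NEH}, but now carried out uniformly up to both spacetime infinity and fibre infinity in the parabolic calculus. The idea is to construct a commutant $A = A^* \in \Psi^{2s-m+1, 2\mathsf{r}-l+1}_{\delta, \mathrm{par}}$ whose symbol $a$ is large near the point(s) we wish to control, concentrated in $\WF'(Q)$, and which decreases along the bicharacteristic flow of $H^{m,l}_p$ in such a way that $H^{m,l}_p a^2$ is (up to lower order terms and a term controlled by $Q'$) a nonnegative multiple of $a^2$ itself, with all `escaping' regions microsupported where $G$ is elliptic. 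Concretely, one builds $a$ from a function along the flow of the form $e^{-F}$ times bump functions in the transverse directions, with $F$ chosen so that $H^{m,l}_p F$ has a definite sign; the monotonicity of the variable order $\mathsf{r}$ along the flow is used precisely to ensure the weight contributes a term of favourable (or at worst controllable) sign, rather than spoiling the estimate — this is where the $\delta>0$ loss in the variable-order calculus enters.

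First I would set up the regularization: replace $a$ by $a_\eta = a\, \phi_\eta$ where $\phi_\eta \in S^{-1,-1}_{\mathrm{par}}$ is a cutoff tending to $1$, so that all pairings below are a priori finite given the background regularity $u \in H^{M,N}_{\mathrm{par}}$, and prove the estimate uniformly in $\eta$ before taking $\eta \to 0$. Then I would compute the commutator: using Proposition~\ref{prop:parcalc}, $\sigma([A,L]) = \tfrac{1}{i} H_p a_\eta$ modulo lower order, so that
\begin{equation}
i\bigl([A, L] + (L - L^*)A\bigr) = B^*B + E + R + (\text{terms micro-supported in }\Ell(G)),
\end{equation}
where $B \in \Psi^{s-(m-1)/2, \mathsf{r}-(l-1)/2}_{\delta,\mathrm{par}}$ has symbol essentially $\sqrt{-H^{m,l}_p a}$ (elliptic near the target, by the construction of $a$), $E \in \Psi^{2s-m, 2\mathsf{r}-l}$ is supported in $\Ell(Q')$, and $R$ is a genuinely lower-order (compact) error. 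Pairing this identity with $u$ and estimating $\ang{i[A,L]u,u} = 2\operatorname{Im}\ang{Au, Lu}$ by Cauchy–Schwarz, absorbing a small multiple of $\|Bu\|^2$ into the left side, and using ellipticity of $G$ on the escaping region to bound those terms by $\|GLu\|_{H^{s-m+1,\mathsf{r}-l+1}}$ and $\|u\|_{H^{M,N}}$, yields
\begin{equation}
\|Bu\|^2 \leq C\bigl(\|Q'u\|^2_{H^{s,\mathsf{r}}} + \|GLu\|^2_{H^{s-m+1,\mathsf{r}-l+1}} + \|u\|^2_{H^{M,N}}\bigr).
\end{equation}
Since $B$ is elliptic at $\WF'(Q)\cap\Sigma(L)$, and elliptic regularity (Proposition~\ref{prop:elliptic.estimate}) handles $\WF'(Q)$ off the characteristic set, a microlocal elliptic parametrix for $B$ together with a covering argument along each bicharacteristic segment (compactness of $\gamma$) gives the claimed bound on $\|Qu\|_{H^{s,\mathsf{r}}}$.

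The main obstacle, and the step requiring the most care, is constructing the commutant $a$ globally as a symbol in $\overline{T}^*_{\mathrm{par}}\mathbb{R}^{n+1}$ that simultaneously has the right weights at spacetime infinity, at fibre infinity, and at the corner, while respecting the anisotropic (parabolic) scaling and the variable order $\mathsf{r}$: this is where the geometry from Section~\ref{sec:schrod geom} — the b-tangency of $H^{m,l}_p$ to the boundary, and the fact that the flow is a nondegenerate reparametrization of the geodesic flow in the interior of each boundary face — must be used to ensure $H^{m,l}_p a$ extends smoothly and has the correct sign structure up to the boundary. A secondary technical point is that because $\mathsf{r}$ is merely nonincreasing (not strictly decreasing) along the flow, the weight term $\ang{Z}^{-1}(H^{m,l}_p \mathsf{r})\log\ang{Z}$ arising from differentiating $\ang{Z}^{\mathsf{r}}$ is of order $\delta$ rather than genuinely lower order, so one must check it has the correct sign on $\Sigma(L)$ — which it does, by the monotonicity hypothesis — and absorb it accordingly. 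Once these are in place, the rest is the standard positive commutator bookkeeping, and one may alternatively cite the nearly identical argument in \cite[Proposition~5.15]{grenoble} or \cite{NEH} for the variable-order modifications.
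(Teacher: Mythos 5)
Your proposal is correct and takes the same route as the paper, which itself does not write out the argument but observes that the proof is essentially identical to that of \cite[Theorem~5.4]{grenoble} (with the quasi-homogeneous fibre boundary defining function replacing the homogeneous one), precisely the positive commutator estimate with a monotone commutant, variable-order weight contributing a favourable sign, regularization, and an escape term absorbed by $Q'$ that you sketch. One small bookkeeping slip: with $A\in\Psi^{2s-m+1,2\mathsf{r}-l+1}_{\delta,\mathrm{par}}$ the operator $i[A,L]$ has order $(2s,2\mathsf{r})$, so the factor $B$ should lie in $\Psi^{s,\mathsf{r}}_{\delta,\mathrm{par}}$ with symbol essentially $\sqrt{-H_p a}$ (the unrescaled Hamilton derivative), not in $\Psi^{s-(m-1)/2,\mathsf{r}-(l-1)/2}_{\delta,\mathrm{par}}$ as written; otherwise one only controls $\|Qu\|$ in a weaker norm than claimed.
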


The proof of Proposition \ref{prop:sing} is essentially identical to that of  \cite[Theorem~5.4]{grenoble}. The only difference is that the boundary defining function for the fiber compactification $\rho_\mathrm{fib}$ is in our setting given by the quasi-homogeneous $(1 + R^4)^{-1/4}$, as in \eqref{eq:fiber weight function}, and so the Sobolev spaces in the theorem become the parabolic Sobolev spaces considered in this paper.

\begin{remark}
	When $\WF'(Q),\WF'(Q')$ and $\WF'(G)$ are disjoint from the
	corner of the compactified phase space $\overline{T}^*_{\mathrm{par}}\mathbb{R}^{n + 1}$, Proposition \ref{prop:sing} can be
	obtained as a direct consequence of standard propagation
	theorems valid on the boundary faces $\partial_{\mathrm{base}} \overline{T}^*_{\mathrm{par}}\mathbb{R}^{n + 1}$ and
	$\partial_{\mathrm{fib}} \overline{T}^*_{\mathrm{par}}\mathbb{R}^{n + 1}$. The original propagation theorem due to
	H\"{o}r\-mander \cite{Hormander:Existence}, as first used by
	Melrose \cite{RBMSpec} in the scattering setting, is valid on the interior of 
	$\partial_{\mathrm{base}} \overline{T}^*_{\mathrm{par}}\mathbb{R}^{n + 1}$ where the anisotropic nature of
	$\Psi_{\mathrm{par}}$ plays no role. On the other hand, the
	propagation theorem is proven for $\Psi_{\mathrm{par}}$ and
	general  anisotropic pseudodifferential calculi on the interior of the boundary face $\partial_{\mathrm{fib}} \overline{T}^*_{\mathrm{par}}\mathbb{R}^{n + 1}$ in \cite{lascar}.
\end{remark}

\subsection{Positive commutator estimates near the radial sets}
\label{subsec:positive.commutator.radial}

In this section we write down the microlocal propagation estimates for a general operator $\newP \in \Psipcl{m}{l}(\RR^{n+1})$ with real principal symbol near its radial set $\SR_\newP$ (see Definition~\ref{def:radial set}).

The proofs of these results are essentially identical to the positive commutator estimates in the standard scattering calculus, and we follow the presentation of \cite{grenoble},\cite{hintz},\cite{RBMSpec}.

We are interested in the study of propagation estimates near a radial set $\SR_\newP$ extending into the corner of the compactified phase space $\overline{T}^*_{\mathrm{par}}\mathbb{R}^{n + 1}$. 
We shall consider the case of a radial set $\SR_\newP \subset \partial_{\mathrm{base}} \overline{T}^*_{\mathrm{par}}\mathbb{R}^{n + 1}$ that meets the other boundary face $\partial_{\mathrm{base}} \overline{T}^*_{\mathrm{fib}}\mathbb{R}^{n + 1}$ transversally.

Let $\newP \in\Psi^{m,l}_\mathrm{par, cl}$ have real principal symbol $p$, and let 
$\hat p$ denote $\rhob^m \rhof^l p$ (where $\rhob$ and $\rhof$ are defined by \eqref{rho-defn} and \eqref{rhofib}), which by the assumption of classicality of $\newP$ is a smooth function on $\overline{T}^*_{\mathrm{par}}\mathbb{R}^{n + 1}$. 

We recall from Section~\ref{sec:scatcalc} that the vector field 
	\begin{equation}\label{eq:renormalised}
		H^{m,l}:= \rhof^{1-m} \rhob^{1-l} H_\newP 
	\end{equation}
	extends to a smooth vector field tangent to the boundary faces of $\overline{T^*\RR^n}$. By definition, $\SR_\newP$ is the subset of $\Sigma(\newP)$ where $H^{m,l}$ vanishes. 
We assume that $\SR_\newP$ is a smooth submanifold of $\Sigma(\newP)$ of codimension $k$ that meets $\partial_{\mathrm{fib}} \overline{T}^*_{\mathrm{fib}}\mathbb{R}^{n + 1}$ transversally. As we have seen, this assumption holds for our specific operator $P$ with $k = n$. As a submanifold of $\partial_{\mathrm{base}} \overline{T}^*_{\mathrm{fib}}\mathbb{R}^{n + 1}$, $\SR_\newP$ can be characterized by 
\begin{equation}\label{eq:quad.non.degen}
\SR_\newP = \{ 	\rho_{\mathcal{R}_L}=0, \  \hat p =0 \},
\end{equation}
where $\rho_{\mathcal{R}_L}$ is a quadratic defining function for $\SR_\newP$ as a submanifold of $\Sigma(\newP)$, that is, $\rhor=\sum_{j=1}^k \rho_{\mathcal{R}_L,j}^2$, with the  $\rho_{\mathcal{R}_L,j}$ a collection of $k$ smooth functions vanishing on $\mathcal{R}$ with linearly independent differentials. 

Our main assumption on $\SR_\newP$ will be that it is either a \emph{source} or a \emph{sink} of the Hamilton vector field. To explain what this means concisely, we adopt the convention in the remainder of this Section that in all future occurrences of $\pm$ and $\mp$, the top sign choice corresponds to the sink situation and the bottom corresponds to the source situation. 
By a source/sink we mean that $H^{m,l} \rho_{\mathcal{R}_L}$ is nonnegative/nonpositive in a neighbourhood of the radial set, and that $H^{m,l} \rhob$ is `strictly' nonnegative/nonpositive in the sense that 
\begin{equation}
	\label{eq:beta}
	H^{m,l} \rhob=\mp\beta \rhob
\end{equation}
where $\beta\in\SC^\infty(\overline{T^*\RR^{n+1}})$ is strictly positive on $\SR_\newP$. 
As a consequence of this first condition, taking $\phi\in\mathcal{C}_c^\infty([0,\infty))$, equal to $1$ near $0$ and decreasing, then 
\begin{equation}
	\label{eq:phi1}
	\phi_1:=\sqrt{\pm H^{m,l}\big(\phi( \rho_{\mathcal{R}} + \rhob^2)\big)}
\end{equation}
is non-negative, smooth, and vanishes near the radial set. 
Furthermore, we require that 
\begin{equation}
	\label{eq:beta1}
	H^{m,l} \rhof=\mp 2\beta\beta_1 \rhof
\end{equation}
with $\beta_1\in\SC^\infty(\overline{T^*\RR^{n+1}})$ vanishing on $\mathcal{R}$. 
We introduce $\tilde{p},q\in\mathcal{C}^\infty(\partial (\overline{T^*\RR^{n+1}}))$ by setting
\begin{equation}
	\label{eq:subprincipal}
	\tilde{p}=\sigma_{\mathrm{par},m-1,l-1}(\frac{1}{2i}(\newP -\newP ^*))=\pm \beta q\rhof^{1-m}\rhob^{1-l}
\end{equation}
Finally we choose $\phi_0\in\mathcal{C}_c^\infty(\RR)$ identically $1$ near, and supported sufficently close to $0$. We shall use $\phi_0\circ \hat{p}$ to localize near the characteristic set $\Sigma_\newP =\hat p^{-1}(0)$.

In the specific case of the operator $P = D_t + \Delta_g + V$, we have seen via explicit calculation in Section~\ref{subsec:radial} that its rescaled Hamilton vector field is a sink near $\SR_+$ and a source near $\SR_-$. Moreover, those calculations shows that  $\beta = 2$ on the radial set, while it is clear that $\beta_1$  and $q$ are both zero near the spacetime boundary. Thus all these conditions are fulfilled for the operator $P$. 

In order to prove microlocal estimates near the radial set, we need to come up with an operator $A$ so that its commutator with $L$, or more exactly the operator on the LHS of \eqref{eq:commutatorAL}, has a positive symbol at the radial set. To this end, we now define \begin{equation}
	\label{eq:symboldef}
	a=\phi(\rho_{\mathcal{R}} + \rhob^2)^2\phi_0(\hat{p})^2\rhob^{-l'}\rhof^{-m'},
\end{equation}
which we may assume is supported in a given small neighbourhood $U$ of $\SR_\newP$, and compute the principal symbol
\begin{equation}\label{eq:commutatorAL}
\sigma_{m+m'-1, l+l'-1}([A,\newP ]+(\newP -\newP ^*)A) = -(H_{\newP}a+2\tilde{p}a),
\end{equation}
 where $A\in\Psip{m'}{l'}$ is the symmetric operator with principal symbol $a$ given by 
 \begin{equation}\label{eq:AA}
 A = (A^{1/2})^2 , \quad A^{1/2} =  \frac{q_L(\sqrt{a}) + q_L(\sqrt{a})^*}{2} .
 \end{equation}
We have 
\begin{align*}
	H_\newP a &= \rho_\mathrm{base}^{1-l}\rho_\mathrm{fib}^{1-m}H^{m,l}a\\
	&= \rho_\mathrm{base}^{1-l-l'}\rho_\mathrm{fib}^{1-m-m'}(2\phi\phi_0^2H^{m,l}\rho_{\mathcal{R}}+2\phi^2\phi_0\phi_0'H^{m,l}\hat p)\\
	&+ \rho_\mathrm{base}^{1-l}\rho_\mathrm{fib}^{1-m}\phi^2\phi_0^2(-l'\rho_\mathrm{base}^{-l'-1}\rho_\mathrm{fib}^{-m'}H^{m,l}\rho_{\mathrm{base}}-m'\rho_{\mathrm{base}}^{-l'}\rho_{\mathrm{fib}}^{-m'-1}H^{m,l}\rho_{\mathrm{fib}})\\
	&= \rho_{\mathrm{base}}^{-l-l'+1}\rho_{\mathrm{fib}}^{-m-m'+1}(\pm 2\phi\phi_0^2\phi_1^2+2\phi^2\phi_0\phi_0'H^{m,l}\hat{p}\pm l'\phi^2\phi_0^2\beta\pm 2m'\phi^2\phi_0^2\beta\beta_1).
\end{align*} 

Hence we can express \eqref{eq:commutatorAL} as
\begin{multline}\label{eq:comm}
	-(H_\newP a+2\tilde{p}a) =\rho_{\mathrm{base}}^{-l-l'+1}\rho_{\mathrm{fib}}^{-m-m'+1} \\
	\times \Big(\mp 2\phi\phi_0^2\phi_1^2-2\phi^2\phi_0\phi_0'H^{m,l}\hat{p}\mp l'\phi^2\phi_0^2\beta\mp 2m'\phi^2\phi_0^2\beta\beta_1\mp 2\beta q \phi^2\phi_0^2 \Big).
\end{multline}
Recall that $\phi_0$ cuts off near $\Sigma_\newP $, and $\phi$ cuts off near $\mathcal{R}$ where $\phi_1'=0$. Hence the first term in \eqref{eq:comm} is supported in a punctured neighbourhood of  the radial set, and the second term involving $\phi_0'$ is supported away from the characteristic set. The latter is easily treated by using microlocal elliptic estimates.

The sum of the final three terms in \eqref{eq:comm} has sign determined by that of
\begin{equation}
\mp(l'+2m'\beta_1+ 2q ).
\label{eq:quantity}\end{equation}
In particular, if $l'+2m'\beta_1+2q > 0$ on $\mathcal{R}$, then this sign matches that of the first term in \eqref{eq:comm}. (Notice that in the case of the specific operator $P$, this quantity is just $\mp l'$.)

We require that the quantity \eqref{eq:quantity} has a definite sign in order to run the positive commutator argument, drawing different conclusions in the two sign cases. 
Suppose that we want to estimate $u$ (or a microlocalized version of $u$) in the $H^{s,r}_{par}$ norm. This requires that we choose $m'$ and $l'$ (the orders of $A$) to satisfy 
\begin{equation}\label{eq:m'l'}
2s= m+m'-1, \quad 2r=l+l'-1
\end{equation}
and recalling that $\beta_1$ vanishes on $\SR$, we require (eliminating $l'$ from \eqref{eq:quantity} that $r+q-\frac{l-1}{2}$ has definite sign on $\mathcal{R}$. 
We obtain estimates for both signs, but the estimates have slightly different characters. 
If $r+q-\frac{l-1}{2}$ is positive, then we obtain microlocal regularity if we assume a priori that $u$ is microlocally in $H^{s,r'}_{\mathrm{par}}$ for some $r' \in [r-1/2, r)$ for which we still have $r'+q-\frac{l-1}{2}>0$. If this quantity is negative, we obtain instead propagation of regularity `towards' the radial set from a punctured neighbourhood of the radial set.

In the case of the specific operator $P$, we have $q=0$ and $0$ so the condition becomes that $r - (-1/2)$ has definite sign. This shows that $r=-1/2$ is a threshold value of the spacetime order, where different behaviours occur above and below this value. 

Returning to the general operator $\newP$, in the case $r+q>\frac{l-1}{2}$, we use  \eqref{eq:comm} and \emph{formally} compute $$\langle i([A,\newP ]+(\newP -\newP ^*)A)u,u\rangle$$ (that is, ignoring regularity conditions for pairing distributions, and integrating by parts) to obtain 
\begin{equation}\label{eq:naive.above.threshold}
	\mp 2\Im \langle Au,\newP u \rangle=\|B_1 u\|^2 + \|B_2 u\|^2 + \langle Fu,u\rangle + \langle Ru,u \rangle .
\end{equation}
Here $B_j=q_L(b_j), F=q_L(f)$, where, using \eqref{eq:m'l'} to replace $m'$ and $l'$ by $s$ and $r$, 
\begin{equation}
	\label{eq:b1def}
	b_1=\phi\phi_0\sqrt{\beta(2r-l+1+2\beta_1(2s-m+1)+2q)}\rhob^{-r}\rhof^{-s}
\end{equation}
\begin{equation}
	\label{eq:b2def}
	b_2=\sqrt{2\phi}\phi_0\phi_1\rhob^{-r}\rhof^{-s}
\end{equation}
\begin{equation}
	f=\pm 2\phi^2\phi_0\phi_0' (H^{m,l}\hat p)\rhob^{-2r}\rhof^{-2s}
\end{equation}
and $R\in \Psip{2s-1}{2r-1}$.

In the special case $\newP u=0$ for example, \eqref{eq:naive.above.threshold} then yields the estimate 
\begin{equation}
	\label{eq:main.ineq}
	\|B_1u\|^2 \leq |\langle Fu,u \rangle|+|\langle Ru,u \rangle|
\end{equation}
Let $Q=\Lambda B_1 \in \Psip{0}{0}$ where $\Lambda=\mathrm{Op}(\rhob^{r}\rhof^{s})$ and take $Q''\in \Psip{0}{0} $ elliptic on $\WF'(A)$. We make the assumption that $Q''u\in H^{s-1/2,r'}$ as foreshadowed above. 
As $\WF'(F) \subset \WF'(A) \subset U$ is disjoint from the characteristic set of $\newP $, we may choose $\tilde Q\in\Psip{0}{0}$ such that $\WF'(\tilde Q) \subset \Ell(Q'')$ and $\tilde Q$ is microlocally equal to the identity on $\WF'(F)$. We can then estimate, for arbitrary $M,N$,   
\begin{multline}
	\label{key2.5}
	|\langle Fu, u\rangle|\leq C \big( |\langle Fu, \tilde Qu\rangle| + \|u\|^2_{M,N}\big) \\
	\leq C \Big( \|Fu\|_{H^{-s+1,-r+1}}^2+\|\tilde Q u\|_{H^{s-1,r-1}}^2+\|u\|^2_{M,N}\Big)\\
	\leq C  \Big( \|GLu\|_{H^{s-m+1,r-l+1}}^2+\|Q'' u\|_{H^{s-1,r-1}}^2+\|u\|^2_{M,N}\Big)
\end{multline} 
using the fact that we can invert elliptic operators microlocally --- see Proposition~\ref{prop:elliptic param}. So, for example, we invert $GL$ microlocally on $\WF'(F)$ to write $F = A GL + R'$ with $A \in \Psip{2s-m}{2r-l}$ and $R' \in \Psip{-\infty}{ -\infty}$. The last term in \eqref{eq:main.ineq} can be estimated similarly:
\begin{equation} \label{key2.75}\begin{aligned}
	|\langle Ru, u\rangle |&\leq C \Big( \|Ru\|_{H^{1/2-s,-r'}}^2 + \|\tilde Q u\|^2_{H^{s-1/2,r'}}+ \|u\|^2_{H^{M,N}} \Big)\\
	&\leq C\Big( \|Q''u\|^2_{H^{s-1/2,r'}}+ \|u\|^2_{H^{M,N}} \Big)
\end{aligned}\end{equation}
where $M,N\in\RR$ are again arbitrary.
Inserting these estimates into \eqref{eq:main.ineq}, we obtain
\begin{equation}\label{key3}
	\|Qu\|_{H^{s,r}}\leq C(\|G\newP u\|_{H^{s-m+1,r-l+1}}+\|Q''u\|_{s-1/2,r'}+\|u\|_{H^{M,N}}).
\end{equation}

In the second case with $r+q < \frac{l-1}{2}$, the above calculation is similar, but \eqref{eq:naive.above.threshold} is replaced with
\begin{equation}\label{eq:naive.below.threshold}
	\mp 2\Im \langle Au,\newP u \rangle=-\|B_1 u\|^2 + \|B_2 u\|^2 + \langle Fu,u\rangle + \langle Ru,u \rangle 
\end{equation}
where \eqref{eq:b1def} is replaced by 
\begin{equation}\label{eq:b1.below}
	b_1=\phi\phi_0\sqrt{\beta(l-2r-1-2\beta_1(2s-m+1)-2q)}\rhob^{-r}\rhof^{-s}
\end{equation}

The changed sign of $B_2$ relative to $B_1$ means that we additionally need microlocal control of $u$ on $\WF'(B_2)$, which lies in a punctured neighbourhood of the radial set $\SR$. This can be achieved by using the standard propagation estimate of Proposition \ref{prop:sing} away from the radial set, and leads to an additional term $\|Q'u\|_{H_\mathrm{par}^{s,r}}$ in the estimate, provided $Q'$ and $G$ satisfy the bicharacteristic condition in Proposition \ref{prop:sing}. 

One can also relax the assumption $\newP u=0$ to $G \newP u \in H^{s-m+1, r-l+1}_{\mathrm{par}}$, which only leads to the additional consideration of the term $\langle Au, \newP u\rangle$ in \eqref{eq:naive.above.threshold} and \eqref{eq:naive.below.threshold}.
We  absorb the contribution of this term into the positivity of $b_1$, by replacing the symbol $b_1$ with $\tilde{b}_1^2=b_1^2-\delta a \rhob^{l'-2r}\rhof^{m'-2s}>0$ for sufficiently small $\delta$. Then we have 
\begin{equation}
	\label{eq:main.ineq2}
	\|\tilde{B}_1u\|^2 \leq -\delta  \|\Lambda A^{1/2}u\|^2 +|\langle Fu,u \rangle|+|\langle Ru,u \rangle|+2|\Im \langle A^{1/2}u,A^{1/2}\newP u\rangle|
\end{equation}
where $\Lambda=\mathrm{Op}(\rhob^{l'/2-r}\rhof^{m'/2-s})$ and $A^{1/2}$ is given by \eqref{eq:AA}. 
Taking $\tilde\Lambda$ an elliptic parametrix to $\Lambda$, we have
\begin{align}
	|\langle A^{1/2}u, A^{1/2}\newP u\rangle|&\leq |\langle \Lambda A^{1/2}u, \tilde\Lambda A^{1/2}\newP u\rangle|\\
	&\leq \frac{\delta}{2}\|\Lambda A^{1/2}u\|^2+\frac{1}{2\delta}\|\tilde\Lambda A^{1/2}\newP u\|^2
\end{align}
and the first of these terms is absorbed by the first term on the right hand side of \eqref{eq:main.ineq2}, whilst the latter is bounded (recalling $G$ is elliptic on $U \supset \WF'(A)$) by
$$
\frac{C}{2\delta}\|G \newP u\|_{H^{s-m+1,r-l+1}} + C \|u\|_{H_\mathrm{par}^{M,N}}.
$$

We now state and prove the propagation result in the two cases. Our assumptions are as above for both results; that is, we assume that $L\in\Psipcl{m}{l}(\RR^{n+1})$ is a classical pseudodifferential operator in the parabolic scattering calculus that has real principal symbol, such that its radial set $\SR_\newP$ is a codimension $k$ submanifold of $\Sigma(\newP)$ contained in $\partial_{\mathrm{base}} \overline{T}^*_{\mathrm{fib}}\mathbb{R}^{n + 1}$ that meets $\partial_{\mathrm{base}} \overline{T}^*_{\mathrm{fib}}\mathbb{R}^{n + 1}$ transversally. We assume that $\SR_\newP$ is either a source or a sink for the rescaled Hamilton vector field in the sense described above, and for 
either the top (sink) or bottom (source) sign choices in \eqref{eq:phi1}, \eqref{eq:beta}, \eqref{eq:beta1}, we assume that $\phi_1$,$\beta_1$ are smooth and vanish near $\SR$ and on $\SR$ respectively, and that $\beta$ is smooth and positive on $\SR$.

\begin{prop}
	\label{prop:below}
	Suppose $L\in\Psip{m}{l}(\RR^{n+1})$ is as above. 	For the $q$ defined in \eqref{eq:subprincipal}, suppose that  $r+q<\frac{l-1}{2}$ on $\SR_\newP$. Assume that there exists a neighbourhood $U$ of $\SR_\newP$ and $Q', Q'', G\in\Psi_{\mathrm{par}}^{0,0}(\RR^{n+1})$, with $U \subset \Ell(Q'')$ and such that for every $\alpha\in p^{-1}(0)\cap U \setminus \SR_\newP$ the bicharacteristic $\gamma$ through $\alpha$ enters $\Ell(Q')$ whilst remaining in $\Ell(G)$. Then there exists $Q\in\Psi_{\mathrm{par}}^{0,0}(\RR^{n+1})$ elliptic on $\SR_\newP$ such that if $u \in H^{M,N}$, $Q'u\in H^{s,r}$, $Q''u\in H^{s-1/2,r-1/2}$ and $G\newP u\in H^{s-m+1,r-l+1}$, then $Qu\in H^{s,r}$ and  there is $C>0$ such that
	\begin{equation}\label{key superfluous label 2}
		\|Qu\|_{H_\mathrm{par}^{s,r}}\leq C(\|Q'u\|_{H_\mathrm{par}^{s,r}} +\|G\newP u\|_{H_\mathrm{par}^{s-m+1,r-l+1}}+\|Q''u\|_{H_\mathrm{par}^{s-1/2,r-1/2}}+\|u\|_{H_\mathrm{par}^{M,N}}).
	\end{equation}
\end{prop}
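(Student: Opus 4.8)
The plan is to assemble the positive‑commutator machinery already set up in this subsection with the real‑principal‑type propagation estimate Proposition~\ref{prop:sing} and microlocal elliptic regularity, and then to run a regularization argument to make the formal pairings rigorous; the proof is then structurally identical to the below‑threshold radial point estimate in the scattering calculus (cf.\ \cite{grenoble}, \cite{RBMSpec}, \cite{hintz}). I would take the symmetric operator $A=(A^{1/2})^2$ of \eqref{eq:AA} with principal symbol $a$ as in \eqref{eq:symboldef}, and fix the orders $m',l'$ by \eqref{eq:m'l'} so that $A\in\Psip{2s-m+1}{2r-l+1}$ and the resulting estimate controls the $H^{s,r}_\mathrm{par}$ norm. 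The commutator symbol identity \eqref{eq:commutatorAL}--\eqref{eq:comm} has already been computed, and the hypothesis $r+q<\tfrac{l-1}{2}$ on $\SR_\newP$ is exactly the condition placing us in the below‑threshold regime, so that the pairing identity takes the form \eqref{eq:naive.below.threshold} (with the term $\langle Au,\newP u\rangle$ handled by the modification $\tilde b_1^2=b_1^2-\delta a\rhob^{l'-2r}\rhof^{m'-2s}$ of \eqref{eq:main.ineq2}). Since $\phi_1,\beta_1$ vanish near and on $\SR_\newP$ respectively and $\beta>0$ there, the symbol $b_1$ of \eqref{eq:b1.below} — hence $\tilde b_1$ for $\delta$ small — is elliptic in a neighbourhood of $\SR_\newP$ precisely because $l-2r-1-2q>0$ there. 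Thus any $Q\in\Psip{0}{0}$ elliptic on $\SR_\newP$ with $\WF'(Q)\subset\Ell(\tilde B_1)\cap\Ell(Q'')$ satisfies $\|Qu\|_{H^{s,r}_\mathrm{par}}\le C(\|\tilde B_1u\|_{L^2}+\|u\|_{H^{M,N}_\mathrm{par}})$, by composing $\tilde B_1$ with a microlocal elliptic parametrix (Proposition~\ref{prop:elliptic param}) and an elliptic parametrix of $\mathrm{Op}(\rhob^{-r}\rhof^{-s})$.

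Next I would bound the right‑hand side of \eqref{eq:naive.below.threshold}/\eqref{eq:main.ineq2} term by term. The term $\langle Fu,u\rangle$ is controlled by $\|G\newP u\|_{H^{s-m+1,r-l+1}_\mathrm{par}}+\|Q''u\|_{H^{s-1,r-1}_\mathrm{par}}+\|u\|_{H^{M,N}_\mathrm{par}}$ as in \eqref{key2.5}, since $\WF'(F)\subset\WF'(A)\subset U$ is disjoint from $\Sigma(\newP)$ and $G$ is elliptic there; the error term $\langle Ru,u\rangle$, with $R\in\Psip{2s-1}{2r-1}$, is bounded by $\|Q''u\|^2_{H^{s-1/2,r-1/2}_\mathrm{par}}+\|u\|^2_{H^{M,N}_\mathrm{par}}$ as in \eqref{key2.75}, using that $Q''$ is elliptic on $U$. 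The residual $\langle A^{1/2}u,A^{1/2}\newP u\rangle$ is split with a weighted Young inequality: one part is absorbed by the $\delta$‑term built into $\tilde b_1$ and the rest is bounded by $\tfrac{C}{\delta}\|G\newP u\|_{H^{s-m+1,r-l+1}_\mathrm{par}}+C\|u\|_{H^{M,N}_\mathrm{par}}$ using ellipticity of $G$ on $U$. The feature distinguishing the below‑threshold case is the extra term $+\|B_2u\|^2$: because $b_2=\sqrt{2\phi}\,\phi_0\phi_1\rhob^{-r}\rhof^{-s}$ (from \eqref{eq:b2def}) is supported in a punctured neighbourhood of $\SR_\newP$, the dynamical hypothesis of Proposition~\ref{prop:below} — every bicharacteristic through $\WF'(B_2)$ enters $\Ell(Q')$ while staying in $\Ell(G)$ — lets me invoke Proposition~\ref{prop:sing} to bound $\|B_2u\|_{H^{s,r}_\mathrm{par}}$ by $\|Q'u\|_{H^{s,r}_\mathrm{par}}+\|G\newP u\|_{H^{s-m+1,r-l+1}_\mathrm{par}}+\|u\|_{H^{M,N}_\mathrm{par}}$. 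Combining these with the previous paragraph yields the estimate in the statement, at the formal level.

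The step I expect to be the main obstacle is making these pairings rigorous, since $u$ is only assumed to lie microlocally in the a priori spaces and globally in $H^{M,N}_\mathrm{par}$, so $\langle i([A,\newP]+(\newP-\newP^*)A)u,u\rangle$ need not a priori converge. Following the scheme used in the proof of Proposition~\ref{prop:mod.belowschrod}, I would insert a uniformly bounded family of regularizers, e.g.\ $S_\eta=q_L\bigl(\tfrac{\rhob}{\rhob+\eta}\,\tfrac{\rhof}{\rhof+\eta}\bigr)\in\Psip{-1}{-1}$ for $\eta>0$, bounded in $\Psip{0}{0}$ uniformly in $\eta$ and converging to the identity strongly in $\Psip{0}{0}$ and in norm in $\Psip{-\epsilon}{-\epsilon}$, run the commutator argument with $A$ replaced by $S_\eta^* A S_\eta$, and check that the extra commutators $[S_\eta,\newP]$ and $[S_\eta,A^{1/2}]$ produce only errors uniformly bounded by the quantities already controlled as $\eta\to0$. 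The a priori hypotheses $Q'u\in H^{s,r}_\mathrm{par}$, $Q''u\in H^{s-1/2,r-1/2}_\mathrm{par}$, $G\newP u\in H^{s-m+1,r-l+1}_\mathrm{par}$ then give a bound on $\|QS_\eta u\|_{H^{s,r}_\mathrm{par}}$ uniform in $\eta$, and a weak‑$*$ compactness argument produces a limit in $H^{s,r}_\mathrm{par}$ which must coincide with $Qu$, establishing both $Qu\in H^{s,r}_\mathrm{par}$ and \eqref{eq:naive.below.threshold}'s consequence. The only place the parabolic setting departs from the standard scattering case is the quasi‑homogeneous form $\rhof=(1+R^4)^{-1/4}$ of the fibre boundary defining function, but this is accommodated by the calculus of Section~\ref{sec:scatcalc} exactly as in the proof of Proposition~\ref{prop:sing}.
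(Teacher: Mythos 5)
Your argument has the same skeleton as the paper's joint proof of Propositions~\ref{prop:above} and \ref{prop:below}: the same symmetric operator $A$ of \eqref{eq:AA} with orders fixed by \eqref{eq:m'l'}, the same pairing identity \eqref{eq:naive.below.threshold}, the modification $\tilde b_1^2=b_1^2-\delta a\rhob^{l'-2r}\rhof^{m'-2s}$ of \eqref{eq:main.ineq2} to treat $\langle A^{1/2}u,A^{1/2}\newP u\rangle$, the treatment of $\langle Fu,u\rangle$ and $\langle Ru,u\rangle$ via \eqref{key2.5}--\eqref{key2.75}, and Proposition~\ref{prop:sing} to absorb $\|B_2u\|^2$ into $\|Q'u\|_{H^{s,r}_\mathrm{par}}$. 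The only genuine divergence is the regularization, and it is here that your proposal is too loose to count as a proof. The paper does not sandwich $A$ between $S_\eta$'s; it multiplies the \emph{symbol} by $\varphi_\epsilon^2\tilde\varphi_\epsilon^2=(1+\epsilon\rhob^{-1})^{2(r'-r)}(1+\epsilon\rhof^{-1})^{-1}$ as in \eqref{eq:regularised}, tying the order drop to the a priori regularity gap, and separately uses the nested family $\Gamma_t$ (raised to an arbitrarily high power $K$) to justify the integration by parts \eqref{eq:commidentity} for each fixed $\epsilon$ against $u\in H^{M,N}_\mathrm{par}$ with $M,N$ arbitrarily negative.

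The problematic sentence is your claim that $[S_\eta,\newP]$ and $[S_\eta,A^{1/2}]$ ``produce only errors uniformly bounded by the quantities already controlled.'' Those terms are not lower order: the symbol of $[S_\eta,\newP]$ involves $H_p s_\eta$, which contains the contribution $H^{m,l}\rhob=\mp\beta\rhob$ at the same $b$-order as the main term, so $S_\eta^*A[S_\eta,\newP]$ is in $\Psip{2s-1}{2r-1}$ uniformly in $\eta$ --- exactly the order of the term you are trying to estimate, not a lower one. The paper's mechanism for disposing of the analogous terms is a \emph{sign} observation (the remark following \eqref{eq:reg.bound.2}): in the below-threshold case $r+q<\frac{l-1}{2}$ they carry the same sign as $\|B_1u\|^2$ and are simply discarded from the inequality, with the indefinite-sign $\beta_1$ piece handled because $\beta_1$ vanishes on $\SR_\newP$. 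If you insist on your $S_\eta$-sandwich, you must either carry out the analogous sign computation for $H^{m,l}[\rhob/(\rhob+\eta)]$ and $H^{m,l}[\rhof/(\rhof+\eta)]$, or explicitly invoke that $S_\eta^*A[S_\eta,\newP]\in\Psip{2s-1}{2r-1}$ \emph{uniformly} and is microsupported in $\Ell(Q'')$, so the pairing against $u$ is bounded by $\|Q''u\|^2_{H^{s-1/2,r-1/2}_\mathrm{par}}+\|u\|^2_{H^{M,N}_\mathrm{par}}$. As written, ``errors'' suggests a lower-order bound that is false. You also do not address the integration-by-parts justification itself: for very negative $M,N$ the fixed order drop $(-2,-2)$ from one $S_\eta$ sandwich is not enough globally, and you need an argument (microlocalization to $\Ell(Q'')$, or a $\Gamma_t$-type family with an adjustable power) to make \eqref{eq:commidentity} licit for each fixed $\eta$.
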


\begin{prop}
	\label{prop:above}
	Suppose $L \in\Psipcl{m}{l}(\RR^{n+1})$ is as above. 	For the $q$ defined in \eqref{eq:subprincipal}, suppose $r+q>\frac{l-1}{2}$ on $\SR_\newP$ and moreover $r' +q > \frac{l-1}{2}$ on $\SR_\newP$ for some $r'\in [r-1/2,r)$. Assume that  $Q'', G\in\Psi_{\mathrm{par}}^{0,0}(\RR^{n+1})$ are elliptic at $\SR_\newP$. 
	Then there exists $Q\in\Psi_{\mathrm{par}}^{0,0}(\RR^{n+1})$, elliptic at $\SR_\newP$, such that  if $u \in H^{M,N}$, $Gu\in H_\mathrm{par}^{s-1/2,r'}$ and $G\newP u\in H_\mathrm{par}^{s-m+1,r-l+1}$, then $Qu\in H_\mathrm{par}^{s,r}$ and there is $C>0$ such that
	\begin{equation}\label{key 3}
		\|Qu\|_{H_\mathrm{par}^{s,r}}\leq C(\|G\newP u\|_{H_\mathrm{par}^{s-m+1,r-l+1}}+\|Q''u\|_{H_\mathrm{par}^{s-1/2,r'}}+\|u\|_{H_\mathrm{par}^{M,N}}).
	\end{equation}
\end{prop}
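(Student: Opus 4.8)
The plan is as follows. Note first that, after shrinking $Q''$ so that $\WF'(Q'')\subset\Ell(G)$ and using microlocal elliptic regularity to bound $\|Q''u\|_{H^{s-1/2,r'}_{\mathrm{par}}}$ by $\|Gu\|_{H^{s-1/2,r'}_{\mathrm{par}}}+\|u\|_{H^{M,N}_{\mathrm{par}}}$, the asserted inequality \eqref{key 3} is precisely the estimate \eqref{key3} obtained in the discussion preceding this proposition. So the remaining work is (a) to fix the commutant and its microsupport so that the positivity invoked there genuinely holds, and (b) to make rigorous the formal integration by parts that produced \eqref{eq:naive.above.threshold}.

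For (a), I would take $m',l'$ as in \eqref{eq:m'l'}, so that $A=(A^{1/2})^2\in\Psip{m'}{l'}$ has symbol $a$ as in \eqref{eq:symboldef} and symmetrization \eqref{eq:AA}, and arrange that $\WF'(A)$ lies in a neighbourhood $U$ of $\SR_\newP$ with $U\subset\Ell(G)\cap\Ell(Q'')$. Since $\beta$ is strictly positive on $\SR_\newP$, $\beta_1$ vanishes on $\SR_\newP$, and $r+q>\tfrac{l-1}{2}$ there, one may shrink $U$ so that the coefficient $\beta\bigl(2r-l+1+2\beta_1(2s-m+1)+2q\bigr)$ appearing in \eqref{eq:b1def} is strictly positive on $U$; this puts us in case \eqref{eq:naive.above.threshold}, where $\|B_1u\|^2$ and $\|B_2u\|^2$ carry the \emph{same} sign and $B_1$ is elliptic at $\SR_\newP$. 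The remaining bookkeeping is routine: $\langle Fu,u\rangle$ is microsupported off $\Char(\newP)$, hence controlled via the elliptic parametrix (Proposition~\ref{prop:elliptic param}) by $\|G\newP u\|^2_{H^{s-m+1,r-l+1}_{\mathrm{par}}}+\|Q''u\|^2_{H^{s-1/2,r'}_{\mathrm{par}}}+\|u\|^2_{H^{M,N}_{\mathrm{par}}}$; the lower-order remainder $\langle Ru,u\rangle$ with $R\in\Psip{2s-1}{2r-1}$ is absorbed into the same a priori terms using $r'\ge r-\tfrac12$ and $\WF'(R)\subset\Ell(Q'')$; the pairing $\langle A^{1/2}u,A^{1/2}\newP u\rangle$ is handled by the $\tilde b_1$-modification as in \eqref{eq:main.ineq2}, absorbing a small multiple of $\|\Lambda A^{1/2}u\|^2$ into the strictly positive principal term and bounding the rest through ellipticity of $G$ on $U$; and $\|B_2u\|^2\ge0$ is discarded. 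Taking $Q=\Lambda B_1$ modulo lower order ($\Lambda=\mathrm{Op}(\rhob^{r}\rhof^{s})$), elliptic at $\SR_\newP$ with $\WF'(Q)\subset\Ell(B_1)$, one arrives at \eqref{key3}, hence \eqref{key 3}.

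The main obstacle is (b), the regularization. A priori $u$ is only known to lie in $H^{M,N}_{\mathrm{par}}$, so $\langle(i[A,\newP]+(\newP-\newP^*)A)u,u\rangle$ is not literally defined; the plan is to replace $A$ by $A_\eta=AS_\eta$, with $S_\eta=q_L(\psi_\eta)$, $\psi_\eta=\rhob^{K}(\rhob+\eta)^{-K}\rhof^{K}(\rhof+\eta)^{-K}$, which for $\eta>0$ lies in $\Psip{-K}{-K}$, is bounded in $\Psip{0}{0}$ uniformly in $\eta$, and converges to $\Id$ in the strong operator topology of $\Psip{0}{0}$ and in operator norm in $\Psip{-\epsilon}{-\epsilon}$, with $[S_\eta,\newP]$ uniformly of one order below $\newP$. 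Choosing $K$ large, all pairings become legitimate for $u\in H^{M,N}_{\mathrm{par}}$; \eqref{eq:naive.above.threshold} then holds for $A_\eta$ with additional error terms coming from $A[S_\eta,\newP]$, estimated exactly like $\langle Ru,u\rangle$ uniformly in $\eta$, and the argument of (a) yields a bound on $\|QS_\eta u\|_{H^{s,r}_{\mathrm{par}}}$ uniform in $\eta$. Letting $\eta\to0$, a uniformly bounded family in $H^{s,r}_{\mathrm{par}}$ has a weak limit, which the norm convergence $S_\eta\to\Id$ in $\Psip{-\epsilon}{-\epsilon}$ identifies with $Qu$; hence $Qu\in H^{s,r}_{\mathrm{par}}$ and \eqref{key 3} follows. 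I expect the genuinely delicate point to be keeping the regularizer errors and the non-self-adjoint contributions uniform in $\eta$; everything else parallels the classical radial-point arguments of Melrose \cite{RBMSpec} and Vasy \cite{VD2013}, and, unlike the below-threshold Proposition~\ref{prop:below}, no propagation estimate or bicharacteristic hypothesis is needed because $\|B_2u\|^2$ has the favourable sign.
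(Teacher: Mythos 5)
Your part~(a) --- setting up the commutant $A$ with microsupport in a small neighbourhood of $\SR_\newP$, noting that $\|B_2u\|^2$ has the favourable sign above threshold so no propagation hypothesis is needed, and controlling $\langle Fu,u\rangle$, $\langle Ru,u\rangle$ and $\langle A^{1/2}u,A^{1/2}\newP u\rangle$ --- matches the paper's argument. The gap is in part~(b), the regularization, and it sits exactly where the hypothesis $r'+q>\tfrac{l-1}{2}$ is actually used.

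Your regularizer $S_\eta = q_L\big(\rhob^{K}(\rhob+\eta)^{-K}\rhof^{K}(\rhof+\eta)^{-K}\big)$ with $K$ ``chosen large'' cannot be fed into the positive commutator argument. When the renormalized Hamilton vector field falls on $S_\eta$, relation \eqref{eq:beta} produces in the symbol of $i[A_\eta,\newP]$ a term proportional to $K\beta\,\eta/(\rhob+\eta)$ times the symbol of $A_\eta$. This term is \emph{not} of one order lower: it has the same parabolic orders $(2s,2r)$ as the leading ``$b_1^2$'' term (the factor $\eta/(\rhob+\eta)$ is not $O(\rhob)$ uniformly in $\eta$; it equals $1$ at $\rhob=0$), and in the above-threshold case it carries the \emph{opposite} sign to $\|B_1u\|^2$. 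So it cannot be absorbed as a remainder ``exactly like $\langle Ru,u\rangle$''; it must be absorbed into the strictly positive main coefficient $2r-l+1+2\beta_1(2s-m+1)+2q$ of \eqref{eq:b1def}, and the positivity survives only if the regularizer derivative contribution is at most of size $r-r'$. This is the reason the paper uses $a_\ep=(1+\ep\rhob^{-1})^{r'-r}(1+\ep\rhof^{-1})^{-1/2}a$, whose derivative ratios are controlled by \eqref{eq:reg.bound.1}--\eqref{eq:reg.bound.2}; the hypothesis $r'+q>\tfrac{l-1}{2}$ is then precisely what keeps $b_1^2>0$ after subtracting the regularizer contribution. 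With your $K$ large, the positivity is destroyed. To justify the integration-by-parts identity \eqref{eq:commidentity} the paper does deploy a second regularizer $\Gamma_t$ with a large fixed power, but crucially $\Gamma_t\to\Id$ for each \emph{fixed} $\ep>0$ before positivity is ever invoked, so its large exponent never enters the sign bookkeeping. Your scheme collapses these two stages and loses the tuning; you would need to replace $S_\eta$ by a regularizer with exponents $r'-r$ on the $\rhob$-factor and $-1/2$ on the $\rhof$-factor, and add a separate $t$-regularization to validate \eqref{eq:commidentity}.
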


\begin{remark} In the statement of Proposition~\ref{prop:above}, we could take $Q'' = G$. However, in order to treat the proofs of these two results jointly, it helps to state the result as above. 
\end{remark}

\begin{proof}
	The proof of these two results largely amounts to regularising the commutator estimates outlined above in order to legitimately obtain the equality  \eqref{eq:naive.above.threshold}. 
	
	Note that a priori, the conditions of Proposition \ref{prop:above} only imply that $\newP u$ and $Au$ have orders $(s-m+1,r-l+1)$ and $(s-1/2-m', r'-l')$ in $\WF'(A)$, summing to $(-1/2,r'-r)$ (using \eqref{eq:m'l'}), and so $\langle Au,\newP u\rangle$ is not a priori well-defined. This requires some regularization procedure added to the formal calculations above. 
	
	To deal with this issue, we replace the symbol $a$ in \eqref{eq:symboldef} with
	\begin{equation}\label{eq:regularised}
		a_\ep=\varphi_\ep(\rhob^{-1})^2 \tilde\varphi_\ep(\rhof^{-1})^2 a:=(1+\epsilon\rhob^{-1})^{r'-r}(1+\epsilon\rhof^{-1})^{-1/2}a
	\end{equation}
	for $\epsilon\geq 0$. Thus for each fixed $\epsilon$, the order of $A_\epsilon$ obtained from $a_\epsilon$ analogously to \eqref{eq:AA} has been shifted by $(-1/2, r'-r)$ relative to $A$. 
	
	These regularising functions have the property that 	
	\begin{equation}
		\label{eq:reg.bound.1}
		\varphi_\ep'/\varphi_\ep\leq \frac{r-r'}{2}\cdot \min\{1,\rhob\}
	\end{equation}
	and
	\begin{equation}
		\label{eq:reg.bound.2}\tilde\varphi_\ep'/\tilde\varphi_\ep\leq \frac{1}{4}\cdot \min\{1,\rhof\}.
	\end{equation}
	Due to the regularisation, the pairing $\langle A_\ep u,\newP U\rangle $ is now well defined for $\ep>0$, however the formal integration by parts, i.e.\ the identity
	\begin{equation}\label{eq:commidentity}
		\ang{\newP u,A_\ep u}-\ang{A_\ep u,\newP u}=\ang{(A_\ep \newP -\newP ^*A_\ep)u, u}
	\end{equation}
	still remains to be justified for fixed $\ep>0$. To do this, we use the functions $\varphi_\ep$,  $\tilde\varphi_\ep$ just defined and raise them to a sufficiently high fixed power $K$: let 
 $\Gamma_t=q_L(\varphi_t^K(\rhob^{-1})\tilde \varphi_t^K(\rhof^{-1}))$, and compute
	\begin{align*}
		\ang{\newP u,A_\ep u}-\ang{A_\ep u,\newP u}&= \lim_{t\rightarrow 0} \Big( \ang{\Gamma_t \newP u,A_\ep u}-\ang{\Gamma_t A_\ep u,\newP u} \Big) \\
		&= \lim_{t\rightarrow 0} \ang{(A_\ep\Gamma_t \newP -\newP ^*\Gamma_tA_\ep)u, u}\\
	\end{align*}
	We have 
	\begin{equation}\label{eq:reg2}
		A_\ep\Gamma_t \newP -\newP ^*\Gamma_tA_\ep=\Gamma_t(A_\ep \newP  -\newP ^* A_\ep)+[A_\ep,\Gamma_t]\newP -[\newP ^*,\Gamma_t]A_\ep.
	\end{equation}
	As $\Gamma_t$ is uniformly bounded in $\Psip{0}{0}$ (in the sense of having symbol with uniformly bounded seminorms), and converges to $\Id$ as $t \to 0$ in $\Psip{\mu}{\mu}$ for any $\mu > 0$, we have strong convergence $A_\ep\Gamma_t \newP -\newP ^*\Gamma_tA_\ep\rightarrow A_\ep \newP -\newP ^*A_\ep$
	which implies \eqref{eq:commidentity} is valid for each $\ep>0$.
	
	As before, we compute the symbol $-(H_p a_\ep+2\tilde p a_\ep)$ of the commutator expression $i([A_\ep,\newP ]+(\newP -\newP ^*)A_\ep)$ where  $A_\ep$ is symmetric with principal symbol $a_\ep$.
	
	The calculation proceeds as before from \eqref{eq:comm}, however in each term there is the regularising factor $\varphi_\ep^2\tilde\varphi_\ep^2$, and there are two additional terms from $H^{m,l}$ falling on the regularisers. These two terms are:
	\begin{equation}
		\mp 2 \rhob^{1-l-l'}\rhof^{1-m-m'}\phi^2\phi_0^2\varphi_\ep^2\tilde\varphi_\ep^2 \cdot (\varphi_\ep'/\varphi_\ep)\cdot \rhob^{-1}\beta 
	\end{equation}
	and
	\begin{equation}
		\mp 4 \rhob^{1-l-l'}\rhof^{1-m-m'}\phi^2\phi_0^2\varphi_\ep^2\tilde\varphi_\ep^2 \cdot (\tilde\varphi_\ep'/\tilde\varphi_\ep)\cdot \rhof^{-1}\beta \beta_1
	\end{equation}
	In the case $r+q<\frac{l-1}{2}$, these new terms in fact have the same sign as the `main' term $\|B_1u\|^2$, and can thus be dropped from the commutator estimate.
	
	In the case $r+q > \frac{l-1}{2}$, these new terms have the opposite sign of $\|B_1u\|^2$, but can be absorbed into the $b_1$ term as in \eqref{eq:main.ineq2}. To see this, we use \eqref{eq:reg.bound.1} and \eqref{eq:reg.bound.2} to obtain the estimate 
	\begin{equation}
		\frac{2\varphi_\ep '}{\varphi_\ep}+\frac{4\beta_1 \tilde{\varphi}_\ep'}{\tilde{\varphi}_\ep}\leq r-r'+\beta_1.
	\end{equation}
	Since the expression underneath the square root in \eqref{eq:b1def} remains positive near $\SR_\newP$ if we replace the $r$ with an $r'$, it follows that, provided we have $r'+q > \frac{l-1}{2}$, the two additional terms can be absorbed into the positive expression $b_1^2$.
	
	The remaining terms in
        \eqref{eq:naive.above.threshold}, \eqref{eq:naive.below.threshold}
        are now generally $\ep$-dependent, which we denote with a
        subscript. Replacing $\WF'(A)$ and $\WF'(F)$
         with their uniform versions $\WF'(A_\ep),\WF'(F_\ep)$
        (in the sense of \cite[Sect.\ 4.4]{grenoble}), then all estimates go through uniformly in $\ep$ to give
	\begin{equation}
		\|Q_\ep u\|_{H^{s,r}}\leq C(\|G\newP u\|_{H^{s-m+1,r-l+1}}+\|Q''u\|_{s-1/2,r'} +\|u\|_{H^{M,N}})
	\end{equation}
	for fixed $C$ and all $\ep > 0$ in the case $r+q>r'+q > \frac{l-1}{2}$.
	
	From weak compactness of the unit ball in $H^{s,r}$, as $\ep\rightarrow 0$ we thus obtain a limit $\lim_{\ep_j\rightarrow 0} Q_\ep u$ in $H^{s,r}$. However, we know that $Q_\epsilon u$ converges strongly (and a fortiori weakly) to $Qu$ in $H^{s-\epsilon', r - \epsilon'}$ for any $\epsilon' > 0$. 
	It follows that limit obtained from weak compactness is $Qu$, and hence $Qu$ lies in $H^{s,r}$ satisfying the required estimate 
	\begin{equation}
		\|Q u\|_{H^{s,r}}\leq C(\|G\newP u\|_{H^{s-m+1,r-l+1}}+\|Q''u\|_{s-1/2,r'} +\|u\|_{H^{M,N}}).
	\end{equation}
	
	In the case $r+q < \frac{l-1}{2}$, the exact same argument goes through, with the additional term $\|B_2u\|^2$ in \eqref{eq:naive.below.threshold} controlled using Proposition \ref{prop:sing} and giving rise to the additional term $\|Q'u\|_{H_\mathrm{par}^{s,r}}$ in Proposition \ref{prop:below}.
	
\end{proof}

\begin{remark}\label{rem:iteration} The conclusions of Propositions~\ref{prop:above} and \ref{prop:below} lend themselves to iteration. In fact, if we know that $Q'u$ (microsupported away from the radial set, say) and $G Lu$ have regularity $(s^*, r^*)$ and $(s^* -m+1, r^*-l+1)$ respectively, then we can apply \eqref{key superfluous label 2} over and over, starting at the assumed regularity $(M,N)$,  to obtain 
regularity $(s^*, r^*)$ for $Qu$, provided that $r^* + q < (l-1)/2$. Similarly, 
if $G Lu$ has very high regularity, say of order $(s^* -m+1, r^*-l+1)$ where $s^*$ and $r^*$ are very large, then we simply apply \eqref{key 3} over and over, gaining up to $1/2$ in both spacetime and fibre regularity until we reach $s^*$ and $r^*$, provided we know a priori that we have regularity $(s_0, r_0)$ with $r_0 + q > (l-1)/2$.  The results obtained by such iteration are stated for the specific operator $P$ in Propositions~\ref{prop:aboveschrod} and \ref{prop:belowschrod}. Notice that the $Q''$ term is not needed in the below-threshold case as it can be iteratively lowered in regularity until it is subsumed into the $\|u\|_{H^{M,N}}$ term. 
\end{remark}

\begin{remark}
The example of the free Schr\"odinger operator $P_0$ shows that these propositions cannot be improved much. Consider a solution $u$ to $P_0u = 0$ given by $\mathcal{P}_0 f$ for $f$ Schwartz, as in \eqref{eq:poisson}. The function $u$ is microlocally trivial outside the radial set $\SR$, and is in $H^{s, r}_{\mathrm{par}}$ for every $r < -1/2$, but not for $r \geq -1/2$. It shows that the a priori condition that $u \in H^{s', r'}_{\mathrm{par}}$ for some $r' > -1/2$ in Propositions~\ref{prop:above} and \ref{prop:aboveschrod} cannot be removed in order to gain additional regularity at the radial set, and also shows that regularity gain in Propositions~\ref{prop:below} and \ref{prop:belowschrod} cannot be pushed above the threshold level of $-1/2$. 
\end{remark}

\bibliographystyle{plain}
\bibliography{tds}

\end{document}